\long\def\symbolfootnote[#1]#2{\begingroup%
	\def\thefootnote{\fnsymbol{footnote}}\footnote[#1]{#2}\endgroup}
\newcommand{\tr}{\ensuremath{{}^t\!}}
\newcommand{\tra}{\ensuremath{{}^t}}
\newcommand{\C}{\mathfrak C}
\newcommand{\E}{\mathcal E}
\newcommand{\F}{\mathcal F}
\def\imod#1{\allowbreak\mkern10mu({\operator@font mod}\,\,#1)}
\newtheorem{theorem}{Theorem}[section]
\newtheorem{lemma}[theorem]{Lemma}
\newtheorem{corollary}[theorem]{Corollary}
\newtheorem{proposition}[theorem]{Proposition}
\newtheorem{definition}[theorem]{Definition}
\newtheorem{question}[theorem]{Question}
\newtheorem{notation}[theorem]{Notation}
\newtheorem{problem}[theorem]{Problem}
\newtheorem*{theorem*}{Theorem}
\theoremstyle{definition}
\newtheorem{remark}[theorem]{Remark}
\newtheorem{example}[theorem]{Example}
\numberwithin{equation}{section}
\newcommand{\ignore}[1]{}
\newcommand{\mynote}[1]{}
\begin{document}
\pagestyle{myheadings}
\begin{titlepage}
	\begin{center}
		\vspace*{1cm}
		\textbf{\LARGE\fontsize{22}{1.5} COMPUTATIONS IN CLASSICAL GROUPS}\\
		\vspace{2cm}
		\large A thesis 
		\\Submitted in partial fulfillment of the requirements
		\\of the degree of
		\\\textbf{Doctor of Philosophy}
		\vspace{1cm}
		\\By
		
		\vspace{1.3cm}
		\Large\fontsize{15}{1.5} \textbf{Sushil Bhunia}
		\\\large 20123166
		\vspace{2cm}
		\begin{center}
			\includegraphics[width=2cm,height=2cm]{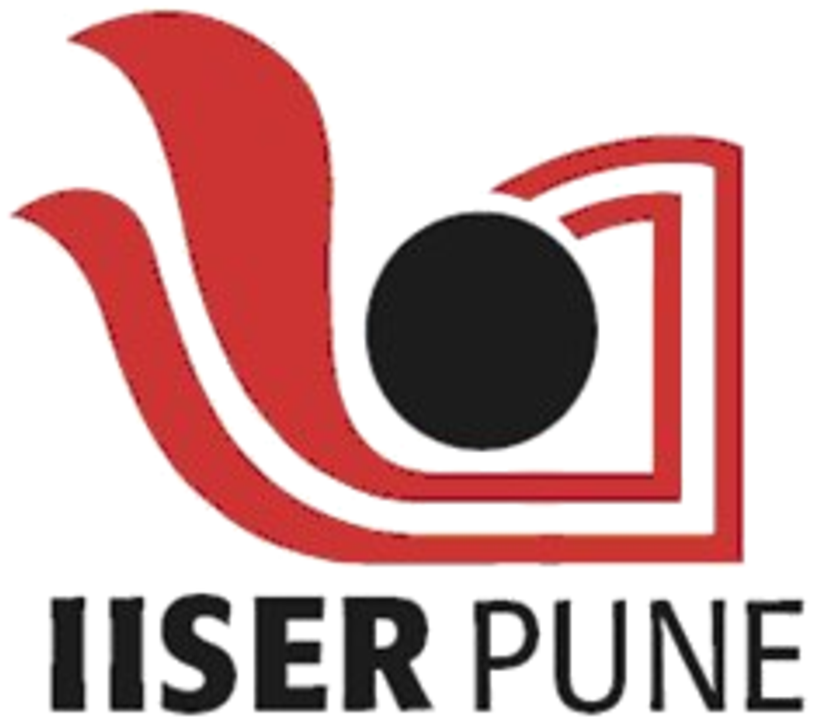}
		\end{center}
		\vspace{1cm}
		
		\normalsize INDIAN INSTITUTE OF SCIENCE EDUCATION AND RESEARCH PUNE
		\vfill
		{August, 2017}
	\end{center}
\end{titlepage}

\vspace*{\fill}
\begingroup
\begin{center}
	\textbf{\it\LARGE Dedicated to \\[10pt] My Didima (Grandmother)}
\end{center}
\vskip10cm
\endgroup
\vspace*{\fill}
\cleardoublepage
\chapter*{Certificate}
Certified that the work incorporated in the thesis entitled \textit{``Computations in Classical Groups"},
submitted by \textit{Sushil Bhunia} was carried out by the candidate, under my supervision. 
The work presented here or any part of it has not been included in any other thesis submitted previously 
for the award of any degree or diploma from any other university or institution.\\[25mm]
{\em Date: August 28, 2017} \hfill {\em Dr. Anupam Kumar Singh}\\ \vspace{-10mm}
\begin{flushright}
	{Thesis Supervisor}
\end{flushright}
\cleardoublepage
\chapter*{Declaration}
I declare that this written submission represents my ideas in my own words and where others' ideas have been included, 
I have adequately cited and referenced the original sources. I also declare that I have adhered to all principles of 
academic honesty and integrity and have not misrepresented or fabricated or falsified any idea/data/fact/source in my 
submission. I understand that violation of the above will be cause for disciplinary action by the institute and can 
also evoke penal action from the sources which have thus not been properly cited or from whom proper permission has not 
been taken when needed.\\[25mm]
{\em Date: August 28, 2017} \hfill {\em Sushil Bhunia}\\ \vspace{-10mm}
\begin{flushright}
	{\em Roll Number: 20123166}
\end{flushright}
\cleardoublepage
\chapter*{Acknowledgements}\addcontentsline{toc}{chapter}{\textbf{Acknowledgements}}\chaptermark{Acknowledgements}
To my life-coach, my late grandmother Bishnupriya Bera: because I owe it all to you. 

First and foremost, I would like to express my sincere gratitude to my thesis 
supervisor Dr. Anupam Singh for the continuous support, for his patience, motivation, 
enthusiasm and encouragement. He was always ready to discuss with me. He trusted my ability 
and was patient enough to explain anything to me. I could not have imagined 
having a better guide for my Ph.D. The 
questions studied in this thesis are formulated by him.

Besides my supervisor, I would like to thank the rest of my research 
advisory committee: Prof. K. N. Raghavan and Dr. Baskar Balasubramanyam for 
their insightful comments and encouragement. 
I had the opportunity to talk mathematics with several people. 
I would like to thank them for their support and encouragement. In 
particular, I would like to thank Prof. Dipendra Prasad, Prof. Amritanshu Prasad, 
Prof. Maneesh Thakur and Prof. Benjamin Martin. I had very helpful mathematical 
discussions with Dr. Ayan Mahalanobis, Dr. Krishnendu Gongopadhyay, and Dr. Ronnie Sebastian.

I owe my understanding of mathematics to many mathematicians at IISER Pune, especially to 
Dr. Diganta Borah, Dr. Chandrasheel Bhagwat, Dr. Rabeya Basu, Dr. A. Raghuram, Dr. Steven Spallone,
Dr. Vivek Mohan Mallick, Dr. Kaneenika Sinha, Dr. Tejas Kalelkar, Dr. Krishna Kaipa 
and Dr. Amit Hogadi. I am grateful to all of them. 
I am thankful to CSIR for the financial support in the form of the research fellowship. 
I would like to acknowledge the support of the institute and its administrative staff 
members for their cooperation, special thanks are due to 
Mrs. Suvarna Bharadwaj, Mr. Tushar Kurulkar and Mr. Kalpesh Pednekar. 

I am grateful to my teachers starting from my school days till date 
having faith in me and guiding me in right direction. Especially to Mr. Chittaranjan Chaudhuri, 
Mr. Suvendu Dandapat, Mr. Gautambabu in school and Prof Himadrisekhar Sarkar in Jadavpur University.

I thank all my school friends, batchmates in Jadavpur University and IISER Pune, with whom 
I shared good times and bad times as well. 
Many of you will recognize yourselves, and I hope that you will 
forgive me for not naming you individually.
I thank my friends in Jadavpur University. 
In particular, Debmalya, Prahlad, Srimoyee, Simi, Mousumi, Barnali, Nirupam, Chiranjit, 
Dibakar, Dishari, Ibrahim, Manoranjanda, Biswajitda, and Gautamda. 
All the students of Mathematics at IISER Pune deserve a note of 
appreciation for being enthusiastic about discussing mathematics with me. I thank 
Rohit, Yasmeen, Hitesh, Rashmi, Sudhir, Manidipa, Pralhad, Prabhat, 
Makarand, Jatin, Neha, Jyotirmoy, Milan, Tathagata, Debangana, Ayesha, Girish, 
Advait, Chitrabhanuda, Dilpreet, Uday, Ratna, and others for their help and discussions.
With a special mention to Mr. Rohit Joshi. It was fantastic to 
have the opportunity to discuss mathematics with him. 
A special acknowledgment goes to my office mate of many years: Ms. Manidipa Pal. 
She was a true friend ever since we began to share an office in $2012$. 
I must thank Mr. Uday Baskar Sharma for correcting my English.
I thank all the security persons in IISER and my special thanks to IISER football team.

Finally, I must express my deepest gratitude to my parents for 
providing me with unconditional support and constant encouragement throughout 
my years of study and through the process of research and write this thesis and 
my life in general, without whom this thesis would not have existed.
I am also grateful to my other family members who have supported me along 
the way. Especially to my younger brother Samir, my sisters Swapna, Bandana, Gangotryi and my nephew Chunai, Munai. 
Last but not the least to my Kakima and Sir. Also a mention to Pramita.
It is not possible to express my gratitude towards them in words.

The comments of referees has been very helpful to improve this thesis.

\noindent{\it\hfill Sushil Bhunia}

\pagenumbering{arabic}
\setcounter{page}{5}
\tableofcontents
\chapter*{Abstract}\addcontentsline{toc}{chapter}{\textbf{Abstract}}\chaptermark{Abstract}
In this thesis, we develop algorithms similar to the Gaussian elimination algorithm in symplectic 
and split orthogonal similitude groups. As an application to this algorithm, we compute the 
spinor norm for split orthogonal groups. Also, we get similitude character for symplectic and split 
orthogonal similitude groups, as a byproduct of our algorithms. 

Consider a perfect field $k$ with ${\rm char}\, k \neq 2$, which has a non-trivial Galois automorphism of order $2$.
Further, suppose that the fixed field $k_0$ has the property that there are only finitely many field extensions of any 
finite degree. In this thesis, we prove that the number of $z$-classes in the unitary group defined 
over $k_0$ is finite. Eventually, we count the number of $z$-classes in the unitary group over a finite field 
$\mathbb{F}_q$, and prove that this number is same as that of the general linear group over $\mathbb{F}_q$ 
when $q$ is large enough.
\chapter*{Notation}\addcontentsline{toc}{chapter}{\textbf{Notation}}\chaptermark{Notation}
$k : \text{a field}\; (\mathrm{char}\;\neq 2)\\ 
k^{\times} : k \setminus \{0\} \\
\bar{k} : \text{algebraic closure of}\; k \\
\mathbb{Z} : \text{integers}  \\
\mathbb{Q} : \text{rational numbers} \\
\mathbb{R} : \text{real numbers} \\
\mathbb{C} : \text{complex numbers} \\ 
\mathbb{Q}_{p} : \text{$p$-adic fields} \\
\mathbb{F}_q : \text{finite fields with $q$ elements}\\
R : \text{a commutative ring with $1$}\\
R^{\times} : \text{units of a ring $R$}\\
(V,B) : \text{bilinear or sesquilinear form on $V$}\\
\beta : \text{the matrix of $B$ relative to a basis} \\
dV : \text{discriminant of $(V,B)$}\\
Q : \text{a quadratic form} \\
\otimes : \text{tensor product}\\
\oplus : \text{direct sum}\\
\perp : \text{orthogonal sum}\\
\cong : \text{isomorphism}\\
\mathcal{Z}_{G}(g) : \text{centralizer of $g$ in $G$}\\
\mathcal{Z}(G) : \text{center of $G$}\\
\mathrm{Aut}\;(V) : \text{set of all automorphisms of $V$}\\
M(n,k) : \text{matrix algebra over $k$}\\
GL(V)\; \text{or} \;GL(n,k) : \text{general linear group}\\
SL(V) \; \text{or}\; SL(n,k) : \text{special linear group}\\
GSp(V,B)\; \text{or}\; GSp(n,k) : \text{symplectic similitude group}\\
Sp(V,B)\; \text{or}\; Sp(n,k) : \text{symplectic group}\\
GO(V,B)\; \text{or}\; GO(n,k) : \text{orthogonal similitude group}\\
O(V,B)\; \text{or}\; O(n,k) : \text{orthogonal group} \\
U(V,B)\; \text{or}\; U(n,k) : \text{unitary group}\\
\mathrm{Gal}\;(L/k) : \text{Galois group of a field $L$ over $k$}\\
\mathrm{det}(g) : \text{determinant of a matrix $g$}\\
\tr g : \text{transpose of a matrix $g$}\\
\tr g^{-1} : \text{transpose inverse of a matrix $g$}\\
p(n) : \text{number of partitions of $n$}\\
\mathrm{diag}(\lambda_1, \ldots, \lambda_n) : \text{diagonal matrix}\\
\emph{italic} : \text{definition}\\
\qed : \text{end of a proof}$
\chapter{Introduction}\label{chapter1}
This thesis deals with the subject of classical groups.
Specifically, we deal with the Gaussian elimination 
for some similitude groups, and conjugacy classes of 
centralizers for certain classical groups. We give a
concrete algorithm for symplectic and split orthogonal 
similitude groups analogous to the usual row and column 
operations to solve the word problem. Also, we give structure of centralizers and 
classes of centralizers in unitary groups to complete 
the story for classical groups, at least as far as 
the topics we deal with are concerned.
\vskip 4mm
\noindent
\section*{What is the Gaussian elimination?}
\vskip 4mm
\noindent
Gaussian elimination is a very old technique in Mathematics. It appeared
in print as chapter eight in a Chinese mathematical text called, 
``The nine chapters of the mathematical art''. It is believed, a part 
of that book was written as early as $150$ BCE. For a historical 
perspective on Gaussian elimination, we refer to a nice work 
by Grcar~\cite{Gc}.

In computational group theory, one is always looking for algorithms 
that solve the word problem (for definition see p. 4, section 1.4~\cite{Ob}). 
Algorithms for word problem are useful 
in other programs in computational group theory, namely, the group 
recognition program and the membership problems.
Extensive work on these programs are being done by several people Leedham-Green and 
O'Brien~\cite{LO}, and Guralnick et.al.~\cite{GKKL}.
Thus, one of the main objectives of this thesis is to give 
an algorithm, on similar lines as the row-column operations for general 
linear groups, to solve the word problem for similitude groups. In this thesis, 
we work with Chevalley generators~\cite{Ca1}. Chevalley generators for the special 
linear group $SL(n,k)$ are elementary transvections, which are used to do 
the Gaussian elimination for $GL(n,k)$. The similitude groups are thought of as an 
analog of what $GL(n,k)$ is for $SL(n,k)$. So, for the Gaussian elimination of 
symplectic and split orthogonal similitude groups, we use the Chevalley generators.

These Chevalley generators for classical groups are well-known for a very long time. 
However, its use in row-column operations in symplectic and split orthogonal similitude 
groups is new. We develop row-column operations, very similar to the Gaussian elimination 
algorithm for general linear groups. We call our algorithms \textbf{Gaussian elimination} in symplectic 
and split orthogonal similitude groups respectively. 

In a nutshell, Gaussian elimination is nothing but a series of 
row and column operations. For details see Chapter~\ref{chapter6}.
The algorithms that we develop in this thesis work
for a split bilinear form $B$ (see (4) in Example~\ref{splitb}). First, we define 
elementary matrices (see Section~\ref{elementarymatrices}), which give 
elementary operations (see~\ref{elementaryoperations}) for similitude groups. 
We prove the following result: 
\begin{theorem}[Theorem~\ref{maintheorem1}]
	Every element of the symplectic similitude group $GSp(2l,k)$ or split  
	orthogonal similitude group $GO(n,k)$ (here $n=2l$ or $2l+1$),  
	can be written as a product of elementary matrices and 
	a diagonal matrix. Furthermore, the diagonal matrix is of the following form:
	\begin{enumerate}
		\item In $GSp(2l,k)$, 
		$\mathrm{diag}(\underbrace{1,\ldots,1}_{l},\underbrace{\mu(g),\ldots, \mu(g)}_{l})$, where $\mu(g) \in k^{\times}$.
		\item In $GO(2l,k)$,  
		$\mathrm{diag}(\underbrace{1,\ldots,1,\lambda}_{l},\underbrace{\mu(g),\ldots, \mu(g),\mu(g)\lambda^{-1}}_{l})$, 
		where $\mu(g), \lambda \in k^{\times}$.
		\item In $GO(2l+1,k)$,  
		$\mathrm{diag}(\alpha(g),\underbrace{1,\ldots,1,\lambda}_{l},\underbrace{\mu(g),\ldots, \mu(g), \mu(g)\lambda^{-1}}_{l})$, 
		where $\alpha(g)^2=\mu(g)$ 
		and $\mu(g), \lambda \in k^{\times}$. 
	\end{enumerate}
\end{theorem}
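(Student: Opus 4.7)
The plan is to mimic ordinary Gaussian elimination but exploit the defining relation $\tr g\,\beta\,g = \mu(g)\,\beta$ to pair up root directions: clearing a positive-root entry on the upper half of $g$ simultaneously constrains the corresponding negative-root entry on the lower half. I would proceed by induction on the rank $l$, peeling off one hyperbolic plane at a time until only a diagonal of the asserted shape remains.

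For $GSp(2l,k)$, start with $g$ whose first column must be nonzero. Use elementary row operations (left multiplication by the Chevalley-type elementary matrices defined earlier) to permute a nonzero pivot into position $(1,1)$, scale it to $1$, and clear the rest of column $1$. Expanding $\tr g\,\beta\,g = \mu(g)\,\beta$ with the split standard $\beta$, this forces the $(l+1,l+1)$-entry to equal $\mu(g)$ and the $(l+1,j)$-entry to vanish for $j \neq l+1$. A symmetric round of column operations (right multiplication) then clears the first row away from $(1,1)$ and the $(l+1)$-th column away from $(l+1,l+1)$. The residual $(2l-2)\times(2l-2)$ block on the indices $\{2,\ldots,l,l+2,\ldots,2l\}$ lies in $GSp(2l-2,k)$ with the same similitude factor, so induction yields the claimed $\mathrm{diag}(1,\ldots,1,\mu(g),\ldots,\mu(g))$.

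For the split orthogonal similitude groups the same scheme works for the first $l-1$ hyperbolic planes. The difference appears at the bottom of the induction: the Weyl group of type $D_l$ realises only an even number of sign changes, so the residual last hyperbolic plane cannot always be reduced to $\mathrm{diag}(1,\mu(g))$ by the available elementary matrices. The leftover freedom is recorded by a free parameter $\lambda \in k^{\times}$ at position $(l,l)$, paired with $\mu(g)\lambda^{-1}$ at position $(2l,2l)$ so that the form-preservation constraint survives. In $GO(2l+1,k)$ there is additionally an anisotropic line on which the similitude relation reduces to $\alpha(g)^2 = \mu(g)$; the entry $\alpha(g)$ on this line is a genuine square root of $\mu(g)$, which in general cannot be normalised to $1$ inside $k$, and this accounts for the extra leading slot in the diagonal.

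The principal obstacle is the pivot step in the orthogonal cases. Unlike in $GL(n,k)$, where any nonzero column can be moved to $e_1$, the elementary matrices here must also preserve the form, and one has to verify they act transitively enough on the relevant orbits of isotropic and anisotropic vectors. This requires a careful case analysis when the first column of $g$ has its mass concentrated in a problematic block, and the reduction may need to be preceded by a Weyl-group-type elementary matrix swapping $e_i$ with $f_i$ (possibly with a scalar correction, which is exactly where the parameter $\lambda$ is absorbed in the orthogonal setting) to bring a usable pivot into view. Once this pivot availability is established, the inductive clearing and the precise identification of the residual $\lambda$ and $\alpha(g)$ parameters are routine.
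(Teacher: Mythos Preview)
Your inductive strategy---peeling off one hyperbolic plane at a time---is viable and your use of the relation $\tr g\,\beta\,g=\mu(g)\beta$ to see that clearing column $1$ automatically forces row $l+1$ to become $\mu(g)\,e_{l+1}^{t}$ is exactly right. But this is not the route the paper takes. The paper argues \emph{block-wise} rather than by induction on $l$: writing $g=\begin{pmatrix}A&B\\C&D\end{pmatrix}$, it first runs classical Gaussian elimination on the $l\times l$ block $A$ alone (via the $E1$-type elementaries $\mathrm{diag}(R,\tr R^{-1})$) to put $A$ into the form $\mathrm{diag}(1,\ldots,1,\lambda)$; a sequence of structural lemmas (Lemmas~\ref{lemma1}--\ref{lemma5}) then pins down the shape of $C$ enough that the $E3$-type elementaries kill $C$ in one sweep, after which $D$ is forced to be $\mu(g)\tr A^{-1}$ and $B$ is killed by $E2$. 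A final normalisation (Lemma~\ref{lemma7}) absorbs $\lambda$ in the symplectic case, while in the orthogonal cases it survives---and in odd dimension the extra $E4$ moves handle the anisotropic line.

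The trade-off: your approach makes the appearance of the residual $\lambda$ conceptually transparent (it is the obstruction at the base of the induction, tied to the type-$D$ Weyl group), but it forces you to track how later row/column operations disturb the already-cleared first row and column, and your ``scale it to $1$'' step quietly uses torus elements that are only products of elementaries in the symplectic case. The paper's block approach avoids this bookkeeping entirely at the cost of the auxiliary Lemmas~\ref{lemma1}--\ref{lemma10}, and never needs to scale a single pivot---the $\lambda$ simply emerges as the determinant of $A$ after classical elimination.
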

\noindent
\section*{What is the spinor norm and why study them?}
\vskip 4mm
\noindent
Let $k$ be a field with ${\rm char}\, k \neq 2$. 
The \textbf{spinor norm} is a group homomorphism 
$\Theta : O(n,k)\rightarrow k^{\times}/k^{\times2}$ defined by  
$\Theta(g)=\displaystyle \prod_{i=1}^m Q(u_{i})k^{\times2}$, 
where $g =\sigma_{u_{1}}\sigma_{u_{2}}\cdots \sigma_{u_{m}}$ 
using Cartan-Dieudonne theorem 
(see Section~\ref{classicalspinornorm}) and $Q$ is the quadratic form associated 
to the bilinear form $B$.
In the connection to group recognition project, 
Scott H. Murray and Colva M. Roney-Dougal~\cite{MR} studied spinor norm earlier. The 
definition of the spinor norm is not friendly to compute.
Hahn, Wall, and Zassenhaus~\cite{Ha, Wa1, Za} developed a theory to compute the spinor norm.
In this thesis, we will give an efficient algorithm to compute the 
spinor norm using Gaussian elimination algorithm. 
From Gaussian elimination algorithm, one can compute the spinor norm easily. 
Since the commutator subgroup of the orthogonal 
group is the kernel of the spinor norm restricted to the special orthogonal group, so the following theorem also gives 
a membership test for the commutator subgroup in the orthogonal group.
We prove the following result:
\begin{theorem}[Theorem~\ref{maintheorem2}]
	Let $g\in O(n,k)$ (here $n=2l$ or $n=2l+1$). Suppose Gaussian elimination reduces $g$ to 
	$\mathrm{diag}(\underbrace{1,\ldots,1,\lambda}_{l \; \text{or}\; l+1},\underbrace{1,\ldots,1,\lambda^{-1}}_{l})$, where
	$\lambda \in k^{\times}$. 
	Then the spinor norm $\Theta (g)=\lambda k^{\times 2}$.
\end{theorem}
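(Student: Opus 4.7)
The plan is to use the Gaussian elimination algorithm of Theorem~\ref{maintheorem1}, which writes $g = e_1 e_2 \cdots e_N \cdot d$, where each $e_i$ is an elementary matrix and $d$ is the diagonal matrix displayed in the hypothesis (with $\mu(g) = 1$ since $g \in O(n,k)$). Because the spinor norm is a homomorphism $\Theta : O(n,k) \to k^{\times}/k^{\times 2}$, we have $\Theta(g) = \prod_i \Theta(e_i) \cdot \Theta(d)$, so the proof reduces to two independent steps: verifying that every elementary matrix lies in $\ker \Theta$, and computing $\Theta(d)$ directly.

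For the first step, I would show that each Chevalley root element $x_\alpha(t)$ appearing in the elementary operations has trivial spinor norm. This can be done either by giving an explicit factorization $e_i = \sigma_u \sigma_v$ as a product of two reflections with $Q(u) Q(v) \in k^{\times 2}$ (typically with $Q(u) = Q(v)$, so that the product is automatically a square), or by identifying the $x_\alpha(t)$ with elements of the reduced orthogonal subgroup $\Omega(n,k) = \ker \Theta \cap SO(n,k)$, which is generated by commutators in the split case of rank at least two. Either route yields $\Theta(e_i) = 1$ for every $i$.

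The second step reduces to a two-dimensional hyperbolic plane. The matrix $d$ acts trivially off the hyperbolic plane $H$ spanned by $e_l$ and $e_{2l}$ (or the corresponding pair in the odd-dimensional case), and on $H$ it acts as $\mathrm{diag}(\lambda, \lambda^{-1})$ in a basis with $B(e_l, e_{2l}) = 1$. Taking $u = e_l + e_{2l}$ and $v = \lambda e_l + e_{2l}$, so that $Q(u) = 1$ and $Q(v) = \lambda$, a short calculation using the reflection formula $\sigma_w(x) = x - \tfrac{2B(x,w)}{B(w,w)} w$ gives $\sigma_v \sigma_u(e_l) = \lambda e_l$ and $\sigma_v \sigma_u(e_{2l}) = \lambda^{-1} e_{2l}$. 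Since both reflections fix $H^{\perp}$ pointwise when extended to $V$, we obtain $d = \sigma_v \sigma_u$ in $O(n,k)$, whence $\Theta(d) = Q(u) Q(v) \cdot k^{\times 2} = \lambda k^{\times 2}$.

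The main obstacle is the first step. The two-reflection factorization (or the inclusion of the Chevalley root elements in $\Omega(n,k)$) is not automatic and must be checked separately for each class of elementary matrix in both the $B_l$ and $D_l$ settings, consistently with the split-form conventions fixed in Section~\ref{elementarymatrices}. No individual case is deep, but the bookkeeping constitutes the substantive effort; once it is finished, the two-dimensional computation of $\Theta(d)$ closes out the proof.
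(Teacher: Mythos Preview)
Your overall strategy matches the paper's exactly: factor $g$ as a product of elementary matrices times the diagonal $d$, use multiplicativity of $\Theta$, and handle the two kinds of factors separately. Your treatment of the diagonal block is essentially identical to the paper's (the paper writes $d=\sigma_{e_l+e_{-l}}\sigma_{e_l+\lambda e_{-l}}$ and reads off $\Theta(d)=\lambda k^{\times 2}$).

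The genuine difference is in how you dispose of the elementary matrices. You propose either explicit two-reflection factorizations $x_\alpha(t)=\sigma_u\sigma_v$ with $Q(u)Q(v)\in k^{\times 2}$, or a commutator argument placing each $x_\alpha(t)$ in $\Omega(n,k)$; you correctly flag this as the bulk of the work. The paper bypasses all of that case analysis with a single observation: every $x_\alpha(t)$ is \emph{unipotent}, and Corollary~\ref{spinorunipotent} (which follows from Wall's form via Hahn's Proposition~\ref{hahn}) shows that any unipotent element of $O(n,k)$ has trivial spinor norm. This collapses your ``substantive effort'' to one line. The only elementary matrix that is not unipotent is the Weyl element $w_l$ in the even case, and the paper handles it separately by noting that $w_l$ is itself the reflection $\sigma_{e_l+e_{-l}}$ with $Q(e_l+e_{-l})=1$. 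Your sketch implicitly folds $w_l$ into the root elements $x_\alpha(t)$, which it is not; you would need to treat it on its own (though this is easy). In short, your route is correct but unnecessarily laborious; the unipotent shortcut via Wall's theory is what the paper actually uses and is worth knowing.
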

\noindent
\section*{What are the $z$-classes and why study them?}
\vskip 4mm
\noindent
Let $G$ be a group. The elements $x$ and $y \in G$ are said to be $z$-equivalent denoted as 
$x\sim_{z} y$ if their centralizers in $G$ are conjugate, i.e., $\mathcal{Z}_G(y)=g\mathcal{Z}_G(x)g^{-1}$ for some 
$g\in G$, where $\mathcal{Z}_G(x):=\{g\in G \mid gx=xg \}$ denotes centralizer of $x$ in $G$. 
Clearly $\sim_{z}$ is an equivalence relation on $G$. 
The equivalence classes with respect to this relation are called \textbf{$z$-classes}. 
It is easy to see that if two elements 
of a group $G$ are conjugate then their centralizers are conjugate, thus they are also $z$-equivalent. 
However, in general, the converse is not true. In fact, a group may have infinitely many conjugacy classes 
but finitely many $z$-classes (see Example~\ref{singlezclass}). 
In this thesis, we explore the $z$-classes for classical groups.  
In~\cite{St2}, R. Steinberg proved the following:
\begin{theorem}[Steinberg]\label{steinberg}
	Let $G$ be a reductive algebraic group defined over an algebraically closed 
	field $k$ of good characteristic, then the number of $z$-classes in $G$ is finite.
\end{theorem}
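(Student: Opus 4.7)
The plan is to reduce the problem to two well-known finiteness statements via the Jordan decomposition: finiteness of conjugacy classes of reductive subgroups of $G$ that arise as connected centralizers of semisimple elements, and finiteness of unipotent conjugacy classes in a reductive group of good characteristic.

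First, I would note that for any $g \in G$ the Jordan decomposition $g = g_s g_u$ (with $g_s$ semisimple, $g_u$ unipotent, and $g_s g_u = g_u g_s$) exists and is unique, and commutation with $g$ is the same as commutation with both $g_s$ and $g_u$. Therefore $\mathcal{Z}_G(g) = \mathcal{Z}_G(g_s) \cap \mathcal{Z}_G(g_u) = \mathcal{Z}_H(g_u)$, where $H := \mathcal{Z}_G(g_s)$. Since conjugation sends Jordan decompositions to Jordan decompositions, it therefore suffices to show that the pair $(H, g_u)$, taken up to simultaneous $G$-conjugacy, ranges over a finite set.

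Next I would handle the semisimple part. Because $G$ is reductive, any semisimple element lies in some maximal torus, and any two maximal tori are conjugate; so we may assume $g_s \in T$ for a fixed maximal torus $T$. The connected centralizer $H^\circ = \mathcal{Z}_G(g_s)^\circ$ is the reductive subgroup generated by $T$ together with the root subgroups $U_\alpha$ for those roots $\alpha$ in the root system $\Phi = \Phi(G,T)$ satisfying $\alpha(g_s) = 1$. The set of roots killed by $g_s$ is a subsystem of $\Phi$, and $\Phi$ has only finitely many subsystems; together with finitely many component groups $H/H^\circ$ coming from the action of $N_G(T)/T$, this gives only finitely many $G$-conjugacy classes of subgroups $H$ that can arise. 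This step is the cleanest input: it is essentially combinatorics of the root datum.

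For the unipotent part, for each fixed such reductive subgroup $H$, the element $g_u$ is unipotent in $H$, and here I would invoke the deep fact that in good characteristic a reductive group has only \emph{finitely many} unipotent conjugacy classes (Richardson, Lusztig, Bala--Carter). Combining, each $z$-class is determined by a conjugacy class of $H$ together with a unipotent class in $H$, and both sets are finite; moreover finitely many $H$ arise, giving the desired finiteness. I expect the main obstacle to be precisely the finiteness of unipotent classes in good characteristic, which is nontrivial and is the place where the hypothesis on the characteristic is genuinely used; every other step is either formal (Jordan decomposition, passage to $H$) or elementary root-system combinatorics.
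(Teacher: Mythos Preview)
The paper does not actually prove this theorem: it is stated in the introduction as a known result of Steinberg, with a citation to~\cite{St2}, and no proof is given. So there is no ``paper's own proof'' to compare against.

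That said, your outline is the standard argument and is essentially correct; it is also exactly the template the thesis later uses to prove its own finiteness results (see the proof of Theorem~\ref{maintheorem3}): reduce via Jordan decomposition to finiteness of semisimple centralizer types together with finiteness of unipotent classes inside each such centralizer. Two small points you might tighten. First, you need that good characteristic for $G$ is inherited by each $H=\mathcal Z_G(g_s)$, so that the Richardson--Lusztig finiteness of unipotent classes applies to $H$ as well; this is true for centralizers of semisimple elements (pseudo-Levi subgroups), but it deserves a sentence. Second, the component group $H/H^{\circ}$ need not be trivial in general, and your handling of it is a bit informal; the clean statement is that $H^{\circ}$ is determined by a closed subsystem of $\Phi$ and $H/H^{\circ}$ embeds in the stabilizer of that subsystem in the Weyl group, so only finitely many isomorphism types (hence $G$-conjugacy classes) of $H$ occur.
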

\begin{question}
	What can we say about the finiteness of $z$-classes for algebraic group $G$ defined over an arbitrary field $k$?
\end{question}
\noindent
To study this we assume that the field $k$ satisfies the following property:
\begin{definition}[Property FE]
	A perfect field $k$ of ${\rm char}\, k \neq 2$  has the property FE if $k$ has only finitely many field 
	extensions of any fixed finite degree. 
\end{definition}
Examples of such fields are, algebraically closed fields (for example, $\mathbb C$), real numbers $\mathbb R$, local 
fields (for example, $\mathbb Q_p$), and finite fields $\mathbb F_q$. 
From now on we assume that $k$ has property FE unless stated otherwise.
In~\cite{Si}, A. Singh studied $z$-classes for real compact groups of type $G_2$.
Ravi S. Kulkarni proved the following (see Theorem 7.4~\cite{Ku}):
\begin{theorem}[Kulkarni]
	Let $V$ be an $n$-dimensional vector space over a field $k$ with 
	the property FE, then the number of $z$-classes in $GL(n,k)$ is finite.
\end{theorem}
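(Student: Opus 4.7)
The plan is to parameterize $z$-classes in $GL(n,k)$ by a purely algebraic invariant attached to each element, and then apply property FE to bound the number of possible invariants. Given $g \in GL(n,k)$, view $V = k^n$ as a $k[T]$-module via $T \cdot v = g(v)$. By the structure theorem for finitely generated modules over the PID $k[T]$, this decomposes as
\[
V \;\cong\; \bigoplus_{p} V_p, \qquad V_p \;\cong\; \bigoplus_{i} k[T]/(p^{\lambda_{p,i}}),
\]
where $p$ runs over monic irreducible factors of the characteristic polynomial of $g$ and $\lambda_p = (\lambda_{p,1}, \lambda_{p,2}, \ldots)$ is a partition. Set $K_p := k[T]/(p)$. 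Because $k$ is perfect, each such $p$ is separable, and a Hensel-style lifting of the residue field (the Cohen structure theorem for finite-length local algebras with separable residue field) furnishes a $k$-algebra isomorphism $k[T]/(p^N) \cong K_p[s]/(s^N)$ for every $N \geq 1$. Define the invariant of $g$ to be the finite multiset $\mathbf{d}(g) := \{(K_p, \lambda_p)\}_p$, where each $K_p$ is taken up to $k$-algebra isomorphism.

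The centralizer of $g$ in $GL(V)$ is the unit group of the $k[T]$-endomorphism algebra,
\[
\mathcal{Z}_{GL(V)}(g) \;=\; \mathrm{End}_{k[T]}(V)^\times \;=\; \prod_{p} \mathrm{End}_{k[T]}(V_p)^\times,
\]
and the key claim is that its conjugacy class in $GL(V)$ depends only on $\mathbf{d}(g)$. To verify this, suppose $\mathbf{d}(g) = \mathbf{d}(h)$; then a bijection of the primes of $k[g]$ with those of $k[h]$ respecting the pairs $(K_p, \lambda_p)$, combined with the separability isomorphisms above, produces a $k$-algebra isomorphism $\psi : k[g] \to k[h]$. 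The $k[g]$-modules $V^{g}$ and $V^{h}$ (the second regarded through $\psi$) then share the same partition invariants and are therefore isomorphic via some $\phi \in GL(V)$. Writing $g' := \phi g \phi^{-1}$, the intertwining $\phi g = \psi(g)\phi$ gives $g' = \psi(g)$ and so $k[g'] = \psi(k[g]) = k[h]$, whence
\[
\phi \, \mathcal{Z}_{GL(V)}(g) \, \phi^{-1} \;=\; \mathcal{Z}_{GL(V)}(g') \;=\; \mathcal{Z}_{GL(V)}(h),
\]
proving $g \sim_z h$.

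For the finiteness count, a datum $\mathbf{d}$ is a finite multiset of pairs $(K, \lambda)$ constrained by $\sum [K : k] \cdot |\lambda| = n$. By property FE, the set of $k$-isomorphism classes of finite extensions of $k$ of each degree $d \leq n$ is finite, and the set of partitions of each integer $\leq n$ is finite. Therefore the set of possible data is finite, and by the previous step this is an upper bound for the number of $z$-classes in $GL(n,k)$.

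The main obstacle, I expect, is the passage from ``$k[g]$ and $k[h]$ are abstractly isomorphic as $k$-algebras'' to ``$\mathcal{Z}_{GL(V)}(g)$ and $\mathcal{Z}_{GL(V)}(h)$ are actually conjugate subgroups of $GL(V)$''. Because the centralizer algebras are in general not semisimple, the Skolem--Noether theorem cannot be applied off the shelf; the conjugator $\phi$ must instead be produced concretely as a $k[g]$-module isomorphism matching invariant factors prime-by-prime, and this is exactly where the separability reduction $k[T]/(p^N) \cong K_p[s]/(s^N)$ is used in an essential way.
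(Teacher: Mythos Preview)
Your argument is correct. The paper does not actually supply a proof of this statement; it is quoted in the introduction as Kulkarni's theorem (Theorem~7.4 of~\cite{Ku}) and used as motivation for the thesis's own results on unitary groups. So there is no ``paper's proof'' to compare against directly.

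That said, your approach is precisely the one underlying both Kulkarni's original argument and the paper's parallel treatment of the unitary case in Chapter~\ref{chapter8}: parameterize the $z$-class of $g$ by the module-theoretic data $\{(K_p,\lambda_p)\}_p$, use separability to identify $k[T]/(p^N)$ with $K_p[s]/(s^N)$, and invoke property~FE to bound the number of residue fields $K_p$ that can occur in each degree. Your handling of the step you flag as the main obstacle is clean: rather than appealing to Skolem--Noether, you transport the algebra isomorphism $\psi\colon k[g]\to k[h]$ to a module isomorphism $\phi$, observe that $g':=\phi g\phi^{-1}=\psi(g)$ generates $k[h]$ as a $k$-algebra (since $\psi$ is a surjective $k$-algebra map), and conclude $\mathcal Z(g')=\mathcal Z(h)$ because the two commutants in $\mathrm{End}(V)$ coincide. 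This is exactly the right mechanism, and it is the same idea the paper uses (in the form of Proposition~\ref{conjugacyunitary} and Theorem~\ref{semisimplezclass}) when it passes from ``same Springer--Steinberg data'' to ``conjugate centralizers'' in the unitary setting.
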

\noindent
K. Gongopadhyay and Ravi S. Kulkarni proved the following (Theorem 1.1~\cite{GK}):
\begin{theorem}[Gongopadhyay-Kulkarni]\label{gongopadhyay}
	Let $V$ be an $n$-dimensional vector space over a field 
	$k$ with the property FE, equipped with a non-degenerate symmetric or skew-symmetric bilinear form $B$. 
	Then, there are only finitely many $z$-classes in orthogonal groups $O(V,B)$ and symplectic groups $Sp(V,B)$.
\end{theorem}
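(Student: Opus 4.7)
The plan is to adapt Kulkarni's strategy for $GL(n,k)$: classify the $G$-conjugacy class of the centralizer $\mathcal{Z}_G(g)$ of an element $g \in G$ (where $G$ is $O(V,B)$ or $Sp(V,B)$) by a finite list of algebraic invariants attached to $g$, then invoke property FE of $k$ to bound each invariant. This will yield only finitely many conjugacy classes of centralizers, hence finitely many $z$-classes.

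First I would attach to each $g \in G$ its primary decomposition. Factor the characteristic polynomial as $f(t) = \prod_i p_i(t)^{m_i}$ with the $p_i$ distinct monic irreducibles, and decompose $V = \bigoplus_i V_{p_i}$ into generalized eigenspaces. Since $g \in G$ forces $g^{-1}$ to be the $B$-adjoint of $g$ (up to a sign reflecting whether $B$ is symmetric or skew-symmetric), the involution $p(t) \mapsto p^*(t) := t^{\deg p}p(0)^{-1}p(t^{-1})$ permutes the $p_i$. Partition the primary factors into self-dual factors $p_i = p_i^*$ and non-self-dual pairs $\{p_i, p_i^*\}$.

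Next I would establish the structural decomposition: $V$ splits $B$-orthogonally as $V = \bigoplus_j W_j$, where a self-dual $p_i$ contributes $W_j = V_{p_i}$ on which $B$ is non-degenerate, while a non-self-dual pair contributes a hyperbolic block $W_j = V_{p_i} \oplus V_{p_i^*}$. On each block the restriction of $\mathcal{Z}_G(g)$ is a well-understood classical group over the field $K_i = k[t]/(p_i(t))$: a general linear group over $K_i$ on a hyperbolic pair; a symplectic, orthogonal, or unitary group (over $K_i$ or its fixed subfield, depending on how $*$ acts on $K_i$ and on the ambient type of $B$) on a self-dual block. Refining by the partition $\lambda^{(i)}$ encoding the Jordan structure at $p_i$ then presents $\mathcal{Z}_G(g)$ as an explicit product of such classical groups indexed by the blocks. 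From this I would conclude that the $G$-conjugacy class of $\mathcal{Z}_G(g)$ is determined by the multiset of triples $\bigl(K_i,\lambda^{(i)},[B|_{W_j}]\bigr)$, where $K_i$ ranges over finite extensions of $k$ of degree at most $n$, $\lambda^{(i)}$ is a partition with $|\lambda^{(i)}| \cdot [K_i:k] \le n$, and $[B|_{W_j}]$ is an isometry class of non-degenerate symmetric or skew-symmetric form in the relevant dimension. Property FE supplies only finitely many $K_i$ of each bounded degree; there are only finitely many partitions of each bounded size; and in each bounded dimension there are only finitely many isometry classes of non-degenerate $\varepsilon$-symmetric forms over an FE field (immediate in the skew-symmetric case, and in the symmetric case reducible to finiteness of $k^{\times}/k^{\times 2}$ and of Brauer/Hasse-type invariants, which themselves follow from FE). Hence only finitely many tuples of invariants arise.

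The main obstacle I foresee is the converse half of the structural step: proving that two elements $g,h \in G$ producing the same multiset of block data are actually $z$-equivalent \emph{inside} $G$, not merely that they have abstractly isomorphic centralizers. For this one must exhibit a genuine isometry of $(V,B)$ that conjugates $\mathcal{Z}_G(g)$ to $\mathcal{Z}_G(h)$, to be assembled block-by-block via Witt's extension theorem after matching isomorphic blocks, together with a careful identification of each abstract centralizer factor with its embedding as a subgroup of $G$.
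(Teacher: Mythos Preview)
Your plan is essentially the approach the paper sketches (following Gongopadhyay--Kulkarni): primary decomposition of $V$ along self-dual factors and dual pairs of the minimal polynomial, identification of the centralizer block by block as a classical group over the residue field $K_i = k[t]/(p_i(t))$, and finiteness via property~FE bounding the extensions $K_i$, the partitions, and the isometry classes of the induced forms. The only organizational difference is that the paper first invokes the Jordan decomposition $T = T_sT_u$ and separately treats finiteness of semisimple $z$-classes and of unipotent $z$-classes inside each $\mathcal{Z}_G(T_s)$, whereas you record the field extension and the partition $\lambda^{(i)}$ simultaneously; the content is the same. One small correction: on a block with nontrivial Jordan structure the centralizer is not literally a product of classical groups but rather such a product extended by a unipotent radical---this does not disturb your finiteness count, since the radical too is determined up to conjugacy by the partition data, but you should phrase the structural step accordingly.
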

This result generalizes Steinberg's result mentioned above (Theorem~\ref{steinberg}). 
In this thesis, we extend this result 
to the unitary groups. We prove the following result:
\begin{theorem}[Theorem~\ref{maintheorem3}]
	Let $k$ be a perfect field of ${\rm char}\, k\neq 2$ with a non-trivial Galois automorphism of order $2$. 
	Let $V$ be a finite dimensional vector space over $k$ with a non-degenerate hermitian form $B$. 
	Suppose the fixed field $k_0$ has the property FE,  
	then the number of $z$-classes in the unitary group $U(V,B)$ is finite.
\end{theorem}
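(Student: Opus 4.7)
The plan is to follow the strategy of Gongopadhyay--Kulkarni (Theorem~\ref{gongopadhyay}) in the hermitian setting. First I would use the Jordan decomposition $g=su$ (with $s$ semisimple, $u$ unipotent, $su=us$ and both $s,u\in U(V,B)$) together with the identity $\mathcal{Z}_{U(V,B)}(g)=\mathcal{Z}_{\mathcal{Z}_{U(V,B)}(s)}(u)$ to reduce the problem to two sub-problems: (i) only finitely many $U(V,B)$-conjugacy classes of centralizers $\mathcal{Z}_{U(V,B)}(s)$ occur as $s$ ranges over semisimple elements; (ii) for each such centralizer $H$, only finitely many $H$-conjugacy classes of centralizers $\mathcal{Z}_{H}(u)$ occur as $u$ ranges over unipotents in $H$.

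For (i), let $\sigma$ denote the non-trivial element of $\mathrm{Gal}(k/k_{0})$ and attach to $f\in k[x]$ with $f(0)\neq 0$ its dual $f^{*}(x):=\sigma(f(0))^{-1}x^{\deg f}\sigma(f)(x^{-1})$. Because $s$ preserves $B$, the set of irreducible factors of the minimal polynomial of $s$ is stable under $f\mapsto f^{*}$. Grouping factors into self-dual singletons $\{f\}$ (with $f=f^{*}$) and pairs $\{f,f^{*}\}$ with $f\neq f^{*}$ produces an orthogonal decomposition of $V$ into $s$-stable pieces $V_{\{f\}}=\ker f(s)$ (non-degenerate hermitian) and $V_{\{f,f^{*}\}}=\ker f(s)\oplus\ker f^{*}(s)$ (hyperbolic, with the two summands as maximal isotropic subspaces). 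On $V_{\{f\}}$ the unitary structure transports to a hermitian form over the field $K_{f}=k[x]/(f)$ equipped with the involution coming from $f=f^{*}$, and the centralizer of $s$ there becomes the corresponding unitary group; on $V_{\{f,f^{*}\}}$ the centralizer becomes the general linear group of $\ker f(s)$ viewed as a $K_{f}$-vector space. Hence $\mathcal{Z}_{U(V,B)}(s)$ is a product of unitary and general linear groups over finite extensions of $k$.

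The conjugacy class of $\mathcal{Z}_{U(V,B)}(s)$ is therefore determined by a finite collection of invariants: the partition of $\dim V$ into block-dimensions, the list of extensions $K_{f}$ (viewed as $k$-algebras with involution in the self-dual case) appearing, and the isometry class of the hermitian form on each self-dual block. The partition data is combinatorial; the list of candidate $K_{f}$ is controlled by property FE of $k_{0}$, since $k/k_{0}$ is quadratic so every finite extension of $k$ is also a finite extension of $k_{0}$, and property FE bounds the available extensions of each degree. The delicate point, which I expect to be the main obstacle, is finiteness of isometry classes of hermitian forms on the self-dual blocks: these are classified by dimension together with a discriminant in $F^{\times}/N_{K_{f}/F}(K_{f}^{\times})$, where $F$ is the fixed field of the involution, plus, in some cases, a Hasse-type invariant. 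The examples of fields admitting property FE ($\bar k$, $\mathbb R$, local fields and finite fields) each yield finite norm quotients, and this has to be verified field by field (or abstracted to a general statement) in order to close (i) under the blanket hypothesis of property FE.

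For (ii) I would invoke the classification of unipotent conjugacy classes in products of unitary and general linear groups over finite extensions: each factor has only finitely many unipotent conjugacy classes, indexed by partitions decorated with type data, and the centralizer of a unipotent in such a factor is again a product of unitary and general linear groups over the same extensions already occurring. Thus, for fixed $H$ as above, only finitely many $H$-conjugacy classes of centralizers of unipotent elements appear. Combining (i) and (ii), and noting that isomorphism of unipotent centralizers across different semisimple classes only reduces the count further, yields finitely many $z$-classes in $U(V,B)$.
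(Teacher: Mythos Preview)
Your overall architecture matches the paper's exactly: Jordan decomposition $g=su$, the identity $\mathcal Z_{U(V,B)}(g)=\mathcal Z_{\mathcal Z_{U(V,B)}(s)}(u)$, then (i) finitely many conjugacy classes of semisimple centralizers, and (ii) finitely many unipotent $z$-classes inside each of them. Your description of $\mathcal Z_{U(V,B)}(s)$ as a product of unitary groups and general linear groups over finite extensions of $k$ is precisely Theorem~\ref{semisimplezclass}, and your treatment of (ii) is what the paper does via Corollary~\ref{zunipotent}.

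The genuine gap is the point you yourself flag as ``the main obstacle'': you do not actually establish, under the blanket FE hypothesis on $k_0$, that only finitely many equivalence classes of hermitian forms occur on each self-dual block. Proposing to verify this ``field by field'' over $\bar k,\mathbb R,\mathbb Q_p,\mathbb F_q$ is not a proof of the theorem as stated, and appealing to a discriminant-plus-Hasse classification does not work in general: over an arbitrary fixed field $F$ these are not a complete set of invariants (already for $\mathbb C/\mathbb R$ the discriminant sees only the parity of the negative index, not the full signature), and you have not argued that whatever the complete invariants are, they take only finitely many values under FE.

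The paper closes this gap without any case analysis, in the proof of Corollary~\ref{zsemisimple}, via Jacobson's theorem: two hermitian forms over a quadratic extension $K/F$ are equivalent if and only if the associated quadratic forms $Q(x)=B(x,x)$ over the fixed field $F$ are equivalent. Each fixed field $F$ arising from your self-dual blocks is a finite extension of $k_0$ and therefore inherits property FE; Lemma~\ref{kfinite} then gives $|F^{\times}/F^{\times 2}|<\infty$, and diagonalisation of quadratic forms shows there are at most $|F^{\times}/F^{\times 2}|^{r}$ quadratic forms of rank $r$ over $F$, hence finitely many hermitian forms of each rank over $K$. That single step finishes (i). Your norm-quotient approach can be salvaged along the same lines, since $N_{K/F}(a)=a\bar a=a^2$ for $a\in F$ forces $F^{\times 2}\subseteq N_{K/F}(K^{\times})$ and hence $|F^{\times}/N_{K/F}(K^{\times})|\leq |F^{\times}/F^{\times 2}|<\infty$; but you still need to argue via diagonalisation rather than via a putative complete list of invariants.
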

The FE property of the field is necessary for the above theorem.
For example, the field of rationals $\mathbb Q$ does not have property FE. We show that the above theorem 
is no longer true over $\mathbb Q$ (see Example~\ref{nonexample}).

If we look at character table of $SL(2,q)$ (for example see~\cite{B} and ~\cite{Pr}), 
we notice conjugacy classes and irreducible characters bunched together (see p. 404 in~\cite{Gr}). 
One observes a similar pattern in the work of Srinivasan~\cite{Sr} for $Sp(4,q)$. In~\cite{Gr}, 
Green studied the complex representations of $GL(n,q)$ where he introduced the function $t(n)$ for the 
`types of characters/classes' (towards the end of section 1 on page 407-408) 
which is same as the number of $z$-classes in $GL(n,q)$.

In Deligne-Lusztig theory, where one studies representation theory of finite groups of Lie type, 
$z$-classes of semisimple elements play an important role. 
In~\cite{Ca2} Carter and in~\cite{Hu2} Humphreys defined genus of 
an algebraic group $G$ defined over $k$. Two semisimple elements have same \emph{genus} if they are 
$z$-equivalent in $G(k)$. Thus understanding $z$-classes 
for finite groups of Lie type, especially semisimple $z$-classes, and their counting is of importance in 
representation theory (see~\cite{Fl, FG, Ca2, DM}). A. Bose, in~\cite{Bo}, calculated the genus number for 
simply connected simple algebraic groups over an algebraically closed field, and compact simple Lie groups.
In this thesis we prove the following: 
\begin{theorem}[Theorem~\ref{maintheorem4}]
	The number of $z$-classes in $U(n,q)$ is same as the number of $z$-classes in $GL(n,q)$ if $q>n$. 
	Thus, the number of $z$-classes for either group can be read off 
	by looking at the coefficients of the function 
	$\displaystyle\prod_{i=1}^{\infty} z(x^i)$, where $z(x)=\displaystyle\prod_{j=1}^{\infty}\frac{1}{(1-x^j)^{p(j)}}$ 
	and $p(j)$ is the number of partitions of $j$.
\end{theorem}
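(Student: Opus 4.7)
The plan is to show that under the hypothesis $q > n$, the $z$-classes in both $GL(n,q)$ and $U(n,q)$ are parameterized by the same combinatorial data---multisets of pairs $(e,\lambda)$ with $e\geq 1$, $\lambda$ a partition, and $\sum e|\lambda|=n$---and then to read off the generating function from this common indexing set.

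For $GL(n,q)$, a conjugacy class is specified by a function $f$ from monic irreducibles over $\mathbb{F}_q$ (other than $x$) to partitions with $\sum_p \deg(p)\,|f(p)|=n$. The centralizer decomposes as a direct product over $\{p : f(p)\neq\emptyset\}$, and the $GL(n,q)$-conjugacy class of each factor depends only on the pair $(\deg p,\, f(p))$. Hence two elements are $z$-equivalent iff the multisets $\{(\deg p, f(p)) : f(p)\neq\emptyset\}$ agree. For $q > n$, every degree $d\leq n$ admits at least $n$ monic irreducibles over $\mathbb{F}_q$ (by the standard count $\sim q^d/d$), so every admissible multiset is realized.

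For $U(n,q)$, working over $\mathbb{F}_{q^2}$ with the involution $p\mapsto\tilde p$ where $\tilde p(x)=x^{\deg p}\bar p(x^{-1})/\bar p(0)$, conjugacy classes are parameterized by functions on $\sigma$-orbits to partitions. A self-conjugate orbit $\{p\}$ forces $\deg p$ to be odd and contributes $\deg(p)\,|f(p)|$ to $n$ through a unitary-type centralizer piece (from the Hermitian form induced on the primary component $V_p$); a pair orbit $\{p,\tilde p\}$ contributes $2\deg(p)\,|f(p)|$ through a $GL$-type piece (from the hyperbolic Hermitian structure on $V_p\oplus V_{\tilde p}$). Introducing the \emph{effective degree} $e=\deg p$ in the self-conjugate case and $e=2\deg p$ in the pair case, odd $e$ corresponds uniquely to a unitary-type piece and even $e$ uniquely to a $GL$-type piece, so the $z$-class is captured by the multiset of pairs $(e,\lambda)$ with $\sum e|\lambda|=n$. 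A direct count shows that the norm-one subgroup of $\mathbb{F}_{q^{2d}}^{\times}$ supplies $\sim q^d/d$ self-conjugate irreducibles of each odd degree $d$, and there are $\sim q^{2d}/(2d)$ pair orbits of each degree $d$; for $q > n$ both counts exceed the maximum multiplicities $\lfloor n/d\rfloor$ and $\lfloor n/(2d)\rfloor$ respectively, so every admissible multiset is realized.

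With both groups parameterized by the same multisets, the number of $z$-classes equals the coefficient of $x^n$ in
\begin{equation*}
\prod_{e\geq 1}\prod_{\lambda} \frac{1}{1-x^{e|\lambda|}} = \prod_{e\geq 1}\prod_{j\geq 1} \frac{1}{(1-x^{ej})^{p(j)}} = \prod_{e\geq 1} z(x^e),
\end{equation*}
matching the stated formula. The main obstacle I expect is the detailed structural analysis of centralizers in $U(n,q)$ for non-semisimple elements---identifying each pair-orbit piece as $GL$-type over $\mathbb{F}_{q^{2\deg p}}$ via the duality between $V_p$ and $V_{\tilde p}$, identifying each self-conjugate piece as the unitary group of the Hermitian form induced on $V_p$ (noting that the involution of $\mathbb{F}_{q^2}$ extends nontrivially to $\mathbb{F}_{q^{2\deg p}}$ precisely when $\deg p$ is odd), and verifying that the two families yield non-$U(n,q)$-conjugate centralizer subgroups (distinguishable, for instance, by order). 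Once those structural facts are established, the bijection between the two parameterizations and the generating-function evaluation are routine.
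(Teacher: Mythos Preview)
Your proposal is correct and takes a genuinely different route from the paper's. The paper argues via Jordan decomposition: it first shows (in the lemma immediately preceding the theorem) that the number of \emph{semisimple} $z$-classes in $U(n,q)$ and $GL(n,q)$ agree when $q>n$, and that the number of \emph{unipotent} $z$-classes in each is $p(n)$; then, writing $\mathcal Z(g)=\mathcal Z_{\mathcal Z(g_s)}(g_u)$ and using that $\mathcal Z(g_s)$ is a product of smaller unitary and general linear groups (Theorem~\ref{semisimplezclass}), it expresses the total count as $\sum_{[s]_z}(\text{unipotent $z$-classes in }\mathcal Z(s))$ and concludes by induction on rank. You instead parameterize $z$-classes directly, in the spirit of Green's ``types'': a $z$-class in either group is a multiset of pairs $(e,\lambda)$ with $\sum e|\lambda|=n$, and the key observation---implicit in your odd/even dichotomy for the effective degree $e$---is precisely Ennola's result (Proposition~\ref{ennolaodd}) that self-$U$-reciprocal irreducibles have odd degree, so that odd $e$ corresponds to unitary-type pieces and even $e$ to $GL$-type pieces, giving a bijection with the $GL(n,q)$ types. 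Your approach yields the generating function more transparently and avoids the two-step semisimple/unipotent reduction; the paper's approach is more modular and fits the general pattern for reductive groups. The structural facts you flag as the ``main obstacle'' (identification of the centralizer pieces over $\mathbb F_{q^{2\deg p}}$ and their non-conjugacy) are exactly what the paper extracts from the Springer--Steinberg and Wall theory (Proposition~\ref{conjugacyunitary}, Proposition~\ref{conjugacycriterion}, and Lemma~3.3.8 of~\cite{BG}), so you may simply cite those.
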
  
Along the way, we also prove some counting results (see for example, Proposition~\ref{countzgln}, 
Proposition~\ref{countprod}, Theorem~\ref{countzun1}).

\noindent
\textbf{A chapter wise description:} 
A conscious effort is made to make this thesis self-contained and reader-friendly.
The results in Chapters $2$ to $5$ are all well-known. 
They are preliminary in nature, and almost all basic results are recalled 
in the first four chapters, which are used in this thesis. After covering the 
preliminaries in the first four chapters, we report on author's 
research work in the next four chapters. 
Finally, in the last chapter, we give some further research problems. 
That pretty much summarizes the thesis giving glimpses into the main results 
proved in the various chapters.
\chapter{Classical Groups}\label{chapter2}
This chapter is the most basic and at the same time 
most essential part of this thesis. 
In this chapter, we will discuss the groups that are popularly known as the classical groups, 
as they were named by Hermann Weyl. 
Let $k$ be a field. Let $V$ be an $n$-dimensional vector space over $k$. 
We denote the set of all invertible linear transformations of $V$ by $GL(V)$.
The set $GL(V)$ is a group under the multiplication defined by the composition of maps.
Let us fix a basis $\{e_1,\ldots,e_n \}$ of $V$. Then we can identify $GL(V)$ with $GL(n,k)=
\{g\in M(n,k)\mid \mathrm{det}\;(g)\neq 0\}$, the set of all $n \times n$ invertible matrices. 
This group is called the \emph{general linear group}.
All further groups discussed are subgroups of $GL(V)$. 
The \emph{special linear group} $SL(n,k):=\{g\in GL(n,k)\mid \mathrm{det}\;(g)=1\}$. 
In Weyl's words, ``each group stands in its own right and does not 
deserve to be looked upon merely as a subgroup of something else, be it even 
\emph{Her All-embracing Majesty} $GL(n)$''.
The exposition in this chapter is mostly based on the 
book by Larry C. Grove~\cite{Gv}. In Section 2.1 
we describe reductive algebraic groups. Section 2.2 
covers the basic definitions and some very basic 
properties of classical groups, especially for 
symplectic and orthogonal groups. Also in this section, we 
introduce the notion of the spinor norm. In the 
last section, we describe the unitary groups and some 
important examples, which will be useful later in this thesis. 
\section{Reductive Algebraic Groups}
There are several excellent references for this topic, 
Borel~\cite{Br}, Springer~\cite{Sp} and Humphreys~\cite{Hu1}, to name a few.
We fix a perfect field $k$ ($\mathrm{char}\,k \neq 2 $) for this section, 
and $\bar{k}$ denotes the algebraic closure of $k$.
An \emph{algebraic group} $G$ defined over $\bar{k}$ is a 
group as well as an affine variety over $\bar{k}$ such that the maps 
$\mu \colon G\times G \rightarrow G$, and $i \colon G \rightarrow G$ 
given by $\mu(g_1,g_2)=g_1g_2$, and $i(g)=g^{-1}$ are morphisms of 
varieties. An \emph{algebraic group} $G$ is defined over $k$, 
if the polynomials defining the underlying affine variety $G$ 
are defined over $k$, with the maps $\mu$ and $i$ defined over 
$k$, and the identity element $e$ is a $k$-rational point of $G$. 
We denote the $k$-rational points of $G$ by $G(k)$. 
Any algebraic group
$G$ is a closed subgroup of $GL(n,k)$ for some $n$. 
Hence algebraic groups are called \emph{linear algebraic groups}.

An element in $GL(n,k)$ is called \emph{semisimple} (respectively, \emph{unipotent}) 
if it is diagonalizable over $\bar{k}$ (respectively, if all its eigenvalues are equal 
to $1$). We have $G\hookrightarrow GL(n,k)$. An element $g\in G$ is said to be \emph{semisimple} 
(respectively, \emph{unipotent}) if the image of $g$, under the above inclusion, is 
semisimple (respectively, unipotent) in $GL(n,k)$. 
An algebraic group $G$ is said to be \emph{unipotent} if all its elements 
are unipotent. The \emph{radical} of an algebraic group $G$ over $k$ is defined 
to be the largest closed, connected, solvable, normal subgroup of $G$, denoted by $R(G)$.
We call $G$ to be a \emph{semisimple} algebraic group if $R(G)=\{e\}$. 
The \emph{unipotent radical} of $G$ is defined to be the largest, closed, connected, unipotent, normal 
subgroup of $G$ and denoted by $R_u(G)$.
We call a connected group $G$ to be \emph{reductive} if $R_u(G)=\{e\}$. 
For example, the group $GL(n,k)$ is a reductive group, whereas $SL(n,k)$ is 
a semisimple group. A semisimple algebraic group is always a reductive group.
In next section, we see more examples of algebraic groups, namely, classical groups.
\subsection{Jordan decomposition}
Recall that an element $g\in GL(n,k)$ can be written as $g=g_sg_u=g_ug_s$, in a unique way,
where $g_s \in GL(n,k)$ is semisimple, and $g_u \in GL(n,k)$ is unipotent. 
This decomposition is called the \emph{Jordan decomposition} for invertible matrices. We have the following 
Jordan decomposition in linear algebraic groups. 
We need the following (Theorem 2.4.8~\cite{Sp}), 
\begin{theorem}[Jordan decomposition]\label{jordandecomposition}
	Let $G$ be a linear algebraic group defined over a perfect field $k$ and let $g \in G$. 
	Then there exist unique elements $g_s, g_u \in G$ such that 
	$g=g_sg_u=g_ug_s$. Furthermore, if $\phi \colon G\rightarrow H$ is a 
	homomorphism of linear algebraic groups, then $\phi(g_s)=\phi(g)_s$ and 
	$\phi(g_u)=\phi(g)_u$.
\end{theorem}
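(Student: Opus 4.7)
The plan is to reduce the statement to the classical Jordan decomposition for matrices by embedding $G$ as a closed subgroup of some $GL(n,k)$, and then use functoriality to handle the homomorphism statement. The delicate part is showing that the semisimple and unipotent parts produced in the ambient $GL(n,k)$ actually lie in $G$.

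First I would fix a closed embedding $\iota \colon G \hookrightarrow GL(n,\bar{k})$, and write the abstract Jordan decomposition $\iota(g) = s \cdot u = u \cdot s$ in $GL(n,\bar{k})$ with $s$ semisimple and $u$ unipotent. The content of the theorem is then: (i) $s$ and $u$ lie in $\iota(G)$, (ii) the resulting elements $g_s, g_u \in G$ are independent of the choice of embedding, and (iii) the construction commutes with morphisms of algebraic groups. For step (i) the standard route is to look at the right (or left) translation action of $g$ on the coordinate ring $\bar{k}[G]$: although $\bar{k}[G]$ is usually infinite-dimensional, it is a union of finite-dimensional $g$-stable subspaces, so one can speak of the semisimple and unipotent parts $\rho(g)_s, \rho(g)_u$ of the translation operator $\rho(g)$ locally on each such subspace. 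A key lemma (proved by direct computation on polynomial identities) shows that $\rho(g)_s$ and $\rho(g)_u$ are again algebra automorphisms of $\bar{k}[G]$ commuting with right translation, hence they are themselves translations by certain elements of $G$; these elements are precisely $g_s$ and $g_u$. Comparing with the matrix Jordan decomposition under the embedding gives $\iota(g_s) = s$ and $\iota(g_u) = u$, so $g_s, g_u \in G$.

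For uniqueness, suppose $g = g_s' g_u' = g_u' g_s'$ is another such factorization with $g_s'$ semisimple and $g_u'$ unipotent in $G$. Applying $\iota$ gives two Jordan decompositions of $\iota(g)$ in $GL(n,\bar{k})$, and uniqueness there forces $\iota(g_s') = s$ and $\iota(g_u') = u$; since $\iota$ is injective, $g_s' = g_s$ and $g_u' = g_u$. For the functoriality statement, let $\phi \colon G \to H$ be a morphism and pick closed embeddings $\iota_G$ and $\iota_H$. The induced comorphism $\phi^* \colon \bar{k}[H] \to \bar{k}[G]$ intertwines the translation operators by $g$ and $\phi(g)$, and one checks that it carries the semisimple and unipotent parts to the semisimple and unipotent parts. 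Translating this back through the two embeddings gives $\phi(g_s) = \phi(g)_s$ and $\phi(g_u) = \phi(g)_u$.

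The main obstacle will be step (i) — the assertion that $\rho(g)_s$ and $\rho(g)_u$ are algebra automorphisms of $\bar{k}[G]$, i.e., preserve the multiplicative structure. This rests on a general fact about locally finite, locally semisimple operators on a commutative algebra: if $T$ is such an operator which is also a derivation or an algebra map, then its Jordan components inherit the same property. Verifying this requires writing out the action on a product $fh \in \bar{k}[G]$ in terms of generalized eigenspace decompositions and invoking the hypothesis that $\mathrm{char}\, k \neq 2$ (or more generally that $k$ is perfect, so that the semisimple/unipotent decomposition descends to $k$-rational points). Once this point is granted, the rest of the proof is essentially bookkeeping through the embedding.
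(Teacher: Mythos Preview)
The paper does not give its own proof of this theorem: it is quoted as Theorem 2.4.8 from Springer~\cite{Sp} and used as a black box. Your sketch is essentially the standard textbook argument (the one found in Springer and Borel), so there is nothing to compare against and your approach is fine; the only quibble is that the perfectness of $k$ is what ensures $g_s,g_u$ are $k$-rational (via Galois-stability of the matrix Jordan decomposition), not the condition $\mathrm{char}\,k\neq 2$ that you parenthetically mention.
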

The elements $g_s$ and $g_u$ are called the \emph{semisimple part} and 
the \emph{unipotent part} of $g$ respectively.
\section{Symplectic and Orthogonal Groups}\label{sec2.1}
In this section, we follow Larry C. Grove~\cite{Gv}, and 
define two important classes of groups, which preserve certain bilinear form.
Let $k$ be a field of ${\rm char}\,k \neq 2$. Let $V$ be an $n$-dimensional vector space over $k$.
\begin{definition}
	A \emph{bilinear form} on $V$ is a function $B: V \times V \rightarrow k$ 
	satisfying 
	\begin{enumerate}
		\item $B(u+v,w)=B(u,w)+B(v,w)$
		\item $B(u,v+w)=B(u,v)+B(u,w)$
		\item $B(au,v)=aB(u,v)=B(u,av)$
	\end{enumerate}
	for all $u,v,w \in V $ and all $a\in k$. 
\end{definition}
If $B$ is a bilinear form on $V$ and $\{e_1,e_2,\ldots,e_n\}$ is a basis for $V$, set $b_{ij}:=B(e_i,e_j)$ for 
all $1\leq i,j\leq n$. Then $\beta :=(b_{ij})$ is called the matrix of $B$ relative to $\{e_1,e_2,\ldots,e_n\}$. 
If $u,w \in V$, write $u=\sum_{i}a_ie_i$, and $w=\sum_j b_je_j$, so that $u$ and $w$ are represented by column vectors 
$\textbf{u}={}^{t}(a_1 \cdots a_n)$ and $\textbf{w}={}^{t}(b_1 \cdots b_n)$. Then $B(u,w)={}^{t}{\textbf{u}}
\beta \textbf{w}$ for all $u,w \in V$, where $\textbf{u}, \textbf{w}$ 
are the column vectors with the entries being the 
components of $u,w$ with respect to the given basis $\{e_1,e_2,\ldots,e_n\}$ of $V$. If 
$\{f_1,f_2,\ldots,f_n\}$ is another basis for $V$, write $f_j=\sum_i p_{ij}e_i $, where $p_{ij} \in k$, for all $j=1,2,
\ldots,n$. 
Then $B(f_i,f_j)=\displaystyle \sum_{k,l}p_{ki}B(e_k,e_l)p_{lj}
=\displaystyle \sum_{k,l}p_{ki}b_{kl}p_{lj}$, which is the $(i,j)$-entry of ${}^{t}{P}\beta P$, 
where $P=(p_{ij})\in GL(n,k)$, is the change of basis matrix. 
We say two $n\times n$ matrices $M,N$ are \emph{congruent} 
if $N={}^{t}PMP$, 
for some $P\in GL(n,k)$. So $\mathrm{det}N=\mathrm{det}P \mathrm{det}M \mathrm{det}P$.
Define $k^{\times 2}:=\{a^2 \mid a\in k^{\times}\}$. 
Then $k^{\times 2}$ is a subgroup of $k^{\times}$.
\begin{notation}
	A vector space $V$ having a bilinear form $B$ will be denoted by $(V,B)$.
\end{notation}
\begin{definition}
	Define the \emph{discriminant} of $(V,B)$ to be 
	\[dV:= \left\{
	\begin{array}{ll}
	0 & \text{if} \; \mathrm{det}\beta=0, \\
	(\mathrm{det}\beta) k^{\times 2} & \text{otherwise}.\
	\end{array}\right. \] 
\end{definition}
\noindent
Observe that the discriminant $dV \in k^{\times}/(k^{\times 2})$, is independent of the choice of basis. 
\begin{definition}
	The bilinear form $(V,B)$ is said to be \emph{non-degenerate} if $dV \neq 0$.
\end{definition}
\begin{definition}
	A subspace $W$ of $V$ is said to be \emph{non-degenerate} if $\mathrm{rad}\; W:=W\cap W^{\perp}=\{0\}$, where
	$W^{\perp}=\{v \in V \mid B(w,v)=0 \; \forall w \in W\}$.
\end{definition}
\noindent
Unless otherwise specified, we assume from now on that $(V,B)$ is a non-degenerate bilinear form.
\begin{definition}
	Two bilinear forms $(V_1,B_1)$ and $(V_2,B_2)$ are said to be \emph{equivalent}, 
	denoted by $(V_1, B_1) \approx (V_2, B_2)$, if there 
	exists a vector space isomorphism $\sigma :V_1 \rightarrow V_2$ such that $B_2(\sigma u, \sigma v)=B_1(u,v)$ for all 
	$u,v \in V_1$. 
\end{definition}
\begin{remark}
	We call the above $\sigma$ an \emph{isometry} with respect to $B_1$ and $B_2$.
\end{remark}
\subsection{Symplectic groups}
\begin{definition}
	A bilinear form $B$ is said to be \emph{skew-symmetric} or \emph{alternating} if 
	$ B(u,v)=-B(v,u)$
	for all $u,v \in V$. \\
	Alternatively, this definition is equivalent to $B(u,u)=0$ for all $u \in V$.
	In matrix terminology, the bilinear form $B$ is skew-symmetric if and only if any representing matrix $\beta$ is 
	skew-symmetric, i.e., $\tr \beta=- \beta $.
\end{definition}
\noindent
For the remainder of this section $(V,B)$ will denote a non-degenerate
alternating bilinear form.
\begin{definition}
	A pair $\{u,v\}$ of vectors is said to be a \emph{hyperbolic pair} if $B(u,u)=0=B(v,v)$ and $B(u,v)=1=-B(v,u)$.
\end{definition}
The restriction of $B$ to the subspace generated by $u,v$ has representing matrix 
$\begin{pmatrix}
0 & 1 \\ -1 & 0
\end{pmatrix}
$ relative to $\{u,v \}$.
\begin{proposition}[Theorem 2.10~\cite{Gv}]
	If $B$ is a non-degenerate alternating bilinear form on $V$, then there exists a basis 
	$\{e_1,\ldots,e_l,e_{-1},\ldots,e_{-l}\}$ of $V$ relative to which the representing matrix has the following form
	$\beta=\begin{pmatrix}
	0 & I_{l} \\-I_{l} & 0
	\end{pmatrix}
	$, where $\{e_i,e_{-i}\}$ is a hyperbolic pair for all $i=1,2,\ldots, l$.
\end{proposition}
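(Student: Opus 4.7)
The plan is to proceed by induction on $\dim V$, building up the basis one hyperbolic pair at a time, and then reorder the basis at the end to obtain the stated block form. Since $B$ is alternating, $B(u,u)=0$ for all $u$; combined with non-degeneracy this will force $\dim V$ to be even.

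First I would construct the initial hyperbolic pair. Pick any nonzero $e_1 \in V$. By non-degeneracy of $B$ there exists some $w\in V$ with $c := B(e_1,w)\neq 0$; set $e_{-1} := c^{-1}w$. Then $B(e_1,e_{-1})=1$, and since $B$ is alternating $B(e_1,e_1)=B(e_{-1},e_{-1})=0$, so $\{e_1,e_{-1}\}$ is a hyperbolic pair. Let $W = \mathrm{span}\{e_1,e_{-1}\}$; the restricted matrix $\bigl(\begin{smallmatrix}0&1\\-1&0\end{smallmatrix}\bigr)$ is invertible, so $W$ is non-degenerate.

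Next I would show $V = W \oplus W^{\perp}$. Given $v \in V$, the element $w := B(v,e_{-1})e_1 - B(v,e_1)e_{-1} \in W$ satisfies $B(v-w,e_1)=0=B(v-w,e_{-1})$, so $v - w \in W^{\perp}$; this gives $V = W + W^{\perp}$, while $W\cap W^{\perp}=\mathrm{rad}\,W = \{0\}$ because $W$ is non-degenerate. It follows that the restriction $B|_{W^{\perp}}$ is again alternating and non-degenerate: if some $u\in W^{\perp}$ were orthogonal to all of $W^{\perp}$, then using $V = W \oplus W^{\perp}$ it would be in the radical of $B$ on $V$, contradicting non-degeneracy. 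By induction on $\dim V$ (base case $\dim V = 0$, where there is nothing to prove), there exists a basis $\{e_2,\dots,e_l,e_{-2},\dots,e_{-l}\}$ of $W^{\perp}$ in which $B|_{W^{\perp}}$ has the desired block form.

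Concatenating, $\{e_1,\dots,e_l,e_{-1},\dots,e_{-l}\}$ is a basis of $V$. To verify the matrix shape, I would compute $\beta_{ij} = B(e_i,e_j)$ block by block: $B(e_i,e_j)=0$ and $B(e_{-i},e_{-j})=0$ for all $i,j$ (both pairs are hyperbolic or the arguments come from orthogonal subspaces), while $B(e_i,e_{-j}) = \delta_{ij}$ by construction and alternation gives $B(e_{-i},e_j) = -\delta_{ij}$. Assembling these blocks yields precisely $\beta = \bigl(\begin{smallmatrix} 0 & I_l \\ -I_l & 0 \end{smallmatrix}\bigr)$.

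The main subtlety, rather than a deep obstacle, lies in two bookkeeping points: making sure the orthogonal complement remains non-degenerate (so the induction actually applies) and organizing the output so that the hyperbolic pairs produced in sequence $(e_1,e_{-1}), (e_2,e_{-2}),\dots$ are reindexed into the requested block ordering $e_1,\dots,e_l$ followed by $e_{-1},\dots,e_{-l}$. Both are handled by the decomposition $V = W\oplus W^{\perp}$ and a relabeling at the end.
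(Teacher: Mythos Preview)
Your proof is correct and is precisely the standard inductive construction of a symplectic basis. The paper itself does not supply a proof of this proposition; it merely quotes the result as Theorem~2.10 of Grove~\cite{Gv}, and your argument is essentially the one found there.
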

\begin{definition}[Symplectic group]\label{defsymplecticgroup}
	The \emph{symplectic group} is denoted by 
	$Sp(V,B):=\{T \in GL(V) \mid B(Tu,Tv)=B(u,v) \; \forall u,v \in V \}$.
\end{definition}
\noindent
In matrix terminology, the symplectic group is defined as:
$$ Sp(n,k)=Sp(2l,k):=\{ g \in GL(n,k) \mid \tr g\beta g=\beta \}, $$
where $\beta=\begin{pmatrix}
0 & I_l \\ -I_l & 0
\end{pmatrix}
$.
\begin{definition}[Symplectic similitude group]
	The \emph{symplectic similitude group} with respect to the matrix $\beta$ 
	as in Definition~\ref{defsymplecticgroup}, is defined by
	$GSp(n,k)=\{g\in GL(n,k) \mid \tr g \beta g=\mu(g)\beta, \text{for some}\; \mu(g) \in k^{\times}\}$, 
	where $\mu : GSp(n,k) \rightarrow k^{\times}; g\mapsto \mu(g)$,  
	is a group homomorphism with $\mathrm{ker}\;\mu=Sp(n,k)$; 
	$\mu$ is called a \emph{similitude character}.
\end{definition}
\subsection{Orthogonal groups}
\begin{definition}
	A bilinear form $B$ is said to be \emph{symmetric} if 
	$ B(u,v)=B(v,u)$
	for all $u,v \in V$.
	In matrix terminology, the bilinear form $B$ is symmetric 
	if and only if any representing matrix $\beta$ is symmetric, i.e.,
	$\tr \beta=\beta $.
\end{definition}
\begin{definition}
	If $B$ is a symmetric bilinear form on $V$, then  
	$Q:V \rightarrow k$ defined by $Q(v)=\frac{B(v,v)}{2}$, is called a \emph{quadratic form}
	associated to $B$.
\end{definition}
\noindent
Thus $B(u,v)=Q(u+v)-Q(u)-Q(v)$ for all $u,v \in V$. So the bilinear form $B$ is completely determined by the 
quadratic form $Q$ and vice-versa.

\noindent
For the remainder of this section $(V,B)$ will denote a non-degenerate symmetric bilinear form.
\begin{definition}[Orthogonal group]
	The \emph{orthogonal group} is defined by 
	\begin{align*}
	O(V,B):&=\{T \in GL(V) \mid B(Tu,Tv)=B(u,v) \; \forall u,v \in V \} \\
	&=\{T \in GL(V) \mid Q(Tv)=Q(v) \; \forall v \in V \}.
	\end{align*}         
\end{definition}
\noindent
In matrix terminology, the orthogonal group is defined as:
$$ O(n,k):=\{ g \in GL(n,k) \mid \tr g\beta g=\beta \}.$$
\begin{remark}
	Equivalent forms give conjugate groups in $GL(n,k)$, i.e., if  
	$\beta_2=\tr g\beta_1 g$ for some $g\in GL(n,k)$ then  
	$O(V_2,\beta_2)=g^{-1}O(V_1,\beta_1)g$.
\end{remark}
\begin{definition}
	A vector $v\in V$ is called \emph{isotropic} if $Q(v)=0$, and \emph{anisotropic} 
	if $Q(v) \neq 0$. A vector space $V$ is called 
	\emph{isotropic} if $Q(v)=0$ for some $0 \neq v\in V$ and 
	$V$ is called \emph{totally isotropic} if $Q(v)=0$ for all $v\in V$.
\end{definition}
\begin{definition}
	The dimension of a maximal totally isotropic subspace of a quadratic space is called the \emph{Witt index}. 
\end{definition}
Let $u\in V$ be any non-zero anisotropic vector, and define a linear transformation $\sigma_u$ via 
$$ \sigma_u(v):=v-\frac{2B(u,v)}{B(u,u)}u$$ for all $v\in V$. Then $\sigma_u \in O(V,B)$. We call $\sigma_u$ is the 
\emph{reflection} in the hyperplane orthogonal to $u$. 
The following theorem is well-known, that the orthogonal group is generated by reflections. 
We have (see Theorem 6.6~\cite{Gv}):
\begin{theorem}[E. Cartan-Dieudonn\'{e}]\label{cartandieudonne}
	If $V$ is an $n$-dimensional vector space, equipped with a non-degenerate symmetric bilinear form $B$, 
	then every element of $O(V,B)$ is a product of at most $n$ reflections.
\end{theorem}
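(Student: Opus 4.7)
My plan is to prove the theorem by induction on $n = \dim V$. The base case $n = 1$ is easy: $O(V, B) = \{\pm 1_V\}$, the identity is the empty product of reflections, and $-1_V$ is itself $\sigma_v$ for any anisotropic $v$ (which exists because $B$ is non-degenerate and $\mathrm{char}\; k \neq 2$).

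For the inductive step with $T \in O(V, B)$, I would split into two cases. \emph{Case 1:} $T$ fixes some anisotropic vector $v$. Then $T$ preserves the hyperplane $v^{\perp}$, on which $B$ restricts to a non-degenerate form, so $T|_{v^{\perp}} \in O(v^{\perp}, B|_{v^{\perp}})$, a space of dimension $n-1$. By induction $T|_{v^{\perp}}$ is a product of at most $n-1$ reflections of $v^{\perp}$; each extends to a reflection of $V$ fixing $v$, so $T$ is a product of at most $n - 1 \leq n$ reflections.

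\emph{Case 2:} $T$ fixes no anisotropic vector. The key claim is that there nevertheless exists an anisotropic $v$ such that $v - Tv$ is also anisotropic. Granting it, direct computation using $B(Tv, Tv) = B(v, v)$ yields $B(v - Tv, v - Tv) = -2[B(v, Tv) - B(v, v)]$ and $B(v - Tv, Tv) = B(v, Tv) - B(v, v)$, so
\[
\sigma_{v - Tv}(Tv) \;=\; Tv - (-1)(v - Tv) \;=\; v.
\]
Hence $\sigma_{v - Tv}\, T$ fixes the anisotropic vector $v$; Case 1 applies to express it as at most $n-1$ reflections, and $T = \sigma_{v - Tv} \cdot (\sigma_{v - Tv} T)$ is then a product of at most $n$ reflections.

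The hard part will be proving the claim in Case 2, because the naive alternative (when $v - Tv$ is isotropic one is tempted to use $v + Tv$, but this costs two reflections and gives a bound of $n+1$). My approach is by contradiction: suppose every anisotropic $v$ satisfies $B(v - Tv, v - Tv) = 0$, equivalently $B(v, v) = B(v, Tv)$ on the anisotropic locus. Since the anisotropic locus is Zariski-dense in $V$ over infinite $k$, and over finite $k$ (necessarily of odd order, so $|k| \geq 3$) the quadratic polynomial $t \mapsto B(v + tw, (v + tw) - T(v + tw))$ restricted to any affine line vanishes at enough values of $t$ to be identically zero, the identity $B(v, v - Tv) = 0$ extends to all of $V$. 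Polarising shows that $S := T - I$ is $B$-skew, and then the orthogonality relation $B(u + Su, v + Sv) = B(u, v)$ forces $B(Su, Sv) = 0$ for all $u, v \in V$; non-degeneracy of $B$ gives $S = 0$, i.e., $T = I$, which contradicts the hypothesis of Case 2. This establishes the claim and completes the induction.
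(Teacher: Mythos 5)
The paper itself does not prove this theorem (it only quotes it from Grove, Theorem 6.6), so I can only assess your argument on its own terms — and it has a genuine gap in Case 2. The final step of your contradiction argument, ``$B(Su,Sv)=0$ for all $u,v$; non-degeneracy of $B$ gives $S=0$,'' is incorrect: non-degeneracy says a vector orthogonal to \emph{all of $V$} is zero, whereas $B(Su,Sv)=0$ for all $u,v$ only says that $\mathrm{Im}(S)$ is orthogonal to itself, i.e.\ totally isotropic. That is entirely possible with $S\neq 0$, and in fact your ``key claim'' is false. Take $V$ of dimension $4$ with hyperbolic basis $e_1,e_2,f_1,f_2$ ($B(e_i,f_j)=\delta_{ij}$, $B(e_i,e_j)=B(f_i,f_j)=0$) and define $S$ by $Se_1=Se_2=0$, $Sf_1=e_2$, $Sf_2=-e_1$. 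One checks $B(Su,v)=-B(u,Sv)$ and $B(Su,Sv)=0$, so $T=I+S$ lies in $O(V,B)$ and $T\neq I$; its fixed space is the totally isotropic plane $\langle e_1,e_2\rangle$, so $T$ fixes no anisotropic vector, and $v-Tv=-Sv$ is isotropic for \emph{every} $v$. Thus there is no anisotropic $v$ with $v-Tv$ anisotropic, and your reduction to Case 1 cannot be carried out for this $T$.

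This exceptional situation — $(1_V-T)V$ totally isotropic — is precisely the delicate point of the classical proofs (Artin, Dieudonn\'e, Grove). They handle it by a separate argument: one shows that in this case $n$ is even and $\det T=1$, then composes $T$ with a single reflection $\sigma$ to escape the exceptional case, expresses $\sigma T$ as a product of at most $n$ reflections by the cases already treated, and uses the determinant parity (a product of $k$ reflections has determinant $(-1)^k$) to conclude that $\sigma T$ in fact needs at most $n-1$ of them, whence $T$ needs at most $n$. Your induction (base case, Case 1, and the reflection $\sigma_{v-Tv}$ computation in Case 2 when such a $v$ exists) is all correct; what is missing is this extra idea for the case where no such $v$ exists.
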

\begin{definition}[Orthogonal similitude group]
	The \emph{orthogonal similitude group} with respect to an invertible symmetric matrix $\beta$ is defined by
	$GO(n,k)=\{g\in GL(n,k) \mid \tr g \beta g=\mu(g)\beta, \text{for some}\; \mu(g) \in k^{\times} \}$, 
	where $\mu : GO(n,k)\rightarrow k^{\times}; g\mapsto \mu(g)$, 
	is a group homomorphism with $\mathrm{ker}\;\mu=O(n,k)$; $\mu$ is called a \emph{similitude character}.
\end{definition}
\begin{lemma}\label{kfinite}
	Let $k$ have the property FE, then $k^{\times}/k^{\times 2}$ is finite.
\end{lemma}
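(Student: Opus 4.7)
The plan is to exploit property FE by parametrizing nontrivial square classes via the quadratic extensions they produce. Fix an algebraic closure $\bar{k}$ of $k$ and let $\mathcal{Q}$ denote the set of subfields $L \subset \bar{k}$ with $[L : k] = 2$. For each $a \in k^{\times}$ with $a \notin k^{\times 2}$, the field $k(\sqrt{a})$ belongs to $\mathcal{Q}$. I would first check that the assignment
$$\Phi \colon (k^{\times}/k^{\times 2}) \setminus \{k^{\times 2}\} \longrightarrow \mathcal{Q}, \qquad a\, k^{\times 2} \longmapsto k(\sqrt{a})$$
is well-defined: if $b = a c^2$ with $c \in k^{\times}$, then the two square roots of $b$ in $\bar{k}$ are $\pm c \sqrt{a}$, both generating the same subfield $k(\sqrt{a})$.

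The main step is injectivity of $\Phi$. Suppose $a, b \in k^{\times}$ are non-squares with $k(\sqrt{a}) = k(\sqrt{b}) =: L$. Since $\{1, \sqrt{a}\}$ is a $k$-basis for $L$, write $\sqrt{b} = x + y\sqrt{a}$ with $x, y \in k$. Squaring yields $b = x^2 + y^2 a + 2xy\sqrt{a}$, so $b - x^2 - y^2 a = 2xy\sqrt{a}$; the left-hand side lies in $k$ and $\sqrt{a} \notin k$, which forces $2xy = 0$, and since $\mathrm{char}\, k \neq 2$ we conclude $xy = 0$. If $y = 0$ then $\sqrt{b} = x \in k$, contradicting $b \notin k^{\times 2}$; hence $x = 0$ and $b = y^2 a$, showing $a\, k^{\times 2} = b\, k^{\times 2}$.

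Finally, property FE ensures that there are only finitely many field extensions of $k$ of degree $2$, so $\mathcal{Q}$ is finite, and the injectivity of $\Phi$ makes $(k^{\times}/k^{\times 2}) \setminus \{k^{\times 2}\}$ finite; adjoining back the trivial class shows $k^{\times}/k^{\times 2}$ is finite. I do not foresee a serious obstacle: the only delicate point is invoking $\mathrm{char}\, k \neq 2$ to pass from $2xy = 0$ to $xy = 0$, which is part of the standing hypothesis built into the definition of property FE.
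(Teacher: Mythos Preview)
Your proof is correct and follows essentially the same approach as the paper's: both arguments associate to each nontrivial square class the quadratic extension $k(\sqrt{a})$ and use property FE to bound the number of such extensions. Your version is in fact more complete, since you carefully verify the injectivity step (distinct square classes yield distinct quadratic extensions) that the paper's proof simply asserts.
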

\begin{proof}
	If possible suppose that $k^{\times}/k^{\times 2}$ is infinite, then 
	there are infinitely many $a_{i} \in k^{\times}$ not in $k^{\times 2}$. 
	So $k[\sqrt{a_i}]\ncong k[\sqrt{a_j}]$ as field. Hence there are infinitely 
	many field extentions of degree $2$, which contradicts the fact that $k$ has the 
	property FE.
\end{proof}

\begin{example}\label{splitb}
	\begin{enumerate}
		\item Let $V$ be an $n$-dimensional vector space over $\mathbb{C}$ 
		equipped with a non-degenerate symmetric bilinear form $B$. It is known 
		that any two non-degenerate symmetric bilinear forms on $V$ are equivalent, 
		i.e., there is a basis for $V$ relative to which $\beta=I_n$. 
		So the corresponding orthogonal group is denoted by 
		$$O(n, \mathbb{C})=\{g \in GL(n, \mathbb{C})\mid \tra g g=I_n\}.$$
		\item Let $V$ be an $n$-dimensional vector space over $\mathbb{R}$ 
		equipped with a non-degenerate symmetric bilinear form $B$. 
		In this situation, non-degenerate symmetric bilinear forms are 
		classified by their signature, 
		i.e., there is a basis for $V$ relative to which 
		$\beta=\begin{pmatrix}I_r&0\\0&-I_s\end{pmatrix}$. 
		So the corresponding orthogonal groups are denoted by 
		$$O(r,s):=\{g \in GL(n, \mathbb R) \mid \tra g\beta g=\beta \},$$ where $r+s=n$.
		\item Let $V$ be an $n$-dimensional vector space over $\mathbb{F}_q$ 
		equipped with a non-degenerate symmetric bilinear form $B$. Then there 
		is a basis for $V$ relative to which $\beta=\mathrm{diag}(\underbrace{1,\ldots,1}_{n-1},\lambda)$, 
		where $\lambda \in \mathbb{F}_{q}^{\times}$. Thus, up to equivalence, there 
		are two such forms corresponding to a square and non-square elements of $\mathbb{F}_{q}^{\times}$. 
		So the corresponding orthogonal groups are denoted by 
		$$O(n,q)=\{g\in GL(n,q)\mid \tra g \beta g=\beta\}.$$
		\item Let $V$ be an $n$-dimensional vector space over $k$. Up to equivalence, there is a unique non-degenerate symmetric 
		bilinear form $B$ of maximal Witt index over $k$. This is called the \emph{split form}. 
		More explicitly we can fix a basis $\{e_1,\ldots,e_l,e_{-1},\ldots, e_{-l}\}$ for even dimension, and 
		$\{e_{0},e_1,\ldots,e_{l},e_{-1},\ldots,e_{-l}\}$ for odd dimension, so that the matrix 
		of $B$ is as follows:
		\[\beta= \left\{
		\begin{array}{ll}
		\begin{pmatrix}  0 & I_{l} \\ I_{l} & 0  \end{pmatrix} & \text{if}\ n=2l,\\
		\begin{pmatrix}      2 & 0 & 0\\0 & 0 & I_{l}\\0 & I_{l} & 0 \end{pmatrix} & \text{if}\ n=2l+1.
		\end{array}\right. \]                                                                                 
		The orthogonal group corresponding to this form is called a \emph{split orthogonal group}. 
		In this thesis, we will 
		work with only the split orthogonal groups, and this group will be denoted by $O(n,k)$.
	\end{enumerate}
\end{example}
\subsection{Spinor norm}\label{classicalspinornorm}
For $u\in V$ with $Q(u)\neq 0$, we defined the reflection $\sigma_u$ 
by $\sigma_{u}(v)=v-2\frac{B(u,v)}{B(u,u)}u$ along $u$, which is 
an element of the orthogonal group. We know from Theorem~\ref{cartandieudonne} 
that every element of the orthogonal group 
$O(n,k)$ can be written as a product of at most $n$ reflections. 
Let $g\in O(n,k)$ then 
$g =\sigma_{u_{1}}\sigma_{u_{2}}\cdots \sigma_{u_{m}}$ ($m\leq n$),
where $Q(u_i) \neq 0$ for all $i=1,2,\ldots, m$.
We are now in a position to define the spinor norm. 
To show that this is well-defined (see p. 75, Proposition 9.1 in~\cite{Gv}) 
map, we need Clifford algebra theory.
\begin{definition}
	The \emph{spinor norm} is a group homomorphism 
	$\Theta : O(n,k)\rightarrow k^{\times}/k^{\times2}$ defined by  
	$\Theta(g):=\displaystyle \prod_{i=1}^m Q(u_{i})k^{\times2}$, 
	where $g=\sigma_{u_1}\cdots \sigma_{u_m}$.
\end{definition}
Thus for a reflection, we have  $\Theta(\sigma_{u})=Q(u)k^{\times2}$.
However, for computational purposes, this 
definition is difficult to use. 
In Chapter~\ref{chapter4}, we will define the spinor norm using 
Wall's theory, and we will give an efficient algorithm in Chapter~\ref{chapter7} 
to compute the spinor norm.
\section{Unitary Groups}\label{sec2.2}
For the material covered here, we refer to the books~\cite{Kn} and~\cite{Gv}.
Let $R$ be a commutative ring with $1$. An \emph{involution} on $R$ is an automorphism $J: a \mapsto \bar{a}$ 
of $R$ of order $2$. Thus: 
\begin{center}
	$\overline{a+b}=\bar{a}+\bar{b}, \overline{ab}=\bar{a}\bar{b}, \bar{\bar{a}}=a$, 
\end{center}
for all $a,b \in R$. Set $R_{0}:=Fix(J)=\{a\in R \mid \bar{a}=a \}$.
Let $V$ be a free $R$-module of rank $n$. 
In this section, we discuss the unitary groups which are also one of the classical groups. The 
\emph{General Linear Group} $GL(V)$ is a group of all $R$-linear isomorphism of the module 
$V$ over $R$. In matrix terminology it consists of all $n\times n$ invertible matrices and denoted as $GL(n,R)$.
\begin{definition}
	A \emph{sesquilinear form} on $V$, with respect to $J$, is a function $B: V \times V \rightarrow R$ 
	satisfying 
	\begin{enumerate}
		\item $B(u+v,w)=B(u,w)+B(v,w)$
		\item $B(u,v+w)=B(u,v)+B(u,w)$
		\item $B(au,v)= \bar{a} B(u,v)=B(u,\bar{a}v)$
	\end{enumerate}
	for all $u,v,w \in V $ and all $a\in R$. 
\end{definition}
If $B$ is a sesquilinear form on $V$ and $\{e_1,e_2,\ldots,e_n\}$ is a free basis for $V$, set $b_{ij}:=B(e_i,e_j)$ for 
all $1\leq i,j\leq n$. Then $\beta :=(b_{ij})$ is called the matrix of $B$ relative to $\{e_1,e_2,\ldots,e_n\}$. 
If $u,w \in V$, write $u=\sum_{i}a_ie_i$, and $w=\sum_j b_je_j$, 
so that $u$ and $w$ are represented by column vectors 
$\textbf{u}={}^{t}(a_1 \cdots a_n)$ and $\textbf{w}={}^{t}(b_1 \cdots b_n)$. Then $B(u,w)={}^{t}\bar{\textbf{u}}
\beta \textbf{w}$ for all $u,w \in V$, where $\textbf{u}, \textbf{w}$ are the column vectors with the entries being the 
components of $u,w$ with respect to the given basis $\{e_1,e_2,\ldots,e_n\}$ of $V$. If 
$\{f_1,f_2,\ldots,f_n\}$ is another free basis for $V$, write $f_j=\sum_i p_{ij}e_i $, where $p_{ij} \in R$, for all $j=1,2,
\ldots,n$. 
Then $B(f_i,f_j)=\displaystyle \sum_{k,l}\bar{p}_{ki}B(e_k,e_l)p_{lj}
=\displaystyle \sum_{k,l}\bar{p}_{ki}b_{kl}p_{lj}$, which is the $(i,j)$-entry of ${}^{t}\bar{P}\beta P$, 
where $P=(p_{ij})\in GL(n,R)$. We say two $n\times n$ matrices $M,N$ are \emph{congruent} if $N={}^{t}\bar{P}MP$, 
for some $P\in GL(n,R)$. So $\mathrm{det}(N)=\mathrm{det}(P) \overline{\mathrm{det}(P)} \mathrm{det}(M)$.
\noindent
Define $R^{1+J}:=\{a\bar{a}\mid a\in R^{\times}\}$ is a subgroup of $R_{0}^{\times}$.
\begin{notation}
	A free module $V$ having a sesquilinear form $B$ will be denoted by $(V,B)$.
\end{notation}
\begin{definition}
	Define the \emph{discriminant} of $(V,B)$ to be 
	\[dV:= \left\{
	\begin{array}{ll}
	(\mathrm{det}\beta) R^{1+J} & \text{if}\ \;(\mathrm{det}\beta) \in R^{\times},\\
	0 & \text{otherwise}.\ 
	\end{array}\right. \]  
\end{definition}
\noindent
Note that $dV\in R^{\times}/R^{1+J}$ is independent of the choice of basis. 
The sesquilinear form $B$ is said to be \emph{non-degenerate} 
if $dV \in R^{\times}$.

Another way to look at the sesquilinear form is the following.
Denote the dual of $V$ by $V^{*}:=\mathrm{Hom}_{R}(V,R)$.
The form $B$ induces a map $h_{B}:V\rightarrow V^{*}, h_{B}(v)(w)=B(v,w)$ for all $v,w \in V$, which is $R$-linear. 
Conversely, an $R$-linear homomorphism $h:V \rightarrow V^{*}$ defines a sesquilinear form $B_{h}:V \times V \rightarrow 
R, B_{h}(u,v)=h(u)(v)$ for all $u,v \in V$. We call $h_{B}$ the \emph{adjoint} of $B$. Since $h_{B_{h}}=h$ and $B_{h_{B}}=B$,  
a sesquilinear form is determined by its adjoint and vice-versa. If $h_{B}$ is an 
$R$-module isomorphism between $V$ and $V^{*}$ then $B$ is non-degenerate. 
The above two definitions for non-degeneracy are equivalent.
Let $B_1$ and $B_2$ be two sesquilinear forms on $V_1$ and $V_2$ respectively. 
Two forms are said to be \emph{equivalent}, denoted by $(V_1, B_1) \approx (V_2, B_2)$, if there 
exists a $R$-module isomorphism $\sigma :V_1 \rightarrow V_2$ such that $B_2(\sigma u, \sigma v)=B_1(u,v)$ for all 
$u,v \in V_1$. We call $\sigma$ an \emph{isometry} with respect to $B_1$ and $B_2$.
\begin{definition}
	A sesquilinear form $B$ is said to be \emph{hermitian} if $B(u,v)=\overline{B(v,u)}$ for all $u,v \in V$. 
	In matrix terminology, the sesquilinear form $B$ is hermitian 
	if and only if any representing matrix $\beta$ is hermitian, i.e.,
	$\tr \bar{\beta}=\beta $.
\end{definition}
\begin{definition}
	A sesquilinear form $B$ is said to be \emph{skew-hermitian} 
	if $B(u,v)=- \overline{B(v,u)}$ for all $u,v \in V$. 
	In matrix terminology, the sesquilinear form $B$ is skew-hermitian 
	if and only if any representing matrix $\beta$ is skew-hermitian, i.e.,
	$\tr \bar{\beta}=-\beta $.
\end{definition}
\begin{remark}
	If $B$ is a skew-hermitian form, then $B_1:=aB$ is a hermitian form for some $a\in R^{\times}$ with $\bar{a}=-a$.
	So the corresponding isometry group will be same whether we consider hermitian or skew-hermitian form.
\end{remark}

\noindent
For the remainder of this section $(V,B)$ 
will denote a non-degenerate hermitian form.
\begin{definition}[Unitary group]
	The \emph{unitary group} is defined as follows:
	$U(V,B):=\{T \in GL(V) \mid B(Tu,Tv)=B(u,v) \; \forall u,v \in V \}$.
\end{definition}
\noindent
In matrix terminology, the unitary group is defined as:
$$ U(n,R_0):=\{ g \in GL(n,R) \mid \tr \bar{g} \beta g=\beta \}.$$
\noindent
Most of the time we will consider unitary groups over fields.
\begin{definition}[Unitary similitude group]
	The \emph{unitary similitude group} with respect to an invertible hermitian matrix $\beta$ is defined by
	$GU(n,k_0)=\{g\in GL(n,k) \mid \tr \bar{g} \beta g=\mu(g)\beta, \text{for some}\; \mu(g) \in k_{0}^{\times} \}$, 
	where $\mu : GU(n,k_0)\rightarrow k_{0}^{\times}; g\mapsto \mu(g)$, 
	is a group homomorphism with $\mathrm{ker}\;\mu=U(n,k_0)$; $\mu$ is called a \emph{similitude character}.
\end{definition}
\begin{example}\label{splitu}
	\begin{enumerate}
		\item Let $V$ be an $n$-dimensional vector space over $\mathbb C$ with $\overline{a+ib}=a-ib$. 
		In this situation, hermitian forms are classified by signature and 
		given by $\beta= \begin{pmatrix} I_r &0 \\0 & -I_s \end{pmatrix}$. 
		So the corresponding unitary groups are denoted by 
		$$U(r,s):=\{g \in GL(n, \mathbb C) \mid \tra \bar{g}\beta g=\beta \},$$
		where $\bar{g}:=(\bar{g}_{ij})$, where $\bar{g}_{ij}$ is the usual complex conjugate
		and $r+s=n$.
		\item Let $V$ be an $n$-dimensional vector space over a finite field $\mathbb{F}_{q^2}$ with $J:a \mapsto a^q$. 
		It is known that any two hermitian forms on $V$ are equivalent and thus we may choose $\beta=I_n$. 
		So the corresponding unitary group is, unique up to conjugation, and is  
		denoted by 
		$$U(n,q):=\{g \in GL(n, q^2) \mid \tr \bar{g}g=I_n\},$$ 
		where $\bar{g}:=(g_{ij}^{q})$. 
		\item Let $V$ be a free module over $R=k\times k$ of rank $n$ with 
		$J: (a,b)\mapsto \overline{(a,b)}=(b,a)$. Then $R_{0}=\{(a,b)\in R \mid (b,a)=(a,b) \}=\mathrm{diag}(k\times k)
		\cong k$. Then the unitary group defined over $R_{0}$ is 
		$$U(n,k)=\{g\in GL(n,R) \mid \tr \bar{g}\beta g=\beta\},$$ where 
		$g=(A,B)\in M(n,k)\times M(n,k)$ and $\tr \bar{g}=(\tr B, \tr A)$, and $\beta=(\beta_1, \beta_2)$. 
		In particular, if $\beta=(I_n, I_n)$ then $U(n,k)=\{(A,B) \in M(n,k)\times M(n,k) \mid \tr AB=I_n\} \cong GL(n,k)$.
		\item Let $V$ be an $n$-dimensional vector space over $k$ 
		with an involution $J:a \mapsto \bar{a}$.  
		Up to equivalence, there is a unique non-degenerate hermitian 
		form $B$ of maximal Witt index over $k$. This is called the \emph{split form}. 
		More explicitly we can fix a basis $\{e_1,\ldots,e_l,e_{-1},\ldots, e_{-l}\}$ for even dimension, and 
		$\{e_{0},e_1,\ldots,e_{l},e_{-1},\ldots,e_{-l}\}$ for odd dimension, so that the matrix 
		of $B$ is as follows:
		\[\beta= \left\{
		\begin{array}{ll}
		\begin{pmatrix}  0 & I_{l} \\ I_{l} & 0  \end{pmatrix} & \text{if}\ n=2l,\\
		\begin{pmatrix}      2 & 0 & 0\\0 & 0 & I_{l}\\0 & I_{l} & 0 \end{pmatrix} & \text{if}\ n=2l+1.
		\end{array}\right. \]                                                                                 
		The unitary group corresponding to this form is called a \emph{split unitary group}. 
		In Section 6.4 of Chapter~\ref{chapter6} we will 
		work with only the split unitary groups, and this group 
		will be denoted by $U(n,k_0)$, where $k_0$ is the fixed field.
	\end{enumerate}
\end{example}
\chapter{Chevalley Groups}\label{chapter3}
This is another basic chapter of this thesis.
In the present chapter, we will 
take another approach to define the split classical groups. 
For the Gaussian elimination, which we will develop in Chapter~\ref{chapter6}, 
we need an analog of elementary matrices. These matrices are described in Section 3.2, 
which come from the theory of Chevalley groups (of adjoint type). 
In this theory, one begins with a complex simple 
Lie algebra $\mathfrak{g}$, a field $k$, and get 
a group $G(k)$ (see Section 3.1).
The theory was developed by Chevalley~\cite{Ch} himself, 
and further generalized by Robert Steinberg~\cite{St1}.
In our computations, we often imitate the notation from Carter~\cite{Ca1}.
\section{Construction of Chevalley Groups (adjoint type)}
Let $\mathfrak{g}$ be a complex simple Lie algebra. 
Since any two Cartan subalgebras of $\mathfrak{g}$ are conjugate, we fix a 
Cartan subalgebra $\mathfrak{h}$.
Then there is the adjoint representation of $\mathfrak{g}$, 
$$\mathrm{ad} : \mathfrak{g} \rightarrow \mathfrak{gl}(\mathfrak{g}) $$ 
given by 
$\mathrm{ad}X(Y)=[X,Y]$.
Since $\mathfrak{h}$ is Abelian, $\mathrm{ad}(\mathfrak{h})$ is a 
commuting family of semisimple linear transformations of $\mathfrak{g}$. 
Hence $\mathrm{ad}(\mathfrak{h})$ is simultaneously diagonalizable. 
Thus we have (see p.35~\cite{Ca1}): 
\begin{theorem}[Cartan decomposition]\label{cartandecomposition}
	With this notation, we have,  
	$$\mathfrak{g}=\mathfrak{h}\bigoplus \displaystyle\sum_{\alpha \in \Phi}\mathfrak{g}_{\alpha},$$ 
	where $\mathfrak{g}_{\alpha}=\{X\in \mathfrak{g}\mid \mathrm{ad}H(X)=\alpha(H)X, \; \forall H \in \mathfrak{h}\}$ 
	are root spaces and $\Phi$ is a root system with respect to $\mathfrak{h}$. 
\end{theorem}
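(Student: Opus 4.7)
The plan is to exploit the toral nature of a Cartan subalgebra of a complex semisimple Lie algebra. First I would verify that every $H\in\mathfrak{h}$ acts semisimply on $\mathfrak{g}$ under $\mathrm{ad}$. In a semisimple Lie algebra the abstract Jordan decomposition is preserved by $\mathrm{ad}$, and one shows using nondegeneracy of the Killing form together with the self-normalizing property of $\mathfrak{h}$ that the semisimple part of each $H\in\mathfrak{h}$ again lies in $\mathfrak{h}$. Since $\mathfrak{h}$ is abelian (a standard consequence of semisimplicity), this makes $\mathrm{ad}(\mathfrak{h})$ a family of pairwise commuting, diagonalizable operators on the finite-dimensional space $\mathfrak{g}$.

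Next I would invoke the elementary linear-algebra fact that a commuting family of diagonalizable operators is simultaneously diagonalizable. This produces the weight-space decomposition
$$\mathfrak{g}=\bigoplus_{\alpha\in\mathfrak{h}^{*}}\mathfrak{g}_{\alpha},\qquad \mathfrak{g}_{\alpha}=\{X\in\mathfrak{g}\,:\,\mathrm{ad}H(X)=\alpha(H)X\ \text{for all}\ H\in\mathfrak{h}\},$$
with only finitely many $\mathfrak{g}_{\alpha}$ nonzero. I would then identify the zero-weight space with $\mathfrak{h}$ itself: the inclusion $\mathfrak{h}\subseteq\mathfrak{g}_{0}$ is clear from commutativity of $\mathfrak{h}$, and for the reverse inclusion one notes that $\mathfrak{g}_{0}$ is the centralizer of $\mathfrak{h}$ in $\mathfrak{g}$, which one shows is nilpotent and therefore, by maximality of the Cartan subalgebra among nilpotent self-normalizing subalgebras, equal to $\mathfrak{h}$. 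Setting $\Phi:=\{\alpha\in\mathfrak{h}^{*}\setminus\{0\}:\mathfrak{g}_{\alpha}\neq 0\}$ then yields the direct-sum decomposition stated in the theorem.

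The main obstacle is verifying the root-system axioms for $\Phi$, namely finiteness and spanning of $\mathfrak{h}^{*}$, the integrality $\langle\beta,\alpha^{\vee}\rangle\in\mathbb{Z}$, and stability of $\Phi$ under the Weyl reflections $s_{\alpha}$. The standard route, which I would follow, is to show that $\dim\mathfrak{g}_{\alpha}=1$ for each $\alpha\in\Phi$, construct an $\mathfrak{sl}_{2}$-triple $(e_{\alpha},h_{\alpha},f_{\alpha})$ inside $\mathfrak{g}_{\alpha}\oplus[\mathfrak{g}_{\alpha},\mathfrak{g}_{-\alpha}]\oplus\mathfrak{g}_{-\alpha}$ using nondegeneracy of the Killing form, and then apply the representation theory of $\mathfrak{sl}_{2}(\mathbb{C})$ to the $\alpha$-string module $\bigoplus_{k\in\mathbb{Z}}\mathfrak{g}_{\beta+k\alpha}$ to deduce the integrality of $\langle\beta,\alpha^{\vee}\rangle$ and the reflection invariance $\beta-\langle\beta,\alpha^{\vee}\rangle\alpha\in\Phi$. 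Finiteness of $\Phi$ is immediate from $\dim\mathfrak{g}<\infty$, and the spanning property follows from the injectivity of the Killing-form pairing restricted to $\mathfrak{h}$. This last technical step, while standard, is where essentially all the structural content of the theorem is concentrated.
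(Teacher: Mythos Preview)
Your proposal is correct and follows exactly the approach the paper indicates: the paper does not give a self-contained proof but merely records the key observation preceding the theorem statement---that $\mathfrak{h}$ being abelian makes $\mathrm{ad}(\mathfrak{h})$ a commuting family of semisimple operators, hence simultaneously diagonalizable---and then cites Carter~\cite{Ca1}, p.~35, for the remainder. Your sketch fills in precisely what the paper defers to that reference, including the identification $\mathfrak{g}_0=\mathfrak{h}$ and the $\mathfrak{sl}_2$-string argument for the root-system axioms, so there is nothing to correct or contrast.
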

We call this decomposition the \emph{Cartan decomposition} of $\mathfrak{g}$ with respect to $\mathfrak{h}$.
The classification of finite dimensional complex simple 
Lie algebras gives four infinite families 
$A_l (l\geq 1), B_l (l\geq 2), C_l (l\geq 3)$ and $D_l (l\geq 4)$ called 
\emph{classical types}, and five \emph{exceptional types} $G_2, F_4, E_6, E_7$ and $E_8$.
Chevalley proved that, there exists a basis of $\mathfrak{g}$ such that 
all the structure constants, which define $\mathfrak{g}$ as a Lie algebra, are integers. 
The following (Theorem 4.2.1~\cite{Ca1}) is 
a key theorem to define Chevalley groups.
\begin{theorem}[Chevalley basis theorem]\label{Chevalleybasistheorem}
	Let $\mathfrak{g}$ be a simple Lie algebra over $\mathbb{C}$, 
	$\mathfrak{h}$ be a Cartan subalgebra, and 
	$$ \mathfrak{g}=\mathfrak{h}\bigoplus \displaystyle\sum_{\alpha \in \Phi}\mathfrak{g}_{\alpha}$$ 
	be a Cartan decomposition of $\mathfrak{g}$. Let $h_{\alpha} \in \mathfrak{h}$ 
	be the co-root corresponding to the root $\alpha$. Then, for each root $\alpha \in \Phi$,
	an element $e_{\alpha}$ can be chosen in $\mathfrak{g}_{\alpha}$ such that 
	\begin{align*}
	[ e_{\alpha}, e_{-\alpha} ]&=h_{\alpha},\\
	[ e_{\alpha}, e_{\beta} ]&=\pm (r+1)e_{\alpha + \beta},
	\end{align*}
	where $r$ is the greatest integer for which $\beta-r \alpha \in \Phi$.
	
	The elements $\{h_{\alpha},\alpha \in \Pi; e_{\alpha}, \alpha \in \Phi\}$ 
	form a basis for $\mathfrak{g}$, called a \textbf{Chevalley basis}.
	The basis elements multiply together as follows:
	\begin{equation}\label{integers} 
	\left.
	\begin{split}
	[ h_{\alpha},h_{\beta} ] &= 0, \\
	[ h_{\alpha},e_{\beta} ] &= A_{\alpha \beta}e_{\beta}, \\
	[ e_{\alpha},e_{-\alpha} ] &= h_{\alpha}, \\
	[ e_{\alpha},e_{\beta} ] &= 0 & \text{if $\alpha +\beta \notin \Phi$}, \quad\\
	[ e_{\alpha},e_{\beta} ] &= \pm (r+1)e_{\alpha +\beta} & \text{if $ \alpha +\beta \in \Phi $}, \quad
	\end{split}
	\right\}
	\end{equation}
	where $A_{\alpha \beta}$ are Cartan integers and $\Pi$, a simple root system fixed for $\Phi$.
	
	The structure constants of the algebra with respect to a Chevalley basis are all integers.
\end{theorem}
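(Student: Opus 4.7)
The plan is to build the $e_\alpha$ in two stages using the representation theory of the $\mathfrak{sl}_2$-subalgebras spanned by opposite root spaces, and then to fix signs via a consistent inductive procedure. Since $\mathfrak{g}$ is simple, standard semisimple Lie algebra theory gives $\dim \mathfrak{g}_\alpha = 1$ for every $\alpha \in \Phi$, so one may pick any nonzero $X_\alpha \in \mathfrak{g}_\alpha$ and rescale each opposite pair $(X_\alpha, X_{-\alpha})$ so that $[X_\alpha, X_{-\alpha}] = h_\alpha$ and $[h_\alpha, X_{\pm \alpha}] = \pm 2 X_{\pm \alpha}$; this produces an $\mathfrak{sl}_2$-triple at every root. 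The relation $[h_\alpha, e_\beta] = A_{\alpha\beta} e_\beta$ is automatic from the defining property of root spaces together with the identification $h_\alpha = 2 t_\alpha / (\alpha,\alpha)$, where $t_\alpha$ corresponds to $\alpha$ under the Killing form.

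For pairs with $\alpha + \beta \in \Phi$, the bracket $[X_\alpha, X_\beta]$ lies in the one-dimensional $\mathfrak{g}_{\alpha+\beta}$, so $[X_\alpha, X_\beta] = N_{\alpha,\beta} X_{\alpha+\beta}$ for some scalar. The key input is the $\alpha$-string through $\beta$, namely $\{\beta + i \alpha : -r \leq i \leq q\}$, which forms an irreducible module of dimension $r+q+1$ for the $\mathfrak{sl}_2$-triple at $\alpha$. Computing the action of $\mathrm{ad}(X_\alpha)$ and $\mathrm{ad}(X_{-\alpha})$ up and down this string via the standard raising and lowering formulas yields an explicit relation among $N_{\alpha,\beta}$, $N_{-\alpha,-\beta}$, $r$ and $q$; after a further rescaling of the $X_\alpha$ within their root spaces, these relations force $|N_{\alpha,\beta}| = r+1$.

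The principal obstacle is making the signs consistent across all pairs simultaneously. The constants $N_{\alpha,\beta}$ are not independent: the Jacobi identity applied to triples $[X_{\gamma_1},[X_{\gamma_2}, X_{\gamma_3}]]$ with $\gamma_1+\gamma_2+\gamma_3 \in \Phi \cup \{0\}$ yields linear relations among them. The strategy I would follow is Carter's: fix a total order on $\Phi^+$, identify the \emph{extraspecial pairs} (pairs $(\alpha,\beta)$ with $\alpha$ smallest among those summing to a given positive root), choose the basis so that $N_{\alpha,\beta} = r+1$ on each extraspecial pair, and then determine every other $N_{\gamma,\delta}$ recursively from the Jacobi relations. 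The delicate step is verifying that this recursion produces no contradictions; this amounts to checking case-by-case that every Jacobi triple closes up correctly, and is the true technical content of the theorem.

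Once the signs are set consistently, integrality is immediate: each structure constant appearing in the bracket relations is either zero, a Cartan integer $A_{\alpha\beta} \in \mathbb{Z}$, or $\pm(r+1) \in \mathbb{Z}$, and every bracket of two Chevalley basis vectors is thus a $\mathbb{Z}$-linear combination of the basis, completing the proof.
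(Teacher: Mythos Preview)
The paper does not supply its own proof of this theorem: it is quoted as Theorem~4.2.1 of Carter~\cite{Ca1} and used as a black box for the construction of Chevalley groups. Your sketch is in fact precisely the strategy of Carter's proof (you even invoke his extraspecial-pair argument by name), so there is no independent argument in the paper to compare against; your outline is correct and matches the referenced source.
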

The map $\mathrm{ad}e_{\alpha}$ is a nilpotent linear map on 
$\mathfrak{g}$. Let $t \in \mathbb{C}$, then $\mathrm{ad}(te_{\alpha})=
t(\mathrm{ad}e_{\alpha})$ is also nilpotent.
Thus $\mathrm{exp}(t(\mathrm{ad}e_{\alpha}))$ is an automorphism of 
$\mathfrak{g}$. 
We denote by $\mathfrak{g}_{\mathbb{Z}}$ the subset of $\mathfrak{g}$ of 
all $\mathbb{Z}$-linear combinations of the Chevalley basis elements of $\mathfrak{g}$.
By Equation (\ref{integers}), a  
Lie bracket can be defined for $\mathfrak{g}_{\mathbb{Z}}$. 
Thus $\mathfrak{g}_{\mathbb{Z}}$ is a Lie algebra over $\mathbb{Z}$.
Now let $k$ be any field. We define $\mathfrak{g}_{k}:=\mathfrak{g}_{\mathbb{Z}}\otimes_{\mathbb{Z}} k$.
Then $\mathfrak{g}_{k}$ is a Lie algebra 
over $k$ via the Lie multiplication 
$$[X\otimes 1_k, Y\otimes 1_k]:=[X,Y]\otimes 1_k,$$  
where $X,Y$ are Chevalley basis elements of $\mathfrak{g}$, and 
$1_k$ denote the identity element of $k$.

Now everything makes sense over an arbitrary field 
$k$. So we are in a position to define the 
Chevalley groups of adjoint type.
The Chevalley group of type $\mathfrak{g}$ over the field $k$, denoted by 
$G(k)$, is defined to be the subgroup of automorphisms of the Lie 
algebra $\mathfrak{g}_k$ generated by $\mathrm{exp}(t(\mathrm{ad}e_{\alpha}))$ for all 
$\alpha \in \Phi, t \in k$. In fact, the group $G(k)$ over $k$ 
is determined up to isomorphism by the simple Lie algebra 
$\mathfrak{g}$ over $\mathbb{C}$ and the field $k$.

Observe that (see Lemma 4.5.1, p.65~\cite{Ca1}), when $\mathfrak{g}$ is a linear Lie algebra,
$$\mathrm{exp}(t(\mathrm{ad}e_{\alpha}))X=\mathrm{exp}(te_{\alpha})X\mathrm{exp}(te_{\alpha})^{-1},$$ 
for all $X\in \mathfrak{g}_k$, for all $\alpha \in \Phi$, and for all $t\in k$.
We shall abuse the notation slightly 
and denote the matrix of the linear map by 
$\mathrm{exp}(te_{\alpha})$ itself.
Define $x_{\alpha}(t):=\mathrm{exp}(te_{\alpha})$. 
We call the $x_{\alpha}(t)$, \emph{elementary matrix}.
Let $\tilde{G}(k)$ be the group of matrices 
generated by the elements $x_{\alpha}(t)$ for all $\alpha \in \Phi$ 
and all $t\in k$. Thus there is a homomorphism 
from $\tilde{G}(k)$ onto $G(k)$ such that 
$$\mathrm{exp}(te_{\alpha})\mapsto \mathrm{exp}(t(\mathrm{ad}e_{\alpha}))$$
whose kernel is the center of $\tilde{G}(k)$.
Hence $\frac{\tilde{G}(k)}{\mathcal{Z}(\tilde{G}(k))}\cong G(k)$.
We work with $\tilde{G}(k)$ instead of the Chevalley group $G(k)$.
In (see Section 11.2~\cite{Ca1}), the classical Lie algebras 
and their Chevalley basis are described explicitly. 
Usually, row-column operations are defined by pre and post 
multiplication by certain elementary matrices. We are going to define the elementary matrices 
for symplectic and orthogonal groups, and more generally, for 
symplectic and orthogonal similitude groups.
\begin{example}[Cartan decomposition and Chevalley basis of $\mathfrak{sp}(2l, \mathbb{C})$]
	Let us consider the Lie algebra of type $C_l$:
	$$\mathfrak{g}:=\mathfrak{sp}(2l, \mathbb{C})=\{X\in \mathfrak{gl}(2l,\mathbb{C}) \mid \tr X\beta +\beta X=0\},$$
	where $\beta=\begin{pmatrix}0 & I_l \\ -I_l & 0 \end{pmatrix}$. We can write elements 
	of $\mathfrak{g}$ in block form. Let $X=\begin{pmatrix}A&B\\C&D\end{pmatrix} \in \mathfrak{g}$, where 
	$A, B, C, D$ are $l\times l$ matrices. 
	We use the condition that $X$ satisfies $\tr X\beta +\beta X=0$, then we get $\tr B=B, \tr C=C$ and $D=-\tr A$. The 
	set of diagonal matrices in $\mathfrak{g}$ is a Cartan subalgebra $\mathfrak{h}$ of $\mathfrak{g}$. The 
	elements of $\mathfrak{h}$ have form $H=\mathrm{diag}(\lambda_1,\ldots
	,\lambda_l,-\lambda_1,\ldots,-\lambda_l)$. We index the rows and columns by 
	$1,\ldots,l$ and $-1,\ldots,-l$. The elements $H_i=e_{ii}-e_{-i,-i}, 1\leq i \leq l$, form a 
	basis of $\mathfrak{h}$. 
	Then by Theorem~\ref{cartandecomposition}, we have, 
	$$\mathfrak{g}=\mathfrak{h}\oplus \displaystyle\sum_{\alpha \in \Phi}\mathbb{C}e_{\alpha},$$
	where 
	\begin{equation}\label{rootvectors}
	e_{\alpha} =\left\{
	\begin{split}
	& e_{ij}-e_{-j,-i}, & \quad 1\leq i\neq j\leq l,\\
	& e_{i,-j}+e_{j,-i}, & \quad 1\leq i< j\leq l,\\
	& e_{-i,j}+e_{-j,i}, & \quad 1\leq i< j\leq l,\\
	& e_{i,-i}, & \quad 1\leq i\leq l,\\
	& e_{-i,i}, & \quad 1\leq i\leq l.
	\end{split}
	\right.                
	\end{equation}
	The above decomposition is the Cartan decomposition of the Lie algebra 
	$\mathfrak{g}=\mathfrak{sp}(2l, \mathbb{C})$, and 
	a Chevalley basis for this Lie algebra is 
	$$\{H_i=e_{ii}-e_{-i,-i}, 1\leq i\leq l; e_{\alpha}, \alpha \in \Phi\},$$ where 
	$e_{\alpha}$ as in Equation (\ref{rootvectors}). 
	Observe that the above mentioned Chevalley basis is not unique, in fact, any 
	integral multiple of it is again a Chevalley basis.
	Now $e_{\alpha}$'s are nilpotent endomorphisms of 
	$\mathfrak{g}$ with $e_{\alpha}^2=0$. So $x_{\alpha}(t)=\mathrm{exp}\;(te_{\alpha})=I+te_{\alpha}$, elementary 
	matrix, is an automorphism of $\mathfrak{g}$. 
	Similarly, we can do this, for orthogonal Lie algebras. 
	For details see~\cite{Ca1}. In next section, we define these matrices explicitly.
\end{example}
\section{Elementary Matrices}\label{elementarymatrices}
First of all, let us describe the elementary matrices 
for symplectic and split orthogonal similitude groups. The genesis 
of these elementary matrices lies in the Chevalley basis theorem (Theorem~\ref{Chevalleybasistheorem}).
In what follows, the scalar $t$ varies over the field $k$, $n=2l$ or $n=2l+1$, and $1\leq i,j \leq l$. 
We define $te_{i,j}$ as the $n\times n$ matrix with $t$ in the $(i,j)$ position, and zero everywhere else. 
We simply use $e_{i,j}$ to denote $1e_{i,j}$. 
We often use the well-known matrix identity 
$e_{i,j}e_{k,l}=\delta_{j,k}e_{i,l}$, where 
$\delta_{j,k}$ is the Kronecker delta.
For more details on elementary matrices see~\cite{Ca1}. 
\begin{example}
	Elementary matrices (or elementary transvections) in $SL(n,k)$ are 
	$x_{ij}(t):=I+te_{ij}$, where $t \in k; 1\leq i\neq j\leq n$.
\end{example}
\subsection{Elementary matrices for $GSp(2l,k)\; (l\geq 2)$}
We index rows and columns by $1, \ldots, l, -1, \ldots, -l$. 
The elementary matrices are as follows:
\begin{eqnarray*}
	x_{i,j}(t)=&I+t(e_{i,j}-e_{-j,-i}) &\text{for}\;  i\neq j,\\
	x_{i,-j}(t)=&I+t(e_{i,-j}+e_{j,-i})&\text{for}\;  i<j,\\ 
	x_{-i,j}(t)=&I+t(e_{-i,j}+e_{-j,i})& \text{for}\;  i<j,\\
	x_{i,-i}(t)=&I+te_{i,-i}, \\
	x_{-i,i}(t)=&I+te_{-i,i}, 
\end{eqnarray*}
and in matrix format they look as follows:
\begin{align*}
E1 &: \begin{pmatrix}
R & 0 \\ 0 & \tr R^{-1} 
\end{pmatrix}, \text{where}\; R=I+te_{i,j}; i\neq j, \\
E2 &: \begin{pmatrix}
I & R \\ 0 & I
\end{pmatrix}, \text{where}\; R=t(e_{i,j}+e_{j,i}); \text{for} \;i < j\; \text{or}\; te_{i,i},\\
E3 &: \begin{pmatrix}
I & 0 \\ R & I
\end{pmatrix}, \text{where}\; R=t(e_{i,j}+e_{j,i}); \text{for}\; i < j\; \text{or}\; te_{i,i}.
\end{align*}
\subsection{Elementary matrices for $GO(2l,k)\; (l\geq 2)$}\label{elementarymate}
We index rows and columns by $1, \ldots, l, -1, \ldots, -l$. 
The elementary matrices are as follows:
\begin{align*}
x_{i,j}(t) &= \hspace{12mm} I + t(e_{i,j} - e_{-j,-i}) \hspace{12mm} \text{for $i \neq j$},\\
x_{i,-j}(t) &= \hspace{12mm} I + t(e_{i,-j} - e_{j,-i}) \hspace{12mm} \text{for $i < j$},\\
x_{-i,j}(t) &= \hspace{12mm} I + t(e_{-i,j} - e_{-j,i}) \hspace{12mm} \text{for $i < j$},\\
w_l &= \quad I - e_{l,l} - e_{-l,-l} - e_{l,-l} - e_{-l,l}, 
\end{align*}
and in matrix format they look as follows:
\begin{align*}
E1 &: \begin{pmatrix}
R & 0 \\ 0 & \tr R^{-1}
\end{pmatrix}, \text{where}\, R=I+te_{i,j}; i\neq j, \\
E2 &: \begin{pmatrix}
I & R \\ 0 & I
\end{pmatrix}, \text{where}\, R=t(e_{i,j}-e_{j,i}); \text{for} \, i < j, \\
E3 &: \begin{pmatrix}
I & 0 \\ R & I
\end{pmatrix}, \text{where}\, R=t(e_{i,j}-e_{j,i}); \text{for}\, i < j.
\end{align*}
\subsection{Elementary matrices for $GO(2l + 1, k)\; (l \geq 2)$}\label{elementarymato}
We index rows and columns by $0, 1,\ldots , l, -1, \ldots, -l$. 
The elementary matrices are as follows:
\begin{align*}
x_{i,j}(t) &= \hspace{12mm} I + t(e_{i,j} - e_{-j,-i} ) \hspace{12mm} \text{for $i \neq j$},\\
x_{i,-j}(t) &= \hspace{12mm} I + t(e_{i,-j} - e_{j,-i}) \hspace{12mm} \text{for $i < j$},\\
x_{-i,j}(t) &= \hspace{12mm} I + t(e_{-i,j} - e_{-j,i}) \hspace{12mm} \text{for $i < j$},\\
x_{i,0}(t) &= \quad I + t(2e_{i,0} - e_{0,-i} ) - t^2 e_{i,-i}, \\
x_{0,i}(t) &= \quad I + t(-2e_{-i,0} + e_{0,i}) - t^2 e_{-i,i}, 
\end{align*}
\begin{align*}
w_l &= \quad I - e_{l,l} - e_{-l,-l} - e_{l,-l} - e_{-l,l}, 
\end{align*}
and in matrix format they look as follows:
\begin{align*}
E1 &: \begin{pmatrix}
1&0&0 \\0&R&0 \\0& 0 & \tr R^{-1}
\end{pmatrix}, \text{where}\, R=I+te_{i,j}; i\neq j, \\
E2 &: \begin{pmatrix}
1&0&0\\0&I & R \\0& 0 & I
\end{pmatrix}, \text{where}\, R=t(e_{i,j}-e_{j,i}); \text{for}\, i < j, \\
E3 &: \begin{pmatrix}
1&0&0\\0&I & 0 \\0& R & I
\end{pmatrix}, \text{where}\, R=t(e_{i,j}-e_{j,i}); \text{for}\, i < j, \\
E4a &: \begin{pmatrix}1&0&-R \\2\tr R&I&-\tr RR \\0&0&I\end{pmatrix}, \text{where}\, R=te_i, \\
E4b &:\begin{pmatrix}1&R&0\\0&I&0\\-2\tr R&-\tr RR&I\end{pmatrix}, \text{where}\, R=te_i. 
\end{align*}
Here $e_i$ is the row vector with $1$ at $i^{\text{th}}$ place and zero elsewhere.

In~\cite{Re}, Ree proved that the above defined elementary matrices generate the symplectic group 
$Sp(2l,k)$ and the commutator subgroups of the orthogonal groups $O(2l,k)$ and $O(2l+1, k)$ respectively.
We will give an algorithmic proof of this fact via our Gaussian elimination algorithm (see Theorem~\ref{maintheorem1}). 
\chapter{Conjugacy Classes of an Isometry}\label{chapter4}
G. E. Wall~\cite{Wa2}, and Springer-Steinberg~\cite{SS} classified isometries 
with respect to a symmetric, skew-symmetric and hermitian forms up to conjugacy. They 
associated certain forms to an isometry.
In this chapter, we define those forms associated with an element in the isometry group. 
In Section 4.1 we describe the Wall's form, 
which is associated with an 
orthogonal element, which will be used in Chapter~\ref{chapter7} to compute the 
spinor norm. In Section 4.2, we describe the other form associated to an element of the 
unitary group, which will be used in Chapter~\ref{chapter8} to prove the finiteness 
of $z$-classes in unitary group. The material in this chapter is based on 
the work of Wall and Springer-Steinberg, and is presented here for the sake of completeness.
\section{Wall's Form} 
In~\cite{Wa2}, Wall classified conjugacy classes in classical groups by associating a bilinear 
form and thus reducing the problem of conjugacy to the equivalence of bilinear forms.
Let $g \in O(n,k)$ and define 
the \emph{residual space} of $g$ by 
$V_{g}:=(1_{V}-g)(V)$, where $1_V$ denotes the identity linear map on $V$. 
Observe that $V_g$ is $\langle g \rangle$-stable.
\begin{definition}
	An element $g\in O(n,k)$ is said to be \emph{regular} if the residual space 
	$V_{g}$ is non-degenerate.  
\end{definition}
\begin{example}
	The reflection $\sigma_u \in O(n,k)$ is an example of a regular element, as 
	$V_{\sigma_u}=\langle u \rangle$, which is non-degenerate.
\end{example}
If $g\in O(n,k)$ then we have,
\begin{equation}\label{walleqnwell}
B((1_{V}-g)x,y)+B(x,(1_{V}-g)y)=B((1_{V}-g)x,(1_{V}-g)y)
\end{equation}
for all $x,y \in V$. 
This defines a map 
$$[,]_{g}:V_{g} \times V_{g} \rightarrow k \; \text{by} \; [u,v]_{g}:=B(u,y)$$ 
for all $u,v \in V_{g}$, 
where $v=(1_{V}-g)(y)$ for some $y \in V$. Thus to $g\in O(n,k)$ we associate $(V_g, [,]_g)$, called 
\emph{Wall's form}. We have (see p.6~\cite{Wa2}):
\begin{proposition}\label{welldefined}
	The map $[,]_{g}$ is a well-defined non-degenerate bilinear form on $V_{g}$, and $g$ is an isometry on $V_{g}$ with 
	respect to $[,]_{g}$. Furthermore, we have
	\begin{enumerate}
		\item $[u,v]_{g}+[v,u]_{g}=B(u,v)$,
		\item $[u,v]_{g}=-[v, gu]_{g}$
	\end{enumerate}
	for all $u,v \in V_{g}$.
\end{proposition}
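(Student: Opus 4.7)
The plan is to verify each claim in turn, with the well-definedness of $[,]_g$ being the one point that requires a small argument while everything else reduces to rearrangement via the isometry identity (\ref{walleqnwell}).

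First I would handle well-definedness. Given $u,v \in V_g$ with $v = (1_V-g)(y) = (1_V-g)(y')$, the difference $y-y'$ lies in $\ker(1_V-g)$, i.e.\ is fixed by $g$. For $u = (1_V-g)(x)$, I compute
\[ B(u, y-y') = B(x - gx, y-y') = B(x, y-y') - B(gx, y-y') = B(x, y-y') - B(gx, g(y-y')) = 0, \]
using $g \in O(n,k)$ in the last step. So $B(u,y)$ depends only on $v$, not on the preimage $y$. Bilinearity is then immediate from the bilinearity of $B$.

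For non-degeneracy, suppose $u \in V_g$ satisfies $[u,v]_g = 0$ for all $v \in V_g$. As $y$ ranges over $V$, the vector $v = (1_V-g)(y)$ ranges over all of $V_g$, so $B(u,y) = 0$ for every $y \in V$, forcing $u = 0$ by non-degeneracy of $B$. To see that $g$ is an isometry on $V_g$, note $V_g$ is $g$-stable since $g(1_V-g)(y) = (1_V-g)(gy)$, and then
\[ [gu, gv]_g = B(gu, gy) = B(u,y) = [u,v]_g. \]

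For property (1), write $u = (1_V-g)(x)$, $v = (1_V-g)(y)$ and apply (\ref{walleqnwell}) together with the symmetry of $B$:
\[ [u,v]_g + [v,u]_g = B((1_V-g)x, y) + B((1_V-g)y, x) = B((1_V-g)x, y) + B(x, (1_V-g)y) = B(u,v). \]
For property (2), observe $gu = (1_V-g)(gx)$, so $[v, gu]_g = B(v, gx)$. Expanding
\[ B(u,y) + B(v, gx) = B((1_V-g)x, y) + B((1_V-g)y, gx) \]
and using $B(gy, gx) = B(y,x)$ together with symmetry of $B$, the four terms cancel in pairs, giving $[u,v]_g = -[v, gu]_g$.

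The only real obstacle is the well-definedness step, which hinges on recognizing that $\ker(1_V-g) \subseteq V_g^{\perp}$; once this is observed the rest is bookkeeping, with properties (1) and (2) following from (\ref{walleqnwell}) and the symmetry of $B$ respectively.
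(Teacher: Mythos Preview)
Your proof is correct and follows essentially the same approach as the paper's, with all the key steps (non-degeneracy, isometry, properties (1) and (2)) argued in the same way. The only minor variation is in the well-definedness step: you show directly that $\ker(1_V-g)\subseteq V_g^{\perp}$ with respect to $B$, whereas the paper applies identity (\ref{walleqnwell}) twice to pass from $B((1_V-g)x,y)$ to $B((1_V-g)x_1,y_1)$; your route is slightly more conceptual but otherwise equivalent.
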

\begin{proof}
	Let $u=(1_V-g)x=(1_V-g)x_1$ and $v=(1_V-g)y=(1_V-g)y_1$ be in $V_g$ 
	for some $x,y,x_1,y_1 \in V$.
	Then we have 
	\begin{align*}
	B((1_V-g)x,y) &  =  B((1_V-g)x_1,y) \\
	& =  B((1_V-g)x_1,(1_V-g)y)-B(x_1,(1_V-g)y) & (\text{by}\;(\ref{walleqnwell}))\\
	& =  B((1_V-g)x_1,(1_V-g)y_1)-B(x_1,(1_V-g)y_1) \\
	& =  B((1_v-g)x_1,y_1) & (\text{by}\;(\ref{walleqnwell})).
	\end{align*}                 
	So the map $[,]_g$ is well-defined.
	Let $u\in V_g$, and if 
	$[u,v]_g=0$ for all $v \in V_g$, then $B(u, y)=0$ for all $y \in V$, 
	which implies $u=0$, as $B$ is nondegenerate. 
	Hence $[,]_g$ is nondegenerate.
	It follows immediately that $[,]_g$ is a bilinear form on $V_g$, as $B$ is so.
	Now $[g |_{V_g}u,g |_{V_g}v]=[g |_{V_g}u,g |_{V_g}(1_{V}-g)y]
	=B(g |_{V_g}u,g |_{V_g}y)=B(u, y)=[u,(1_{V}-g)y]_g=[u, v]_g$. 
	Hence $g$ is an isometry on $V_g$ with respect to the new form 
	$[,]_g$. Furthermore, 
	let $u,v \in V_{g}$ then $u=(1_{V}-g)x$ and $v=(1_{V}-g)y$ for some $x,y \in V$. 
	We have 
	\begin{align*}
	B(u,v)&=B(x-gx, y-g y) \\
	&=B(x,y)-B(x,g y)+B(x,y)-B(g x,y) \\
	&=B(x, (1_{V}-g)y)+B((1_{V}-g)x, y) \\
	&=B(v,x)+B(u,y) \\
	&=[v, (1_{V}-g)x]_{g} + [u,(1_{V}-g)y]_{g} \\ 
	&=[v, u]_{g} + [u, v]_{g}.
	\end{align*}
	Hence $[u, v]_{g}+[v, u]_{g}=B(u, v)$, which proves (1). 
	Now we have 
	\begin{align*}
	[(1_{V}-g)x, (1_{V}-g)y]_{g}&=B((1_{V}-g)x, y) \\
	&=-B((1_{V}-g)y, g x) \\
	&=-[(1_{V}-g)y, (1_{V}-g)g x]_{g}.
	\end{align*}
	Therefore $[u, v]_{g}=-[v, g u]_{g}$, proving (2). 
	Hence the Proposition.
\end{proof}
We have seen that the Wall's form $[,]_g$ is always non-degenerate, but 
need not be symmetric. Here we give a criterion for the Wall's form $[,]_g$ 
to be symmetric. We have (see p.116~\cite{Ha}):
\begin{proposition}
	The Wall's form $[,]_g$ is symmetric if and only if $g^2=Id$.
\end{proposition}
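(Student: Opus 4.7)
The plan is to leverage the two identities from Proposition~\ref{welldefined}, namely $[u,v]_g + [v,u]_g = B(u,v)$ and $[u,v]_g = -[v,gu]_g$, together with the non-degeneracy of $[,]_g$ on $V_g$ that was already established. The symmetry condition is close to being encoded in identity (2), so I expect both directions to fall out directly once I understand how $g$ acts on its own residual space $V_g$.

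For the forward direction, I would assume $[,]_g$ is symmetric and combine symmetry with identity (2): writing $[v,u]_g = [u,v]_g = -[v,gu]_g$ and rearranging yields $[v, u+gu]_g = 0$ for all $v \in V_g$ (with $u \in V_g$). Non-degeneracy of $[,]_g$ forces $gu = -u$ on $V_g$, i.e., $g|_{V_g} = -\mathrm{Id}$, which in particular gives $g^2|_{V_g} = \mathrm{Id}$. To lift this to all of $V$, I would introduce the fixed space $V^g := \ker(1_V - g)$, on which $g^2 = \mathrm{Id}$ trivially, and show that $V = V_g \oplus V^g$. Rank-nullity gives $\dim V_g + \dim V^g = \dim V$, and the intersection $V_g \cap V^g$ is trivial because a vector in it would satisfy both $gu = u$ and $gu = -u$, forcing $u = 0$ since $\mathrm{char}\, k \neq 2$. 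Therefore $g^2 = \mathrm{Id}$ on $V$.

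For the converse, assume $g^2 = \mathrm{Id}$. The key observation is that $(1_V+g)(1_V-g) = 1_V - g^2 = 0$, so every element of $V_g = (1_V - g)V$ lies in $\ker(1_V+g)$, meaning $g$ acts on $V_g$ as $-\mathrm{Id}$. Feeding this into identity (2) instantly gives $[u,v]_g = -[v,gu]_g = -[v,-u]_g = [v,u]_g$, which is the required symmetry.

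The only step requiring a little care is the decomposition $V = V_g \oplus V^g$ in the forward direction; the characteristic assumption $\mathrm{char}\, k \neq 2$ is what makes $V_g \cap V^g = 0$ and thus allows the restriction identity $g|_{V_g} = -\mathrm{Id}$ to promote to $g^2 = \mathrm{Id}$ globally. Everything else is a short manipulation of the two identities of Proposition~\ref{welldefined}.
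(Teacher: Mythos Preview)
Your proposal is correct. Both directions work, but there is a notable simplification you missed in the forward direction and an interesting contrast with the paper's converse.

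In the forward direction, once you have established $g|_{V_g}=-\mathrm{Id}$, you are already done: this statement says precisely that $(1_V+g)u=0$ for every $u=(1_V-g)x\in V_g$, i.e., $(1_V+g)(1_V-g)=0$ as an operator on all of $V$, which is $g^2=\mathrm{Id}$. The decomposition $V=V_g\oplus V^g$ is therefore superfluous; the paper proceeds exactly this way and avoids the extra step. (Your decomposition argument is nonetheless valid, and your use of non-degeneracy is legitimate because $V_g$ is $g$-stable, so $u+gu\in V_g$.)

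For the converse, your argument is actually cleaner than the paper's. The paper unwinds the definition of $[\,,\,]_g$ back through $B$ via a chain of six equalities using $g^2=\mathrm{Id}$ twice, whereas you observe directly that $g^2=\mathrm{Id}$ forces $g|_{V_g}=-\mathrm{Id}$ and then apply identity~(2) in one line: $[u,v]_g=-[v,gu]_g=-[v,-u]_g=[v,u]_g$. Both routes rely on the same underlying fact $(1_V+g)(1_V-g)=0$, but yours exploits Proposition~\ref{welldefined} more efficiently.
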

\begin{proof}
	Suppose $[,]_g$ is symmetric, then $[u,v]_g=[v,u]_g$ for all $u,v \in V_g$. 
	Then by part 2 of the Proposition~\ref{welldefined}, we have 
	$-[v,gu]_g=[v,u]_g$. So $[v,(1_V +g)u]_g=0$ for all $u,v \in V_g$, which implies that 
	$(1_V+g)(1_V-g)x=0$ for all $x\in V$, where $u=(1_V-g)x$. Hence $g^2=Id$.
	
	Conversely, suppose that $g^2=Id$, then we have 
	\begin{align*}
	[u,v]_g &=-[v,gu]_g & (\text{by (2) of Proposition~\ref{welldefined}}) \\
	&=-[v,g(1_V-g)x]_g & (\text{where $u=(1_V-g)x$ for some $x \in V$})\\
	&=-B(v,gx) \\
	&=-B(gv,x) & (\text{since $g^2=Id$})\\
	&=-B(g(1_V-g)y,x) & (\text{where $v=(1_V-g)y$ for some $y\in V$})\\
	&=B((1_V-g)y,x) & (\text{since $g^2=Id$})\\
	&=[(1_v-g)y,(1_V-g)x]_g \\
	&=[v,u]_g.
	\end{align*}
	Therefore the form $[,]_g$ is symmetric.
\end{proof}
Wall developed this to classify the conjugacy class of $g$. We have (see Theorem 1.3.1~\cite{Wa2}): 
\begin{proposition}
	Let $g,h \in O(n,k)$. Then $g$ is conjugate to $h$ in $O(n,k)$ if and only if 
	$(V_g, [,]_g)\approx (V_h, [,]_h)$.
\end{proposition}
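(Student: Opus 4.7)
The forward direction is essentially bookkeeping. Suppose $h = \phi g \phi^{-1}$ with $\phi \in O(n,k)$. Then
$$1_V - h = \phi(1_V - g)\phi^{-1},$$
so $V_h = \phi(V_g)$ and $\phi|_{V_g}\colon V_g \to V_h$ is a linear isomorphism. I would verify directly that $\phi|_{V_g}$ carries $[,]_g$ to $[,]_h$: for $u,v \in V_g$, writing $v = (1_V-g)y$, we have $\phi v = (1_V-h)(\phi y)$, and then $[\phi u, \phi v]_h = B(\phi u, \phi y) = B(u,y) = [u,v]_g$, using only the definition and $\phi \in O(n,k)$.

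For the reverse direction, assume we are given an isometry $\sigma\colon(V_g,[,]_g) \to (V_h,[,]_h)$. The plan has three steps. First, by part (1) of Proposition \ref{welldefined}, $B(u,v) = [u,v]_g + [v,u]_g$ on $V_g$ (and the same on $V_h$), so $\sigma$ is automatically an isometry between $(V_g, B|_{V_g})$ and $(V_h, B|_{V_h})$. Second, I would show $\sigma$ intertwines $g|_{V_g}$ with $h|_{V_h}$. Using part (2) of Proposition \ref{welldefined} twice, once for $g$ and once for $h$, one obtains
$$[\sigma v, \sigma(gu)]_h = -[\sigma u, \sigma v]_h = [\sigma v, h(\sigma u)]_h$$
for all $u,v \in V_g$; non-degeneracy of $[,]_h$ then forces $\sigma g = h \sigma$ on $V_g$.

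The third and main step is to extend $\sigma$ to an element $\phi \in O(n,k)$ satisfying $\phi g \phi^{-1} = h$. A direct computation using $g \in O(n,k)$ yields $V_g^\perp = V^g := \ker(1_V - g)$, and similarly for $h$; in particular $\dim V^g = \dim V^h$. In the regular case, $V_g$ is non-degenerate, so $V = V_g \perp V^g$ and $V = V_h \perp V^h$. Here I would invoke Witt's extension theorem to extend the $B$-isometry $\sigma\colon V_g \to V_h$ by \emph{any} $B$-isometry $V^g \to V^h$ (which exists by comparing discriminants of the complements, or again by Witt); the resulting $\phi$ satisfies $\phi g \phi^{-1} = h$ because on $V_g$ this was step two and on $V^g$ both $g$ and $h$ act as the identity.

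The main obstacle is the non-regular case $V^g \cap V_g \neq 0$. My plan is to reduce to the regular case by orthogonally decomposing $V$ into $g$-stable pieces following the primary decomposition of $g$: the $(t-1)(t+1)$-primary part on the one hand and the part on which $1_V - g$ is invertible on the other; within the first, further split off the $-1$-eigenspace (which is regular) from the unipotent block. On every piece except the unipotent one, $V_g$ is non-degenerate and the regular argument applies. For the unipotent block, one has to choose the extension of $\sigma$ across a complement of $V_g$ inside $V^g + V_g$ carefully so as to match the action of $g$ with that of $h$; the key observation is that the Wall form $[,]_g$ already records, via parts (1) and (2) of Proposition \ref{welldefined}, enough information about $g$ to determine the intertwining up to an ambiguity that lies in the orthogonal group of $V^g/(V_g \cap V^g)$, and this ambiguity is precisely what Witt's extension theorem, applied to this quotient, resolves. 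Assembling the pieces orthogonally produces the required $\phi \in O(n,k)$.
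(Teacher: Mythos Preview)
The paper does not actually prove this proposition; it simply records it with a citation to Wall's paper (Theorem 1.3.1 in \cite{Wa2}). So there is no ``paper's proof'' to compare against, and your attempt must stand on its own.

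Your forward direction is correct, and steps 1 and 2 of the reverse direction are clean and correct. Your treatment of the regular case in step 3 is also fine.

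The non-regular case, however, has a genuine gap. You propose to extend $\sigma$ ``across a complement of $V_g$ inside $V^g+V_g$'', but in the non-regular case $V_g+V^g$ is a \emph{proper} subspace of $V$ (its codimension is $\dim(V_g\cap V^g)$, by rank--nullity and $V_g^\perp=V^g$), so extending over $V_g+V^g$ does not yet give a map on all of $V$. The subsequent appeal to ``Witt's extension theorem, applied to this quotient'' is too vague: you have not explained how an isometry of the quotient $V^g/(V_g\cap V^g)$ is lifted back to an isometry of $V$ that also intertwines $g$ with $h$, nor why the ambiguity in the extension lies only there. The primary decomposition does reduce matters to the unipotent block, but on that block the hard work remains undone.

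There is a much shorter route that avoids the regular/non-regular split altogether. By the full Witt extension theorem (valid in characteristic $\neq 2$ for \emph{arbitrary}, possibly degenerate, subspaces of a non-degenerate space), the $B$-isometry $\sigma\colon V_g\to V_h$ extends to some $\phi\in O(V,B)$. Set $g'=\phi g\phi^{-1}$. A direct computation using only the definition of the Wall form shows $V_{g'}=\phi(V_g)=V_h$ and $[,]_{g'}=[,]_h$. It then suffices to prove the \emph{uniqueness} statement: if $V_{g'}=V_h$ and $[,]_{g'}=[,]_h$, then $g'=h$. Indeed, for any $y\in V$ and $u\in V_h$ one has $[u,(1_V-g')y]_{g'}=B(u,y)=[u,(1_V-h)y]_h=[u,(1_V-h)y]_{g'}$; non-degeneracy of $[,]_{g'}$ in the second variable (which follows from $V_h^\perp=V^h=\ker(1_V-h)$) forces $(1_V-g')y=(1_V-h)y$ for all $y$, whence $g'=h$. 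This is essentially Wall's argument and replaces the delicate extension construction you were attempting.
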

\noindent
Now the residual space $V_{g}$ is equipped with two bilinear forms: 
\begin{enumerate}
	\item The Wall's form $(V_{g}, [,]_{g})$, for this we use the notation $V_g$.
	\item Restriction of the usual form $B$ on $V_g$ is, denoted by $(V_{g}, B)$.
\end{enumerate}
\subsection{Spinor norm using Wall's theory}
We will now define the spinor norm using Wall's theory, which will be useful for our purpose.
\begin{definition}
	The \emph{spinor norm} is a group homomorphism  
	$\Theta _{W}:O(n,k)\rightarrow k^{\times}/k^{\times2}$ defined by  
	$\Theta_{W}(g)=(dV_{g})k^{\times2}$, where $V_g$ is defined as above.
\end{definition}
Let $\sigma_{u}$ be a reflection in $O(n,k)$.
Then the residual space is $V_{\sigma_{u}}=\langle u \rangle$, 
therefore $dV_{\sigma_{u}}=\mathrm{det}([u, u]_{\sigma_{u}})=Q(u)$. 
Hence $\Theta _{W}(\sigma_{u})=Q(u)k^{\times2}$, 
which is same as the spinor norm computed in Section~\ref{classicalspinornorm}.
The following Proposition and its Corollary are due to A. J. Hahn~\cite{Ha}. 
We include the proof for the sake of completeness. 
\begin{proposition}\label{hahn}
	Let $g \in O(n,k)$ be regular with residual space $V_{g}$. 
	Then $\Theta_{W}(g)=(\mathrm{det}(1_{V}-g)|_{V_{g}})(d(V_g ,B))k^{\times2}$.
\end{proposition}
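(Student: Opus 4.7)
The goal is to compute the discriminant of the Wall's form $[,]_g$ on $V_g$ in terms of data that is easy to extract: the determinant of $(1_V-g)$ restricted to $V_g$, and the discriminant of the restriction of the original form $B$ to $V_g$. My plan is to set up both forms in a single basis of $V_g$ and read off the matrix identity directly from the defining formula $[u,v]_g = B(u,y)$ where $v=(1_V-g)y$.

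First I would verify that the restriction $(1_V-g)|_{V_g}$ is an automorphism of $V_g$. For any isometry $g$, a short computation using $B(gu,gv)=B(u,v)$ shows that $V_g^{\perp} = \ker(1_V-g)$. Since $g$ is regular, $V_g$ is non-degenerate, so $V_g \cap V_g^{\perp} = 0$, which means $(1_V-g)|_{V_g}$ is injective, hence a bijection of $V_g$ onto itself. In particular, for any basis $\{v_1,\dots,v_m\}$ of $V_g$ there exist unique $w_1,\dots,w_m \in V_g$ with $v_j = (1_V-g)(w_j)$.

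Next I would express everything as matrices in the chosen basis $\{v_1,\dots,v_m\}$. Let $T$ be the matrix of $(1_V-g)|_{V_g}$, let $\beta = (B(v_i,v_j))$ be the matrix of $B|_{V_g}$, and let $A=(a_{kj})$ be defined by $w_j = \sum_k a_{kj} v_k$. Since $(1_V-g)(w_j)=v_j$ reads $TA = I$, we get $A = T^{-1}$. The matrix of $[,]_g$ in this basis is
\[
M_{ij} = [v_i,v_j]_g = B(v_i,w_j) = \sum_k a_{kj} B(v_i,v_k) = (\beta A)_{ij},
\]
so $M = \beta T^{-1}$. Taking determinants gives $\det(M) = \det(\beta)\det(T)^{-1}$, and since $\det(T)^{-1}\equiv \det(T)\pmod{k^{\times 2}}$, we obtain modulo squares
\[
dV_g = \det(M)\, k^{\times 2} = \det(T)\,\det(\beta)\, k^{\times 2} = \bigl(\det(1_V-g)|_{V_g}\bigr)\bigl(d(V_g,B)\bigr) k^{\times 2},
\]
which is the desired formula for $\Theta_W(g)$.

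The main conceptual point, and the only nontrivial step, is establishing that $(1_V-g)|_{V_g}$ is invertible, because everything else is a direct unwinding of the definition $[u,v]_g = B(u,y)$ into matrix form. This invertibility rests squarely on the regularity hypothesis via the identification $V_g^{\perp}=\ker(1_V-g)$; once that is in hand, the computation above is forced and no further subtlety is required.
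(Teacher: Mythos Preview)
Your proof is correct and follows essentially the same route as the paper: both establish invertibility of $(1_V-g)|_{V_g}$ via the identity $V_g^{\perp}=\ker(1_V-g)$ together with regularity, then derive the matrix relation $M=\beta T^{-1}$ for the Wall form versus $B|_{V_g}$ and pass to determinants modulo squares. The only cosmetic difference is that the paper treats the trivial case $V_g=\{0\}$ separately and writes $\det M=\det N\,\det T\,(\det T^{-1})^2$ to exhibit the square factor explicitly.
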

\begin{proof}
	If $V_{g}=\{0\}$, then $\mathrm{det}((1_{V}-g)|_{V_{g}})=1$, since $\mathrm{det}(g|_{\{0\}})=1$, 
	for any $g \in \mathrm{Hom}_{k}(V, V)$ 
	and $dV_{g}=k^{\times2}$. Hence, in this case the result follows immediately. 
	Suppose now $V_{g} \neq \{0\}$. Since $g$ is regular, $V_{g}\cap V_{g}^{\perp}=\{0\}$, and 
	$\mathrm{ker}(1_{V}-g)=V_{g}^{\perp}$. 
	So $\mathrm{ker}(1_{V}-g)|_{V_{g}}=V_{g}\cap \, \mathrm{ker}(1_{V}-g)=V_{g}\cap V_{g}^{\perp}=\{0\}$. 
	Hence $(1_{V}-g)|_{V_{g}} \in GL(V_g)$. 
	Therefore $[u, v]_g=B(u, (1_{V}-g)|_{V_{g}}^{-1}v)$ for all $u,v \in V_{g}$. 
	Fix any basis for $V_{g}$, say $\{e_{1}, e_{2},\ldots, e_{r}\}$. 
	Let $M, N$ be the matrices corresponding to the forms
	$[,]_g$ and $B$ respectively, and let $T$ be the matrix 
	corresponding to the linear transformation $(1_{V}-g)|_{V_{g}}$
	with respect to the above mentioned basis. 
	Then using $[u, v]_g=B(u, (1_{V}-g)|_{V_{g}}^{-1}v)$, we get $M=NT^{-1}$. 
	Then $\mathrm{det}M=\mathrm{det}N\mathrm{det}T(\mathrm{det}(T^{-1}))^2$. 
	Therefore $dV_{g}=d(V_{g},B)\mathrm{det}((1_{V}-g)|_{V_{g}})(\mathrm{det}(T^{-1}))^2$. 
	Hence $\Theta_{W}(g)=(\mathrm{det}(1_{V}-g)|_{V_{g}})(d(V_{g} ,B))k^{\times2}$.
\end{proof}
\begin{corollary}\label{spinorunipotent}
	Let $g\in O(n,k)$ be unipotent. Then $\Theta_{W}(g)=k^{\times2}$.
\end{corollary}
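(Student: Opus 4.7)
The strategy is a case analysis on $\mathrm{char}\,k$. In positive odd characteristic, $g$ has odd order (a power of $p$); in characteristic zero, $g$ admits a square root in $O(n,k)$ via the truncated $\exp/\log$ series. In either case the image in the exponent-$2$ group $k^\times/k^{\times 2}$ is forced to be trivial.

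Suppose first that $\mathrm{char}\,k = p$ is odd, and write $g = 1 + N$ with $N$ nilpotent. All intermediate binomial coefficients vanish in characteristic $p$, so $(1+N)^p = 1 + N^p$; iterating gives $g^{p^a} = 1 + N^{p^a}$, which equals the identity once $p^a$ exceeds the nilpotency index of $N$. Thus $|g|$ divides a power of $p$ and is in particular odd. Applying the homomorphism $\Theta_W \colon O(n,k) \to k^\times/k^{\times 2}$ to $g^{|g|} = 1$ yields $\Theta_W(g)^{|g|} = k^{\times 2}$. Since the target has exponent $2$ and $\gcd(|g|, 2) = 1$, this forces $\Theta_W(g) = k^{\times 2}$.

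If $\mathrm{char}\,k = 0$, the series $\log(g) = N - N^2/2 + N^3/3 - \cdots$ is a finite sum in $\mathrm{End}(V)$, terminating by nilpotency of $N$ (and with all denominators invertible). Set $Z := \tfrac{1}{2}\log(g)$, which is again nilpotent. The orthogonality relation $g^{*} = g^{-1}$ together with the functoriality of $\log$ gives $(\log g)^{*} = \log(g^{*}) = \log(g^{-1}) = -\log(g)$, so $Z$ is skew-adjoint with respect to $B$. The truncated exponential $h := \exp(Z)$ then satisfies $h^{*} = \exp(Z^{*}) = \exp(-Z) = h^{-1}$, hence $h \in O(n,k)$; moreover $h^2 = \exp(2Z) = \exp(\log g) = g$. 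Consequently $\Theta_W(g) = \Theta_W(h)^2 = k^{\times 2}$.

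The main obstacle is the bookkeeping for the characteristic-zero step: one must verify the formal-series identities $\exp(\log g) = g$, $\log(g^{-1}) = -\log(g)$, and $\exp(Z^{*}) = \exp(Z)^{*}$ at the level of truncated polynomials in a nilpotent endomorphism. These are routine but require careful checking. Taken together, the two cases cover every field of characteristic different from $2$, matching the hypothesis of the corollary.
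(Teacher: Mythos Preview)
Your argument is correct. In odd characteristic the order count is clean, and in characteristic zero the $\exp/\log$ square-root construction goes through because all the series truncate and the adjoint identities follow from $*$ being a polynomial-preserving anti-automorphism on the commutative subalgebra $k[g]$.

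The paper, however, avoids the case split entirely. It works with $-g$ instead of $g$: since a unipotent $g$ has no eigenvalue $-1$, the residual space $V_{-g}$ is all of $V$, so $-g$ is regular and Proposition~\ref{hahn} applies directly to give $\Theta_W(-g) = (2^n\, dV)\,k^{\times 2}$. The same computation gives $\Theta_W(-1_V) = (2^n\, dV)\,k^{\times 2}$, and multiplying the two yields $\Theta_W(g) = k^{\times 2}$. This is characteristic-free and stays entirely inside the Wall-form framework built up in the chapter, at the cost of invoking the somewhat heavier Proposition~\ref{hahn}. Your approach, by contrast, uses nothing about Wall's form beyond the fact that $\Theta_W$ is a homomorphism into a group of exponent $2$; it is more portable, but the characteristic-zero half requires the analytic-flavored detour through truncated series that you flag yourself.
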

\begin{proof}
	The fixed space of $-g$ is $0$, since $-1$ is not an eigenvalue of $g$. 
	Hence, its residual space $V_{-g}=V$, which is non-degenerate. Hence $-g$ is regular. 
	As $g$ is unipotent, there is a basis for $V$ such that the matrix of $g$ is upper triangular with 
	diagonal entries equal to $1$. 
	Therefore by Proposition~\ref{hahn} we have,
	$\Theta_{W}(-g)=(2^{n}dV)k^{\times2}$. 
	Again $-1_{V}$ has no fixed points (except 0), hence $-1_{V}$ is regular with residual space $V$. 
	Therefore $\Theta_{W}(-1_{V})=(2^{n})(dV)k^{\times2}$. 
	Hence $\Theta_{W}(g)=\Theta_W(-g)\Theta_W(-1_{V})=(2^{n}dV)(2^{n}dV)k^{\times2}=k^{\times2}$.
\end{proof}
\section{Springer-Steinberg Form}
Let us fix some notation and terminology. Let $k$ be a perfect field of ${\rm char}\, k \neq 2$ with an involution 
$\sigma$ such that the fixed field of $\sigma$ is $k_0$. 
Let $V$ be a vector space over $k$, equipped with a non-degenerate hermitian form $B$.
Let $T\in U(V,B)$ with minimal polynomial $f(x)$. We define a $k$-algebra $E^T:=\frac{k[x]}{<f(x)>}$. 
Clearly, $V$ is an $E^T$-module, denoted by $V^T$.
The $E^T$-module structure on $V^T$ determines $GL(n)$-conjugacy class of $T$ 
(see~\cite{As} and~\cite{SS} for more details). 
To determine conjugacy classes of $T$ within $U(V,B)$, 
Springer and Steinberg defined a hermitian form $H^T$ on $V^T$, 
called \emph{Springer-Steinberg form}, denoted by $(H^T, V^T)$
(see 2.6 in~\cite{SS} Chapter IV). Since $f(x)$ is self-U-reciprocal (see~\ref{self} for definition), 
there exists a unique involution 
$\alpha$ on $E^T$ such that $\alpha(x)=x^{-1}$ and $\alpha$ is an extension of $\sigma$ on scalars. 
Thus $(E^T, \alpha)$ is an algebra with involution. They prove that there exists a $k$-linear function 
$l^T\colon E^T \rightarrow k$ such that the symmetric bilinear form $\bar{l^T}\colon E^T \times E^T \rightarrow k$ 
given by $\bar{l^T}(a,b):=l^T(ab)$ is non-degenerate with $l^T(\alpha(e))=l^T(e)$ for all $e\in E^T$. 
It follows that there exists a hermitian form $H^T$ on $E^T$-module $V^T$ (with respect to $\alpha$) 
satisfies $B(eu, v) = l^T(eH^T(u,v))$ for all $e\in E^T$, and $u,v \in V^T$ (see p. 254, 2.5~\cite{SS}). 
Let $S,T \in U(V,B)$, then the following commutative diagrams 
clarify what we are talking about so far, which will also be useful 
in the following proposition (Proposition~\ref{conjugacyunitary}). 
\begin{center}
	\begin{tikzpicture}[scale=0.8]
	\node (A1) at (-1,3) {$V^S\times V^S$};
	\node (B1) at (2,3) {$E^S$};
	\node (B2) at (2,1) {$k$};
	\node (P1) at (4,3) {$V^T\times V^T$};
	\node (Q1) at (7,3) {$E^T$};
	\node (Q2) at (7,1) {$k$};
	\node (a1) at (9,3) {$E^S$};
	\node (b1) at (12,3) {$E^T$};
	\node (b2) at (12,1) {$k$};
	\draw[->](B1)to node [right]{$l^S$} (B2);
	\draw[->](A1) to node [below]{$B$} (B2);
	\draw[->](A1) to node [above]{$H^S$} (B1);
	\draw[->](Q1)to node [right]{$l^T$} (Q2);
	\draw[->](P1) to node [below]{$B$} (Q2);
	\draw[->](P1) to node [above]{$H^T$} (Q1);
	\draw[->](b1)to node [right]{$l^T$} (b2);
	\draw[->](a1) to node [below]{$l^S$} (b2);
	\draw[->](a1) to node [above]{$f$} (b1);
	\end{tikzpicture}
\end{center}

\begin{definition}
	Let $(V_1,H_1)$ and $(V_2,H_2)$ be two hermitian spaces over 
	$E_1$ and $E_2$ respectively, where $E_1$ and $E_2$ are isomorphic 
	modules over $k$ and let $f\colon E_1 \rightarrow E_2$ be an isomorphism. 
	Then we say $(V_1,H_1)$ and $(V_2,H_2)$ are \emph{equivalent}, denoted as 
	$(V_1,H_1) \approx (V_2,H_2)$, if there exists a $k$-isomorphism 
	$\varphi \colon V_1 \rightarrow V_2$ such that 
	\begin{enumerate}
		\item $\varphi(ev)=f(e)\varphi(v)$ and 
		\item $H_2(\varphi(u), \varphi(v))=f(H_1(u,v))$
	\end{enumerate}
	for all $u,v\in V_1$ and all $e\in E_1$.
\end{definition}
\noindent
We need the following (see p. 255, 2.7 and 2.8~\cite{SS} Chapter IV): 
\begin{proposition}\label{conjugacyunitary}
	With the notation as above, let $S$ and $T \in U(V,B)$. Then,
	\begin{enumerate}
		\item the elements $S$ and $T$ are conjugate in $U(V,B)$ if and only if 
		$(V^S,H^S)$ and $(V^T,H^T)$ are equivalent.
		\item The centralizer of $T$ in $U(V,B)$ is $\mathcal{Z}_{U(V,B)}(T)=U(V^T, H^T)$.
	\end{enumerate}
\end{proposition}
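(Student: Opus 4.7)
The plan is to translate both statements into the language of $(E^T,\alpha)$-modules equipped with the hermitian form $H^T$, using the defining identity $B(eu,v)=l^T(eH^T(u,v))$ as the dictionary between the ambient form $B$ on $V$ and the Springer--Steinberg form. The starting observation is that $S$ and $T$ being conjugate in $GL(V)$ forces them to have the same minimal polynomial $f(x)$, so $E^S$ and $E^T$ are canonically identified as $k$-algebras with involution; in particular the functionals $l^S$ and $l^T$ correspond under this identification, which is exactly what the right-hand triangle in the displayed diagram encodes.

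For part (1), first I would suppose $g\in U(V,B)$ satisfies $gSg^{-1}=T$ and show that $g\colon V^S\to V^T$ is an equivalence of hermitian spaces with respect to the canonical isomorphism $f\colon E^S\to E^T$. Semilinearity $g(ev)=f(e)g(v)$ is immediate from $gS=Tg$. For the compatibility of forms, I would use the characterising identity to write
\begin{equation*}
l^T\bigl(eH^T(gu,gv)\bigr)=B(e\,gu,gv)=B(g(f^{-1}(e)u),gv)=B(f^{-1}(e)u,v)=l^S\bigl(f^{-1}(e)H^S(u,v)\bigr)
\end{equation*}
for every $e\in E^T$, and then invoke non-degeneracy of the bilinear form $\overline{l^T}$ together with $l^T\circ f=l^S$ to conclude $H^T(gu,gv)=f(H^S(u,v))$. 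Conversely, given an equivalence $\varphi\colon V^S\to V^T$ with respect to some isomorphism $f$, semilinearity immediately yields $\varphi S\varphi^{-1}=T$, and reading the identity in the other direction gives $B(\varphi u,\varphi v)=l^T(H^T(\varphi u,\varphi v))=l^T(f(H^S(u,v)))=l^S(H^S(u,v))=B(u,v)$, so $\varphi\in U(V,B)$.

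For part (2), I would begin with the purely linear fact that $g$ commutes with $T$ if and only if $g$ is an $E^T$-module automorphism of $V^T$, so $\mathcal{Z}_{GL(V)}(T)=\Aut_{E^T}(V^T)$. Intersecting with $U(V,B)$ and using part (1) with $S=T$ shows $\mathcal{Z}_{U(V,B)}(T)\subseteq U(V^T,H^T)$. For the reverse inclusion, given $E^T$-linear $g$ preserving $H^T$, the identity $B(eu,v)=l^T(eH^T(u,v))$ applied to $gu,gv$ and $u,v$ gives the same value for every $e\in E^T$, hence $g$ preserves $B$.

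The main obstacle I expect is not the algebraic manipulation but keeping careful track of the semilinearity data: one has to verify that the isomorphism $f\colon E^S\to E^T$ really does intertwine $\alpha^S$ with $\alpha^T$ and $l^S$ with $l^T$ (which ultimately rests on uniqueness of the involution sending $x\mapsto x^{-1}$ on $k[x]/\langle f(x)\rangle$ and on the normalisation of $l^T$), so that the computation above genuinely lives inside the single pair $(E^T,\alpha,l^T)$ after transport of structure. Once this bookkeeping is in place, the non-degeneracy of $\overline{l^T}$ does all the remaining work in both parts.
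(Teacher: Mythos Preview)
Your proposal is correct and follows essentially the same route as the paper: use the defining identity $B(eu,v)=l^T(eH^T(u,v))$ as the dictionary, check semilinearity from $gS=Tg$, and read off both preservation of $H^T$ and preservation of $B$ from that identity; part (2) is then the $S=T$, $f=\mathrm{id}$ specialisation. If anything, your forward computation in part (1) is more explicit than the paper's, which writes only the $e=1$ instance $l^S(f^{-1}(H^T(\varphi u,\varphi v)))=l^S(H^S(u,v))$ and then concludes equality of the arguments---your use of the full $e$-parametrised identity together with non-degeneracy of $\overline{l^T}$ is exactly what justifies that step.
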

\begin{proof}
	\begin{enumerate}
		\item Suppose $S$ and $T$ are conjugate in $U(V,B)$. Then there exists a $\varphi \in U(V,B)$ such that 
		$T=\varphi S \varphi^{-1}$. 
		Then $\varphi : V^S \rightarrow V^T$ is a $k$-isomorphism. Here also $f:E^S \rightarrow E^T$ is a 
		$k$-isomorphism such that $f(S)=T$. Now for $v\in V^S$, $\varphi(S^mv)=\varphi\circ S^m(v)=T^m\circ \varphi(v)=
		f(S^m)\varphi(v)$. It then follows that $\varphi(ev)=f(e)\varphi(v)$ for all $e \in E^S$ and for all $v\in V^S$.
		Let $u,v \in V^S$, then $l^S(f^{-1}(H^T(\varphi u, \varphi v)))=l^T(H^T(\varphi u, \varphi v))=
		B(\varphi u, \varphi v)=B(u,v)=l^S(H^S(u,v))$. 
		Therefore $H^T(\varphi u, \varphi v)=f(H^S(u,v))$. Hence $(V^S, H^S) \approx (V^T, H^T)$.
		
		Conversely, suppose that $(V^S, H^S)$ and $(V^T, H^T)$ are equivalent. 
		Then there exists a $k$-isomorphism $\varphi : V^S \rightarrow V^T$  
		such that $H^T(\varphi u, \varphi v)=f(H^S(u,v))$ for all $u,v \in V^S$, where $f:E^S \rightarrow 
		E^T$ is a $k$-isomorphism such that $f(S)=T$ and 
		$\varphi (Sv)=f(S)\varphi(v)$ for all $v \in V^S$ and $S\in E^S$.
		For $v\in V^S$, $\varphi S(v)=\varphi(Sv)=f(S)\varphi(v)=T\varphi(v)=(T\varphi)(v)$, then $\varphi S=T \varphi$, i.e.,
		$\varphi S \varphi^{-1}=T$. Now, look at $B(\varphi u, \varphi v)=l^T(H^T(\varphi u, \varphi v))=l^T(f(H^S(u,v)))=
		l^S(H^S(u,v))=B(u,v)$ for all $u,v \in V$, then $\varphi $ is an isometry. 
		Hence $S$ and $T$ are conjugate in $U(V,B)$.
		\item Enough to show an isometry $\varphi$ is in $\mathcal{Z}_{U(V,B)}(T)$ if and only if $\varphi$ preserves $H^T$.
		Let $\varphi\in U(V,B)$ such that $\varphi T=T \varphi $. Then we get $H^T(\varphi u, \varphi v)=H^T(u,v)$ for all 
		$u,v \in V^T$ (here we replace $S$ by $T$ and $f$ by identity in part (1)).
		Conversely, suppose $\varphi$ preserves $H^T$, then $B(\varphi u, \varphi v)=l^T(H^T(\varphi u,\varphi v))=
		l^T(H^T(u,v))=B(u,v)$. So $\varphi$ is an isometry. Also as $\varphi T(v)=T\varphi(v)$ for all $v\in V$, then 
		$\varphi T=T\varphi$. Therefore $\varphi \in \mathcal{Z}_{U(V,B)}(T)$. 
		Hence $\mathcal{Z}_{U(V,B)}(T)=U(V^T,H^T)$.
	\end{enumerate}
\end{proof}

We can decompose $E^T=E_{1}\oplus E_{2} \oplus \cdots \oplus E_{r}$, 
where $E_{i}$ are indecomposable subalgebras of $E^T$ with respect to $\alpha$, i.e.,
$E_i$ are not direct sums of non-trivial $\alpha$-stable subalgebras (see section 2.2 Chapter IV of~\cite{SS}). 
The restriction of $\alpha$ to $E_i$ is an involution on $E_i$ denoted by $\alpha_{i}$. 
Clearly, $E_{i}$'s, are of one of the following forms 
according to the decomposition of $f(x)$ (see Equation (\ref{polydecomposition})): 
\begin{itemize}
	\item $\frac{k[x]}{<p(x)^d>}$, where $p(x)$ is an irreducible self-U-reciprocal polynomial.
	\item $\frac{k[x]}{<q(x)^d>} \oplus \frac{k[x]}{<\tilde{q}(x)^d>}$, where $q(x)$ is irreducible and not self-U-reciprocal.
\end{itemize}
In the second case, the two components $\frac{k[x]}{<q(x)^d>}$ and $\frac{k[x]}{<\tilde{q}(x)^d>}$ are isomorphic 
local rings (For $d=1$, let $\lambda$ be a root of $q(x)$ then $\bar{\lambda}^{-1}$ is a root 
of $\tilde{q}(x)$. Therefore $\frac{k[x]}{<q(x)>}\cong k(\lambda)\cong k(\bar{\lambda}^{-1})\cong 
\frac{k[x]}{<\tilde{q}(x)>}$. Let $\varphi$ be an isomorphism sending $x\mapsto \varphi(x)$. Now for $d>1$, 
define $\psi : \frac{k[x]}{<q(x)^d>}\rightarrow \frac{k[x]}{<\tilde{q}(x)^d>}$ via 
$\psi(f(x)+<q(x)^d>):=f(\varphi(x))+<\tilde{q}(x)^d>$, which is a ring homomorphism. Similarly, 
we can define a map other way. Hence the isomorphism). 
The restriction of $\alpha$ is given by $\alpha(a,b)=(b,a)$ via the isomorphism. 
Using Wall's approximation theorem (Corollary~\ref{Wall's}) it's easy to see that all hermitian forms over 
such rings are equivalent. Thus to determine equivalence of $H^T$, 
we need to look at modules over rings of the first kind.
\subsection{Wall's approximation theorem}
We recall a theorem of Wall (see Theorem 2.2.1~\cite{Wa2}), 
which will be useful for further analysis. Also, see Asai (Proposition 2.5~\cite{As})
for more details.
Let $R$ be a commutative ring with $1$, 
$\mathcal{J}$ be its Jacobson radical, and $\alpha$ be an involution on $R$. 
Let $(V,B)$ be a non-degenerate hermitian space of rank $n$ over $R$. 
We define $\underline {V}:= \frac{V}{\mathcal{J}V}$ a module over $\underline {R}:=\frac{R}{\mathcal{J}}$. 
Now $B$ induces a hermitian form $\underline {B}$ on $\underline{V}$ with respect to the involution 
$\underline{\alpha}$ of $\underline{R}$ induced by $\alpha$. Then we have 
(Theorem 2.2.1~\cite{Wa2}): 
\begin{theorem}[Wall's approximation theorem]\label{wall'sapproximationthm}
	With the notation as above,
	\begin{enumerate}
		\item any non-degenerate hermitian form over $\underline{R}$ 
		is induced by some non-degenerate hermitian form over $R$.
		\item Let $(V_{1}, B_{1})$ and $ (V_{2}, B_{2})$ be non-degenerate hermitian spaces over $R$, and 
		correspondingly, $(\underline{V_{1}}, \underline{B_{1}})$ and $(\underline{V_{2}}, \underline{B_{2}})$ be 
		non-degenerate hermitian spaces over $\underline{R}$. 
		Then $(V_{1}, B_{1})$ is equivalent to $(V_{2}, B_{2})$ if and only if $(\underline{V_{1}}, \underline{B_{1}})$ 
		is equivalent to $(\underline{V_{2}}, \underline{B_{2}})$.
	\end{enumerate}
\end{theorem}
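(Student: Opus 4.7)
My plan is to prove part (1) by a direct lift of the matrix of the hermitian form, and part (2) by first lifting the modular isometry to an $R$-linear isomorphism via Nakayama's lemma and then correcting it to an actual isometry by a Hensel-style approximation.

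For part (1), I would fix a basis of $\underline V$ and write the given form as a hermitian matrix $\underline\beta$ over $\underline R$. To build a hermitian lift $\beta\in M_n(R)$, I would lift the above-diagonal entries $\underline\beta_{ij}$ ($i<j$) to arbitrary elements $b_{ij}\in R$, set $b_{ji}:=\alpha(b_{ij})$, and for each $\underline\alpha$-fixed diagonal entry replace an arbitrary lift $a_i$ by the symmetrized lift $(a_i+\alpha(a_i))/2$; here $2$ is a unit in $R$ because its image in $\underline R$ is invertible (as $\mathrm{char}\,k\neq 2$) and preimages of units under $R\to\underline R$ are units, $\mathcal J$ being the Jacobson radical. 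The resulting $\beta$ is hermitian and reduces to $\underline\beta$; the same unit-lifting argument applied to $\det\underline\beta\in\underline R^\times$ gives $\det\beta\in R^\times$, so the form on $V=R^n$ defined by $\beta$ is non-degenerate and induces $\underline B$.

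The forward direction of part (2) is immediate, since reduction modulo $\mathcal J$ sends isometries to isometries. For the reverse direction, given the modular isometry $\underline\varphi$, I would pick any $R$-linear lift $\psi\colon V_1\to V_2$; Nakayama's lemma forces $\psi$ to be an $R$-isomorphism. Pulling back, $B'(u,v):=B_2(\psi u,\psi v)$ is a non-degenerate hermitian form on $V_1$ with $B'\equiv B_1\pmod{\mathcal J}$. The problem thus reduces to the following key lemma: if $B$ and $B'$ are non-degenerate hermitian forms on a free $R$-module $V$ with $B\equiv B'\pmod{\mathcal J}$, then there is an isometry $\tau\colon(V,B)\to(V,B')$ with $\tau\equiv\mathrm{Id}\pmod{\mathcal J}$; the required isometry lifting $\underline\varphi$ is then $\varphi:=\psi\circ\tau^{-1}$.

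To prove the lemma, let $\beta,\beta'$ be the matrices of $B,B'$ and set $\gamma:=\beta^{-1}\beta'\in I+M_n(\mathcal J)$. I would introduce the twisted anti-involution $\sigma(Q):=\beta^{-1}\tr\bar Q\beta$ on $M_n(R)$; a short computation using $\tr\bar\beta=\beta$ and $\tr\bar{\beta'}=\beta'$ shows $\sigma(\gamma)=\gamma$. The task becomes to find a $\sigma$-symmetric $P\in I+M_n(\mathcal J)$ with $P^2=\gamma$, which will serve as the matrix of $\tau$. I would construct $P$ as a square root of $\gamma$ inside the commutative subalgebra $R[\gamma]\subseteq M_n(R)$ by Newton iteration, setting $P_0:=I$ and $P_{k+1}:=\tfrac12(P_k+\gamma P_k^{-1})$; since each $P_k$ is a polynomial in $\gamma$ with $\alpha$-fixed coefficients, it is automatically $\sigma$-symmetric. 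The main obstacle is convergence: the error $P_k^2-\gamma$ lies in quadratically deeper powers of $\mathcal J$, so extracting a limit $P$ requires an additional ring-theoretic hypothesis such as $\mathcal J$-adic completeness or nilpotence of $\mathcal J$ (the setting in which Wall applies this theorem), at which point the iteration stabilizes and yields the required $\sigma$-symmetric square root.
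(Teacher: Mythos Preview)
The paper does not actually prove this theorem; it is quoted from Wall~\cite{Wa2} (Theorem~2.2.1) and Asai~\cite{As} (Proposition~2.5) without proof, and only the Corollary that follows is argued in the text. So there is no ``paper's own proof'' to compare against; I can only comment on the soundness of your sketch.

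Your strategy is the standard one and is essentially correct. A couple of small computational slips: with your conventions, the composite isometry should be $\varphi=\psi\circ\tau$ rather than $\psi\circ\tau^{-1}$ (check: $B_2(\psi\tau u,\psi\tau v)=B'(\tau u,\tau v)=B(u,v)$); and if $P$ is the matrix of $\tau$ with $\sigma(P)=P$, the isometry condition $\tr\bar P\beta' P=\beta$ together with $\beta'=\beta\gamma$ and $\tr\bar P\beta=\beta P$ forces $P\gamma P=I$, i.e.\ $P^2=\gamma^{-1}$ rather than $P^2=\gamma$. Neither affects the argument, since $\gamma^{-1}\in I+M_n(\mathcal J)$ as well and the same Newton iteration applies.

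You are right to flag the convergence issue: as stated in the thesis the theorem carries no completeness or nilpotence hypothesis on $\mathcal J$, and your iteration genuinely needs one. In every application the thesis makes (Corollary~\ref{Wall's} and the proofs in Chapter~\ref{chapter8}), the ring $R$ is Artinian local of the form $k[x]/\langle p(x)^d\rangle$ or a product of two such, so $\mathcal J$ is nilpotent and the iteration terminates in finitely many steps. Wall's original statement is formulated under such hypotheses, and your remark that the argument goes through ``in the setting in which Wall applies this theorem'' is the correct resolution.
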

\noindent 
For our purpose, we need the following (see also p. 256~\cite{SS}),
\begin{corollary}\label{Wall's}
	Let $V$ be a module over $R=\frac{k[x]}{<q(x)^d>} \oplus \frac{k[x]}{<\tilde{q}(x)^d>}$, and  
	$H_{1}$ and $H_{2}$ be two non-degenerate hermitian forms on $V$ with respect to the involution on $R$ 
	given by $\overline{(b,a)} = (a,b)$. Then $H_{1}$ and $H_{2}$ are equivalent.
\end{corollary}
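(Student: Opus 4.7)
The plan is to invoke Wall's approximation theorem (Theorem~\ref{wall'sapproximationthm}) to descend to the semisimple quotient $\underline R := R/\mathcal J(R)$, where the classification of non-degenerate hermitian forms is essentially immediate from the idempotent decomposition.

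First I would compute $\underline R$. Each factor of $R$ is a local ring: $k[x]/\langle q(x)^d\rangle$ has maximal ideal $\langle q(x)\rangle/\langle q(x)^d\rangle$, and likewise for $\tilde q(x)$. Hence
\[
\underline R \;\cong\; \frac{k[x]}{\langle q(x)\rangle}\;\oplus\;\frac{k[x]}{\langle \tilde q(x)\rangle}\;\cong\; L\oplus L,
\]
where $L := k[x]/\langle q(x)\rangle$ is a field; the identification of the two summands is the isomorphism $x\mapsto \varphi(x)$ recorded in the paragraph immediately preceding the corollary. Under this identification the induced involution $\underline\alpha$ is still the swap $(a,b)\mapsto (b,a)$. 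By Theorem~\ref{wall'sapproximationthm}(2), $H_1$ and $H_2$ are equivalent over $(R,\alpha)$ if and only if their reductions $\underline{H_1}$ and $\underline{H_2}$ on $\underline V := V/\mathcal J(R)V$ are equivalent over $(\underline R, \underline\alpha)$, so I am reduced to the case of $L\oplus L$ equipped with the swap involution.

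Next I would classify non-degenerate hermitian forms on $\underline V$ directly using the central idempotents $e_+ = (1,0)$ and $e_- = (0,1)$. Setting $\underline V_\pm := e_\pm \underline V$, each summand is naturally an $L$-vector space and $\underline V = \underline V_+\oplus \underline V_-$. For $u,v\in \underline V_+$ the sesquilinearity convention $H(au,v) = \bar a H(u,v) = H(u,\bar a v)$ from Section~\ref{sec2.2} gives
\[
H(u,v) \;=\; H(u, e_+ v) \;=\; e_+\,H(u,v) \quad\text{and}\quad H(u,v) \;=\; H(e_+ u, v) \;=\; e_-\,H(u,v),
\]
forcing $H(u,v)\in e_+\underline R\cap e_-\underline R = \{0\}$. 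The symmetric argument gives $H|_{\underline V_-\times \underline V_-} = 0$, so $H$ is encoded by the pairing $\underline V_+\times \underline V_-\to \underline R$, which is perfect precisely when $\dim_L \underline V_+ = \dim_L \underline V_-$. Choosing a basis of $\underline V_+$ with its dual basis in $\underline V_-$ then produces a standard form; hence any two non-degenerate hermitian forms on $\underline V$ are equivalent, and Wall's approximation theorem lifts this equivalence back to $V$.

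The only real content is the short computation that $H$ vanishes on each summand $\underline V_\pm$ — everything else is either standard bookkeeping with the Jacobson radical or a direct application of Theorem~\ref{wall'sapproximationthm}. A small subtlety worth watching is the convention on which slot is conjugate-linear (fixed here as the first), since reversing it would swap the roles of $e_+$ and $e_-$ in the above display without changing the conclusion.
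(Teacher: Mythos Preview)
Your argument is correct and shares the same first step as the paper's proof: both reduce via Wall's approximation theorem to the semisimple quotient $\underline R \cong L\oplus L$ with the swap involution. Where you diverge is in how you establish uniqueness of non-degenerate hermitian forms over $L\oplus L$. The paper observes that the norm map $N\colon (L\oplus L)^\times \to L^\times$, $N(a,b)=(ab,ab)$, is surjective onto the diagonal and then invokes a classification theorem (Grove, Theorem~10.2) which says hermitian forms are determined by their discriminant modulo norms; surjectivity makes this invariant trivial. You instead work directly with the orthogonal idempotents $e_\pm$, show that $H$ vanishes on each summand $\underline V_\pm$, and deduce that $H$ is encoded by a perfect $L$-bilinear pairing $\underline V_+\times\underline V_-\to L$, which admits a standard (dual-basis) form. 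Your route is more self-contained---it does not rely on a cited diagonalisation/discriminant result, which in any case is usually stated for fields rather than for a ring like $L\oplus L$---while the paper's route is terser once one is willing to grant that result. Both are standard; your version has the minor advantage of making explicit why the ``split'' ring $L\oplus L$ forces a hyperbolic form.
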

\begin{proof}
	We use Wall's approximation theorem (Theorem~\ref{wall'sapproximationthm}). 
	Here the Jacobson radical of $R$ is 
	$\mathcal{J}=\frac{<q(x)>}{<q(x)^d>} \oplus \frac{<\tilde q(x)>}{<\tilde q(x)^d>}$. 
	Then $\underline {R}\cong \frac{k[x]}{<q(x)>} \oplus \frac{k[x]}{<\tilde q(x)>}\cong K \oplus K$,  
	where $K\cong \frac{k[x]}{<q(x)>} \cong \frac{k[x]}{<\tilde q(x)>}$ is a finite extension 
	of $k$ (thus separable). Now we have hermitian forms 
	$\underline{H_{i}}\colon  \underline{V}\times \underline{V}\rightarrow \underline{R}$ defined by 
	$\underline{H_{i}}(u+\mathcal{J}V, v+\mathcal{J}V)= H_{i}(u,v)\mathcal{J}$ for all $u,v \in V$. 
	Thus it is enough to show that 
	$\underline {H_{1}} $ is equivalent to $\underline{H_{2}}$ on $K \oplus K$-module $\underline{V}$. 
	The norm map $N \colon (K \oplus K)^{\times} \rightarrow K^{\times}$ is 
	$N(a,b)=\overline{(a,b)}(a,b)=(b,a)(a,b)=(ab,ab)$ (for definition of the norm map see p. 3~\cite{Kn}). 
	Clearly, this norm map is surjective. 
	Thus $\frac{K^{\times}}{Im(N)}$ is trivial. 
	Hence the hermitian form is unique up to equivalence in this case (see p. 87, Theorem 10.2~\cite{Gv}).
\end{proof}
\chapter{Conjugacy Classes and $z$-classes}\label{chapter5}
The results in this chapter are part of~\cite{BS}.
To study the $z$-classes, it is important to understand the conjugacy classes 
because $z$-classes are union of conjugacy classes. 
The problem of classifying conjugacy classes in classical groups 
has been studied by many mathematicians, and there is a known 
substantial amount of results. See, for example, Asai, Macdonald, Milnor, Springer-Steinberg, Wall, 
Williamson~\cite{As,Ma,Mi,SS,Wa2,Wi}.
When the field is finite, Wall~\cite{Wa2} gave an explicit description of all the conjugacy classes in the 
unitary, symplectic and orthogonal groups, and also the order of centralizers.
For some recent accounts in this direction, especially with the applications in mind, see 
Thiem-Vinroot~\cite{TV}, and Burness and Giudici~\cite{BG} etc. 
The conjugacy classes in $GL(n,k)$ are given by the canonical form theory, and with 
the unitary group being its subgroup, one needs to begin there. 
We begin with recalling the notation involved in the description of conjugacy classes and $z$-classes. 
In Section 5.1 we define certain kinds of polynomials, which will be used 
in Section 5.2 to decompose the space with respect to a unitary linear transformation.
This decomposition may be thought of as a reduction step, which will be used in 
Chapter~\ref{chapter8} to prove one of the main theorems of this thesis. 
In Section 5.3 we describe $z$-classes in orthogonal and symplectic groups (for more details see~\cite{GK}).
\section{Self-$U$-reciprocal Polynomials}\label{self}
Let $k$ be a field with an involution given by $\bar{a}=a$ for all $a \in k$.
Let $f(x)=\displaystyle \sum_{i=0}^n a_{i}x^i \in k[x]$. 
We extend the involution on $k$ to that of $k[x]$ by $\overline{f(x)}:=\displaystyle \sum_{i=0}^n \bar{a_{i}} x^i$. 
Let $f(x)$ be a polynomial with $f(0)\neq 0$. The corresponding $U$-reciprocal polynomial of $f(x)$ is defined by 
$$\tilde{f}(x):=\overline{f(0)^{-1}}\ x^n \  \overline{f(x^{-1})}.$$
A monic polynomial $f(x)$ with a non-zero constant term is said to be \emph{self-$U$-reciprocal} if 
$f(x)=\tilde{f}(x)$.
In terms of roots, it means that for a self-U-reciprocal polynomial, 
whenever $\lambda$ is a root,  $\bar{\lambda}^{-1}$ is also a root with the same multiplicity. 
Note that $f(x)=\tilde{\tilde{f}}(x)$, and if $f(x)=f_{1}(x)f_{2}(x)$ 
then $\tilde{f}(x)=\tilde{f_{1}}(x)\tilde{f_{2}}(x)$ provided $f(x)$ is a monic polynomial. 
Also, $f(x)$ is irreducible if and only if $\tilde{f}(x)$ is 
irreducible. In the case of $f(x)=(x-\lambda)^n$, the polynomial $f(x)$ is self-U-reciprocal if and only if 
$\lambda \bar{\lambda}=1$. 
A slightly more general polynomial, called self-dual polynomial will be defined in Section 5.3. 
Over a finite field, we have the following result due to Ennola (Lemma 2~\cite{En}):
\begin{proposition}\label{ennolaodd}
	Let $f(x)$ be a monic, irreducible, self-U-reciprocal polynomial over a finite field $\mathbb F_{q^2}$. 
	Then the degree of $f(x)$ is odd.    
\end{proposition}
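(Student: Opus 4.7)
The plan is to analyze the permutation of the roots of $f$ induced by the map $\lambda \mapsto \bar{\lambda}^{-1}$, and compare it with the Frobenius action.

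First I would fix a root $\lambda$ of $f$ in $\overline{\mathbb{F}_{q^2}}$. Since the involution on $\mathbb{F}_{q^2}$ is $a\mapsto a^q$ and extends to the Frobenius $x\mapsto x^q$ on the algebraic closure, the self-$U$-reciprocal condition together with the description of roots given in Section 5.1 says that the map $\sigma\colon x\mapsto x^{-q}$ sends each root of $f$ to another root. Hence $\sigma$ restricts to a permutation of the root set $R=\{\lambda,\lambda^{q^2},\ldots,\lambda^{q^{2(n-1)}}\}$, which has exactly $n$ elements because $f$ is irreducible of degree $n$ over $\mathbb{F}_{q^2}$.

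Next I would compute $\sigma^2(x)=(x^{-q})^{-q}=x^{q^2}$, so that $\sigma^2$ is precisely the Frobenius of $\mathbb{F}_{q^2}$. Irreducibility of $f$ of degree $n$ means this Frobenius acts on $R$ as a single $n$-cycle. In particular the subgroup $\langle\sigma^2\rangle\subseteq\langle\sigma\rangle$ already acts transitively on $R$, so $\sigma$ itself is a transitive permutation of an $n$-element set; a single transitive permutation is necessarily a single $n$-cycle.

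Finally, a standard fact about symmetric groups says that the square of an $n$-cycle has cycle type consisting of $\gcd(n,2)$ cycles each of length $n/\gcd(n,2)$. Since $\sigma^2$ is an $n$-cycle, we must have $\gcd(n,2)=1$, i.e.\ $n$ is odd, completing the proof. The only subtle point, and the one I would be most careful with, is verifying that the self-$U$-reciprocal condition on $f$ really translates into the assertion that $\sigma(\lambda)=\lambda^{-q}$ is a root whenever $\lambda$ is; once this is in place, the rest is a short combinatorial observation about cycle structures.
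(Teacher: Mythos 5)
Your argument is correct and is essentially the paper's: both proofs rest on observing that $\lambda\mapsto\lambda^{-q}$ permutes the roots of $f$ and squares to the Frobenius $\lambda\mapsto\lambda^{q^2}$, whose orbit is the full set of $n$ roots. The paper finishes the parity step inside the cyclic Galois group (writing the map as $\sigma^{t}$ with $\sigma$ the Frobenius, so that $\sigma^{2t-1}=1$ and $n\mid 2t-1$), while you finish it via the cycle type of the square of an $n$-cycle; these are the same computation in different language.
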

\begin{proof}
	Here the involution of $\mathbb{F}_{q^2}$ is given by $\bar{a}=a^q$. 
	Let $\mathrm{deg}\, f(x)=n$. Let $\alpha$ be a root of $f(x)$ in its splitting field $L$ over $\mathbb{F}_{q^2}$.
	Let $\sigma$ be the Frobenius automorphism of $L$ given by $\sigma(a)=a^{q^2}$. Then $[L:\mathbb{F}_{q^2}]=\mathrm{deg}\,
	f(x)=n=\mathrm{order}\,(\sigma)$. Since $f(x)$ is self-U-reciprocal, so if $\alpha$ is a root of $f(x)$, then 
	$\alpha ^{-q}$ is also a root of $f(x)$ with the same multiplicity. Therefore there is an automorphism $\tau$ of $L$ 
	over $\mathbb{F}_{q^2}$ such that $\tau (\alpha)=\alpha^{-q}$. Then $\tau^2(\alpha)=\alpha^{q^2}=\sigma(\alpha)$, so 
	$\tau^2=\sigma$ since $L=\mathbb{F}_{q^2}(\alpha)$. 
	Now $\tau \in \langle \sigma \rangle \cong \mathbb{Z}/{n\mathbb{Z}} \cong \mathrm{Gal}(L/\mathbb{F}_{q^2})$, 
	so $\tau=\sigma^t$ for some $t$.
	Therefore $\sigma=\tau^2=\sigma^{2t}$, so $\sigma^{2t-1}=1$. 
	Hence $n$ is a divisor of $2t-1$, so $n$ is odd.
\end{proof}
\begin{lemma}
	Let $T\in GL(n, k)$, and suppose $f(x)$ is the minimal polynomial of $T$. Then the minimal polynomial
	of $\bar{T}^{-1}$ is $\tilde{f}(x)$.
\end{lemma}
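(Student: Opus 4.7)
The plan is to reduce the statement to two standard facts: how the minimal polynomial transforms under entry-wise conjugation of the matrix, and how it transforms under inversion. Composing these two transformations on $T$ should give the operation $f \mapsto \tilde{f}$ from the definition in Section~5.1.

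First I would verify that $f(0) \neq 0$, so that the $U$-reciprocal $\tilde{f}$ is actually defined. This is immediate: if $f(0) = 0$, then $x$ divides the minimal polynomial of $T$, forcing $0$ to be an eigenvalue, contradicting $T \in GL(n,k)$. Writing $f(x) = \sum_{i=0}^{n} a_i x^i$ with $a_0 \neq 0$, we then have $n = \deg f$ and $a_0 = f(0)$.

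Next I would show that entry-wise conjugation $A \mapsto \bar{A}$ on $M(n,k)$ is a ring homomorphism (because the field involution is), so for any polynomial $g \in k[x]$ one has $\overline{g(A)} = \bar{g}(\bar{A})$. Applying this to $g = f$ and $A = T$ gives $\bar{f}(\bar{T}) = 0$; minimality of $\bar f$ as a polynomial annihilating $\bar T$ follows by the same symmetry (any annihilator of $\bar T$ of smaller degree would, upon conjugation, yield a smaller annihilator of $T$). Hence $\bar{T}$ has minimal polynomial $\bar{f}(x)$.

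Then I would apply the standard lemma that if $A \in GL(n,k)$ has minimal polynomial $g(x)$ with $g(0) \neq 0$, then the minimal polynomial of $A^{-1}$ is $g(0)^{-1} x^{\deg g} g(x^{-1})$. Applied to $A = \bar T$ and $g = \bar f$, this produces the minimal polynomial of $\bar{T}^{-1}$ as
\[
\bar{f}(0)^{-1}\, x^{n}\, \bar{f}(x^{-1}) \;=\; \overline{f(0)^{-1}}\, x^{n}\, \overline{f(x^{-1})} \;=\; \tilde{f}(x),
\]
where the first equality uses $\bar{f}(0) = \overline{f(0)}$ and $\bar{f}(x^{-1}) = \overline{f(x^{-1})}$ (taking bar commutes with evaluation at a rational expression whose argument is fixed by the involution), and the second equality is the definition from Section~5.1.

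The main obstacle, if any, is purely bookkeeping: one has to keep straight what the bar means in each context (scalar involution, entry-wise conjugation of matrices, involution on $k[x]$) and justify interchanging bar with polynomial evaluation. A cleaner way to present this, which I would prefer, is to establish once and for all the functoriality statement that the assignment $A \mapsto A^{-1}$ (resp.\ $A \mapsto \bar A$) sends a matrix with minimal polynomial $g$ to one with minimal polynomial $g(0)^{-1} x^{\deg g} g(x^{-1})$ (resp.\ $\bar g$), and then observe that $\tilde{f}$ is by definition the composition of these two operations applied to $f$.
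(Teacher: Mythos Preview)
Your proposal is correct and follows essentially the same direct-computation route as the paper. In fact your argument is more complete: the paper's proof only verifies that $\tilde f(\bar T^{-1})=0$ and then asserts minimality without justification, whereas you supply the minimality step via the invertibility of the operations $g\mapsto\bar g$ and $g\mapsto g(0)^{-1}x^{\deg g}g(x^{-1})$.
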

\begin{proof}
	Since $\tilde{f}(x)=\overline{f(0)}^{-1}x^d \overline{f(x^{-1})}$, and $f(T)=0$, then 
	$\tilde{f}(\bar{T}^{-1})=\overline{f(0)}^{-1}(\bar{T}^{-1})^d \overline{f((\bar{T}^{-1})^{-1})}=
	\overline{f(0)}^{-1}\bar{T}^{-d}\overline{f(\bar{T})}=\overline{f(0)}^{-1}\bar{T}^{-d}\overline{f(T)} =0$.
	Thus we conclude that $\tilde{f}(x)$ is the minimal polynomial of $\bar{T}^{-1}$.
\end{proof}
\begin{remark}
	If $g \in U(V,B)$, then $\tr \bar{g} \beta g=\beta$. So $\beta g \beta^{-1}=\tr \bar{g}^{-1}$, which is conjugate 
	to $\bar{g}^{-1}$, as $g$ is conjugate to its transpose in $GL(n,k)$. 
	Hence the minimal polynomials of $g$ and $\bar{g}^{-1}$
	are same, i.e., $f(x)=\tilde{f}(x)$.
\end{remark}
If $T \in U(V,B)$ then its minimal polynomial $f(x)$ is monic with a non zero constant term, and is self-U-reciprocal. 
We can write it as follows:
\begin{equation}\label{polydecomposition}
f(x)=\prod_{i=1}^{k_{1}}p_{i}(x)^{r_{i}}\prod_{j=1}^{k_{2}}(q_{j}(x)\tilde{q_{j}}(x))^{s_{j}}, 
\end{equation}
where $p_{i}(x)$ and $q_{j}(x)$ are irreducible, and $p_{i}(x)$ is self-$U$-reciprocal but 
$q_{j}(x)$ is not self-U-reciprocal for all $i,j$.
\section{Space Decomposition with Respect to a Unitary Transformation}
Let $T \in U(V, B)$, and $f(x)\in k[x]$ satisfying $f(0)\neq 0$. Then,
\begin{lemma}\label{bar}
	For any $u,v \in V$, we have 
	$B(u, f(T)v)=B(\overline{f(T^{-1})}u, v)$.
\end{lemma}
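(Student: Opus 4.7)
The plan is to reduce the statement to the defining identity of a unitary transformation by expanding $f(T)$ as a linear combination of powers of $T$ and moving each power across the form one at a time.

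First I would note that since $T \in U(V,B)$, we have $B(Tu,Tv) = B(u,v)$ for all $u,v \in V$. Substituting $u \mapsto T^{-1}u$ yields the fundamental adjoint identity
\begin{equation*}
B(u, Tv) = B(T^{-1}u, v),
\end{equation*}
and then an easy induction on $n$ (applied in both directions, $n \geq 0$ and $n < 0$) gives $B(u, T^n v) = B(T^{-n} u, v)$ for every integer $n \in \mathbb{Z}$. Because $f(0) \neq 0$, $T$ is invertible on the cyclic subspace generated by $v$ under $f(T)$, so negative powers cause no issue (and in fact $T \in GL(V)$ already).

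Next I would write $f(x) = \sum_{i=0}^{d} a_i x^i$ with $a_i \in k$ and compute directly, using $k$-linearity of $B$ in the second slot:
\begin{equation*}
B\bigl(u, f(T)v\bigr) = \sum_{i=0}^{d} a_i \, B(u, T^i v) = \sum_{i=0}^{d} a_i \, B(T^{-i} u, v).
\end{equation*}
The crucial bookkeeping step is now the conjugate-linearity of $B$ in the \emph{first} slot (since $B$ is sesquilinear with respect to the involution $a \mapsto \bar a$): each scalar $a_i$ moves inside $B(\,\cdot\,, v)$ as $\bar{a_i}$, giving
\begin{equation*}
\sum_{i=0}^{d} a_i \, B(T^{-i} u, v) = \sum_{i=0}^{d} B(\bar{a_i} T^{-i} u, v) = B\!\left(\sum_{i=0}^{d} \bar{a_i} T^{-i} u,\ v\right).
\end{equation*}
By the definition of $\overline{f(x)} = \sum \bar{a_i} x^i$ given in Section~5.1, the operator in the first slot is precisely $\overline{f(T^{-1})}$, so $B(u, f(T)v) = B(\overline{f(T^{-1})} u, v)$ as required.

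There is no real obstacle here; the only subtlety to be careful about is the conjugation of the coefficients when pulling scalars out of the first argument of $B$, which is exactly what produces the bar on $f$ in the final expression. The hypothesis $f(0) \neq 0$ is not needed for the identity itself but is inherited from the setup so that $\overline{f(T^{-1})}$ makes sense without ambiguity; the same computation works for any polynomial.
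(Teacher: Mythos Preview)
Your proof is correct and follows essentially the same approach as the paper: expand $f(T)$ as a polynomial, use the isometry identity $B(u,Tv)=B(T^{-1}u,v)$ term by term, and then pull the scalars across the first slot picking up the conjugate. The only difference is cosmetic---you spell out the induction and the conjugate-linearity step more explicitly than the paper does.
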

\begin{proof}
	Let $f(x)=\displaystyle \sum_{i=0}^n a_{i}x^i$, then $f(T)=\displaystyle \sum_{i=0}^n a_{i}T^i$ .
	Observe that $B(T^{-1}u,v)=B(u, Tv)$ for all $u,v \in V$. Now 
	$B(u,\displaystyle \sum_{i=0}^n a_{i}T^iv)=\displaystyle \sum_{i=0}^n a_{i} B(u, T^i v)
	=\displaystyle \sum_{i=0}^n a_{i}B(T^{-i}u,v)=
	B(\displaystyle \sum_{i=0}^n\bar{a_{i}}T^{-i}u, v)$ for all $u,v \in V$.
	Hence $B(u, f(T)v)=B(\overline{f(T^{-1})}u, v)$.
\end{proof}

\begin{lemma}\label{orthogonalsum}
	The subspaces $\mathrm{Im}(f(T))$ and 
	$\mathrm{ker}(\tilde{f}(T))$ are mutually orthogonal.
\end{lemma}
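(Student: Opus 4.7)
The plan is to combine Lemma~\ref{bar} with the algebraic relationship between $f$ and its U-reciprocal $\tilde{f}$. The key observation is that $\overline{f(T^{-1})}$ is, up to a unit scalar and the invertible factor $T^{-n}$, essentially $\tilde{f}(T)$ itself; hence its kernel contains $\ker(\tilde{f}(T))$. Transporting this vanishing across Lemma~\ref{bar} gives the desired orthogonality.

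More concretely, let $n=\deg f$ and recall $\tilde{f}(x)=\overline{f(0)^{-1}}\,x^{n}\,\overline{f(x^{-1})}$. Substituting $x=T$ (which is invertible since $T\in U(V,B)$) I would first derive the identity
\[
\overline{f(T^{-1})} \;=\; \overline{f(0)}\; T^{-n}\,\tilde{f}(T),
\]
where all factors commute because they are polynomials in $T$. Consequently, if $u\in\ker(\tilde{f}(T))$, then $\overline{f(T^{-1})}u=\overline{f(0)}\,T^{-n}\,\tilde{f}(T)u=0$.

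Next, for any $v\in V$ and $u\in \ker(\tilde{f}(T))$, apply Lemma~\ref{bar} to get
\[
B\bigl(u,\,f(T)v\bigr)\;=\;B\bigl(\overline{f(T^{-1})}u,\,v\bigr)\;=\;B(0,v)\;=\;0.
\]
Since $B$ is hermitian, $B(f(T)v,u)=\overline{B(u,f(T)v)}=0$ as well. As $v$ ranges over $V$, the vector $f(T)v$ ranges over $\mathrm{Im}(f(T))$, so every element of $\mathrm{Im}(f(T))$ is orthogonal to every element of $\ker(\tilde{f}(T))$, which is exactly the claim.

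There is essentially no obstacle here: the only thing to be careful about is that $T$ is invertible (guaranteed because $T\in U(V,B)\subset GL(V)$) so that $T^{-n}$ makes sense, and that $f(0)\neq 0$ so that $\tilde{f}$ is defined in the first place. Both hypotheses are built into the setup, so the argument reduces to the short computation above.
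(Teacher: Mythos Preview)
Your proof is correct and follows essentially the same approach as the paper: both apply Lemma~\ref{bar} and then use the defining relation $\tilde f(T)=\overline{f(0)}^{-1}T^{n}\,\overline{f(T^{-1})}$ to pass from $\overline{f(T^{-1})}u$ to $\tilde f(T)u=0$. The only cosmetic difference is that the paper inserts $T^{d}$ into both slots of $B$ via the isometry property, whereas you equivalently rewrite $\overline{f(T^{-1})}=\overline{f(0)}\,T^{-n}\tilde f(T)$ before applying it to $u$.
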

\begin{proof}
	Let $u\in \mathrm{ker}(\tilde{f}(T))$ and $v\in \mathrm{Im}(f(T))$, therefore $\tilde{f}(T)u=0$ and $v=f(T)w$ 
	for some $w \in V$. Now 
	$B(u,v)=B(u, f(T)w)=B(\overline{f(T^{-1})}u,w)=B(T^d\overline{f(T^{-1})}u,T^dw)=
	f(0) B(\overline{f(0)}^{-1}T^d\overline{f(T^{-1})}u,T^dw)=f(0)B(\tilde{f}(T)u,T^dw)=f(0)B(0, T^dw)=0$.
	Hence $\mathrm{Im}(f(T))\perp \mathrm{ker}(\tilde{f}(T))$.
\end{proof}
Let $T \in U(V, B)$ with minimal polynomial $f(x)$. Write $f(x)=\prod_{i}f_{i}(x)^{m_{i}}$ as in  
Equation (\ref{polydecomposition}), where $f_i(x)=p_i(x)$ or $q_i(x)\tilde{q_i}(x)$. 
Then,
\begin{proposition}\label{spacedecomposition}
	The direct sum decomposition $V=\bigoplus \limits_{i}\mathrm{ker}(f_{i}(T)^{m_{i}})$ is a decomposition into 
	non-degenerate mutually orthogonal $T$-invariant subspaces.
\end{proposition}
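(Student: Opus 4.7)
The plan is to combine the classical primary decomposition theorem with the orthogonality result (Lemma~\ref{orthogonalsum}) already proved. The first paragraph of the proof sets up the direct sum and $T$-invariance by purely linear-algebraic means, and the second extracts orthogonality from the self-U-reciprocity of each factor $f_i$.

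First, I would observe that the polynomials $f_i(x)^{m_i}$ are pairwise coprime in $k[x]$. Indeed, the $p_i$ are distinct monic irreducibles, and $q_j, \tilde{q_j}$ are irreducible factors not appearing among the $p_i$ (and distinct from each other since $q_j$ is not self-U-reciprocal), so no two of the $f_i^{m_i}$ share a root in $\bar{k}$. By the primary decomposition theorem applied to the minimal polynomial $f(x) = \prod_i f_i(x)^{m_i}$, we obtain $V = \bigoplus_i \ker(f_i(T)^{m_i})$, and each summand is $T$-invariant because it is the kernel of a polynomial in $T$.

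Next, I would verify that each $f_i(x)^{m_i}$ is itself self-U-reciprocal. For $f_i = p_i$ this is given. For $f_i = q_j \tilde{q_j}$, the identity $\widetilde{gh} = \tilde{g}\tilde{h}$ and $\tilde{\tilde{q}} = q$ give $\tilde{f_i} = \widetilde{q_j \tilde{q_j}} = \tilde{q_j} q_j = f_i$; since raising to the $m_i$-th power respects U-reciprocity, $\widetilde{f_i^{m_i}} = f_i^{m_i}$. Applying Lemma~\ref{orthogonalsum} with $f = f_i^{m_i}$ therefore gives
\[
\operatorname{Im}(f_i(T)^{m_i}) \perp \ker(f_i(T)^{m_i}).
\]
On the other hand, by the primary decomposition, $f_i(T)^{m_i}$ restricts to an isomorphism on each $\ker(f_j(T)^{m_j})$ with $j \neq i$, whence $\bigoplus_{j \neq i} \ker(f_j(T)^{m_j}) \subseteq \operatorname{Im}(f_i(T)^{m_i})$. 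Combining these two facts yields $\ker(f_i(T)^{m_i}) \perp \ker(f_j(T)^{m_j})$ for all $i \neq j$, proving mutual orthogonality of the summands.

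Finally, non-degeneracy of $B$ on each summand is a formal consequence of orthogonal decomposition together with non-degeneracy of $B$ on $V$: if $w \in \ker(f_i(T)^{m_i})$ is orthogonal to the whole summand, then by orthogonality it is also orthogonal to all $\ker(f_j(T)^{m_j})$ for $j \neq i$, hence to all of $V$, forcing $w = 0$. The main subtlety — and it is a very mild one — is the verification that $f_i$ is self-U-reciprocal in the composite case $f_i = q_j \tilde{q_j}$; everything else is routine given Lemma~\ref{orthogonalsum} and the primary decomposition theorem.
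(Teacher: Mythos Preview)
Your proof is correct and follows essentially the same route as the paper: both use the primary decomposition for the direct sum and $T$-invariance, invoke Lemma~\ref{orthogonalsum} together with the self-U-reciprocity $\tilde{f_i}=f_i$ to obtain mutual orthogonality, and then deduce non-degeneracy of each summand from that orthogonality. Your argument is in fact slightly more explicit than the paper's in checking $\widetilde{q_j\tilde{q_j}}=q_j\tilde{q_j}$, but there is no substantive difference in approach.
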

\begin{proof}
	Let $v\in \mathrm{ker}(f_i(T)^{m_i})\cap \mathrm{ker}(f_j(T)^{m_j})$ for some $i\neq j$, then 
	$f_i(T)^{m_i}(v)=0=f_j(T)^{m_j}(v)$. Since $f_i(x)^{m_i}$ are pairwise relatively prime, then 
	$a_i(x)f_i(x)^{m_i}+a_j(x)f_j(x)^{m_j}=1$ for some $a_i(x), a_j(x)\in k[x]$. 
	So $a_i(T)f_i(T)^{m_i}(v)+a_j(T)f_j(T)^{m_j}(v)=v$, therefore $v=0$. Hence 
	the sum $V=\bigoplus \limits_{i}\mathrm{ker}(f_{i}(T)^{m_{i}})$ is a direct sum. 
	Clearly, these subspaces are $T$-invariant. Observe that $\mathrm{Im}(f_{j}(T)^{m_{j}})=
	\bigoplus \limits_{i\neq j}\mathrm{ker}(f_{i}(T)^{m_{i}})$, and 
	$\mathrm{ker}(f_{i}(T)^{m_{i}})=\mathrm{ker}(\tilde{f_{i}}(T)^{m_{i}})$, since $f_{i}(x)=\tilde{f_{i}}(x)$ 
	for all $i$. By Lemma~\ref{orthogonalsum}, 
	we have $\mathrm{Im}(f_{j}(T)^{m_{j}})\perp \mathrm{ker}(\tilde{f_{j}}(T)^{m_{j}})$. So we get 
	$\bigoplus \limits_{i\neq j}\mathrm{ker}(f_{i}(T)^{m_{i}}) \perp \mathrm{ker}(f_{j}(T)^{m_{j}})$ for all $j$.
	Hence in the sum $V=\bigoplus \limits_{i}\mathrm{ker}(f_{i}(T)^{m_{i}})$, the subspaces are mutually orthogonal. 
	Also mutual orthogonality implies that the restriction of the form on each subspaces are non-degenerate.
\end{proof}
\noindent 
This decomposition helps us reduce the questions about conjugacy classes and 
$z$-classes of a unitary transformation to the unitary transformations with minimal polynomial of 
one of the following two kinds:
\begin{description}\label{type}
	\item[\textbf{Type 1.}] $p(x)^{m}$, where $p(x)$ is monic irreducible 
	self-U-reciprocal polynomial with a non-zero constant term, 
	\item[\textbf{Type 2.}] $(q(x)\tilde{q}(x))^{m}$, where $q(x)$ is monic, irreducible, 
	not self-U-reciprocal with a non-zero constant term.
\end{description}
Thus Proposition~\ref{spacedecomposition} gives us a \emph{primary decomposition} of 
$V$ into $T$-invariant $B$ non-degenerate subspaces
\begin{equation}\label{primary}
V=\left(\bigoplus_{i=1}^{k_{1}}V_{i}\right)\bigoplus \left( \bigoplus_{j=1}^{k_2}V_{j} \right), 
\end{equation}
where $V_{i}=\mathrm{ker}(p_i(T)^{r_i})$ corresponds 
to the polynomials of Type 1, and $V_{j}=W_j + W_{j}^{*}$ 
corresponds to the polynomials of Type 2, where $W_j=\mathrm{ker}(q_{j}(T)^{s_j})$ and 
$W_{j}^{*}=\mathrm{ker}(\tilde{q_{j}}(T)^{s_j})$.
Denote the restriction of $T$ to each $V_r$ by $T_{r}$. 
Then the minimal polynomial of $T_{r}$ is one of the two types. It turns out that the centralizer of $T$ in $U(V,B)$ is 
$$\mathcal Z_{U(V,B)}(T)=\prod_{r} \mathcal Z_{U(V_r, B_r)}(T_{r}),$$
where $B_r$ is a hermitian form obtained by restricting $B$ to $V_r$. 
Thus the conjugacy class and the $z$-class of $T$ is determined by the restriction of $T$ to 
each of the primary subspaces. Hence it is enough to determine the conjugacy class and the $z$-class of 
$T\in U(V,B)$, which has the minimal polynomial of one of the types in~\ref{type}. 
\section{$z$-classes in Orthogonal and Symplectic Groups}
Let $V$ be an $n$-dimensional vector space over $k$ with the property FE, 
equipped with a non-degenerate symmetric or skew-symmetric bilinear form $B$.
The $z$-classes of orthogonal groups $O(V,B)$ and symplectic groups $Sp(V,B)$ have 
been discussed by Gongopadhyay and Ravi S. Kulkarni in~\cite{GK} (see Theorem~\ref{gongopadhyay}). 
We will be very brief in this section to parametrize the $z$-classes in orthogonal and symplectic groups. 
Let $f(x)=\displaystyle \sum_{i=0}^n a_{i}x^i$ be a polynomial in $k[x]$ of degree $n$ such 
that $0,1$ and $-1$ are not its roots. 
The corresponding dual polynomial of $f(x)$ is defined by 
$$f^{*}(x):=f(0)^{-1} x^n f(x^{-1}).$$
A monic polynomial $f(x)$ with $0,1,-1$ are not its roots is said to be \emph{self-dual} if 
$f(x)=f^{*}(x)$.
In terms of roots, it means that for a self-dual polynomial, 
whenever $\lambda$ is a root,  $\lambda^{-1}$ is also a root with the same multiplicity. 
Suppose $T\in O(V,B)$ or $Sp(V,B)$ with the minimal polynomial $m_{T}(x)$. 
Thus an irreducible factor says $p(x)$, of the minimal polynomial, can be one of the following three types:
\begin{itemize}
	\item $x+1$ or $x-1$.
	\item $p(x)$ is self-dual. 
	\item $p(x)$ is not self-dual. In this case, there is an irreducible factor $p^{*}(x)$ will occur 
	in the minimal polynomial.
\end{itemize}
If $T \in O(V,B)$ or $Sp(V,B)$ then its minimal polynomial $m_{T}(x)$ 
is monic with a non-zero constant term, and is self-dual. 
We can write it as follows
\begin{equation}
m_{T}(x)=(x+1)^e (x-1)^f \prod_{i=1}^{k_{1}}p_{i}(x)^{r_{i}}\prod_{j=1}^{k_{2}}(q_{j}(x)q_{j}^{*}(x))^{s_{j}}, 
\end{equation}
where $p_{i}(x)$ and $q_{j}(x)$ are irreducible, and $p_{i}(x)$ is self-dual but 
$q_{j}(x)$ is not self-dual for all $i,j$.
Thus Proposition~\ref{spacedecomposition} gives us a \emph{primary decomposition} of 
$V$ into $T$-invariant $B$ non-degenerate subspaces
\begin{equation}\label{primary}
V=\left(V_{1}\bigoplus V_{-1}\right)\bigoplus \left(\bigoplus_{i=1}^{k_{1}}V_{i}\right)
\bigoplus \left( \bigoplus_{j=1}^{k_2}V_{j} \right), 
\end{equation}
where  $V_{-1}=\mathrm{ker}(T+I)^e, V_{1}=\mathrm{ker}(T-I)^f$, and $V_{i}=\mathrm{ker}(p_i(T)^{r_i})$ corresponds 
to the self-dual polynomials, and $V_{j}=W_j + W_{j}^{*}$ 
corresponds to the not self-dual polynomials, where $W_j=\mathrm{ker}(q_{j}(T)^{s_j})$ and 
$W_{j}^{*}=\mathrm{ker}(q_{j}^{*}(T)^{s_j})$.
Denote the restriction of $T$ to each $V_r$ by $T_{r}$ so $T=\bigoplus_{r}T_r$. 
Then the minimal polynomial of $T_{r}$ is one of the three types. 
It turns out that the centralizer of $T$ in $O(V,B)$ or $Sp(V,B)$ is 
$$\mathcal Z (T)=\prod_{r} \mathcal Z (T_{r}).$$
Thus the $z$-class of $T$ is determined by the restriction of $T$ to 
each of the primary subspaces. Then it has been proved that there are only finitely many $z$-classes 
of semisimple and unipotent elements in orthogonal and symplectic groups respectively. Thus using 
Jordan decomposition (Theorem~\ref{jordandecomposition}), there are only finitely many $z$-classes 
in orthogonal groups $O(V,B)$ and symplectic groups $Sp(V,B)$. For more details see p. 257 in~\cite{GK}.
\chapter{Gaussian Elimination}\label{chapter6}
The results in this chapter are part of~\cite{BMS}. We improved the results on the 
symplectic and split orthogonal similitude groups. 
This chapter is one of the main chapters of this thesis. For 
instance, in Chapter~\ref{chapter7} we use Gaussian elimination to compute 
the spinor norm as well as similitude characters.
In dealing with constructive group recognition project, one needs to solve the word 
problem in some generating set. Thus, the main objective of this chapter 
is to develop a similar algorithm for symplectic and 
split orthogonal similitude groups to solve the word problem. 
In Section 6.1 we describe the classical Gaussian elimination algorithm for 
general linear groups. In Section 6.2 we define elementary operations for similitude 
groups, and describe the Gaussian elimination in similitude groups in Section 6.3. In 
Section 6.4 we record a result~\cite{MS} on the Gaussian elimination in the split 
unitary groups.
\section{Gaussian Elimination in General Linear Groups}
As we know, in the general linear group 
$GL(n,k)$, the word problem has an efficient solution in elementary 
transvections (or elementary matrices) - the Gaussian elimination. 
One observes that the effect of multiplying by elementary transvections 
on a matrix from left or right is either a row or column operation 
respectively. We have the following classical Gaussian elimination algorithm for $GL(n,k)$:
\begin{theorem}\label{classicalgauss}
	Every element $g\in M(n,k)$ can be written as a product 
	of elementary matrices and a 
	diagonal matrix, the diagonal matrix is of the form
	$\mathrm{diag}(1,\ldots,1,\mathrm{det}(g))$ if $\mathrm{det}(g)\neq 0$; else
	$\mathrm{diag}(1,\ldots,1,0,\ldots,0)$.
\end{theorem}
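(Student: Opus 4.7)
The plan is to proceed by induction on $n$, imitating the standard row-column reduction: left multiplication by an elementary matrix $x_{ij}(t)=I+te_{ij}$ adds $t$ times row $j$ to row $i$, while right multiplication adds $t$ times column $i$ to column $j$. The base case $n=1$ is immediate, since $g=(\mathrm{det}(g))$ already has the required form with no elementary factor.

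For the inductive step, if $g=0$ I am done, taking the diagonal to be the zero matrix. Otherwise some entry is nonzero. Rather than invoking row-swaps (which are not elementary in the strict sense), I use the following trick: if $g_{11}=0$ but $g_{i1}\neq 0$ for some $i$, I left-multiply by $x_{1i}(1)$ to make the $(1,1)$ entry nonzero; if the entire first column vanishes but some $g_{ij}\neq 0$, I first apply a right operation $x_{ji}(1)$ to populate the first column. Once $g_{11}\neq 0$, I apply the left operations $x_{i1}(-g_{i1}g_{11}^{-1})$ for $i\geq 2$ to annihilate the remainder of column $1$, and the symmetric right operations to annihilate the remainder of row $1$. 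The matrix now has block form $\mathrm{diag}(g_{11},g')$ with $g'\in M(n-1,k)$, and the inductive hypothesis applies to $g'$.

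After induction, $g$ equals a product of elementary matrices times a diagonal matrix $D=\mathrm{diag}(\lambda_1,\ldots,\lambda_r,0,\ldots,0)$, where $r$ is the rank that has emerged. If $r<n$ (the singular case), further left-right elementary operations can be used to permute any stray zeros to the bottom-right and, by rescaling $\lambda_1,\ldots,\lambda_{r-1}$ via the $SL_2$-identity below, reduce $D$ to $\mathrm{diag}(1,\ldots,1,0,\ldots,0)$. If $r=n$, all $\lambda_i\in k^{\times}$, and I use the classical identity that $\mathrm{diag}(a,a^{-1})\in SL(2,k)$ is a product of elementary transvections, equivalently that $\mathrm{diag}(a,b)=E\cdot\mathrm{diag}(1,ab)$ for some product $E$ of elementaries acting in an adjacent $2\times 2$ block. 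Applying this identity $n-1$ times, sweeping scalar factors rightwards, collapses $D$ into $\mathrm{diag}(1,\ldots,1,\lambda_1\cdots\lambda_n)=\mathrm{diag}(1,\ldots,1,\mathrm{det}(g))$, since the elementary factors produced along the way have determinant $1$ and preserve $\mathrm{det}(g)$.

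The main obstacle is really the bookkeeping in the singular case: arranging the zero diagonal entries into a contiguous bottom-right block and normalizing the remaining non-zero entries to $1$'s using only elementary operations, rather than needing a non-trivial diagonal factor. This is handled by combining the $\mathrm{diag}(a,a^{-1})$-is-elementary identity with the observation that in the singular case a zero diagonal entry allows one to absorb any scalar $\lambda$ from another slot via operations of the form $x_{ij}$ acting across a zero row or column. Everything else is routine manipulation.
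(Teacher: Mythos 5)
Your argument is correct and is precisely the classical row--column reduction that the paper itself invokes: Theorem~\ref{classicalgauss} is stated there as a known fact with no proof given, and your induction-plus-$\mathrm{diag}(a,a^{-1})$ sweep is the standard argument behind it, including the correct handling of the singular case via $\mathrm{diag}(\lambda,0)=\mathrm{diag}(\lambda,\lambda^{-1})\,\mathrm{diag}(1,0)$. The only blemish is a harmless index slip: to populate a vanishing first column from a nonzero entry $g_{ij}$ you want the right operation $x_{j1}(1)$ (add column $j$ to column $1$), not $x_{ji}(1)$.
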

Using the above Theorem~\ref{classicalgauss} one can solve the word problem in $SL(n,k)$, which 
can be stated as follows: 
\begin{corollary}
	Every element of the special linear group $SL(n,k)$ can be written 
	as a product of elementary transvections (or elementary matrices).
\end{corollary}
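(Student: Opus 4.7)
The plan is to derive this corollary as an almost immediate consequence of the classical Gaussian elimination algorithm (Theorem~\ref{classicalgauss}), by specializing to the case where the determinant is equal to $1$. No new ideas are required; the work has already been done in Theorem~\ref{classicalgauss}, and what remains is to observe that the ``residual'' diagonal matrix produced by that algorithm is trivial when we start with an element of $SL(n,k)$.

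First I would take an arbitrary $g\in SL(n,k)$. Since $SL(n,k)\subseteq GL(n,k)\subseteq M(n,k)$ and $\det(g)=1\neq 0$, Theorem~\ref{classicalgauss} applies and produces a factorization $g = E_1 E_2 \cdots E_r \cdot D$, where each $E_i$ is an elementary matrix and $D$ is a diagonal matrix of the form $\mathrm{diag}(1,\ldots,1,\det(g))$. Substituting $\det(g)=1$ gives $D=\mathrm{diag}(1,\ldots,1,1)=I_n$, so the factorization collapses to $g = E_1 E_2 \cdots E_r$, which is precisely the claim.

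There is really no obstacle here; the corollary is just the observation that the ``correction diagonal matrix'' produced by the Gaussian elimination procedure is the identity whenever the determinant is $1$. The only minor point worth articulating carefully in the writeup is the direction of the equation: depending on whether the elementary matrices in the proof of Theorem~\ref{classicalgauss} are applied on the left or on the right (corresponding to row versus column operations), one obtains $D = E'_1\cdots E'_s \, g \, E''_1\cdots E''_t$ and then solves for $g$ by inverting; the inverse of an elementary matrix is again an elementary matrix (of the same root type with parameter $-t$ in place of $t$), so the conclusion is unaffected. This yields the stated expression of $g$ as a product of elementary transvections, completing the proof.
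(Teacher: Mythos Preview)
Your argument is correct and is exactly the approach the paper intends: the corollary is stated without proof, immediately after Theorem~\ref{classicalgauss}, with the remark that one uses that theorem to solve the word problem in $SL(n,k)$. Your observation that the residual diagonal $\mathrm{diag}(1,\ldots,1,\det(g))$ collapses to $I_n$ when $\det(g)=1$ is precisely the intended deduction.
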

Let $B$ be a subgroup of upper triangular matrices and $W$ be the 
subgroup of permutation matrices in $GL(n,k)$ respectively. In this case 
$W\cong S_n$, symmetric group on $n$ letters. Then we have the following (see p.108~\cite{Ca1}): 
\begin{theorem}[Bruhat decomposition]\label{bruhatdecomposition}
	With the notation as above,  
	$$GL(n,k)=BWB=\bigsqcup_{w\in W} BwB.$$
\end{theorem}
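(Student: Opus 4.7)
The plan is to prove the decomposition in two parts: existence, $GL(n,k)=BWB$, and disjointness, $BwB\cap Bw'B=\emptyset$ for $w\neq w'$. Everything rests on the following observation about the allowed operations. If $b\in B$, then $(bg)_{ij}=\sum_k b_{ik}g_{kj}$, and since $b$ is upper triangular with nonzero diagonal, left multiplication by $b$ replaces row $i$ of $g$ by a linear combination of rows $i,i+1,\ldots,n$ with nonzero row-$i$ coefficient. Symmetrically, right multiplication by $b\in B$ replaces column $j$ by a combination of columns $1,\ldots,j$ with nonzero column-$j$ coefficient. Thus the permitted elementary operations are: add a multiple of a later row to an earlier row, add a multiple of an earlier column to a later column, and scale any row or column by a nonzero scalar.

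For existence I would process the columns $j=1,2,\ldots,n$ in order, maintaining the inductive invariant that after step $j-1$ there are distinct indices $i_1,\ldots,i_{j-1}$ such that column $k$ has its unique nonzero entry in row $i_k$ and row $i_k$ has its unique nonzero entry in column $k$, for every $k<j$. At step $j$, reordering the locked rows $i_1,\ldots,i_{j-1}$ and locked columns $1,\ldots,j-1$ to the front exhibits the current matrix in the block form $\left(\begin{smallmatrix}D&0\\0&M\end{smallmatrix}\right)$ with $D$ diagonal and invertible, so the invertibility of the whole forces $M$ to be invertible; in particular column $j$ has a nonzero entry in some unlocked row. Let $i_j$ be the largest such index. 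Now I would use the allowed row operations to add multiples of row $i_j$ to the rows strictly above it in order to clear the rest of column $j$; this does not disturb any locked column $k<j$, because row $i_j$ vanishes there by the invariant. Then I would use the allowed column operations to add multiples of column $j$ to later columns in order to clear the rest of row $i_j$; this does not disturb any locked row $i_k$, because column $j$ now vanishes there. After step $n$ the matrix is monomial, and a final diagonal scaling absorbed into $B$ converts it to the permutation matrix $w$ corresponding to $k\mapsto i_k$, proving $g\in BwB$.

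For disjointness I would attach to every matrix $g$ and every pair $(i,j)$ the integer $r_{ij}(g)$ equal to the rank of the submatrix of $g$ on rows $i,i+1,\ldots,n$ and columns $1,\ldots,j$. The observation from the first paragraph shows that $r_{ij}$ is invariant under $g\mapsto b_1 g b_2$ for $b_1,b_2\in B$, because the row span of rows $\geq i$ and the column span of columns $\leq j$ are each preserved separately. A direct count gives $r_{ij}(w)=|\{k\geq i : w(k)\leq j\}|$ for a permutation $w$, and from this family of integers one recovers $w$ by noting that the jump pattern of $r_{i,j}(w)-r_{i+1,j}(w)$ in $j$ pinpoints $w(i)$. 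Consequently $BwB\cap Bw'B\neq\emptyset$ implies $r_{ij}(w)=r_{ij}(w')$ for all $(i,j)$, hence $w=w'$.

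The hardest part, and the one I would take most care with, is maintaining the algorithmic invariant cleanly: I must check at every step that the clearing operations keep intact the monomial pattern already built on the locked rows and columns and that the unlocked block remains invertible so that a valid pivot $i_j$ exists. Once this bookkeeping is in place the rest is routine, and the rank invariants $r_{ij}$ immediately upgrade existence to a genuine cell decomposition.
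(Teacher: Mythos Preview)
Your argument is correct. The existence half is a clean pivot-and-sweep using only the operations that multiplication by $B$ on each side genuinely affords, and your bookkeeping on locked rows and columns is sound: the pivot row $i_j$ vanishes on all locked columns $<j$ by the invariant, so the upward row-clearing is harmless there, and after the clearing column $j$ vanishes on all locked rows, so the rightward column-clearing is harmless there too. The disjointness half via the rank functions $r_{ij}$ is the standard Schubert-cell invariant, and your observation that $B\times B$ preserves the row span of rows $\geq i$ and the column span of columns $\leq j$ is exactly what makes $r_{ij}$ constant on double cosets. One cosmetic point: depending on whether your permutation matrix has its $1$'s at $(w(k),k)$ or at $(k,w(k))$, the quantity $r_{i,j}(w)-r_{i+1,j}(w)$ jumps at $j=w^{-1}(i)$ rather than $w(i)$; either way the family $\{r_{ij}\}$ recovers $w$, so the conclusion stands.

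As for comparison: the paper does not prove this theorem at all. It is simply stated with a reference to Carter~\cite{Ca1}, p.~108, and used only as motivation (the sentence after it observes that Bruhat says any invertible matrix can be reduced to a permutation matrix by row/column operations, which is the same picture you drew). So you have supplied a self-contained proof where the paper gives none. Your approach is the concrete matrix version tailored to $GL(n,k)$; Carter's treatment is the uniform $(B,N)$-pair argument valid for all Chevalley groups, which trades explicitness for generality. Given that the thesis is about Gaussian elimination, your hands-on version is arguably more in keeping with the surrounding material than the cited reference.
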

So the above Theorem~\ref{bruhatdecomposition} says that 
any element $g\in GL(n,k)$ can be written as $g=b_1wb_2$ 
for some $b_1, b_2 \in B$, and $w\in W$ (which is unique). 
Therefore $w=b_1^{-1}gb_2^{-1}$. Thus, any invertible matrix can be transformed 
into a permutation matrix by a series of row and column operations. 
\section{Elementary Operations}\label{elementaryoperations}
Elementary operations can be thought of as usual row-column 
operations for matrices. 
We already described the elementary matrices in Section~\ref{elementarymatrices} for the 
symplectic and split orthogonal similitude groups. Then multiplications of those 
elementary matrices on the left and right to an element of the similitude groups, for 
example, symplectic and split orthogonal similitude groups, are elementary operations,  
which we are going to describe below case by case. 
The Gaussian elimination algorithm is slightly different for matrices of even and odd size. 
We first describe it for matrices of even size and then for matrices of the odd size.
\subsection{Elementary operations for $GSp(2l,k)\; (l\geq 2)$}
Let $g=\begin{pmatrix}A&B\\C&D\end{pmatrix}$ be a $2l\times 2l$ 
matrix written in block form of size $l\times l$. 
Then the row and column operations are as follows:
\begin{eqnarray*}
	ER1:&\begin{pmatrix}R&0\\0 & \tr {R}^{-1}\end{pmatrix}\begin{pmatrix}A&B\\C&D
	\end{pmatrix}&=\begin{pmatrix}RA&RB \\ \tr {R}^{-1}C & \tr {R}^{-1} D\end{pmatrix}\\
	EC1:&\begin{pmatrix}A&B\\C&D\end{pmatrix}\begin{pmatrix}R&0\\0 & \tr {R}^{-1}
	\end{pmatrix}&=\begin{pmatrix} AR&B\tr {R}^{-1} \\ CR & D\tr {R}^{-1}\end{pmatrix}
\end{eqnarray*}
\begin{eqnarray*}
	ER2: & \begin{pmatrix}I&R\\ 0&I\end{pmatrix}\begin{pmatrix}A&B\\C&D\end{pmatrix}&=
	\begin{pmatrix}A+RC&B+RD \\ C & D\end{pmatrix} \\
	EC2: &\begin{pmatrix}A&B\\C&D\end{pmatrix}\begin{pmatrix}I&R\\0 &I \end{pmatrix}&=
	\begin{pmatrix} A&AR+B \\ C & CR+ D\end{pmatrix}
\end{eqnarray*}
\begin{eqnarray*}
	ER3: &\begin{pmatrix}I&0\\R &I\end{pmatrix}\begin{pmatrix}A&B\\C&D\end{pmatrix}&=
	\begin{pmatrix}A&B \\ RA+ C & RB+D\end{pmatrix}\\ 
	EC3: &\begin{pmatrix}A&B\\C&D\end{pmatrix}\begin{pmatrix}I&0\\ R&I \end{pmatrix}&=
	\begin{pmatrix} A+BR&B \\ C+DR & D\end{pmatrix}.
\end{eqnarray*}
\subsection{Elementary operations for $GO(2l,k)\; (l\geq 2)$}
Let $g=\begin{pmatrix}
A & B \\ C & D
\end{pmatrix}
$ be a $2l \times 2l$ matrix written in block form of size $l\times l$. 
Then the row and column operations are as follows:
\begin{eqnarray*}
	ER1:&\begin{pmatrix}R&0\\ 0& \tr {R}^{-1}\end{pmatrix}\begin{pmatrix}
		A&B\\C&D\end{pmatrix}&=\begin{pmatrix}RA&RB \\ \tr {R}^{-1}C & \tr {R}^{-1} D\end{pmatrix}\\
	EC1:&\begin{pmatrix}A&B\\C&D\end{pmatrix}\begin{pmatrix}R&0\\ 0& \tr {R}^{-1}
	\end{pmatrix}&=\begin{pmatrix} AR&B\tr {R}^{-1} \\ CR & D\tr {R}^{-1}\end{pmatrix}
\end{eqnarray*}
\begin{eqnarray*}
	ER2: & \begin{pmatrix}I&R\\ 0&I\end{pmatrix}\begin{pmatrix}A&B\\C&D\end{pmatrix}
	&=\begin{pmatrix}A+RC&B+RD \\ C & D\end{pmatrix} \\
	EC2: &\begin{pmatrix}A&B\\C&D\end{pmatrix}\begin{pmatrix}I&R\\ 0&I \end{pmatrix}
	&=\begin{pmatrix} A&AR+B \\ C & CR+ D\end{pmatrix}
\end{eqnarray*}
\begin{eqnarray*}
	ER3: &\begin{pmatrix}I&0\\R &I\end{pmatrix}\begin{pmatrix}A&B\\C&D\end{pmatrix}
	&=\begin{pmatrix}A&B \\ RA+ C & RB+D\end{pmatrix}\\ 
	EC3: &\begin{pmatrix}A&B\\C&D\end{pmatrix}\begin{pmatrix}I&0\\ R&I \end{pmatrix}
	&=\begin{pmatrix} A+BR&B \\ C+DR & D\end{pmatrix}.
\end{eqnarray*}
\subsection{Elementary operations for $GO(2l+1,k)\; (l\geq 2)$}
Let $g=\begin{pmatrix}\alpha&X&Y\\ E& A&B\\F&C&D\end{pmatrix}$ be a $(2l+1)\times (2l+1)$ 
matrix, where $A,B,C,D$ are $l\times l$ matrices, and $X=(X_1 X_2 \cdots X_l)$ and  
$Y=(Y_1 Y_2 \cdots Y_l)$ are $1\times l$ matrices, 
and $E=\tra(E_1 E_2 \cdots E_l)$ and $F=\tra(F_1 F_2 \cdots F_l)$ are 
$l\times 1$ matrices. 
Let $\alpha\in k$. Then the row and column operations are as follows:
\begin{eqnarray*}
	ER1:&\begin{pmatrix}1&0&0\\ 0&R&0\\ 0&0& \tr {R}^{-1}\end{pmatrix}\begin{pmatrix}
		\alpha & X&Y\\E& A&B\\F& C&D\end{pmatrix}&=\begin{pmatrix}\alpha &X&Y\\ RE&RA&RB 
		\\ \tr {R}^{-1}F&\tr {R}^{-1}C & \tr {R}^{-1} D\end{pmatrix}\\
	EC1:&\begin{pmatrix}\alpha & X&Y\\ E&A&B\\F&C&D\end{pmatrix}\begin{pmatrix}1&0&0 \\ 
		0&R&0\\ 0&0& \tr {R}^{-1}\end{pmatrix}&=\begin{pmatrix} \alpha& XR & Y\tr {R}^{-1}\\ 
		E&AR&B\tr {R}^{-1} \\ F& CR & D\tr {R}^{-1}\end{pmatrix}.
\end{eqnarray*}
\begin{eqnarray*}
	ER2: & \begin{pmatrix}1&0&0\\ 0&I&R\\ 0&0&I\end{pmatrix}\begin{pmatrix}\alpha &X&Y\\ 
		E&A&B\\F&C&D\end{pmatrix}&=\begin{pmatrix}\alpha &X&Y\\ E+RF& A+RC&B+RD \\ F& C & D\end{pmatrix} \\
	EC2:&\begin{pmatrix}\alpha& X&Y\\ E& A&B\\F& C&D\end{pmatrix}\begin{pmatrix}1&0&0\\ 
		0&I&R\\ 0&0&I \end{pmatrix}&=\begin{pmatrix} \alpha& X& XR+Y\\ E&A&AR+B \\ F& C & CR+ D\end{pmatrix}.
\end{eqnarray*}
\begin{eqnarray*}
	ER3: &\begin{pmatrix}1&0&0\\ 0&I&0\\0&R &I\end{pmatrix}\begin{pmatrix}\alpha & X&Y\\ E&A&B\\F&C&D
	\end{pmatrix}&=\begin{pmatrix}\alpha&X&Y\\ E&A&B \\ RE+F&RA+ C & RB+D\end{pmatrix}\\ 
	EC3:&\begin{pmatrix}\alpha& X&Y\\ E&A&B\\F&C&D\end{pmatrix}\begin{pmatrix}1&0&0\\ 0&I&0\\ 0&R&I 
	\end{pmatrix}&=\begin{pmatrix}\alpha& X+YR&Y\\E& A+BR&B \\ F&C+DR & D\end{pmatrix}.
\end{eqnarray*}
For $E4$ we only write the equations which we need later. 
\begin{itemize}
	\item Let the matrix $g$ have $C=\mathrm{diag}(d_1,\ldots,d_l)$.
	$$ER4:\;[(I+t(2e_{i,0}-e_{0,-i})-t^2e_{i,-i})g]_{0,i}= X_i-td_i $$
	\item Let the matrix $g$ have $A=\mathrm{diag}(d_1,\ldots,d_l)$.
	$$ER4:\;[(I+t(-2e_{-i,0}+e_{0,i})-t^2e_{-i,i})g]_{0,i}= X_i+td_i $$
	$$EC4:\;[g(I+t(2e_{i,0}-e_{0,-i})-t^2e_{i,-i})]_{i,0}= E_i+2 td_i, $$ 
	where $1\leq i \leq l.$
\end{itemize}
\section[Gaussian Elimination in Similitude Groups]{Gaussian Elimination in Symplectic and Orthogonal Similitude Groups}
\subsection{Some useful lemmas}
To justify the steps of the Gaussian elimination algorithm we need several lemmas.
So this subsection is devoted to prove these lemmas.
\begin{lemma}\label{lemma1}
	Let $Y=\mathrm{diag}(1,\ldots,1,\lambda,\ldots,\lambda)$ be of size $l$
	with the number of $1$s equal to $m<l$. Let $X$ be a matrix of size $l$ such that 
	$YX$ is symmetric (resp. skew-symmetric) then $X$ is of the form 
	$ \begin{pmatrix}X_{11}&X_{12}\\X_{21}&X_{22}\end{pmatrix}$, 
	where $X_{11}$ is an $m\times m$ symmetric (resp. skew-symmetric), and 
	$X_{12}=\lambda \tr X_{21}$ (resp. $X_{12}=-\lambda \tr X_{21}$).
	Furthermore, if $\lambda \neq 0$ then $X_{22}$ is symmetric (resp. skew-symmetric).
\end{lemma}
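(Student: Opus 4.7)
The plan is to carry out a direct block decomposition and then match coefficients in the identity $YX = \varepsilon \tr{(YX)}$, where $\varepsilon = +1$ in the symmetric case and $\varepsilon = -1$ in the skew-symmetric case. First, I would write
\[ Y = \begin{pmatrix} I_m & 0 \\ 0 & \lambda I_{l-m} \end{pmatrix}, \qquad X = \begin{pmatrix} X_{11} & X_{12} \\ X_{21} & X_{22} \end{pmatrix}, \]
with $X_{11}$ of size $m\times m$, $X_{12}$ of size $m\times (l-m)$, $X_{21}$ of size $(l-m)\times m$ and $X_{22}$ of size $(l-m)\times (l-m)$. Multiplying out gives
\[ YX = \begin{pmatrix} X_{11} & X_{12} \\ \lambda X_{21} & \lambda X_{22} \end{pmatrix}, \qquad \tr{(YX)} = \begin{pmatrix} \tr{X_{11}} & \lambda \tr{X_{21}} \\ \tr{X_{12}} & \lambda \tr{X_{22}} \end{pmatrix}. \]

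Next I would impose $YX = \varepsilon \tr{(YX)}$ block by block. The $(1,1)$ entry yields $X_{11} = \varepsilon \tr{X_{11}}$, which is exactly the symmetry (resp.\ skew-symmetry) of $X_{11}$. The $(1,2)$ entry yields $X_{12} = \varepsilon \lambda \tr{X_{21}}$, which is the desired relation between the off-diagonal blocks; the $(2,1)$ entry is just the transpose of this identity and provides no new information. Finally the $(2,2)$ entry yields $\lambda X_{22} = \varepsilon \lambda \tr{X_{22}}$, and cancelling $\lambda$ (which is permitted precisely under the stated hypothesis $\lambda \neq 0$) gives $X_{22} = \varepsilon \tr{X_{22}}$.

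There is no real obstacle here: the whole argument is a $2\times 2$ block comparison. The only subtle point to flag is why the hypothesis $\lambda \neq 0$ appears only in the last assertion about $X_{22}$, and the answer is visible from the computation above — the diagonal block equation for $X_{22}$ carries a factor of $\lambda$ on both sides that must be cancelled, whereas the conditions on $X_{11}$ and on the pair $(X_{12}, X_{21})$ are derived from equations in which the factor $\lambda$ appears on only one side and therefore need no cancellation.
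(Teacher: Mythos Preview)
Your proof is correct and follows essentially the same approach as the paper: compute $YX$ in block form and read off the conditions from (skew-)symmetry block by block. Your version is simply more explicit, spelling out the use of $\varepsilon$ and noting why the hypothesis $\lambda\neq 0$ is needed only for the $X_{22}$ block.
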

\begin{proof}
	First, observe that the matrix $YX=\begin{pmatrix}X_{11}&X_{12}\\\lambda X_{21}&\lambda X_{22}\end{pmatrix}$.
	Since the matrix $YX$ is symmetric (resp. skew-symmetric), then $X_{11}$ is symmetric (resp. skew-symmetric),
	and $X_{12}=\lambda \tr X_{21}$ (resp. $X_{12}=-\lambda \tr X_{21}$).
	Also if $\lambda \neq 0$ then $X_{22}$ is symmetric (resp. skew-symmetric).
\end{proof}
\begin{corollary}\label{lemma2}
	Let $g=\begin{pmatrix} A&B\\C&D \end{pmatrix}$ be either in $GSp(2l,k)$ or $GO(2l,k)$.
	\begin{enumerate}
		\item If $A$ is a diagonal matrix $\mathrm{diag}(1,\ldots,1,\lambda), \lambda \in k^{\times}$,  
		then the matrix $C$ is of the form $\begin{pmatrix}C_{11}&\pm \lambda \tr C_{21} 
		\\C_{21} & c_{ll}\end{pmatrix}$, 
		where $C_{11}$ is an $(l-1)\times(l-1)$ symmetric if $g \in GSp(2l,k)$,  
		and $C_{11}$ is skew-symmetric with $c_{ll}=0$ if $g \in GO(2l,k)$.
		\item If $A$ is a diagonal matrix $\mathrm{diag}(\underbrace{1,\ldots,1}_{m},\underbrace{0,\ldots,0}_{l-m})$, 
		then the matrix $C$ is of the form 
		$\begin{pmatrix} C_{11} &0\\C_{21}&C_{22}\end{pmatrix}$, where $C_{11}$ is an $m\times m$
		symmetric matrix if $g\in GSp(2l,k)$, and is skew-symmetric if $g\in GO(2l,k)$.
	\end{enumerate}
\end{corollary}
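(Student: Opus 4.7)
The plan is to unpack the defining similitude relation $\tr{g}\beta g = \mu(g)\beta$ blockwise and then apply Lemma~\ref{lemma1} (the preceding lemma) to the lower-left block $C$. Writing out the product in $2\times 2$ block form with $\beta = \begin{pmatrix}0&I\\\mp I&0\end{pmatrix}$ (minus for the symplectic case, plus for the orthogonal case) yields three identities, and the upper-left $l\times l$ block of $\tr{g}\beta g - \mu(g)\beta = 0$ gives precisely
\[
\tr{A}C \mp \tr{C}A = 0,
\]
i.e.\ $\tr{A}C$ is symmetric in the symplectic case and skew-symmetric in the orthogonal case. The $\mu(g)$ term contributes nothing to this block, so the similitude character plays no role in the argument.

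Since $A$ is diagonal we have $\tr{A} = A$, so the condition becomes $AC$ symmetric (resp.\ skew-symmetric). For part (1) I would take $Y = A = \mathrm{diag}(1,\ldots,1,\lambda)$, which is exactly the shape required by Lemma~\ref{lemma1} with $m = l-1 < l$ and $\lambda \in k^\times$. Writing $C = \begin{pmatrix}C_{11} & C_{12}\\ C_{21} & c_{ll}\end{pmatrix}$ with $C_{11}$ of size $(l-1)\times(l-1)$, Lemma~\ref{lemma1} directly supplies the symmetric/skew-symmetric statement for $C_{11}$ and the relation $C_{12} = \pm\lambda \tr{C_{21}}$. For the orthogonal case the entry $c_{ll}$ appears as the $(l,l)$-entry of the skew-symmetric matrix $\lambda c_{ll}$ (a $1\times 1$ block of $AC$), and since $\lambda \in k^\times$ this forces $c_{ll} = 0$.

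For part (2) I would again invoke Lemma~\ref{lemma1}, now applied with $\lambda = 0$ (the lemma's hypothesis $m < l$ is still satisfied since we allow $\lambda$ to take any value, including zero, in its statement). Writing $C = \begin{pmatrix}C_{11} & C_{12} \\ C_{21} & C_{22}\end{pmatrix}$ with $C_{11}$ of size $m\times m$, the same lemma immediately gives $C_{11}$ symmetric (resp.\ skew-symmetric) and $C_{12} = \lambda\,\tr{C_{21}} = 0$, which is precisely the claimed block form (with $C_{22}$ left unconstrained, as expected since the lemma's final assertion about $X_{22}$ requires $\lambda \neq 0$). There is no real obstacle here: the entire statement is a bookkeeping consequence of the defining equation and the preceding lemma, and the only place to be careful is making sure the correct sign ($+$ or $-$) is tracked through the symplectic-vs-orthogonal dichotomy.
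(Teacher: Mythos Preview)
Your proof is correct and follows essentially the same route as the paper: extract from the similitude relation that $\tr{A}C$ (hence $AC$, since $A$ is diagonal) is symmetric in the symplectic case and skew-symmetric in the orthogonal case, then invoke Lemma~\ref{lemma1} with $Y=A$ and $X=C$. The paper's proof is a terse two-line version of exactly this; your added detail about tracking the sign, handling $c_{ll}$ via the $1\times 1$ skew-symmetric block, and noting that Lemma~\ref{lemma1} applies with $\lambda=0$ in part~(2) is all accurate and helpful.
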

\begin{proof}
	We use the condition that $g$ satisfies $\tr g \beta g=\mu(g)\beta$, and 
	$AC$ is symmetric (using $\tr A=A$, as $A$ is diagonal),  
	when $g\in GSp(2l,k)$, and $AC$ is skew-symmetric, when 
	$g\in GO(2l,k)$. Then Lemma~\ref{lemma1} gives the required 
	form for $C$.
\end{proof}
\begin{corollary}\label{lemma3}
	Let $g=\begin{pmatrix}A&B\\0&\mu(g)A^{-1}\end{pmatrix} \in GSp(2l,k)$ or $ GO(2l,k)$, where 
	$A=\mathrm{diag}(1,\ldots,1,\lambda)$, then the matrix $B$ is of the form 
	$\begin{pmatrix} B_{11}&\pm \lambda^{-1}\tr B_{21}\\B_{21}&b_{ll}\end{pmatrix}$, 
	where $B_{11}$ is a symmetric matrix of size $l-1$ if $g\in GSp(2l,k)$, and 
	skew-symmetric with $b_{ll}=0$ if $g\in GO(2l,k)$.
\end{corollary}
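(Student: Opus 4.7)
The plan is to extract the constraint on $B$ from the defining similitude relation $\tr g\,\beta\,g = \mu(g)\beta$ and then feed it into Lemma~\ref{lemma1}. Concretely, I would expand the product $\tr g\,\beta\,g$ in block form using
\[
g = \begin{pmatrix} A & B \\ 0 & \mu(g)A^{-1} \end{pmatrix}, \qquad
\tr g = \begin{pmatrix} \tr A & 0 \\ \tr B & \mu(g)\tr A^{-1} \end{pmatrix},
\]
with $\beta = \begin{pmatrix} 0 & I_l \\ -I_l & 0 \end{pmatrix}$ in the symplectic case and $\beta = \begin{pmatrix} 0 & I_l \\ I_l & 0 \end{pmatrix}$ in the split orthogonal case. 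The $(1,1)$, $(1,2)$, $(2,1)$ blocks of $\tr g\,\beta\,g$ reproduce $\mu(g)\beta$ automatically (since $A$ is diagonal, so $\tr A = A$ and $\tr A^{-1}=A^{-1}$), and the whole content of the relation sits in the $(2,2)$ block.

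Computing that block, the symplectic relation collapses to $\tr B\,A^{-1} = A^{-1}B$, i.e.\ $A^{-1}B$ is symmetric; the orthogonal relation collapses to $\tr B\,A^{-1} = -A^{-1}B$, i.e.\ $A^{-1}B$ is skew-symmetric. At this point I set $Y := A^{-1} = \mathrm{diag}(1,\ldots,1,\lambda^{-1})$ and $X := B$, so the hypothesis of Lemma~\ref{lemma1} (with $m = l-1$ and the scalar $\lambda$ of that lemma replaced by $\lambda^{-1}$) is satisfied. The lemma then gives the block shape
\[
B = \begin{pmatrix} B_{11} & \pm\lambda^{-1}\tr B_{21} \\ B_{21} & B_{22} \end{pmatrix},
\]
with $B_{11}$ of size $(l-1)\times(l-1)$, symmetric in the symplectic case and skew-symmetric in the orthogonal case, and the sign $+$ (resp.\ $-$) matching the symplectic (resp.\ orthogonal) situation.

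Finally, since $B_{22}$ is a $1\times 1$ block, I read off $B_{22} = b_{ll}$ directly; the lemma tells us $B_{22}$ is symmetric (resp.\ skew-symmetric), which in the orthogonal case forces $b_{ll}=0$, while in the symplectic case it imposes no constraint. This yields the stated form of $B$.

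There is no real obstacle here: the only thing to get right is the bookkeeping between the scalar $\lambda$ appearing in $A$ and the scalar $\lambda^{-1}$ appearing in $A^{-1}$ (which is the matrix one actually feeds into Lemma~\ref{lemma1}); the factor $\lambda^{-1}$ in the off-diagonal block $B_{12} = \pm\lambda^{-1}\tr B_{21}$ is precisely the trace of this bookkeeping.
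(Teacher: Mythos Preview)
Your proposal is correct and follows essentially the same route as the paper: derive from $\tr g\,\beta\,g=\mu(g)\beta$ (using $\tr A=A$) that $A^{-1}B$ is symmetric in the symplectic case and skew-symmetric in the orthogonal case, then apply Lemma~\ref{lemma1} with $Y=A^{-1}=\mathrm{diag}(1,\ldots,1,\lambda^{-1})$ and $X=B$. Your write-up is simply more explicit about the block computation and the $\lambda\leftrightarrow\lambda^{-1}$ bookkeeping than the paper's two-line proof.
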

\begin{proof}
	We use the condition that $g$ satisfies $\tr g \beta g=\mu(g)\beta$ and 
	$\tr A=A$ to get $A^{-1}B$ is symmetric if $g\in GSp(2l,k)$, and skew-symmetric 
	if $g \in GO(2l,k)$. Again Lemma~\ref{lemma1} gives the required form 
	for $B$.
\end{proof}
\begin{lemma}\label{lemma4}
	Let $g=\begin{pmatrix} A&B\\0&D\end{pmatrix}\in GL(2l,k)$. Then,
	\begin{enumerate}
		\item  $g \in GSp(2l,k)$ if and only if 
		$D=\mu(g)\tr A^{-1}$ and $\tr (A^{-1}B)=(A^{-1}B)$, and 
		\item  $g \in GO(2l,k)$ if and only if 
		$D=\mu(g)\tr A^{-1}$ and $\tr (A^{-1}B)=-(A^{-1}B)$.
	\end{enumerate}
\end{lemma}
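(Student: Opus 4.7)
The plan is to unpack the defining identity $\tr{g}\beta g = \mu(g)\beta$ directly in block form, treating the two cases in parallel since they only differ by one sign in $\beta$.

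First, for part (1), write $\beta = \begin{pmatrix} 0 & I_l \\ -I_l & 0 \end{pmatrix}$ and compute, using $\tr{g} = \begin{pmatrix} \tr{A} & 0 \\ \tr{B} & \tr{D} \end{pmatrix}$:
\begin{equation*}
\tr{g}\,\beta\, g \;=\; \begin{pmatrix} 0 & \tr{A}\,D \\ -\tr{D}\,A & \tr{B}\,D - \tr{D}\,B \end{pmatrix}.
\end{equation*}
Setting this equal to $\mu(g)\beta$ blockwise, the $(1,1)$-block is automatic, the $(1,2)$- and $(2,1)$-blocks both reduce to $\tr{A}\,D = \mu(g)\,I_l$, i.e.\ $D = \mu(g)\tr{A}^{-1}$, and the $(2,2)$-block gives $\tr{B}\,D = \tr{D}\,B$. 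Substituting $\tr{D} = \mu(g)\,A^{-1}$ into the latter and using that $(\tr{D}\,B)^{t} = \tr{B}\,D$, the $(2,2)$-condition becomes: $\mu(g)\,A^{-1}B$ is symmetric. Since $\mu(g)\in k^{\times}$, this is equivalent to $\tr{(A^{-1}B)} = A^{-1}B$. Conversely, if $D = \mu(g)\tr{A}^{-1}$ and $\tr{(A^{-1}B)} = A^{-1}B$, plugging back into the same block computation yields $\tr{g}\beta g = \mu(g)\beta$, so $g \in GSp(2l,k)$.

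For part (2), the computation is identical except $\beta = \begin{pmatrix} 0 & I_l \\ I_l & 0 \end{pmatrix}$, which flips the sign in the lower-left block, yielding
\begin{equation*}
\tr{g}\,\beta\, g \;=\; \begin{pmatrix} 0 & \tr{A}\,D \\ \tr{D}\,A & \tr{B}\,D + \tr{D}\,B \end{pmatrix}.
\end{equation*}
Again the $(1,2)$- and $(2,1)$-entries force $D = \mu(g)\tr{A}^{-1}$, while the $(2,2)$-entry now requires $\tr{D}\,B$ to be skew-symmetric; substituting $\tr{D} = \mu(g)A^{-1}$, this becomes $\tr{(A^{-1}B)} = -(A^{-1}B)$. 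The converse direction is once more just re-running the computation in reverse.

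There is no real obstacle here; the argument is pure block-matrix bookkeeping. The only thing to be careful about is that the $(2,2)$-block condition genuinely is equivalent to the stated symmetry/skew-symmetry of $A^{-1}B$, which requires the small observation that $(\tr{D}\,B)^{t} = \tr{B}\,D$ together with the invertibility of $\mu(g)$; both are immediate.
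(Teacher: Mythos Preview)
Your proof is correct and is precisely the approach the paper takes: the paper's own proof simply says to use the condition $\tr g\beta g=\mu(g)\beta$ and read off the consequences, with the converse being clear. You have just filled in the block computation that the paper leaves implicit.
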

\begin{proof}
	\begin{enumerate}
		\item Let $g\in GSp(2l,k)$ then $g$ satisfies $\tr g \beta g=\mu(g)\beta$.
		Then this implies $D=\mu(g)\tr A^{-1}$ and $\tr (A^{-1}B)=(A^{-1}B)$. 
		
		Conversely, if $g$ satisfies the given condition then clearly $g\in GSp(2l,k)$.
		\item This follows by similar computation.
	\end{enumerate}
\end{proof}
\begin{lemma}\label{lemma5}
	Let $Y=\mathrm{diag}(1,\ldots,1,\lambda)$ be of size 
	$l$, where $\lambda \in k^{\times}$ and $X=(x_{ij})$ be 
	a matrix such that $YX$ is symmetric (resp. skew-symmetric).
	Then $X=(R_{1}+R_{2}+\ldots)Y$, where each $R_{m}$ is of the 
	form $t(e_{i,j}+e_{j,i})$ for some $i<j$ or of the form $te_{i,i}$
	for some $i$ (resp. each $R_{m}$ is of the form 
	$t(e_{i,j}-e_{j,i})$ for some $i<j$).
\end{lemma}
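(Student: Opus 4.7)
The plan is to exhibit an explicit symmetric (respectively skew-symmetric) matrix $S$ such that $X=SY$, and then decompose $S$ in the standard basis of elementary symmetric (respectively skew-symmetric) matrices; the summands of that decomposition are precisely the required $R_m$.

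First I would set $S := Y^{-1}(YX)Y^{-1}$. Since $Y$ is diagonal we have $\tr Y=Y$, so
\[
\tr S \;=\; \tr Y^{-1}\,\tr(YX)\,\tr Y^{-1} \;=\; Y^{-1}\bigl(\pm YX\bigr)Y^{-1} \;=\; \pm S,
\]
with the sign according to whether $YX$ is symmetric or skew-symmetric. Thus $S$ has the desired symmetry. Next I would verify directly that
\[
SY \;=\; Y^{-1}(YX)Y^{-1}\,Y \;=\; X,
\]
which gives the factorisation $X=SY$ with $S$ symmetric (respectively skew-symmetric). (One may equivalently check entrywise: $(SY)_{ij}=S_{ij}Y_{jj}$, and since $(YX)_{ij}=Y_{ii}X_{ij}=Y_{jj}X_{ji}=(YX)_{ji}$ forces $X_{il}=\lambda X_{li}$ for $i<l$ and $X_{ij}=X_{ji}$ for $i,j<l$ in the symmetric case, the entries $S_{ij}:=X_{ij}/Y_{jj}$ are well-defined and satisfy $S_{ij}=\pm S_{ji}$.)

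Once $S$ is in hand, the decomposition is immediate. In the symmetric case write
\[
S \;=\; \sum_{1\le i<j\le l} s_{ij}\bigl(e_{i,j}+e_{j,i}\bigr) \;+\; \sum_{i=1}^{l} s_{ii}\,e_{i,i},
\]
where $s_{ij}$ are the entries of $S$ (for $i\le j$). In the skew-symmetric case the diagonal entries of $S$ vanish and
\[
S \;=\; \sum_{1\le i<j\le l} s_{ij}\bigl(e_{i,j}-e_{j,i}\bigr).
\]
Labelling the individual summands $R_1,R_2,\ldots$ yields $X=SY=(R_1+R_2+\cdots)Y$ in the required form.

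The only substantive point is the existence of the factorisation $X=SY$ with $S$ of the correct symmetry, and this rests on the single observation that conjugation by the diagonal (hence symmetric) matrix $Y^{-1}$ preserves the type of a bilinear form. No obstacle is anticipated; the argument is essentially a reformulation of the hypothesis together with the standard basis expansion of (skew-)symmetric matrices.
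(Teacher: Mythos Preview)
Your argument is correct and in spirit the same as the paper's: both show $X=SY$ for some (skew-)symmetric $S$ and then expand $S$ in the obvious basis. The paper reaches this by invoking the block description of $X$ from Lemma~\ref{lemma1} (writing $X=\begin{pmatrix}X_{11}&\pm\lambda\,\tr X_{21}\\X_{21}&x_{ll}\end{pmatrix}$ and then reading off the $R_mY$ summands entrywise), whereas you obtain $S$ in one stroke as the congruence $S=Y^{-1}(YX)Y^{-1}$. Your formulation is a bit slicker and, since it only uses that $Y$ is diagonal and invertible, it applies verbatim to any such $Y$, not just $\mathrm{diag}(1,\ldots,1,\lambda)$; the paper's block computation is tailored to that specific shape.
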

\begin{proof}
	Since the matrix $YX$ is symmetric (resp. skew-symmetric), 
	then the matrix $X$ is of the form 
	$\begin{pmatrix}X_{11}&X_{12}\\X_{21}&x_{ll}\end{pmatrix}$, 
	where $X_{11}$ is symmetric (resp. skew-symmetric), 
	$X_{12}=\lambda \tr X_{21}$ (resp. $X_{12}=-\lambda \tr X_{21}$) 
	and $X_{21}$ is a row of size $l-1$. 
	Clearly, $X$ is a sum of the matrices of the form 
	$R_m Y$.    
\end{proof}
\begin{lemma}\label{lemma6}
	For $1\leq i\leq l$, 
	\begin{enumerate}
		\item The element $w_{i,-i}=I+e_{i,-i}-e_{-i,i}-e_{i,i}-e_{-i,-i} \in GSp(2l,k)$ 
		is a product of elementary matrices.
		\item The element $w_{i,-i}=I-e_{i,-i}-e_{-i,i}-e_{i,i}-e_{-i,-i} \in GO(2l,k)$ 
		is a product of elementary matrices.
		\item The element $w_{i,-i}=I-2e_{0,0}-e_{i,-i}-e_{-i,i}-e_{i,i}-e_{-i,-i} \in GO(2l+1,k)$ 
		is a product of elementary matrices.     
	\end{enumerate}
\end{lemma}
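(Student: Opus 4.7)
The plan is to realize each $w_{i,-i}$ as a standard Chevalley-type Weyl group representative $n_\alpha := x_\alpha(1)x_{-\alpha}(-1)x_\alpha(1)$, possibly conjugated by another such representative when the required root subgroups are unavailable. In each case the verification reduces to matrix multiplication in a small block.

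For part 1, the elementary matrices $x_{i,-i}(t) = I + te_{i,-i}$ and $x_{-i,i}(t) = I + te_{-i,i}$ act as the identity outside the plane $\langle e_i, e_{-i}\rangle$. On that plane they are upper and lower triangular unipotent with parameter $t$. The classical $SL_2$ identity
\[
\begin{pmatrix}1 & 1 \\ 0 & 1\end{pmatrix}\begin{pmatrix}1 & 0 \\ -1 & 1\end{pmatrix}\begin{pmatrix}1 & 1 \\ 0 & 1\end{pmatrix} = \begin{pmatrix}0 & 1 \\ -1 & 0\end{pmatrix}
\]
then yields $w_{i,-i} = x_{i,-i}(1)\,x_{-i,i}(-1)\,x_{i,-i}(1)$. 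Part 3 is analogous but supported on the three-dimensional block $\langle e_0, e_i, e_{-i}\rangle$: a direct $3\times 3$ multiplication shows $w_{i,-i} = x_{i,0}(1)\,x_{0,i}(-1)\,x_{i,0}(1)$, where the $-2e_{0,0}$ entry and the cross terms $-e_{i,-i}$ and $-e_{-i,i}$ appear naturally through the quadratic $-t^2$ corrections built into the definitions of $x_{i,0}(t)$ and $x_{0,i}(t)$.

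For part 2 the case $i = l$ is immediate, since $w_l = w_{l,-l}$ is already listed among the elementary matrices. For $i < l$ one cannot apply the previous recipe directly, since no single root subgroup is supported on the plane $\langle e_i, e_{-i}\rangle$. Instead I build a ``swap'' element $\sigma_{i,l} := x_{i,l}(1)\,x_{l,i}(-1)\,x_{i,l}(1)$ from $E1$-type matrices; applying the same $SL_2$ identity separately on each of the two planes $\langle e_i, e_l\rangle$ and $\langle e_{-i}, e_{-l}\rangle$ shows that $\sigma_{i,l}$ exchanges $e_i \leftrightarrow e_l$ and $e_{-i} \leftrightarrow e_{-l}$ up to signs, while fixing all other basis vectors. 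A short bookkeeping argument then gives $\sigma_{i,l}\, w_l\, \sigma_{i,l}^{-1} = w_{i,-i}$, and since $\sigma_{i,l}^{-1} = x_{i,l}(-1)\,x_{l,i}(1)\,x_{i,l}(-1)$ is again a product of elementary matrices, this exhibits $w_{i,-i}$ as a product of seven elementary matrices.

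The only mildly delicate step is the $3 \times 3$ verification in part 3: the quadratic $-t^2$ terms in $x_{i,0}(t)$ and $x_{0,i}(t)$ (which reflect the fact that $e_i$ corresponds to a short root in the $B_l$ root system) must be tracked carefully to see how the $-2e_{0,0}$ contribution and the desired off-diagonal entries emerge from the triple product. Beyond this, everything is routine block matrix algebra.
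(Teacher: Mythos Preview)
Your argument is correct. Parts 1 and 3 are essentially the paper's proof: the paper writes $w_{i,-i}=x_{i,-i}(1)x_{-i,i}(-1)x_{i,-i}(1)$ in the symplectic case and $w_{i,-i}=x_{0,i}(-1)x_{i,0}(1)x_{0,i}(-1)$ in the odd orthogonal case, which is your triple product with the roles of $x_{i,0}$ and $x_{0,i}$ interchanged (both versions give the same matrix, as one checks on the $3\times 3$ block).

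Part 2 is where you genuinely diverge. The paper proceeds \emph{inductively}: it first realizes $w_{i,j}=x_{i,j}(1)x_{j,i}(-1)x_{i,j}(1)$ and $w_{i,-j}=x_{i,-j}(1)x_{-i,j}(1)x_{i,-j}(1)$ as products of elementary matrices, then verifies the identity $w_l\,w_{l,l-1}\,w_{l,-(l-1)}=w_{l-1,-(l-1)}$ and descends from $l$ to $i$ one step at a time. Your approach instead conjugates $w_l$ directly by $\sigma_{i,l}$ (which is precisely the paper's $w_{i,l}$), obtaining $w_{i,-i}=\sigma_{i,l}\,w_l\,\sigma_{i,l}^{-1}$ in a single step. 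Your route is more economical: it produces a word of fixed length $7$ regardless of $i$, whereas the paper's inductive recipe yields words whose length grows linearly in $l-i$. The paper's identity, on the other hand, stays closer to the Chevalley-group formalism of building all Weyl representatives from adjacent simple reflections.
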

\begin{proof}
	\begin{enumerate}
		\item We have $w_{i,-i}=x_{i,-i}(1)x_{-i,i}(-1)x_{i,-i}(1)$.
		\item We produce these elements inductively. 
		First we get $w_{i,-j}=(I+e_{i,-j}-e_{j,-i})(I+e_{-i,j}-e_{-j,i})(I+e_{i,-j}-e_{j,-i})
		=x_{i,-j}(1)x_{-i,j}(1)x_{i,-j}(1)$, and 
		$w_{i,j}=(I+e_{i,j}-e_{-j,-i})(I-e_{j,i}+e_{-i,-j})(I+e_{i,j}-e_{-j,-i})
		=x_{i,j}(1)x_{j,i}(-1)x_{i,j}(1)$. 
		Set $w_{l}:=w_{l,-l}=I-e_{l,l}-e_{-l,-l}-e_{l,-l}-e_{-l,l}$. 
		Then compute $w_{l}w_{l,l-1}w_{l,-(l-1)}=w_{(l-1),-(l-1)}$.
		So inductively we get $w_{i,-i}$ is a product of elementary matrices.
		\item We have $w_{i,-i}=x_{0,i}(-1)x_{i,0}(1)x_{0,i}(-1)$.
	\end{enumerate}
\end{proof}
\begin{lemma}\label{lemma7}
	The element $\mathrm{diag}(1,\ldots,1,\lambda,1,\ldots,1,\lambda^{-1})\in GSp(2l,k)$ 
	is a product of elementary matrices.
\end{lemma}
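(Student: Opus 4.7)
The plan is to reduce the claim to the well-known Gaussian elimination identity in $SL(2,k)$, applied to the rank-one $SL_2$ sitting inside $Sp(2l,k)$ generated by the root subgroups indexed by $l$ and $-l$. The target matrix $D_\lambda := \mathrm{diag}(1,\ldots,1,\lambda,1,\ldots,1,\lambda^{-1})$ is the identity on $\mathrm{span}\{e_i, e_{-i}\}$ for $1\leq i\leq l-1$, and restricts to $\mathrm{diag}(\lambda,\lambda^{-1})$ on $\mathrm{span}\{e_l, e_{-l}\}$, so it lives inside this distinguished $SL_2$.

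First, I would record what the relevant elementary matrices do on the subspace $\mathrm{span}\{e_l, e_{-l}\}$ in the ordered basis $\{e_l, e_{-l}\}$: from Section~\ref{elementarymatrices}, $x_{l,-l}(t)=I+te_{l,-l}$ restricts to $\bigl(\begin{smallmatrix} 1 & t \\ 0 & 1 \end{smallmatrix}\bigr)$, $x_{-l,l}(t)=I+te_{-l,l}$ restricts to $\bigl(\begin{smallmatrix} 1 & 0 \\ t & 1 \end{smallmatrix}\bigr)$, and the Weyl element $w_{l,-l}=I+e_{l,-l}-e_{-l,l}-e_{l,l}-e_{-l,-l}$ of Lemma~\ref{lemma6}(1) restricts to $\bigl(\begin{smallmatrix} 0 & 1 \\ -1 & 0 \end{smallmatrix}\bigr)$. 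Each of these fixes every $e_i, e_{-i}$ with $i<l$, so their product inside $Sp(2l,k)$ will also be the identity on that complementary symplectic subspace.

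Next, I would verify the $SL_2$ factorisation
\[
\begin{pmatrix} \lambda & 0 \\ 0 & \lambda^{-1} \end{pmatrix}
= \begin{pmatrix} 1 & \lambda \\ 0 & 1 \end{pmatrix}
  \begin{pmatrix} 1 & 0 \\ -\lambda^{-1} & 1 \end{pmatrix}
  \begin{pmatrix} 1 & \lambda \\ 0 & 1 \end{pmatrix}
  \begin{pmatrix} 0 & -1 \\ 1 & 0 \end{pmatrix},
\]
which is a direct two-by-two computation. Lifting this identity to $Sp(2l,k)$ via the embedding above gives
\[
D_\lambda = x_{l,-l}(\lambda)\, x_{-l,l}(-\lambda^{-1})\, x_{l,-l}(\lambda)\, w_{l,-l}^{-1}.
\]
Since $w_{l,-l}$ is an explicit product of the elementary matrices $x_{l,-l}(1), x_{-l,l}(-1), x_{l,-l}(1)$ by Lemma~\ref{lemma6}(1), its inverse is $x_{l,-l}(-1) x_{-l,l}(1) x_{l,-l}(-1)$, and the displayed expression exhibits $D_\lambda$ as a product of elementary matrices, completing the proof.

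No real obstacle is expected here; the only point requiring care is to confirm that the listed matrices genuinely act as the identity on the orthogonal complement $\mathrm{span}\{e_i, e_{-i} : i<l\}$ so that no residual entries appear outside the $(l,-l)$ block, and to check the sign conventions in the definition of $w_{l,-l}$ so that the correct Weyl inverse is used. Both are routine and essentially built into the choice of Chevalley basis made in Section~\ref{elementarymatrices}.
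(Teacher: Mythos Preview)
Your proof is correct and essentially identical to the paper's. The paper defines $w_{l,-l}(t):=x_{l,-l}(t)x_{-l,l}(-t^{-1})x_{l,-l}(t)$ and writes the diagonal element as $h_l(\lambda)=w_{l,-l}(\lambda)w_{l,-l}(-1)$; since $w_{l,-l}(-1)=w_{l,-l}^{-1}$ in the notation of Lemma~\ref{lemma6}(1), this is exactly your factorisation $x_{l,-l}(\lambda)\,x_{-l,l}(-\lambda^{-1})\,x_{l,-l}(\lambda)\,w_{l,-l}^{-1}$, only with the $SL_2$ embedding made explicit in your version.
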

\begin{proof}
	First we compute 
	\begin{align*}
	w_{l,-l}(t)&=(I+te_{l,-l})(I-t^{-1}e_{-l,l})(I+e_{l,-l}) \\
	&=I-e_{l,l}-e_{-l,-l}+te_{l,-l}-t^{-1}e_{-l,l} \\
	&=x_{l,-l}(t)x_{-l,l}(-t^{-1})x_{l,-l}(t).
	\end{align*}                             
	Then compute 
	\begin{align*}
	h_{l}(\lambda)&=w_{l,-l}(\lambda)w_{l,-l}(-1) \\
	&=I-e_{l,l}-e_{-l,-l}+\lambda e_{l,l}+\lambda^{-1}e_{-l,-l}, 
	\end{align*}                      
	which is the required element.
\end{proof}
\begin{lemma}\label{lemma8}
	Let $g=\begin{pmatrix} \alpha &X&Y\\E&A&B\\F&C&D\end{pmatrix} \in GO(2l+1,k)$. Then, 
	\begin{enumerate}
		\item If $A=\mathrm{diag}(1,\ldots,1,\lambda)$ and $X=0$, then 
		$C$ is of the form $\begin{pmatrix}C_{11}&-\lambda \tr C_{21}\\C_{21}&0 \end{pmatrix}$ 
		with $C_{11}$ skew-symmetric.
		\item If $A=\mathrm{diag}(\underbrace{1,\ldots,1}_{m},\underbrace{0,\ldots,0}_{l-m})$, 
		and $X$ with its first $m$ entries $0$, then 
		$C$ is of the form $\begin{pmatrix}C_{11}&0\\C_{21}&C_{22} \end{pmatrix}$ 
		with $C_{11}$ skew-symmetric.
	\end{enumerate}
\end{lemma}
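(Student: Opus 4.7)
The plan is to read off the structure of $C$ directly from the defining relation $\tr g\,\beta\,g = \mu(g)\beta$, where
\[
\beta=\begin{pmatrix} 2 & 0 & 0 \\ 0 & 0 & I_l \\ 0 & I_l & 0 \end{pmatrix}
\]
is the split form for $GO(2l+1,k)$. Expanding $\tr g\,\beta\,g$ as a $3\times 3$ block product and isolating the central $l\times l$ block yields
\[
2\,\tr X\cdot X + \tr A\cdot C + \tr C\cdot A = 0,
\]
since the corresponding block of $\mu(g)\beta$ vanishes. Both parts of the lemma will follow by feeding the hypotheses on $X$ and $A$ into this single identity.

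For part (1), the hypothesis $X=0$ collapses the equation to $AC+\tr(AC)=0$, where $\tr A = A$ because $A$ is diagonal. Decomposing $C$ into blocks conforming to $A=\mathrm{diag}(I_{l-1},\lambda)$ and computing $AC$, skew-symmetry of each block gives $C_{11}+\tr C_{11}=0$, $C_{12}+\lambda\tr C_{21}=0$, and $\lambda c_{ll}=0$. Under the implicit convention $\lambda\in k^{\times}$ (parallel to the hypothesis of Corollary~\ref{lemma2}), the last relation forces $c_{ll}=0$, and the claimed shape of $C$ drops out.

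For part (2), with $A=\mathrm{diag}(I_m,0_{l-m})$ the product $AC$ has only its first $m$ rows nonzero, equal to $\begin{pmatrix} C_{11} & C_{12} \end{pmatrix}$, so
\[
\tr A\cdot C+\tr C\cdot A = \begin{pmatrix} C_{11}+\tr C_{11} & C_{12} \\ \tr C_{12} & 0 \end{pmatrix}.
\]
The assumption that the first $m$ entries of $X$ vanish kills the top-left $m\times m$ and the two off-diagonal blocks of $\tr X\cdot X$, so those blocks of the central identity give $C_{11}+\tr C_{11}=0$ and $C_{12}=0$, which is exactly the claimed form. No constraints on $C_{21}$ or $C_{22}$ are forced by this single block equation, which is why the statement leaves them arbitrary. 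The argument is essentially bookkeeping; the only subtle point is that the split form $\beta$ interleaves the last two block rows of $g$, which is precisely what makes the central block of $\tr g\,\beta\,g$ involve $\tr A\cdot C + \tr C\cdot A$ rather than terms in $B$ or $D$.
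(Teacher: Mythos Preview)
Your proof is correct and follows essentially the same route as the paper: both extract the central $l\times l$ block identity $2\,\tr X X + \tr A C + \tr C A = 0$ from $\tr g\,\beta\,g=\mu(g)\beta$ and then read off the structure of $C$ case by case. The only cosmetic difference is that the paper packages the block computations for part (1) by citing Lemma~\ref{lemma1} and Corollary~\ref{lemma2}, whereas you carry them out directly; your explicit note that $\lambda\in k^{\times}$ is being assumed matches the convention in those cited results.
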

\begin{proof}
	We use the equation $\tr g\beta g=\mu(g)\beta$, and get 
	$2\tr XX+\tr AC+\tr CA=0$. In the first case, $AC$ is skew-symmetric 
	(using $X=0$ and $\tr A=A$). Then Lemma~\ref{lemma1} and Corollary~\ref{lemma2} 
	give the required form for $C$. In the second case, we note that $\tr XX$ has 
	top-left and top-right blocks $0$, and get the required form for $C$.
\end{proof}
\begin{lemma}\label{lemma9}
	Let $g=\begin{pmatrix}\alpha &X&Y\\E&A&B\\F&0&D \end{pmatrix} \in GO(2l+1,k)$, then
	$X=0$, and $D=\mu(g)\tr A^{-1}$.
\end{lemma}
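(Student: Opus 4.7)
The plan is a direct block computation. The defining relation is $\tr g \beta g = \mu(g)\beta$, where
\[
\beta = \begin{pmatrix} 2 & 0 & 0 \\ 0 & 0 & I_l \\ 0 & I_l & 0 \end{pmatrix},
\]
so I would partition both sides conformally with the $1 + l + l$ block decomposition of $g$ and match entries. First compute $\beta g$ (which simply doubles the top row and swaps the bottom two block rows), then left-multiply by $\tr g$, and finally compare with $\mu(g)\beta$ block by block. The hypothesis that the lower-left block $C$ of $g$ vanishes will make several of the block products collapse, which is precisely what lets the desired identities drop out of the right block positions.

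Next I would read off the middle block $(2,2)$ of $\tr g \beta g$. Since $C = 0$, this block simplifies to $\tr X \cdot 2X + \tr A \cdot 0 + 0 \cdot A = 2\, \tr X X$, and it must equal the $(2,2)$ block of $\mu(g)\beta$, which is $0$. Hence $\tr X X = 0$ as an $l\times l$ matrix (interpreting $X$ as a $1\times l$ row). Its diagonal entries are $X_i^2$, so $X_i^2 = 0$ for all $i$, and since $\operatorname{char} k \neq 2$ (in fact even without that, over a field $X_i^2=0$ forces $X_i=0$) we conclude $X = 0$.

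With $X = 0$ in hand, I would read off the block $(2,3)$ of $\tr g \beta g$: it is $\tr X \cdot 2Y + \tr A \cdot D + 0 \cdot B = 2\,\tr X Y + \tr A D$, and $X = 0$ reduces this to $\tr A D$. Comparing to the $(2,3)$ block of $\mu(g)\beta$, which is $\mu(g) I_l$, gives $\tr A\, D = \mu(g) I_l$. Since $\mu(g) \in k^\times$, both $\tr A$ and $D$ are invertible, and we obtain $D = \mu(g)\, \tr A^{-1}$, as required.

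There is essentially no hard step here; the only thing to be careful about is getting the block products in the right order and using the vanishing of $C$ at the correct moment so that the right-hand sides of the $(2,2)$ and $(2,3)$ matchings become scalars (respectively $0$ and $\mu(g)I_l$) rather than entanglements with $B$ or $Y$. Once the block $(2,2)$ identity is used to kill $X$, the block $(2,3)$ identity immediately gives $D$ in terms of $A$ and $\mu(g)$.
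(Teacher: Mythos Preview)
Your proof is correct and follows essentially the same approach as the paper: both compute $\tr g\beta g=\mu(g)\beta$ blockwise, read off $2\,\tr X X=0$ from the $(2,2)$ block to force $X=0$, and then use the $(2,3)$ block equation $2\,\tr X Y+\tr A D=\mu(g)I_l$ to get $D=\mu(g)\,\tr A^{-1}$. Your version simply spells out the block multiplications a bit more explicitly.
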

\begin{proof}
	We compute $\tr g\beta g=\mu(g)\beta$, and get $2\tr XX=0$ and 
	$2\tr XY+\tr AD=\mu(g)I$. Hence $X=0$, and $D=\mu(g)\tr A^{-1}$.
\end{proof}
\begin{lemma}\label{lemma10}
	Let $g=\begin{pmatrix}\alpha &0&Y\\0&A&B\\F&0&D \end{pmatrix}$, 
	with $A$ an invertible diagonal matrix. Then 
	$g\in GO(2l+1,k)$ if and only if $\alpha^2=\mu(g), F=0=Y, D=\mu(g)A^{-1}$ and 
	$\tr DB+\tr BD=0$, where $\mu(g) \in k^{\times}$ is similitude of $g$.
\end{lemma}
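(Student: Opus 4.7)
The plan is to prove this by direct computation: expand the defining equation $\tr g\beta g=\mu(g)\beta$ for $g$ in the given block form with $\beta=\begin{pmatrix}2&0&0\\0&0&I_l\\0&I_l&0\end{pmatrix}$, and then read off each stated condition from the nine block entries of the resulting matrix equation. This is essentially the same style of argument used in Lemmas~\ref{lemma4}, \ref{lemma8}, and~\ref{lemma9}, where the orthogonal similitude condition is unpacked blockwise, so no new ideas are needed.

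For the forward direction (``only if''), I would compute $\beta g$ first, which shifts the top row by $2$ and swaps the middle and bottom block rows of $g$, and then multiply on the left by $\tr g$. Matching the $(1,1)$ block to $2\mu(g)$ yields $\alpha^2=\mu(g)$; matching the $(1,2)$ block to $0$ gives $\tr F A=0$, and since $A$ is diagonal and invertible this forces $F=0$; matching the $(1,3)$ block to $0$ gives $2\alpha Y+\tr F B=0$, which combined with $F=0$ and $\alpha\neq 0$ (a consequence of $\alpha^2=\mu(g)\in k^\times$) forces $Y=0$; matching the $(2,3)$ block gives $\tr A D=\mu(g) I$, hence $D=\mu(g)A^{-1}$ using $\tr A=A$; finally, the $(3,3)$ block gives $2\tr YY+\tr BD+\tr DB=0$, which reduces to $\tr DB+\tr BD=0$ once $Y=0$ is known. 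The remaining blocks are redundant by symmetry.

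For the converse direction (``if''), I would substitute $F=0$, $Y=0$, $D=\mu(g)A^{-1}$, $\alpha^2=\mu(g)$ and $\tr DB+\tr BD=0$ back into $\tr g\beta g$ and verify that each block entry matches the corresponding block of $\mu(g)\beta$; this is a routine check. I also need to verify $g\in GL(2l+1,k)$: with $F=Y=0$, the matrix $g$ is block upper triangular, so $\det g=\alpha\det A\det D$, and all three factors are nonzero, so $g$ is invertible.

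There is no real obstacle here; the only mild care needed is to record that $\mu(g)\in k^\times$ (as part of the definition of the similitude group) so that $\alpha\neq 0$ can be used to cancel in the step $2\alpha Y=0\Rightarrow Y=0$, and to use $\tr A=A$ throughout to simplify the identities involving $A$.
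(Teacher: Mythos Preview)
Your proposal is correct and follows essentially the same approach as the paper's own proof, which simply states that computing $\tr g\beta g=\mu(g)\beta$ yields the listed conditions, and that the converse is a direct verification. You have merely filled in the blockwise details (and added the invertibility check for the converse), so there is nothing to compare or correct.
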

\begin{proof}
	Let $g\in GO(2l+1,k)$ then we have $\tr g\beta g=\mu(g)\beta$. So we get 
	$\alpha^2=\mu(g), F=0=Y, D=\mu(g)A^{-1}$ and $\tr DB+\tr BD=0$.
	
	Conversely, if $g$ satisfies the given condition, then $g \in GO(2l+1,k)$.
\end{proof}
\subsection{Gaussian elimination for $GSp(2l,k)$ and $GO(2l,k)$}\label{gausseven}
The algorithm is as follows:\\
Step $1$: 
\begin{enumerate}[]
	\item \textbf{Input}: A matrix $g=\begin{pmatrix}A&B\\C&D \end{pmatrix} \in GSp(2l,k)$ or $GO(2l,k)$.\\
	\item \textbf{Output}: The matrix $g_1=\begin{pmatrix}A_1&B_1\\C_1&D_1 \end{pmatrix}$ 
	is one of the following kind:\\
	\begin{enumerate}[a:]
		\item The matrix $A_1$ is a diagonal matrix 
		$\mathrm{diag}(1,\ldots,1,\lambda)$ with $\lambda \neq 0$, 
		and $C_1=\begin{pmatrix}C_{11}&C_{12}\\C_{21}&c_{ll}\end{pmatrix}$,  
		where $C_{11}$ is symmetric, when $g\in GSp(2l,k)$, and 
		skew-symmetric, when $g\in GO(2l,k)$, and is of size $l-1$. 
		Furthermore, $C_{12}=\lambda \tr C_{21}$, when $g \in GSp(2l,k)$, and 
		$C_{12}=-\lambda \tr C_{21}, c_{ll}=0$, when $g \in GO(2l,k)$.
		\item The matrix $A_1$ is a diagonal matrix $\mathrm{diag}
		(\underbrace{1,\ldots,1}_{m},\underbrace{0,\ldots,0}_{l-m})$, and 
		$C_1=\begin{pmatrix} C_{11}&0\\C_{21}&C_{22}\end{pmatrix}$, where 
		$C_{11}$ is an $m\times m$ symmetric, when $g\in GSp(2l,k)$ and 
		skew-symmetric, when $g\in GO(2l,k)$.
	\end{enumerate}
	\item \textbf{Justification}: Observe the effect of ER$1$ and EC$1$ on the block $A$. 
	This amounts to the classical Gaussian elimination 
	(see Theorem~\ref{classicalgauss}) on a $l\times l$ matrix $A$. 
	Thus we can reduce $A$ to a diagonal matrix, and Corollary~\ref{lemma2} makes sure 
	that $C$ has the required form. 
\end{enumerate}     
Step $2$:       
\begin{enumerate}[]
	\item \textbf{Input}: matrix $g_1=\begin{pmatrix}A_1&B_1\\C_1&D_1\end{pmatrix}$.
	\item \textbf{Output}: matrix $g_2=\begin{pmatrix}A_2&B_2\\0&\mu(g) \tr A_{2}^{-1}\end{pmatrix}
	;A_2=\mathrm{diag}(1,\ldots,1,\lambda)$.
	\item \textbf{Justification}: Observe the effect of ER$3$. It changes $C_1$ by 
	$RA_1+C_1$. Using Lemma~\ref{lemma5} we can make the matrix $C_1$ the zero matrix 
	in the first case, and $C_{11}$ the zero matrix in the second case. Further, in the second 
	case, we make use of Lemma~\ref{lemma6} to interchange the rows, so that we get a zero matrix 
	in place of $C_1$. If required, use ER$1$ and EC$1$ to make $A_1$ a diagonal matrix.
	Lemma~\ref{lemma4} ensures that $D_1$ becomes $\mu(g) \tr A_2^{-1}$.
\end{enumerate}
Step $3$: 
\begin{enumerate}[]
	\item \textbf{Input}: matrix $g_2=\begin{pmatrix}A_2&B_2\\0&\mu(g)\tr A_2^{-1}\end{pmatrix}
	;A_2=\mathrm{diag}(1,\ldots,1,\lambda)$.
	\item \textbf{Output}: matrix $g_3=\mathrm{diag}(1,\ldots,1,\lambda,\mu(g),\ldots,\mu(g),\mu(g) \lambda^{-1})$.
	\item \textbf{Justification}: Using Corollary~\ref{lemma3} we see that the matrix 
	$B_2$ has a certain form. We can use ER$2$ to make the matrix $B_2$ a zero matrix 
	because of Lemma~\ref{lemma5}.
\end{enumerate}
The algorithm terminates here for $GO(2l,k)$. However for $GSp(2l,k)$ there is one more step.\\
Step $4$:
\begin{enumerate}[]
	\item \textbf{Input}: matrix $g_3=\mathrm{diag}(1,\ldots,1,\lambda,\mu(g),\ldots,\mu(g),\mu(g) \lambda^{-1})$.
	\item \textbf{Output}: matrix $g_4=\mathrm{diag}(1,\ldots,1,\mu(g),\ldots,\mu(g))$, where $\mu(g)\in k^{\times}$.
	\item \textbf{Justification}: Using Lemma~\ref{lemma7}.
\end{enumerate}
\subsection{Gaussian elimination for $GO(2l+1,k)$}\label{gaussodd}
The algorithm is as follows:\\
Step $1$:
\begin{enumerate}[]
	\item \textbf{Input}: A matrix $g=\begin{pmatrix}\alpha &X&Y\\E&A&B\\F&C&D\end{pmatrix} \in GO(2l+1,k)$.
	\item \textbf{Output}: The matrix $g_1=\begin{pmatrix}\alpha_1 &X_1&Y_1\\E_1&A_1&B_1\\F_1&C_1&D_1\end{pmatrix}$ 
	is one of the following kind:
	\begin{enumerate}[a:]
		\item The matrix $A_1$ is a diagonal matrix $\mathrm{diag}(1,\ldots,1,\lambda)$ with $\lambda \neq 0$.
		\item The matrix $A_1$ is a diagonal matrix $\mathrm{diag}(\underbrace{1,\ldots,1}_{m},\underbrace{0,\ldots,0}_{l-m}) (m<l)$.
	\end{enumerate}
	\item \textbf{Justification}: Using ER$1$ and EC$1$ we do the 
	classical Gaussian elimination (see Theorem~\ref{classicalgauss}) on a 
	$l\times l$ matrix $A$.
\end{enumerate}
Step $2$:
\begin{enumerate}[]
	\item \textbf{Input}: matrix $g_1=\begin{pmatrix}\alpha_1 &X_1&Y_1\\E_1&A_1&B_1\\F_1&C_1&D_1\end{pmatrix}$.
	\item \textbf{Output}: matrix $g_2=\begin{pmatrix}\alpha_2 &X_2&Y_2\\E_2&A_2&B_2\\F_2&C_2&D_2\end{pmatrix}$ 
	is one of the following kind:
	\begin{enumerate}[a:]
		\item The matrix $A_2$ is $\mathrm{diag}(1,\ldots,1,\lambda)$ with $\lambda \neq 0, X_2=0=E_2$, and 
		$C_2=\begin{pmatrix}C_{11}&-\lambda \tr C_{21}\\C_{21}&0\end{pmatrix}$, where $C_{11}$ is skew-symmetric of size 
		$l-1$.
		\item The matrix $A_2$ is $\mathrm{diag}(\underbrace{1,\ldots,1}_{m},\underbrace{0,\ldots,0}_{l-m}) (m<l)$; 
		$X_2, E_2$ have first $m$ entries $0$, and $C_2=\begin{pmatrix}C_{11}&0\\C_{21}&C_{22}\end{pmatrix}$, 
		where $C_{11}$ is an $m\times m$ skew-symmetric matrix.
	\end{enumerate}
	
	\item \textbf{Justification}: Once we have $A_1$ in diagonal form, we use ER$4$ and EC$4$ to change 
	$X_1$ and $E_1$ to the required form. Then Lemma~\ref{lemma8} makes sure that $C_1$ has the required form.
\end{enumerate}
Step $3$:
\begin{enumerate}[]
	\item \textbf{Input}: matrix $g_2=\begin{pmatrix}\alpha_2 &X_2&Y_2\\E_2&A_2&B_2\\F_2&C_2&D_2\end{pmatrix}$.
	\item \textbf{Output}: 
	\begin{enumerate}[a:]
		\item matrix $g_3=\begin{pmatrix}\alpha_3 &0&Y_3\\0&A_3&B_3\\F_3&0&D_3\end{pmatrix};A_3=
		\mathrm{diag}(1,\ldots,1,\lambda)$.
		\item matrix $g_3=\begin{pmatrix}\alpha_3 &X_3&Y_3\\E_3&A_3&B_3\\F_3&C_3&D_3\end{pmatrix};
		A_3=\mathrm{diag}(\underbrace{1,\ldots,1}_{m},\underbrace{0,\ldots,0}_{l-m})$; $X_3, E_3$ have first 
		$m$ entries $0$, and $C_3=\begin{pmatrix}0&0\\C_{21}&C_{22}\end{pmatrix}$.
	\end{enumerate}
	\item \textbf{Justification}: Observe the effect of ER$3$, and Lemma~\ref{lemma5} ensures the 
	required form.
\end{enumerate}
Step $4$:
\begin{enumerate}[]
	\item \textbf{Input}: matrix $g_3=\begin{pmatrix}\alpha_3 &X_3&Y_3\\E_3&A_3&B_3\\F_3&C_3&D_3\end{pmatrix}$
	\item \textbf{Output}: matrix $g_4=\begin{pmatrix}\alpha_4&0&0\\0&A_4&B_4\\0&0&\mu(g)A_4^{-1}\end{pmatrix}$ with 
	$A_4=\mathrm{diag}(1,\ldots,1,\lambda), \alpha_4^2=\mu(g)$, and 
	$B_4A_4+A_4\tr B_4=0$.
	
	\item \textbf{Justification}: In the first case, Lemma~\ref{lemma10} ensures the required form. 
	In the second case, we interchange $i$ with $-i$ for $m+1\leq i \leq l$. This will make $C_3=0$.
	Then, if needed, we use ER$1$ and EC$1$ on $A_3$ to make it diagonal. Then Lemma~\ref{lemma9} 
	ensures that $A_3$ has full rank. Further, we can use ER$4$ and EC$4$ to make $X_3=0=E_3$. 
	Lemma~\ref{lemma10} gives the required form.
\end{enumerate}
Step $5$:
\begin{enumerate}[]
	\item \textbf{Input}: matrix $g_4=\begin{pmatrix}\alpha_4&0&0\\0&A_4&B_4\\0&0&\mu(g)A_4^{-1}\end{pmatrix};
	A_4=\mathrm{diag}(1,\ldots,1,\lambda), \alpha_4^2=\mu(g)$.
	\item \textbf{Output}: matrix $g_5=\mathrm{diag}(\alpha_5,1,\ldots,1,\lambda,\mu(g),\ldots,\mu(g),\mu(g)\lambda^{-1})$
	with $\alpha_5^2=\mu(g)$.
	\item \textbf{Justification}: Lemma~\ref{lemma10} ensures that $B_4$ is of a certain kind. We 
	can use ER$2$ to make $B_4=0$. 
\end{enumerate}
\noindent
Thus the main result of this chapter is the following theorem:
\begin{theorem}\label{maintheorem1}
	Every element of symplectic similitude group $GSp(2l,k)$ or split  
	orthogonal similitude group $GO(n,k)$ (here $n=2l$ or $2l+1$), 
	can be written as a product of elementary matrices and 
	a diagonal matrix. Furthermore, the diagonal matrix is of the following form:
	\begin{enumerate}
		\item In $GSp(2l,k)$, 
		$\mathrm{diag}(\underbrace{1,\ldots,1}_{l},\underbrace{\mu(g),\ldots, \mu(g)}_{l})$, where $\mu(g) \in k^{\times}$.
		\item In $GO(2l,k)$, 
		$\mathrm{diag}(\underbrace{1,\ldots,1,\lambda}_{l},\underbrace{\mu(g),\ldots,\mu(g), \mu(g)\lambda^{-1}}_{l})$, 
		where $\lambda, \mu(g) \in k^{\times}$.
		\item In $GO(2l+1,k)$, 
		$\mathrm{diag}(\alpha(g),\underbrace{1,\ldots,1,\lambda}_{l},\underbrace{\mu(g),\ldots, \mu(g), \mu(g)\lambda^{-1}}_{l})$, 
		where $\alpha(g)^2=\mu(g)$ and $\mu(g), \lambda \in k^{\times}$.
	\end{enumerate}
\end{theorem}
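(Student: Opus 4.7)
The plan is to prove this constructively by exhibiting an explicit Gaussian elimination algorithm: given $g$ in one of the three similitude groups, successively apply elementary matrices (from the left and right, as described in Section 6.2) until only the claimed diagonal form remains. Because the group $G$ is generated by elementary matrices together with such diagonals, it suffices to produce, for any $g\in G$, sequences of elementary multipliers reducing $g$ to the stated diagonal. The preparatory Lemmas~\ref{lemma1}--\ref{lemma10} of Section 6.3.1 are designed precisely to guarantee at each stage that the block-structure constraints imposed by $\tr g\beta g=\mu(g)\beta$ are preserved and exploitable.

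For the even-dimensional cases $GSp(2l,k)$ and $GO(2l,k)$, I would write $g=\begin{pmatrix}A&B\\C&D\end{pmatrix}$ and proceed in four stages. First, use ER1 and EC1 (which act on $A$ as $A\mapsto RAS$ via the top-left block) to run the classical Gaussian elimination of Theorem~\ref{classicalgauss} on $A$, reducing it to $\mathrm{diag}(1,\ldots,1,\lambda)$ when $A$ is invertible, or to $\mathrm{diag}(1,\ldots,1,0,\ldots,0)$ otherwise; Corollary~\ref{lemma2} then forces $C$ into the required shape. Second, use ER3 ($C\mapsto RA+C$) to clear $C$, with the existence of the needed $R$ guaranteed by Lemma~\ref{lemma5}, interchanging rows via the Weyl-type elements $w_{i,-i}$ (Lemma~\ref{lemma6}) if the singular case b occurs so that $A$ can be made invertible. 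Third, apply Lemma~\ref{lemma4} to conclude $D=\mu(g)\tr A^{-1}$ automatically, then use Corollary~\ref{lemma3} and Lemma~\ref{lemma5} with ER2 to clear $B$. For $GSp$ one more step is needed: use the torus element $h_l(\lambda)$ of Lemma~\ref{lemma7} to absorb $\lambda$ and normalize the diagonal to the stated symplectic form.

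For the odd-dimensional case $GO(2l+1,k)$, I would carry out the same block reduction with the added complication of the extra zero-index row/column and the scalar $\alpha$. After reducing the interior $l\times l$ block $A$ with ER1/EC1 and clearing the zero-row $X$ and zero-column $E$ via the $E4$ operations (using the explicit computations recorded in Section 6.2.3), Lemma~\ref{lemma8} forces $C$ into the needed form; then ER3 plus Weyl-type permutations clear $C$, Lemma~\ref{lemma9} makes $X=0$ and $D=\mu(g)\tr A^{-1}$, and Lemma~\ref{lemma10} simultaneously pins $F=0=Y$, $\alpha^2=\mu(g)$ and the compatibility condition on $B$. Finally, ER2 clears $B$ using Lemma~\ref{lemma5}, leaving the diagonal form with the scalar $\alpha(g)$ satisfying $\alpha(g)^2=\mu(g)$.

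The main obstacle is the intermediate bookkeeping: whenever an elementary operation is applied to one block, one must verify that it does not destroy the delicate symmetry/skew-symmetry relations dictated by $\tr g\beta g=\mu(g)\beta$, and that the pattern of zeros and nonzeros after each stage is exactly what is required to begin the next stage. This is precisely why the lemmas of 6.3.1 are phrased in terms of the block shape of $g$ rather than generic matrices, and the delicate part of the justification is Step 2 (clearing $C$), because case b (with $A$ singular) requires interleaving row permutations from Lemma~\ref{lemma6} with ER3 while maintaining the form-constraint bookkeeping. Once the algorithm terminates, the three diagonal shapes are read off directly from the output of the final step, giving the theorem.
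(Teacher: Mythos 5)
Your proposal is correct and follows essentially the same route as the paper: the paper's proof of Theorem~\ref{maintheorem1} is exactly the algorithms of Sections~\ref{gausseven} and~\ref{gaussodd}, which reduce $A$ via ER1/EC1, clear $C$ via ER3 (with Lemma~\ref{lemma5} and the Weyl-type elements of Lemma~\ref{lemma6} in the singular case), clear $B$ via ER2, and normalize with Lemma~\ref{lemma7} for $GSp$, with the $E4$ operations and Lemmas~\ref{lemma8}--\ref{lemma10} handling the odd case. The step ordering, the lemmas invoked, and the identification of the delicate case (singular $A$ in Step 2) all match the paper's justification.
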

\begin{proof}
	This follows from the above algorithms~\ref{gausseven} and~\ref{gaussodd}.
\end{proof}
\noindent
This gives us following:
\begin{corollary}\label{gaussianelimination}
	Every element $g\in O(n,k)$ (here $n=2l$ or $2l+1$) 
	can be written as a product of elementary matrices and a diagonal matrix. 
	Furthermore, the diagonal matrix is 
	$\mathrm{diag}(\underbrace{1, \ldots ,1, \lambda}_{l\; \text{or}\; l+1}, \underbrace{1, \ldots , 1, \lambda^{-1}}_{l}), 
	\lambda \in k^{\times}$.
\end{corollary}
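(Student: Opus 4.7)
The plan is to obtain this corollary as an immediate specialization of Theorem~\ref{maintheorem1}, using that $O(n,k) = \{g \in GO(n,k) : \mu(g) = 1\}$ is exactly the kernel of the similitude character. Consequently, the Gaussian elimination algorithms developed in Sections~\ref{gausseven} and~\ref{gaussodd} for the similitude groups apply verbatim to any $g \in O(n,k)$; one only has to read off the output under the constraint $\mu(g) = 1$.

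In the even case $n = 2l$, I would invoke Theorem~\ref{maintheorem1}(2), which writes $g = E \cdot \mathrm{diag}(1, \ldots, 1, \lambda, \mu(g), \ldots, \mu(g), \mu(g)\lambda^{-1})$ for some product $E$ of elementary matrices and some $\lambda \in k^{\times}$. Substituting $\mu(g) = 1$ collapses the tail to $\mathrm{diag}(1, \ldots, 1, \lambda, 1, \ldots, 1, \lambda^{-1})$, which is exactly the claimed form. For the odd case $n = 2l+1$, Theorem~\ref{maintheorem1}(3) analogously gives $g = E \cdot \mathrm{diag}(\alpha(g), 1, \ldots, 1, \lambda, \mu(g), \ldots, \mu(g), \mu(g)\lambda^{-1})$ with $\alpha(g)^2 = \mu(g)$. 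Setting $\mu(g) = 1$ forces $\alpha(g) = \pm 1$; when $\alpha(g) = 1$ the result is immediate, and when $\alpha(g) = -1$ I would absorb the sign into the elementary factor $E$ by exploiting the Weyl-type generator $w_l$ (listed in Section~\ref{elementarymato}), which has determinant $-1$ and flips the $(0,0)$ slot at the cost of a compatible rescaling of $\lambda$.

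The only genuinely nontrivial point is this last sign bookkeeping in the odd-dimensional, $\det g = -1$ case: since each transvection $x_{\alpha}(t)$ is unipotent (and hence of determinant $1$) and the tail $\mathrm{diag}(1,\ldots,1,\lambda,1,\ldots,1,\lambda^{-1})$ also has determinant $1$, an element of $O(2l+1,k)$ with $\det g = -1$ can attain the prescribed diagonal form only if at least one sign-flipping generator like $w_l$ is included in $E$. With that handled, the corollary follows directly by specializing $\mu(g) = 1$ in Theorem~\ref{maintheorem1}.
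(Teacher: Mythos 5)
Your proposal follows essentially the same route as the paper: specialize Theorem~\ref{maintheorem1} to $\mu(g)=1$ and, in the odd case, fix the possible sign $\alpha(g)=-1$ by multiplying with a suitable product of elementary matrices. The one slip is in the generator you name for that last step: the element $w_l = I - e_{l,l} - e_{-l,-l} - e_{l,-l} - e_{-l,l}$ from Section~\ref{elementarymato} acts as the identity on $e_0$ and so does not flip the $(0,0)$ slot; the element that does the job is $w_{i,-i} = I - 2e_{0,0} - e_{i,-i} - e_{-i,i} - e_{i,i} - e_{-i,-i} = x_{0,i}(-1)x_{i,0}(1)x_{0,i}(-1)$ of Lemma~\ref{lemma6}(3), which is exactly what the paper invokes. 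With that substitution your argument coincides with the paper's proof.
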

\begin{proof}
	As $g\in O(n,k)$ so $\mu(g)=1$. In the odd dimensional 
	orthogonal group, $\alpha=\pm 1$. In this situation, if needed we use 
	Lemma~\ref{lemma6} to make the first diagonal entry $1$.
	Hence this follows from Theorem~\ref{maintheorem1}.
\end{proof}
\begin{corollary}\label{gaussiansymplectic}
	Every element of the symplectic group $Sp(n,k)$ can be written as a product 
	of elementary matrices. 
\end{corollary}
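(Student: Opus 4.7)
The plan is to deduce this directly as a specialization of Theorem~\ref{maintheorem1}(1). First I would observe that $Sp(2l,k)$ sits inside $GSp(2l,k)$ as the kernel of the similitude character $\mu$, so for any $g \in Sp(2l,k)$ we have $\mu(g) = 1$.

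Next I would apply Theorem~\ref{maintheorem1}(1) to $g$ viewed as an element of $GSp(2l,k)$. This tells us that $g = E_1 E_2 \cdots E_r \cdot D$, where each $E_i$ is an elementary matrix and $D = \mathrm{diag}(\underbrace{1,\ldots,1}_{l},\underbrace{\mu(g),\ldots,\mu(g)}_{l})$. Substituting $\mu(g) = 1$ collapses $D$ to the identity matrix $I_{2l}$, and therefore $g = E_1 E_2 \cdots E_r$ is a product of elementary matrices.

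There is essentially no obstacle here since the Gaussian elimination algorithm of Section~\ref{gausseven} has already been carried out for the similitude group; the only thing to check is that the terminal diagonal matrix is trivial when $\mu(g) = 1$, which is immediate from the explicit form of $D$ given in Theorem~\ref{maintheorem1}(1). The corollary is really just the observation that the four-step algorithm of Section~\ref{gausseven}, when applied to an element of the subgroup $Sp(2l,k)$, produces no leftover diagonal factor.
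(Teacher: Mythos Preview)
Your proof is correct and follows exactly the same approach as the paper: apply Theorem~\ref{maintheorem1}(1) and note that $\mu(g)=1$ forces the residual diagonal matrix to be the identity. The paper's proof is simply the one-line version of what you wrote.
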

\begin{proof}
	This follows from Theorem~\ref{maintheorem1}, as $\mu(g)=1$.
\end{proof}
\begin{remark}
	Corollary~\ref{gaussianelimination} and Corollary~\ref{gaussiansymplectic} solve the word 
	problem in orthogonal groups $O(n,k)$ and symplectic groups $Sp(n,k)$.
\end{remark}
\section{Gaussian Elimination in Unitary Groups}
A similar algorithm has been developed in~\cite{MS}. One 
can define elementary matrices and elementary operations for split unitary groups, 
similar to that of symplectic and split orthogonal groups. Using those elementary 
matrices and elementary operations, Mahalanobis and Singh 
solved the word problem in split unitary groups. They proved (Theorem A~\cite{MS}):
\begin{theorem}\label{gaussianunitary}
	Every element of the split unitary group $U(n,k_0)$ (here $n=2l$ or $2l+1$) can be 
	written as a product of elementary matrices and a diagonal matrix. Furthermore, 
	the diagonal matrix is of the following form: 
	\begin{enumerate}
		\item In $U(2l,k_0)$,  
		$\mathrm{diag}(\underbrace{1,\ldots,1,\lambda}_{l},\underbrace{1,\ldots,1,\bar{\lambda}^{-1}}_{l})$, 
		where $\lambda \in k^{\times}$.
		\item In $U(2l+1,k_0)$,  
		$\mathrm{diag}(\alpha,\underbrace{1,\ldots,1,\lambda}_{l},\underbrace{1,\ldots,1,\bar{\lambda}^{-1}}_{l})$, 
		where $\lambda, \alpha \in k^{\times}$ with $\alpha \bar{\alpha}=1$.
	\end{enumerate}
\end{theorem}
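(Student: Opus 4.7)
The plan is to adapt the Gaussian elimination algorithm for $GO(n,k)$ developed in Sections 6.3.1 and 6.3.2, replacing the transpose by conjugate transpose and the skew-symmetric condition on off-diagonal blocks by the skew-hermitian condition with respect to the involution $J$. First, I would define elementary matrices for $U(n,k_0)$ in three families (with two additional families in the odd-dimensional case), analogous to those in Sections~\ref{elementarymate} and~\ref{elementarymato}: an ER1 of the form $\mathrm{diag}(R,\bar{R}^{-t})$ with $R = I+te_{ij}$, and ER2, ER3 of upper and lower unitriangular block form whose off-diagonal blocks are skew-hermitian. In the odd case, two further families ER4a, ER4b involve one-row blocks with quadratic correction terms built from $t$ and $\bar{t}$, chosen so that the resulting matrix lies in $U(2l+1,k_0)$.

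For $g = \begin{pmatrix} A & B \\ C & D \end{pmatrix} \in U(2l,k_0)$, the defining condition $\bar{g}^t\beta g = \beta$ produces three block identities: $\bar{A}^t C + \bar{C}^t A = 0$, $\bar{D}^t B + \bar{B}^t D = 0$, and $\bar{A}^t D + \bar{C}^t B = I$. The first two say that $\bar{A}^t C$ and $\bar{D}^t B$ are skew-hermitian, and the third determines $D$ from $A$ once $B$ and $C$ are cleared. The algorithm then runs in parallel with Section 6.3.2: first reduce the $A$-block to $\mathrm{diag}(1,\ldots,1,\lambda)$ by ER1/EC1, which is the classical Gaussian elimination on $A$; then use ER3 to kill $C$, invoking the skew-hermitian analogues of Lemma~\ref{lemma1} and Lemma~\ref{lemma5} to ensure that $C$ decomposes as a sum of the required elementary row-operation effects; then use ER2 similarly to kill $B$. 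The third block identity forces $D = \bar{A}^{-t}$, giving the claimed diagonal $\mathrm{diag}(1,\ldots,1,\lambda,1,\ldots,1,\bar{\lambda}^{-1})$. In $U(2l+1,k_0)$, two additional steps modeled on Steps $2$ and $4$ of Section 6.3.3 use ER4 and EC4 to clear the first row and column, after which the unitary analogue of Lemma~\ref{lemma10} forces the surviving middle entry $\alpha$ to satisfy $\alpha\bar{\alpha}=1$.

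The main obstacle will be formulating the skew-hermitian versions of Lemmas~\ref{lemma1}, \ref{lemma5}, and~\ref{lemma6} correctly, because the diagonal entries of a skew-hermitian matrix must lie in the trace-zero part $\{t \in k : t + \bar{t} = 0\}$ of $k$ over $k_0$, which is a nontrivial $k_0$-subspace of $k$ rather than being forced to be zero (as in the symmetric case) or unrestricted (as in the symplectic case). A second subtlety is that, in contrast to the symplectic case where Lemma~\ref{lemma7} eliminates the $\lambda$ from the final diagonal, the unitary analogue of that lemma can only produce diagonals of the shape $\mathrm{diag}(1,\ldots,1,\mu,1,\ldots,1,\bar{\mu}^{-1})$; hence the parameter $\lambda$ genuinely survives, which is exactly why the theorem retains $\lambda$ and $\bar{\lambda}^{-1}$ in the final form rather than a cleaner expression in terms of a similitude character.
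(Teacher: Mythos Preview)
The paper does not actually prove this theorem: Section~6.4 merely records it as Theorem~A of~\cite{MS}, saying only that ``a similar algorithm has been developed'' and that ``one can define elementary matrices and elementary operations for split unitary groups, similar to that of symplectic and split orthogonal groups.'' So there is no proof in the paper for your proposal to be compared against in detail.

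That said, your outline is precisely the adaptation the paper alludes to, and it is correct in spirit and in its identification of the key differences: replacing transpose by conjugate transpose, skew-symmetric by skew-hermitian, and noting that the diagonal of a skew-hermitian matrix lands in the trace-zero part of $k$ over $k_0$ rather than being forced to vanish. Your observation that no analogue of Lemma~\ref{lemma7} can absorb the surviving $\lambda$ (so that $\lambda$ and $\bar\lambda^{-1}$ genuinely remain in the final diagonal) is also the right explanation for the shape of the output. In short, your plan matches what the cited reference does, and nothing more can be extracted from the present paper itself.
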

\chapter{Computing Spinor Norm and Similitude}\label{chapter7}
This chapter reports the work done in~\cite{BMS}.
In this chapter, we show how we can use Gaussian elimination developed in Chapter~\ref{chapter6} 
to compute the spinor norm for split orthogonal groups.
Also in this chapter, we compute similitude character for split groups 
using the Gaussian elimination algorithm. In this chapter, we make use of Wall's theory 
developed in Chapter~\ref{chapter4}.

\section{}
To compute the spinor norm, we will use the following lemma.
\begin{lemma}\label{techlemma}
	With the notation as earlier for the group $O(n,k)$ (here $n=2l$ or $2l+1$), we have,
	\begin{enumerate}
		\item $\Theta(x_{i,j}(t))=\Theta(x_{i,-j}(t))=\Theta(x_{-i,j}(t))=\Theta(x_{i,0}(t))=\Theta(x_{0,i}(t))=k^{\times2}$.
		\item $\Theta(w_{l})=k^{\times2}$.
		\item $\Theta(\mathrm{diag}(\underbrace{1,\ldots,1,\lambda}_{l \; \text{or} \; l+1},
		\underbrace{1,\ldots,1,{\lambda}^{-1}}_{l}))=\lambda k^{\times2}$.
	\end{enumerate}
\end{lemma}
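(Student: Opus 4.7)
The plan is to use the Wall-theoretic formulation $\Theta(g) = dV_g \cdot k^{\times 2}$ established in Section 4.1, which reduces the spinor norm calculation to computing the residual space $V_g = (1_V - g)(V)$ together with the discriminant of Wall's form $[,]_g$. For the unipotent cases we can bypass even this by invoking Corollary~\ref{spinorunipotent} directly.

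For part (1), each of $x_{i,j}(t), x_{i,-j}(t), x_{-i,j}(t), x_{i,0}(t), x_{0,i}(t)$ is unipotent, so Corollary~\ref{spinorunipotent} gives $\Theta(\cdot) = k^{\times 2}$. Unipotence follows from a short direct computation using $e_{i,j}e_{k,l} = \delta_{j,k}e_{i,l}$: for the first three families one checks $(g-I)^2 = 0$ outright; for $x_{i,0}(t) = I + t(2e_{i,0} - e_{0,-i}) - t^2 e_{i,-i}$ one finds $(g-I)^2 = -2t^2 e_{i,-i}$ and then $(g-I)^3 = 0$, and $x_{0,i}(t)$ is handled symmetrically.

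For part (2), I would show $w_l$ is itself a reflection. Direct computation gives $w_l(e_l) = -e_{-l}$, $w_l(e_{-l}) = -e_l$, and $w_l(e_i) = e_i$ for $i \neq \pm l$; setting $u := e_l + e_{-l}$ one has $B(u,u) = 2$ and checks $w_l = \sigma_u$. The reflection-based formula in Section~\ref{classicalspinornorm} then gives $\Theta(w_l) = Q(u) k^{\times 2} = k^{\times 2}$.

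For part (3), let $h = \mathrm{diag}(1,\ldots,1,\lambda,1,\ldots,1,\lambda^{-1})$ (the extra leading $1$ in the odd case contributes nothing to the residual space); I may assume $\lambda \neq 1$ since otherwise $h = I$ and the claim is trivial. A direct calculation yields $V_h = \mathrm{span}\{e_l, e_{-l}\}$, with $e_l = (1_V - h)((1-\lambda)^{-1} e_l)$ and $e_{-l} = (1_V - h)((1-\lambda^{-1})^{-1} e_{-l})$. Feeding these into $[u,v]_h = B(u,y)$ gives the matrix $\begin{pmatrix} 0 & \lambda/(\lambda-1) \\ -1/(\lambda-1) & 0 \end{pmatrix}$ for $[,]_h$ in the basis $\{e_l, e_{-l}\}$, whose determinant is $\lambda/(\lambda-1)^2 \equiv \lambda \pmod{k^{\times 2}}$, so $\Theta(h) = \lambda k^{\times 2}$. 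No step presents a real obstacle; the only care needed is tracking signs and the $\lambda \leftrightarrow \lambda^{-1}$ reciprocity in this last calculation.
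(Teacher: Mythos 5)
Your proof is correct, and parts (1) and (2) coincide with the paper's argument: the paper likewise disposes of part (1) by noting the elements are unipotent and citing Corollary~\ref{spinorunipotent}, and of part (2) by recognizing $w_l$ as the reflection $\sigma_{e_l+e_{-l}}$ with $Q(e_l+e_{-l})=1$. Where you genuinely diverge is part (3). The paper factors the diagonal matrix as a product of two explicit reflections, $\sigma_{e_l+e_{-l}}\,\sigma_{e_l+\lambda e_{-l}}$, and reads off $\Theta = Q(e_l+e_{-l})Q(e_l+\lambda e_{-l})k^{\times 2} = \lambda k^{\times 2}$ directly from the definition of the spinor norm. You instead compute the residual space $V_h=\mathrm{span}\{e_l,e_{-l}\}$ and the discriminant of Wall's form, obtaining $\det = \lambda/(\lambda-1)^2 \equiv \lambda$; your matrix entries and the determinant check out against $B(e_l,e_{-l})=1$. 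The paper's route is shorter once one trusts the reflection factorization (which is a one-line verification on the hyperbolic plane $\langle e_l,e_{-l}\rangle$), while yours is more mechanical and requires no guesswork, at the cost of invoking the identification $\Theta=\Theta_W$ — a point the paper itself already relies on for part (1), since Corollary~\ref{spinorunipotent} is a statement about $\Theta_W$, so this is not an additional gap. Your exclusion of the case $\lambda=1$ and the remark that the leading $1$ in the odd case does not enter $V_h$ are both handled correctly.
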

\begin{proof}
	\begin{enumerate}
		\item This follows from Corollary~\ref{spinorunipotent}, since the given elements are all unipotent.
		\item Observe that $w_{l}$ is a reflection along $e_{l}+e_{-l}$, and $Q(e_{l}+e_{-l})=1$, hence
		$\Theta (w_l)=Q(e_l+e_{-l})k^{\times 2}=k^{\times 2}$.
		\item First observe that $\mathrm{diag}(\underbrace{1,\ldots,1,\lambda}_{l \; \text{or} \; l+1},
		\underbrace{1,\ldots,1,{\lambda}^{-1}}_{l})=
		\sigma_{e_{l}+e_{-l}} \sigma_{e_{l}+\lambda e_{-l}}$.
		Since $\{e_l,e_{-l}\}$ is a hyperbolic pair (see in Section 2.2) 
		then $Q(e_{l}+e_{-l})=1$, and $ Q(e_{l}+\lambda e_{-l})=\lambda $ . 
		Hence 
		$$\Theta(\mathrm{diag}(\underbrace{1,\ldots,1,\lambda}_{l \; \text{or} \; l+1},
		\underbrace{1,\ldots,1,{\lambda}^{-1}}_{l}))=Q(e_{l}+e_{-l})Q(e_{l}
		+\lambda e_{-l})k^{\times2}=\lambda k^{\times2}.$$
	\end{enumerate}
\end{proof}
\noindent 
The main result is the following:
\begin{theorem}[Spinor norm]\label{maintheorem2}
	Let $g\in O(n,k)$ (here $n=2l$ or $2l+1$). Suppose Gaussian elimination reduces $g$ to 
	$\mathrm{diag}(\underbrace{1,\ldots,1,\lambda}_{l \; \text{or} \; l+1},\underbrace{1,\ldots,1,\lambda^{-1}}_{l})$, 
	where $\lambda \in k^{\times}$. 
	Then the spinor norm $\Theta(g)=\lambda k^{\times 2}$.
\end{theorem}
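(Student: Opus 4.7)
The plan is to combine the Gaussian elimination result of Corollary~\ref{gaussianelimination} with the fact that the spinor norm $\Theta$ is a group homomorphism and then invoke Lemma~\ref{techlemma} to evaluate $\Theta$ on each piece. All the heavy lifting has already been done: the Gaussian elimination algorithm of Chapter~\ref{chapter6} gives the explicit factorization, and Wall's reformulation together with Corollary~\ref{spinorunipotent} (used in Lemma~\ref{techlemma}) gives the values of $\Theta$ on the generators.

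First I would apply Corollary~\ref{gaussianelimination} to $g \in O(n,k)$ to obtain an expression
\[
g \;=\; E_{1}\,E_{2}\,\cdots\,E_{m}\cdot d_{\lambda},
\]
where each $E_{i}$ is one of the elementary matrices $x_{i,j}(t)$, $x_{i,-j}(t)$, $x_{-i,j}(t)$, $x_{i,0}(t)$, $x_{0,i}(t)$, or $w_{l}$ from Section~\ref{elementarymatrices}, and
\[
d_{\lambda} \;=\; \mathrm{diag}(\underbrace{1,\ldots,1,\lambda}_{l \text{ or } l+1},\underbrace{1,\ldots,1,\lambda^{-1}}_{l}).
\]
Since $\Theta$ is a group homomorphism, this gives
\[
\Theta(g) \;=\; \Theta(E_{1})\cdots\Theta(E_{m})\cdot \Theta(d_{\lambda}) \pmod{k^{\times 2}}.
\]
By parts (1) and (2) of Lemma~\ref{techlemma}, each $\Theta(E_{i}) = k^{\times 2}$, so the elementary factors contribute trivially. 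By part (3) of Lemma~\ref{techlemma}, $\Theta(d_{\lambda}) = \lambda k^{\times 2}$. Putting these together yields $\Theta(g) = \lambda k^{\times 2}$, as required.

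The only point that needs a brief check is the odd-dimensional case, where Theorem~\ref{maintheorem1} a priori produces a diagonal factor with an entry $\alpha(g)$ in position $0$, satisfying $\alpha(g)^{2} = \mu(g)$. Since $g \in O(n,k)$ we have $\mu(g) = 1$, so $\alpha(g) = \pm 1$. When $\alpha(g) = -1$, I would absorb a factor of $w_{l}$ (which, as shown in Lemma~\ref{lemma6}, is a product of elementary matrices, and which has $\Theta(w_{l}) = k^{\times 2}$ by Lemma~\ref{techlemma}(2)) to flip the sign without affecting the spinor norm class, reducing to the form in Corollary~\ref{gaussianelimination}. No genuine obstacle arises: the entire theorem is really a book-keeping statement that bundles the multiplicativity of $\Theta$ with the already-established computations on each generator.
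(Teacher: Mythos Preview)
Your proof is correct and follows exactly the paper's approach: invoke Corollary~\ref{gaussianelimination} for the factorization, then use the multiplicativity of $\Theta$ together with Lemma~\ref{techlemma} on each factor. Your final paragraph is superfluous, since Corollary~\ref{gaussianelimination} already delivers the diagonal in the stated form; note also that in the odd case the sign of $\alpha(g)$ is adjusted via the element $w_{i,-i}$ of Lemma~\ref{lemma6}(3) (which carries a $-1$ in position $0$), not via $w_l$.
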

\begin{proof}
	Let $g\in O(n,k)$. We write 
	$g$ as a product of elementary matrices and a diagonal matrix of the form 
	$\mathrm{diag}(\underbrace{1,\ldots,1,\lambda}_{l \; \text{or} \; l+1},\underbrace{1,\ldots,1,\lambda^{-1}}_{l})$, 
	following Corollary~\ref{gaussianelimination}.
	Again from Lemma~\ref{techlemma}, we get the spinor norm for the elementary matrices and the 
	diagonal matrix. Hence $\Theta(g)=\lambda k^{\times 2}$.
\end{proof}
\begin{remark}
	The Gaussian elimination algorithm also gives us how to compute the 
	similitude character $\mu(g)$ of the symplectic and split orthogonal 
	similitude groups (see Theorem~\ref{maintheorem1}).
\end{remark}
\chapter{Finiteness of $z$-classes}\label{chapter8}
The results in this chapter are part of~\cite{BS}.
This chapter is devoted to the study of $z$-classes in unitary groups. 
A unitary group is an algebraic group defined over $k_0$. 
Since we are working with perfect fields, an element $T\in U(V,B)$ 
has a Jordan decomposition, $T=T_{s}T_{u}=T_uT_s$, where 
$T_{s}$ is semisimple and $T_{u}$ is unipotent (see Theorem~\ref{jordandecomposition}). 
Further one can use this to compute the centralizer $\mathcal Z_{U(V,B)}(T)=
\mathcal Z_{U(V,B)}(T_{s})\cap \mathcal Z_{U(V,B)}(T_{u})$. 
So the Jordan decomposition helps us reduce the study of conjugacy 
and computation of the centralizer of an element to the study 
of that of its semisimple and unipotent parts. 
In Section 8.1 we study the $z$-classes for unipotent elements. In Section 8.2 
we explore the $z$-classes for semisimple elements, and then we prove our main theorem,  
which states that the number of $z$-classes in any unitary group is finite if $k_0$ has the property FE. 
The preliminaries for this chapter have been discussed in Chapters~\ref{chapter2},~\ref{chapter4} and~\ref{chapter5}. 
\section{Unipotent $z$-classes}
We look at a special case when the minimal polynomial is $p(x)^d$, where $p(x)$ is an irreducible, 
self-$U$-reciprocal polynomial. This includes unipotent elements. The rational canonical form theory 
gives a decomposition of 
$$V= \displaystyle \bigoplus_{i=1}^r V_{d_{i}}$$
with $1\leq d_{1}\leq d_{2} \leq \ldots \leq d_{r}=d$, 
and each $V_{d_{i}}$ is a free module over the $k$-algebra $\frac{k[x]}{<p(x)^{d_{i}}>}$ 
(see 2.14 Chapter IV~\cite{SS}). Thus,
\begin{proposition}\label{conjugacycriterion}
	Let $S$ and $T$ be in $U(V,B)$. Suppose the minimal polynomial of both $S$ and $T$ are equal, and it equals $p(x)^d$, 
	where $p(x)$ is irreducible self-$U$-reciprocal. Then $S$ and $T$ are conjugate in $U(V,B)$ if and only if 
	\begin{enumerate}
		\item the elementary divisors $p(x)^{d_i}$ of $S$ and $T$ are 
		same for $1\leq d_{1}\leq d_{2} \leq \ldots \leq d_{r} =d$, and
		\item the sequence of hermitian spaces, 
		$\left\{(V_{d_{1}}^S, H_{d_{1}}^S), \ldots,(V_{d_{r}}^S,H_{d_{r}}^S)\right\}$ 
		corresponding to $S$, and  $\left\{(V_{d_{1}}^T, H_{d_{1}}^T),\ldots,(V_{d_{r}}^T,H_{d_{r}}^T)\right\}$ 
		corresponding to $T$ are equivalent. Here $H_{d_{i}}^S$ and $H_{d_{i}}^T$ take values in the 
		cyclic $k$-algebra $\frac{k[x]}{<p(x)^{d_{i}}>}$.
	\end{enumerate}
	Moreover, the centralizer of $T$, in this case, is the direct product 
	$\mathcal{Z}_{U(V,B)}(T)=\displaystyle \prod_{i=1}^r U(V_{d_{i}}^T, H_{d_{i}}^T)$.
\end{proposition}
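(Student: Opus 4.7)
The plan is to reduce the statement to Proposition~\ref{conjugacyunitary}, which converts conjugacy in $U(V,B)$ to equivalence of Springer--Steinberg data $(V^T, H^T)$ and identifies $\mathcal{Z}_{U(V,B)}(T) = U(V^T, H^T)$. Under the hypothesis that the minimal polynomial is $p(x)^d$ with $p(x)$ irreducible and self-$U$-reciprocal, the ring $E^T = k[x]/\langle p(x)^d\rangle$ is a local $k$-algebra and $\alpha$ is the unique involution with $\alpha(x) = x^{-1}$ extending $\sigma$, so $E^T$ is intrinsic to the problem.

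First I would show that the rational canonical form decomposition $V^T = \bigoplus_i V_{d_i}^T$, in which each $V_{d_i}$ is free of some rank over $E_{d_i} := k[x]/\langle p(x)^{d_i}\rangle$, can be chosen $H^T$-orthogonal, and that the restriction of $H^T$ to $V_{d_i}$ descends to a non-degenerate hermitian form $H_{d_i}^T$ valued in $E_{d_i}$ (the descent is forced by $p(T)^{d_i}|_{V_{d_i}} = 0$ together with $\alpha(p(x)) = p(x^{-1})$ agreeing with $p(x)$ up to a unit). The key geometric step is to peel off a cyclic $E^T$-summand of highest period $d$ as a non-degenerate subspace under $H^T$ and then induct on $d$ on the orthogonal complement, the non-degeneracy at each stage following from that of $B$ through the identity $B(eu,v) = l^T(eH^T(u,v))$.

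For the forward direction of the conjugacy criterion, an equivalence $(V^S, H^S) \approx (V^T, H^T)$ consists of a $k$-isomorphism $\varphi \colon V^S \to V^T$ and an isomorphism $f \colon E^S \to E^T$ (necessarily sending $S \mapsto T$) satisfying $\varphi(eu) = f(e)\varphi(u)$ and $H^T(\varphi u, \varphi v) = f(H^S(u,v))$. Such a $\varphi$ preserves annihilators, so it matches the elementary divisors, and after refining the orthogonal decomposition if needed it restricts to equivalences $(V_{d_i}^S, H_{d_i}^S) \approx (V_{d_i}^T, H_{d_i}^T)$. Conversely, given matching elementary divisors and equivalences on each isotypic piece, their orthogonal direct sum furnishes a global equivalence, and Proposition~\ref{conjugacyunitary} supplies the conjugator in $U(V,B)$. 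For the centralizer, Proposition~\ref{conjugacyunitary} already gives $\mathcal{Z}_{U(V,B)}(T) = U(V^T, H^T)$; any $E^T$-linear isometry preserves the annihilator of each vector, hence stabilizes each isotypic component $V_{d_i}^T$ and decomposes as the tuple of its restrictions, yielding the product $\prod_i U(V_{d_i}^T, H_{d_i}^T)$.

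The main obstacle is justifying that the isotypic splitting can indeed be taken $H^T$-orthogonal: this is not purely formal because $H^T$ is valued in $E^T$ rather than $k$, and cyclic $E^T$-modules of different periods can a priori fail to be orthogonal. The cleanest route is to apply Wall's approximation theorem~\ref{wall'sapproximationthm} with $R = E^T$ and $\mathcal{J}$ the radical, reducing the equivalence problem over $E^T$ to equivalence over $\underline{E^T} = k[x]/\langle p(x)\rangle$, a finite field extension of $k$; there the existence of an $H^T$-orthogonal refinement of the elementary divisor decomposition becomes the classical orthogonal decomposition theorem for a non-degenerate hermitian form over a field. Lifting via Wall's theorem then yields the required splitting on $V^T$.
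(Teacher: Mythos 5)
Your proposal follows essentially the same route as the paper: both directions are reduced to Proposition~\ref{conjugacyunitary}, applied piecewise to the orthogonal decomposition $V=\bigoplus_i V_{d_i}$ indexed by the elementary divisors, and the conjugator is assembled as $\varphi=\varphi_1\oplus\cdots\oplus\varphi_r$. The extra effort you spend justifying that this decomposition can be taken $H^T$-orthogonal (peeling off a highest-period cyclic summand, or reducing modulo the radical via Wall's approximation theorem) is detail the paper simply outsources to Springer--Steinberg (2.14, Ch.~IV of~\cite{SS}); that part is sound and, if anything, more complete than the paper's treatment.

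One step of your argument is not valid as stated. For the centralizer you claim that an $E^T$-linear isometry of $(V^T,H^T)$ ``preserves the annihilator of each vector, hence stabilizes each isotypic component $V_{d_i}^T$.'' Preserving annihilators does not force an automorphism to stabilize these summands, because they are not characteristic submodules of $V^T$: only the filtrations $\ker p(T)^j$ and $p(T)^jV$ are canonical. For instance, over $E=k[x]/\langle p(x)^2\rangle$ the module $E\oplus E/\langle p(x)\rangle$ admits automorphisms moving the free summand off itself while preserving the annihilator of every vector. So the identification $U(V^T,H^T)=\prod_i U(V_{d_i}^T,H_{d_i}^T)$ requires a genuine argument about isometries of hermitian forms on non-free modules, not a formal annihilator argument. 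To be fair, the paper's own proof is no more careful at this point --- it invokes the product formula for centralizers, which was established earlier only for the primary decomposition into kernels of pairwise coprime factors, where the summands really are characteristic. If you want the ``Moreover'' clause to stand on its own, this is the step that needs actual work.
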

\begin{proof}
	Suppose $S$ and $T$ are conjugate in $U(V,B)$.  
	Since they are conjugate they have the same set of elementary divisors which proves (1), and (2) follows 
	from Proposition~\ref{conjugacyunitary}.
	
	Conversely, the elementary divisors of $S$ and $T$ determine the orthogonal decomposition of $V$ as follows:
	\begin{align}
	V &= V_{d_{1}}^S \oplus \cdots \oplus V_{d_{r}}^S \\
	V &= V_{d_{1}}^T \oplus \cdots \oplus V_{d_{r}}^T, 
	\end{align}
	where $1\leq d_{1}\leq d_{2} \leq \ldots \leq d_{r}=d$, and for each $i$, 
	$V_{d_{i}}^S$ and $V_{d_{i}}^T$ are free as $E_{d_{i}}^S$ 
	and $E_{d_{i}}^T$-module respectively.
	Since $E_{d_{i}}^S$ and $E_{d_{i}}^T$ are isomorphic as $k$-modules. We may write $E_{d_{i}}:=
	E_{d_{i}}^S \cong E_{d_{i}}^T \cong \frac{k[x]}{<p(x)^{d_{i}}>}$. Also by (2) we have 
	$(V_{d_{i}}^S, H_{d_{i}}^S)\approx (V_{d_{i}}^T, H_{d_{i}}^T)$ for all $i=1,2,\ldots,r$.
	So by Proposition~\ref{conjugacyunitary}, we get 
	$S|_{V_{d_{i}}^S}$ is conjugate to $T|_{V_{d_{i}}^T}$ by $\varphi_{i}$, then 
	$\varphi=\varphi_{1} \oplus \cdots \oplus \varphi_{r}$ conjugates $S$ and $T$.
	
	Moreover, we have already seen that $\mathcal{Z}_{U(V,B)}(T)
	=\displaystyle \prod_{i=1}^r \mathcal{Z}_{U(V_i, B_i)}(T_{i})$.
	And by Proposition~\ref{conjugacyunitary}, we have 
	$\mathcal{Z}_{U(V_{d_i},B_i)}(T_{i})=U(V_{d_{i}}^T, H_{d_{i}}^T)$ for all $i$. 
	Hence $\mathcal{Z}_{U(V,B)}(T)=\displaystyle \prod_{i=1}^r U(V_{d_{i}}^T, H_{d_{i}}^T)$.
\end{proof}
\noindent
This gives us following:
\begin{corollary}\label{zunipotent}
	Let $k_0$ have the property FE. Then,
	\begin{enumerate}
		\item the number of conjugacy classes of unipotent elements in $U(V, B)$ is finite.
		\item The number of $z$-classes of unipotent elements in $U(V, B)$ is finite.
	\end{enumerate}
\end{corollary}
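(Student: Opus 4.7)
The plan is to apply Proposition~\ref{conjugacycriterion} with the self-U-reciprocal polynomial $p(x)=x-1$, which governs the unipotent case: any unipotent $T\in U(V,B)$ has minimal polynomial $(x-1)^d$ for some $d\le n$. By that proposition, the $U(V,B)$-conjugacy class of $T$ is determined by two pieces of data: the list of elementary divisors $(x-1)^{d_i}$ with $1\le d_1\le\cdots\le d_r=d$, and the equivalence classes of the hermitian spaces $(V_{d_i}^T,H_{d_i}^T)$ over the local rings $E_{d_i}=k[x]/\langle(x-1)^{d_i}\rangle$.

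The first piece of data is a partition of $n=\dim_k V$ (read off from the $d_i$'s weighted by module rank), and there are only $p(n)$ such partitions. For the second piece, the Jacobson radical of $E_{d_i}$ is $\langle x-1\rangle/\langle(x-1)^{d_i}\rangle$ and its residue field $\underline{E_{d_i}}$ is simply $k$, so Wall's approximation theorem (Theorem~\ref{wall'sapproximationthm}) reduces the classification of non-degenerate hermitian spaces over $E_{d_i}$ of fixed rank to that of non-degenerate hermitian forms over $k$ of the same rank.

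The main obstacle will be showing, under the FE hypothesis on $k_0$, that the number of isometry classes of non-degenerate hermitian forms of any given rank over $k$ is finite. My plan here is to combine Lemma~\ref{kfinite} (which yields $k_0^\times/k_0^{\times 2}$ finite) with the containment $k_0^{\times 2}\subseteq N(k^\times)$, so that the discriminant group $k_0^\times/N(k^\times)$ is also finite; Witt decomposition then reduces the task to bounding the number of anisotropic hermitian forms, whose dimensions are bounded above by $n$ and whose invariants lie in finite sets (discriminant in $k_0^\times/N(k^\times)$, together with further invariants controlled by the finitely many extensions of $k_0$ of bounded degree).

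Once (1) is in hand, assertion (2) is immediate at no additional cost: conjugate elements have conjugate centralizers, so every $z$-class is a union of conjugacy classes, and the number of unipotent $z$-classes is therefore bounded above by the finite number of unipotent conjugacy classes produced above.
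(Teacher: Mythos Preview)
Your overall strategy matches the paper's: apply Proposition~\ref{conjugacycriterion} with $p(x)=x-1$, note that the elementary-divisor data is a partition of $n$, and use Wall's approximation theorem to reduce the classification of the hermitian spaces $(V_{d_i}^T,H_{d_i}^T)$ over $E_{d_i}=k[x]/\langle(x-1)^{d_i}\rangle$ to hermitian spaces over the residue field $k$. Part~(2) then follows from~(1) exactly as you say.

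The one substantive difference is how you establish finiteness of hermitian forms of bounded rank over $k$. Your route through Witt decomposition and ``further invariants controlled by the finitely many extensions of $k_0$ of bounded degree'' is vague: over a general field with property~FE there is no off-the-shelf list of invariants classifying anisotropic hermitian forms, so as written this step is a gap. The paper instead invokes Jacobson's theorem (cited as~\cite{Ja}): two hermitian forms over $k$ are equivalent if and only if the associated $k_0$-quadratic forms $Q(x)=B(x,x)$ are equivalent. This reduces the question to counting quadratic forms over $k_0$, which is immediate once $k_0^\times/k_0^{\times2}$ is finite (diagonalize; each diagonal entry lives in a finite set). You could also bypass both Witt and Jacobson by simply diagonalizing the hermitian form over $k$ and observing that each diagonal entry lies in $k_0^\times/N(k^\times)$, a quotient of $k_0^\times/k_0^{\times2}$ and hence finite; this already gives an upper bound on the number of isometry classes without any appeal to anisotropic classification.
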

\begin{proof}
	\begin{enumerate}
		\item In view of Proposition~\ref{conjugacycriterion}, let the minimal polynomial be $(x-1)^d$.  
		Thus, we have $p(x)= x-1$. Then the conjugacy classes correspond to a sequence 
		$1\leq d_{1}\leq d_{2} \leq \ldots \leq d_{r} =d$, and hermitian spaces 
		$\{(V_{d_1}^T, H_{d_1}^T), \ldots, (V_{d_{r}}^T, H_{d_{r}}^T)\}$ up to equivalence. 
		Now $\underline {E}_{d_{i}}^T=\frac{k[T]}{<T-1>} \cong k$. 
		Then, by the Wall's approximation theorem (Theorem~\ref{wall'sapproximationthm}), 
		the number of non-equivalent hermitian 
		forms $(V, B)$ is exactly equal to the number of non-equivalent hermitian forms 
		$(\underline{V}, \underline{B})$. 
		Now $k_0$ has the property FE, so $k_{0}^{\times}/k_{0}^{\times 2}$ is finite (see Lemma~\ref{kfinite}). 
		Then there are only finitely many non-equivalent quadratic forms over $k_0$ (see p. 32, Corollary 4.3 in~\cite{Gv}).
		Hence, we know (see p. 267, Theorem~\cite{Ja}) that there are only finitely 
		many non-equivalent hermitian forms over $k$. 
		Thus $H_{d_i}^T$ has only finitely many choices for each $i$. Hence the result.
		\item Two elements are conjugate implies that they are also $z$-conjugate. Hence it follows 
		from the previous part.
	\end{enumerate}
\end{proof}
\section{Semisimple $z$-classes} Let $T\in U(V,B)$ be a semisimple element. 
First, we begin with a basic case.
\begin{lemma}\label{semisimplezclassl}
	Let $T\in U(V,B)$ be a semisimple element such that its minimal polynomial 
	is either $p(x)$, which is irreducible, self-$U$-reciprocal of degree $\geq 2$, or 
	$q(x)\tilde q(x)$, where $q(x)$ is irreducible not self-U-reciprocal. Let $E=\frac{k[x]}{<p(x)>}$ 
	in the first case and $\frac{k[x]}{<q(x)>}$ in the second case. 
	Then the $z$-class of $T$ is determined by the following:
	\begin{enumerate}
		\item the algebra $E$ over $k$, and
		\item the equivalence class of the $E$-valued hermitian form $H^T$ on $V^T$.
	\end{enumerate}
\end{lemma}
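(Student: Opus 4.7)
The plan is to reduce the lemma to Proposition~\ref{conjugacyunitary} (the Springer--Steinberg characterization of conjugacy), which simultaneously tells us that two elements $S,T\in U(V,B)$ are conjugate in $U(V,B)$ if and only if $(V^S,H^S)\approx(V^T,H^T)$, and that the centralizer of $T$ is $U(V^T,H^T)$. Since conjugacy automatically implies $z$-equivalence, it is enough to verify that the two pieces of data listed in the statement, namely the $k$-algebra $E$ and the equivalence class of $H^T$, are enough to pin down $(V^T,H^T)$ up to equivalence.

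First I would dispose of the case where the minimal polynomial is $p(x)$, an irreducible self-$U$-reciprocal polynomial of degree $\geq 2$. Here $E=E^T=k[x]/\langle p(x)\rangle$ is a field extension of $k$, equipped with the involution $\alpha$ sending $x$ to $x^{-1}$, extending $\sigma$ on scalars. The module $V^T$ is free over $E^T$ of rank $n/\deg(p)$, a number forced by $n=\dim_k V$; so the isomorphism class of $V^T$ as an $E$-module is already determined, and the hermitian space $(V^T,H^T)$ is specified, up to equivalence, by the pair $(E,[H^T])$. Applying Proposition~\ref{conjugacyunitary} then shows that any two semisimple elements with matching data are conjugate, hence $z$-equivalent.

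Next I would treat the case where the minimal polynomial is $q(x)\tilde{q}(x)$ with $q(x)$ irreducible and not self-$U$-reciprocal. Here $E^T\cong k[x]/\langle q(x)\rangle\oplus k[x]/\langle\tilde{q}(x)\rangle$, which is a direct sum $K\oplus K$ of two isomorphic field extensions of $k$, with involution swapping the summands. By Corollary~\ref{Wall's}, any two non-degenerate hermitian forms on a free module over such a ring are equivalent, so the equivalence class of $H^T$ is automatic and the data collapses to the algebra $E$ alone; again Proposition~\ref{conjugacyunitary} yields conjugacy, hence $z$-equivalence.

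Combining the two cases, the pair $(E,[H^T])$ determines the conjugacy class of $T$, and a fortiori its $z$-class. No serious obstacle arises: the entire argument rests on Proposition~\ref{conjugacyunitary} (which does the heavy lifting by converting conjugacy into a question about equivalence of hermitian forms over an algebra with involution) together with the Wall-type uniqueness statement Corollary~\ref{Wall's} needed to handle the split-algebra case. The one point to keep in mind is that the equivalence of hermitian spaces in Proposition~\ref{conjugacyunitary} is taken with respect to an isomorphism $f\colon E^S\to E^T$, so fixing $E$ up to isomorphism is an implicit part of the datum (2); listing it as (1) merely makes the counting in the subsequent finiteness argument transparent.
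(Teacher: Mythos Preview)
Your argument is correct for the direction it addresses: given the data $(E,[H^T])$, Proposition~\ref{conjugacyunitary} pins down the conjugacy class of $T$, hence its $z$-class. Your case analysis (in particular the appeal to Corollary~\ref{Wall's} in the split case) is more explicit than the paper's, which simply says ``the converse follows from Proposition~\ref{conjugacyunitary}.''

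The difference is that the paper also proves the \emph{other} direction, and in fact leads with it: if $S$ and $T$ are $z$-equivalent, then after conjugating one may assume $\mathcal Z_{U(V,B)}(S)=\mathcal Z_{U(V,B)}(T)$, whence $U(V^S,H^S)=U(V^T,H^T)$, and from this the paper concludes $(V^S,H^S)\approx(V^T,H^T)$ and $E^S\cong E^T$. This establishes that the data is actually an \emph{invariant} of the $z$-class, so that $z$-classes are in bijection with the data rather than merely surjected onto by it. You do not address this, and the step ``equal unitary groups $\Rightarrow$ equivalent hermitian spaces'' is not entirely trivial. Your direction suffices for the literal reading of ``determined by'' and for the finiteness statement (Theorem~\ref{maintheorem3}), but the bijection is what underlies the exact counting in Chapter~\ref{chapter9}, so if you intend the lemma to support Theorem~\ref{maintheorem4} as well, you should supply the missing direction.
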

\begin{proof}
	Suppose $S, T \in U(V,B)$ are in the same $z$-class, then 
	$\mathcal Z_{U(V,B)}(S)=g\mathcal Z_{U(V,B)}(T)g^{-1}$ for some $g\in U(V,B)$. 
	We may replace $T$ by its conjugate $gTg^{-1}$, so we get 
	$\mathcal Z_{U(V,B)}(S)=\mathcal Z_{U(V,B)}(T)$, thus $U(V^S, H^S)=U(V^T, H^T)$. 
	Hence $(V^S, H^S)$ is equivalent to $(V^T,H^T)$. So, in particular, $E^S$ and $E^T$ 
	are isomorphic as $k$-algebras. The converse follows from Proposition~\ref{conjugacyunitary}.
\end{proof}

Now for the general case, let $T\in U(V,B)$ be a semisimple element with minimal polynomial
$$m_{T}(x)= \prod_{i=1}^{k_{1}}p_{i}(x) 
\prod_{j=1}^{k_{2}}\left(q_{j}(x)\tilde{q}_{j}(x)\right),$$ 
where the $p_{i}(x)$ are self-$U$-reciprocal polynomials of degree $d_i$, and 
$q_{j}(x)$ not self-$U$-reciprocal of degree $e_j$. 
Let the characteristic polynomial of $T$ be $$\chi_{T}(x)=\prod_{i=1}^{k_{1}}p_{i}(x)^{r_i} \prod_{j=1}^{k_{2}}
\left(q_{j}(x)\tilde{q}_{j}(x)\right)^{s_j}.$$
Let us write the primary decomposition of $V$ with respect to $m_T$ into $T$-invariant subspaces as 
\begin{equation}
V= \bigoplus_{i=1}^{k_{1}}V_{i}\bigoplus_{j=1}^{k_{2}}\left(W_{j}+W_{j}^*\right).
\end{equation}
Denote $E_{i}=\frac{k[x]}{<p_{i}(x)>}$ and $K_{j}=\frac{k[x]}{<q_{j}(x)>}$, the field extensions of $k$ 
of degree $d_{i}$ and $e_{j}$ respectively.
\begin{theorem}\label{semisimplezclass}
	With notation as above, let $T\in U(V,B)$ be a semisimple element. Then the $z$-class of $T$ is determined by the following:
	\begin{enumerate}
		\item a finite sequence of integers $(d_{1},\ldots, d_{k_{1}}; e_{1},\ldots, e_{k_{2}})$ 
		each $d_i, e_j\geq 0$ and $n = \displaystyle \sum_{i=1}^{k_{1}}d_{i}r_{i} +2 \sum_{j=1}^{k_{2}}e_{j}s_{j}$.
		\item Finite field extensions $E_{i}$ of $k$ of degree $d_{i}$ for $1\leq i \leq k_{1}$ and $K_{j}$ 
		of $k$ of degree $e_{j}$, for $1\leq j \leq k_{2}$, and 
		\item equivalence classes of $E_{i}$-valued hermitian forms $H_{i}$ of rank $r_i$, and 
		$K_{j}\times K_j$-valued hermitian forms $H_{j}^{'}$ of rank $s_j$.
	\end{enumerate}
	Further with this notation, $\mathcal Z_{U(V,B)}(T)\cong \displaystyle \prod_{i=1}^{k_1} U_{r_i}(H_i) \times 
	\prod_{j=1}^{k_2} GL_{s_j}(K_j)$.
\end{theorem}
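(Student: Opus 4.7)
The plan is to combine the primary decomposition of Proposition~\ref{spacedecomposition} with the single-factor analysis of Lemma~\ref{semisimplezclassl}. First I would decompose
$$V = \bigoplus_{i=1}^{k_1} V_i \oplus \bigoplus_{j=1}^{k_2}(W_j + W_j^*)$$
into $T$-invariant, mutually orthogonal, non-degenerate summands indexed by the distinct self-U-reciprocal factors and by the reciprocal pairs of $m_T$. Writing $T_i$ and $T_j'$ for the restrictions of $T$ to these summands, the centralizer factors as
$$\mathcal{Z}_{U(V,B)}(T) = \prod_{i=1}^{k_1} \mathcal{Z}_{U(V_i, B|_{V_i})}(T_i) \times \prod_{j=1}^{k_2} \mathcal{Z}_{U(W_j + W_j^*, B|_{W_j + W_j^*})}(T_j'),$$
so both the conjugacy class and the $z$-class of $T$ are determined factor by factor, and it suffices to handle each summand separately.

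On a Type~1 block $V_i$, the element $T_i$ is semisimple with minimal polynomial $p_i(x)$, hence $E^{T_i}$ is the field extension $E_i = k[x]/\langle p_i(x)\rangle$ of $k$ of degree $d_i$, and $V_i$ is a free $E_i$-module of rank $r_i$ (since $\dim_k V_i = d_i r_i$). The Springer--Steinberg form $H_i := H^{T_i}$ is an $E_i$-valued hermitian form of rank $r_i$, and by Lemma~\ref{semisimplezclassl} the $z$-class of $T_i$ in $U(V_i, B|_{V_i})$ is determined by the $k$-algebra $E_i$ together with the equivalence class of $H_i$. Proposition~\ref{conjugacyunitary}(2) then yields $\mathcal{Z}_{U(V_i, B|_{V_i})}(T_i) = U(V_i^{T_i}, H_i) = U_{r_i}(H_i)$.

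On a Type~2 block $W_j + W_j^*$, the minimal polynomial is $q_j(x)\tilde q_j(x)$, so $E^{T_j'} \cong K_j \oplus K_j$ with $K_j = k[x]/\langle q_j(x)\rangle$ and the involution swapping the two factors. The free-module rank of $(W_j + W_j^*)^{T_j'}$ is $s_j$, and by Corollary~\ref{Wall's} the induced hermitian form $H_j'$ of rank $s_j$ is unique up to equivalence. Lemma~\ref{semisimplezclassl} again fixes the $z$-class, and the identification $U((W_j + W_j^*)^{T_j'}, H_j') \cong GL_{s_j}(K_j)$ follows exactly as in Example~\ref{splitu}(3): writing elements as pairs $(A,B) \in GL_{s_j}(K_j) \times GL_{s_j}(K_j)$ with the swap involution, the unitary condition forces $B = \tra A^{-1}$, so projection to the first factor is an isomorphism onto $GL_{s_j}(K_j)$.

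Gluing the three lists $(d_i, e_j)$, $(E_i, K_j)$, and $([H_i], [H_j'])$ over all factors yields the complete invariant of the $z$-class, with the dimension constraint $n = \sum_i d_i r_i + 2 \sum_j e_j s_j$ automatic. Conversely, if two semisimple elements $S, T$ share all this data, I would assemble an isometry of $V$ conjugating $S$ to $T$ piece by piece, which makes them conjugate in $U(V,B)$ and a fortiori $z$-equivalent. Taking the product of the centralizer identifications above gives the stated formula $\mathcal{Z}_{U(V,B)}(T) \cong \prod_i U_{r_i}(H_i) \times \prod_j GL_{s_j}(K_j)$. The step I expect to require the most care is the Type~2 case, namely verifying that the swap involution on $K_j \oplus K_j$ is exactly what collapses the would-be unitary group to $GL_{s_j}(K_j)$, and that Corollary~\ref{Wall's} genuinely applies to give uniqueness of $H_j'$; everything else is an orchestration of the primary decomposition together with Lemma~\ref{semisimplezclassl}.
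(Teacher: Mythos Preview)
Your proposal is correct and follows essentially the same route as the paper's proof, which is a one-line reference to Lemma~\ref{semisimplezclassl} and Proposition~\ref{conjugacyunitary} together with the remark that the unitary group for the swap involution is a form of the general linear group. You have simply unpacked that sentence: the primary decomposition, the Type~1 identification via Springer--Steinberg, and the Type~2 collapse $U(K_j\oplus K_j)\cong GL_{s_j}(K_j)$ via Example~\ref{splitu}(3) and Corollary~\ref{Wall's} are exactly the ingredients the paper intends.
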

\begin{proof}
	Follows from Lemma~\ref{semisimplezclassl} and Proposition~\ref{conjugacyunitary}. 
	Also, we use the fact that unitary group is a form of general linear group.
\end{proof}
\noindent
This gives us following:
\begin{corollary}\label{zsemisimple}
	Let $k_0$ have the property FE. Then the number of semisimple $z$-classes in $U(V,B)$ is finite.
\end{corollary}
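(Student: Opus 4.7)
The plan is to apply Theorem~\ref{semisimplezclass} as the main tool: it reduces the $z$-class of a semisimple $T\in U(V,B)$ to three pieces of data (a combinatorial sequence of integers, a collection of finite field extensions of $k$, and equivalence classes of hermitian forms), and the finiteness of each piece will be established separately, using the FE hypothesis on $k_{0}$.

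First I would observe that the sequence $(d_{1},\ldots,d_{k_{1}};e_{1},\ldots,e_{k_{2}})$, together with the implicit multiplicities $r_{i},s_{j}$ (which are the ranks of the hermitian forms in part (3) of the theorem), satisfies $n=\sum d_{i}r_{i}+2\sum e_{j}s_{j}$; since all integers involved are positive, only finitely many such tuples exist. Next, I would record the elementary fact that property FE passes to finite extensions: if $[L:k_{0}]=m$, then any degree-$n$ extension of $L$ is at most a degree-$mn$ extension of $k_{0}$, of which there are finitely many, so $L$ has FE as well. In particular $k$ has FE (since $[k:k_{0}]=2$), so for each fixed degree $d_{i}$ (resp.\ $e_{j}$) there are only finitely many field extensions $E_{i}$ (resp.\ $K_{j}$) of $k$ of that degree.

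For the third piece I would split into the two cases provided by Theorem~\ref{semisimplezclass}. The $K_{j}\times K_{j}$-valued hermitian forms of rank $s_{j}$ (with the swap involution) are unique up to equivalence by Corollary~\ref{Wall's}. For the $E_{i}$-valued hermitian forms with the involution $\alpha\colon x\mapsto x^{-1}$ extending $\sigma$, let $E_{i,0}$ denote the fixed field of $\alpha|_{E_{i}}$; it is a finite extension of $k_{0}$, hence inherits property FE, and so by Lemma~\ref{kfinite} the square-class group $E_{i,0}^{\times}/E_{i,0}^{\times 2}$ is finite. The same chain of implications used in the proof of Corollary~\ref{zunipotent} (finitely many square classes over $E_{i,0}$ yield finitely many quadratic forms of rank $r_{i}$ over $E_{i,0}$, which via the classical reduction of hermitian classification to quadratic classification over the fixed field yield finitely many rank-$r_{i}$ hermitian forms over $E_{i}$) then closes the count. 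Multiplying the finite counts over all $i$ and $j$ gives only finitely many semisimple $z$-classes in $U(V,B)$.

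The main obstacle is the last step: one must identify the correct fixed subfield $E_{i,0}$ of the algebra $E_{i}$ for each irreducible self-U-reciprocal factor $p_{i}$, confirm that $E_{i,0}$ inherits FE (not merely that $k_{0}$ does), and verify that the classical hermitian-to-quadratic classification really applies to the Springer--Steinberg involution $\alpha$ rather than to some involution coming directly from $\sigma$. The degenerate cases $p_{i}(x)=x\pm 1$ collapse $E_{i,0}$ to $k_{0}$ and reduce to the unipotent setting of Corollary~\ref{zunipotent}; higher-degree $p_{i}$ produce proper extensions but the transfer argument for FE handles them uniformly, so the overall count remains finite.
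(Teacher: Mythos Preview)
Your argument is correct and follows the same line as the paper's own proof: invoke Theorem~\ref{semisimplezclass} and then show that each of the three pieces of data admits only finitely many possibilities, the key step being the reduction of hermitian classification to quadratic classification over the fixed field (Jacobson) combined with Lemma~\ref{kfinite}. The paper's proof is considerably terser---it simply asserts ``this follows if we show that there are only finitely many hermitian forms over $k$'' and dispatches that via Jacobson and FE---whereas you spell out explicitly that FE passes to finite extensions, that the relevant fixed field is $E_{i,0}$ rather than $k_{0}$ itself, and that the $K_{j}\times K_{j}$ case is handled separately by Corollary~\ref{Wall's}. These are details the paper's proof leaves implicit, so your version is in fact more complete.

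One small slip: in the unitary setting the degree-one self-$U$-reciprocal factors are $x-\lambda$ with $\lambda\bar\lambda=1$, not just $x\pm 1$ (the latter is the orthogonal/symplectic situation). This does not affect your argument, since in that case $E_{i}\cong k$ with involution $\sigma$, so $E_{i,0}=k_{0}$ and everything goes through; but the parenthetical remark at the end should be adjusted accordingly.
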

\begin{proof}
	This follows if we show that there are only finitely many hermitian forms over $k$, up to equivalence of any degree $n$. 
	We use Jacobson's theorem (see p. 267 Theorem in~\cite{Ja}) that equivalence of hermitian forms $B$ over $k$ 
	is given by equivalence of corresponding quadratic forms $Q(x)=\frac{B(x,x)+\overline{B(x,x)}}{2}$ over $k_0$. 
	However, because of the FE property of $k_0$ it turns out that $k_0^{\times}/{k_0^{\times 2}}$ 
	is finite (see Lemma~\ref{kfinite}), 
	and hence there are only finitely many quadratic forms of degree $n$ over $k_0$ (see p. 32, Corollary 4.3~\cite{Gv}). 
	This proves the required result.
\end{proof}
\noindent
The main result of this chapter is the following theorem:
\begin{theorem}\label{maintheorem3}
	Let $k$ be a perfect field of ${\rm char}\, k\neq 2$ with a non-trivial Galois automorphism of order $2$. 
	Let $V$ be a finite dimensional vector space over $k$ with a non-degenerate hermitian form $B$. 
	Suppose the fixed field $k_0$ has the property FE,  
	then the number of $z$-classes in the unitary group $U(V,B)$ is finite.
\end{theorem}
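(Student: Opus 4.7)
The plan is to leverage Jordan decomposition to reduce the count of $z$-classes of arbitrary elements to the count of $z$-classes of their semisimple and unipotent parts, where Corollary~\ref{zsemisimple} and Corollary~\ref{zunipotent} (plus Kulkarni's theorem quoted in the introduction) already do the work. Since $k_0$ is perfect, Theorem~\ref{jordandecomposition} applies: every $T \in U(V,B)$ factors uniquely as $T = T_s T_u = T_u T_s$ with $T_s$ semisimple and $T_u$ unipotent, and $\mathcal{Z}_{U(V,B)}(T) = \mathcal{Z}_{U(V,B)}(T_s) \cap \mathcal{Z}_{U(V,B)}(T_u)$.

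The first step is the reduction lemma: if $S \sim_z T$ in $U(V,B)$, then, after conjugating $S$ within its conjugacy class, we may assume $\mathcal{Z}_{U(V,B)}(S) = \mathcal{Z}_{U(V,B)}(T)$; uniqueness of the Jordan decomposition together with the second statement of Theorem~\ref{jordandecomposition} applied to the inclusion of this common centralizer into $U(V,B)$ then forces $S_s$ and $T_s$ to have the same $U(V,B)$-centralizer, i.e.\ $S_s \sim_z T_s$ in $U(V,B)$, while $S_u$ and $T_u$ become unipotent elements of $G' := \mathcal{Z}_{U(V,B)}(T_s)$ with equal centralizers in $G'$, so $S_u \sim_z T_u$ inside $G'$. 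Consequently the $z$-class of $T$ is determined by the pair consisting of the $z$-class of $T_s$ in $U(V,B)$ and the $z$-class of $T_u$ in $\mathcal{Z}_{U(V,B)}(T_s)$ (where one first conjugates $T_s$ into a chosen representative of its $z$-class to fix the ambient centralizer).

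The second step is to count each piece. By Corollary~\ref{zsemisimple} there are only finitely many semisimple $z$-classes in $U(V,B)$; fix a representative $T_s^{(\alpha)}$ of each. By Theorem~\ref{semisimplezclass},
$$\mathcal{Z}_{U(V,B)}(T_s^{(\alpha)}) \;\cong\; \prod_{i=1}^{k_1} U_{r_i}(H_i) \;\times\; \prod_{j=1}^{k_2} GL_{s_j}(K_j),$$
where the $U_{r_i}(H_i)$ are unitary groups over finite extensions $E_i/k$ and the $K_j$ are finite extensions of $k$. Since $k$ is a degree-$2$ extension of $k_0$ and since finite extensions of a field with property FE inherit FE, the fixed field of each $E_i$ (respectively the field $K_j$, viewed over its prime subfield of fixed points) again has property FE. Hence Corollary~\ref{zunipotent} applies to each unitary factor and Kulkarni's theorem (Theorem of Kulkarni in the introduction) applies to each general-linear factor, giving in each case finitely many unipotent $z$-classes. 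Unipotent $z$-classes of a direct product are products of unipotent $z$-classes of the factors, so each $\mathcal{Z}_{U(V,B)}(T_s^{(\alpha)})$ has only finitely many unipotent $z$-classes. Summing over the finite list of $\alpha$ yields the theorem.

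The main obstacle is the reduction lemma in the first paragraph: one must argue carefully that $z$-equivalence of $T$ in $U(V,B)$ genuinely descends to $z$-equivalence of $T_s$ in $U(V,B)$ \emph{and} to $z$-equivalence of $T_u$ within a \emph{fixed} semisimple centralizer (not merely within two different but conjugate centralizers). The delicate point is that while conjugacy is automatically preserved along the Jordan decomposition, $z$-equivalence is \emph{a priori} a strictly weaker relation; the fix is to first move $S$ so that $\mathcal{Z}_{U(V,B)}(S) = \mathcal{Z}_{U(V,B)}(T)$, then further conjugate inside this common centralizer to align the semisimple parts, reducing the problem of comparing unipotent parts to the genuinely intrinsic question of $z$-equivalence inside $\mathcal{Z}_{U(V,B)}(T_s)$, which is then handled factor-by-factor by the cited corollaries.
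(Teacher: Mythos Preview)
Your overall strategy matches the paper's, but you have overcomplicated it with the ``reduction lemma'' in your first paragraph, and that lemma is not actually justified by the argument you sketch. The claim that $\mathcal{Z}_{U(V,B)}(S)=\mathcal{Z}_{U(V,B)}(T)$ forces $\mathcal{Z}_{U(V,B)}(S_s)=\mathcal{Z}_{U(V,B)}(T_s)$ does not follow merely from functoriality of the Jordan decomposition along the inclusion $Z\hookrightarrow U(V,B)$: that functoriality only tells you $S_s,T_s\in Z$, not that their $U(V,B)$-centralizers coincide. Your proposed ``fix'' of conjugating inside the common centralizer $Z$ to align the semisimple parts cannot work as stated either, since conjugation by any $h\in Z=\mathcal{Z}(S)$ fixes $S$ and hence fixes $S_s$.

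The good news is that the reduction lemma is unnecessary, and your second paragraph already contains the correct argument, which is exactly the paper's. One simply counts centralizers directly: for any $T$ one has $\mathcal{Z}_{U(V,B)}(T)=\mathcal{Z}_{\mathcal{Z}_{U(V,B)}(T_s)}(T_u)$. By Corollary~\ref{zsemisimple} there are only finitely many groups $\mathcal{Z}_{U(V,B)}(T_s)$ up to $U(V,B)$-conjugacy; fixing a representative $H$ for each, Theorem~\ref{semisimplezclass} describes $H$ as a product of unitary and general linear groups over finite extensions of $k$, and Corollary~\ref{zunipotent} (together with the $GL$ case) gives only finitely many unipotent conjugacy classes in $H$, hence only finitely many $\mathcal{Z}_H(T_u)$ up to $H$-conjugacy, hence up to $U(V,B)$-conjugacy. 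This bounds the number of possible $\mathcal{Z}_{U(V,B)}(T)$ up to conjugacy. The paper phrases it in terms of unipotent \emph{conjugacy} classes (part~(1) of Corollary~\ref{zunipotent}) rather than unipotent $z$-classes, but either works for the finiteness bound. Drop your first paragraph, keep the second, and the proof is complete and aligned with the paper's.
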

\begin{proof}
	It follows from Corollary~\ref{zsemisimple} that 
	the number of conjugacy classes of centralizers of semisimple elements is finite. 
	Hence, up to conjugacy, there are finitely many possibilities 
	for $\mathcal Z_{U(V,B)}(s)$ for $s$ semisimple in $U(V,B)$. Let $T\in U(V,B)$, then it has a 
	Jordan decomposition $T=T_{s}T_{u}=T_{u}T_{s}$. Recall $\mathcal Z_{U(V,B)}(T)
	=\mathcal Z_{U(V,B)}(T_{s})\cap \mathcal Z_{U(V,B)}(T_{u})$, and $T_{u} \in \mathcal Z_{U(V,B)}(T_{s})^{\circ}$ 
	(see p. 230, Remarks 3.16 in~\cite{SS} and also see in 11.12~\cite{Br}). 
	Now $\mathcal Z_{U(V,B)}(T_{s})$ is a product of certain unitary groups and general linear groups 
	possibly over a finite extension of $k$ (Initially, the hermitian forms were over local ring $E$. In 
	fact, $E\cong \frac{k[x]}{\langle q(x)^d \rangle}$. Using Wall's approximation Theorem~\ref{wall'sapproximationthm}, 
	we can go to it's quotient by Jacobson radical, which is a finite extension of $k$). 
	Corollary~\ref{zunipotent} applied on the 
	group $\mathcal Z_{U(V,B)}(T_{s})$ implies that, up to conjugacy, $T_{u}$ has finitely many 
	possibilities in $\mathcal Z_{U(V,B)}(T_{s})$. Hence, up to conjugacy, $\mathcal Z_{U(V,B)}(T)$ 
	has finitely many possibilities in $U(V,B)$. Therefore the number of $z$-classes in $U(V,B)$ is finite.
\end{proof}
\begin{remark}
	The FE property of the field $k_0$ is necessary for the above theorem. For example,  
	the field of rational numbers $\mathbb{Q}$ does not have the FE property. We show by an 
	example that the above theorem is no longer true over $\mathbb{Q}$.
\end{remark}
\begin{example}\label{nonexample} 
	Over field $\mathbb Q$, there could be infinitely many 
	non-conjugate maximal tori in $GL(n)$. Since a maximal torus 
	is centralizer of a regular semisimple element in it, we get 
	an example of infinitely many $z$-classes. For the sake of clarity let us write down this concretely when $n=2$.  
	
	The group $GL(2,\mathbb Q)$ has infinitely many semisimple $z$-classes. 
	For, if we take $f(x)\in \mathbb Q[x]$ any degree $2$ irreducible polynomial, then the
	centralizer of the companion matrix $C_f\in GL(2,\mathbb Q)$ 
	is isomorphic to $\mathbb Q_f^{\times}$, where $\mathbb Q_f=\mathbb Q[x]/<f(x)>$, 
	a field extension. Thus non-isomorphic degree two field 
	extensions (hence can not be conjugate) give rise to 
	distinct $z$-classes (these are maximal tori in $GL(2,\mathbb Q)$). 
	
	Consider $k= \mathbb Q[\sqrt d]$ (a quadratic extension) with the usual involution $\overline{a+b\sqrt{d}}:=a-b\sqrt{d}$. 
	We embed $GL(2,\mathbb Q)$ in $U(4)$ with respect to the hermitian 
	form $\begin{pmatrix} 0 & I_2 \\ I_2 & 0 \end{pmatrix}$ given 
	by $$A\mapsto \begin{pmatrix} A & 0 \\ 0 & \tra {\bar A}^{-1}\end{pmatrix}.$$
	This embedding describes maximal tori in $U(4)$ 
	starting from that of $GL(2)$. Yet again, 
	non-isomorphic degree $2$ field extensions would give rise to 
	distinct $z$-classes. In turn, this gives infinitely many $z$-classes (of semisimple elements) in $U(4)$.
\end{example}
\begin{example}\label{singlezclass}
	For $a\in k^{\times}$, consider a unipotent 
	element $u_a=\begin{pmatrix} 1&a\\ 0&1\end{pmatrix}$ in $SL(2,k)$. 
	Then $\mathcal Z_{SL(2,k)}(u_a)= \left\{ \begin{pmatrix} x&y\\ 0&x\end{pmatrix} 
	\mid x^2=1,  y\in k \right\}$. 
	Then, $u_a$ is conjugate to $u_b$ in $SL(2,k)$ if and only 
	if $a\equiv b (\mathrm{mod}\;(k^{\times})^2)$. Let $k$ be a (perfect or non-perfect) 
	field with $k^{\times}/(k^{\times 2})$ infinite. 
	Then this would give an example, where we have infinitely many 
	conjugacy classes of unipotents but still, they are in a single $z$-class. 
\end{example}
\chapter{Counting $z$-classes}\label{chapter9}
This chapter reports the work done in~\cite{BS}. 
In this chapter, we investigate the $z$-classes for classical groups. 
Without further ado, we shall now go into computing $z$-classes for $GL(n,k)$ and 
$U(n,k)$ for different $k$. In Section 9.1 we compute the number of 
$z$-classes and their generating functions for general linear groups, and  
in Section 9.2 we compute the same for unitary groups. The main theorem 
proved here is that the number of $z$-classes in $GL(n,q)$ is same as the 
number of $z$-classes in $U(n,q)$, when $q>n$ (Theorem~\ref{maintheorem4}).
\section{ $z$-classes in General Linear Groups}
Let $n$ be a positive integer with a partition  
$\lambda=(1^{k_{1}} 2^{k_{2}}\ldots n^{k_{n}})$, denoted by 
$\lambda \vdash n$, 
i.e., $n=\displaystyle \sum_{i}ik_{i}$, 
and $p(n)$ denote the number of partitions of $n$.
Let $p(x)$ be the generating function for the partitions of 
integers so $p(x)=\displaystyle \sum_{n=0}^{\infty}p(n)x^n=
\displaystyle\prod_{i=1}^{\infty} \frac{1}{1-x^i}$.
Let $z_{k}(n)$ denotes the number of 
$z$-classes in $GL(n,k)$.  
Define $z_{k}(x):=\displaystyle \sum _{n=0}^{\infty}z_{k}(n)x^n$
be the generating function for the $z$-classes in $GL(n,k)$. 
If $k$ is an algebraically closed field then we will suppress $k$, and simply 
denote them as $z(n)$ and $z(x)$ respectively.
\begin{proposition}\label{countzgln}
	Let $k$ be an algebraically closed field. Then, 
	\begin{enumerate}
		\item the number of $z$-classes of semisimple elements in $GL(n,k)$ is $p(n)$, 
		which is same as the number of $z$-classes of unipotent elements. 
		\item The number of $z$-classes in $GL(n,k)$ is 
		$$z(n) = \displaystyle \sum_{(1^{k_1} 2^{k_2}\ldots n^{k_n})\vdash n} \prod_{i=1}^{n} \binom{p(i)+k_i-1}{k_i}, 
		$$
		and the generating function is 
		$$z(x)=\prod_{i=1}^{\infty}\frac{1}{(1-x^i)^{p(i)}}.$$
	\end{enumerate}
\end{proposition}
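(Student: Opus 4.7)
The plan is to combine the Jordan decomposition with a Weyl group orbit count. For part (1), semisimple elements in $GL(n,k)$ over an algebraically closed $k$ are diagonalizable, and the centralizer of a diagonal matrix depends only on the multiset of eigenvalue multiplicities, which is a partition $(1^{k_1}2^{k_2}\cdots n^{k_n})\vdash n$. This gives a Levi subgroup $L\cong \prod_i GL(i,k)^{k_i}$, and distinct partitions yield non-conjugate Levis. Hence the semisimple $z$-classes correspond bijectively to partitions of $n$, giving $p(n)$. For unipotents, conjugacy classes are indexed by Jordan type (a partition of $n$), and distinct Jordan types yield non-isomorphic centralizer algebras; thus unipotent conjugacy classes equal unipotent $z$-classes, also $p(n)$ in total.

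For part (2), I use the identity $\mathcal{Z}_G(T)=\mathcal{Z}_G(T_s)\cap \mathcal{Z}_G(T_u)=\mathcal{Z}_L(T_u)$, where $L=\mathcal{Z}_G(T_s)$ is a Levi and $T_u\in L$ is the unipotent part. Thus a $z$-class in $G=GL(n,k)$ is determined by a pair $(L,[T_u])$: a Levi $L$ up to $G$-conjugacy together with a unipotent $z$-class $[T_u]$ in $L$, taken modulo the action of $W_L=N_G(L)/L\cong \prod_i S_{k_i}$ (which permutes the equal-sized factors of $L$). A unipotent $z$-class in $L$ is a tuple of one unipotent $z$-class in each $GL(i,k)$-factor, with $p(i)$ choices per factor by part (1). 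Counting $\prod_iS_{k_i}$-orbits of such tuples factor by factor gives
\[ \prod_{i=1}^{n} \binom{p(i)+k_i-1}{k_i} \]
(the number of size-$k_i$ multisets drawn from a $p(i)$-element set). Summing over partitions of $n$ yields the stated closed-form expression for $z(n)$.

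For the generating function, I exchange the outer sum over $n$ for a sum over sequences $(k_1,k_2,\ldots)$ of nonnegative integers with finite support and then factor the resulting expression:
\[ z(x)=\sum_{n\ge 0}z(n)x^n=\sum_{(k_1,k_2,\ldots)}\prod_{i\ge 1}\binom{p(i)+k_i-1}{k_i}x^{ik_i}=\prod_{i\ge 1}\sum_{k\ge 0}\binom{p(i)+k-1}{k}x^{ik}. \]
Applying the standard identity $\sum_{k\ge 0}\binom{m+k-1}{k}t^k=(1-t)^{-m}$ with $m=p(i)$ and $t=x^i$ gives $z(x)=\prod_{i\ge 1}(1-x^i)^{-p(i)}$.

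The main obstacle is justifying the $(L,[T_u])$-parametrization in part (2): one must verify that two elements of $G$ are $z$-equivalent precisely when their associated Levis are $G$-conjugate and, after transporting into a common Levi $L$, their unipotent parts define the same $W_L$-orbit of unipotent $z$-classes in $L$. This amounts to showing that the $G$-conjugacy class of the centralizer $\mathcal{Z}_L(T_u)$ already recovers the $G$-conjugacy class of $L$, after which the residual $G$-conjugations stabilizing $L$ act through $W_L=N_G(L)/L$. Once this Weyl group reduction is secured, the counting is entirely combinatorial.
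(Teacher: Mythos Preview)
Your argument is correct and follows essentially the same route as the paper: both reduce to the Jordan form, index semisimple $z$-classes by the partition of eigenvalue multiplicities, and count, for a fixed such partition $(1^{k_1}\cdots n^{k_n})$, the multisets of Jordan types on the equal-sized eigenspaces via $\prod_i\binom{p(i)+k_i-1}{k_i}$, then pass to the generating function by the negative binomial identity. Your write-up is in fact more explicit than the paper's: you name the Levi $L=\mathcal{Z}_G(T_s)$ and the residual Weyl group $W_L\cong\prod_iS_{k_i}$, and you flag the one genuine verification needed (that the $G$-conjugacy class of $\mathcal{Z}_L(T_u)$ recovers the pair $(L,[T_u])$ up to $W_L$), which the paper simply asserts with ``these possibilities will determine the number of $z$-classes.''
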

\begin{proof}
	Since $k$ is an algebraically closed field then for each element 
	$g \in GL(n,k)$ has a unique Jordan form. Suppose it has $t$-distinct eigenvalues $\lambda_{1}, \lambda_{2}, \ldots , 
	\lambda_{t}$. In each Jordan block corresponding to $\lambda_{i}$'s, 
	the entries in superdiagonal can be filled with 
	zeros and ones. These possibilities will determine the number of $z$-classes. These can be said using the following 
	argument. We know that $(1-x)^{-m}=\displaystyle \sum_{r}\binom{m+r-1}{r}x^r$. 
	Therefore the coefficient of $x^{k_{i}}$ in 
	$(1-x)^{-p(i)}$ is $\binom{p(i)+k_{i}-1}{k_{i}}$. So for a fixed partition  
	$\lambda=(1^{k_{1}} 2^{k_{2}}\ldots n^{k_{n}})$ of $n$, the number of $z$-classes is 
	$\displaystyle\prod_{i=1}^n\binom{p(i)+k_{i}-1}{k_{i}}$. Therefore the total number of $z$-classes in $GL(n,k)$ 
	is $\displaystyle \sum_{(1^{k_{1}} 2^{k_{2}}\ldots n^{k_{n}})\vdash n}
	\prod_{i=1}^{n}\binom{p(i)+k_{i}-1}{k_{i}}$.
\end{proof}

\begin{proposition}\label{countprod}
	Let $z(x)=\displaystyle\prod_{i=1}^{\infty}\frac{1}{(1-x^i)^{p(i)}}$. Then,
	\begin{enumerate}
		\item $z_{\mathbb C}(x)=z(x)$.
		\item $z_{\mathbb R}(x)= z(x)z(x^2)$.
		\item If $q>n$ then $z_{\mathbb F_q}(x) = \displaystyle\prod_{n=1}^{\infty} z(x^n)$.
	\end{enumerate}
\end{proposition}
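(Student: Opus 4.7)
The plan is to refine the description of $z$-classes from Proposition~\ref{countzgln} to fields that need not be algebraically closed, by tracking the degrees of the irreducible factors appearing in the characteristic polynomial. For any field $k$, the primary decomposition of $g \in GL(n,k)$ partitions $V$ into blocks, one per distinct irreducible factor $p$ of the characteristic polynomial; each block carries a partition $\lambda$ describing its Jordan-type data over $k[x]/(p)$ and contributes dimension $\deg(p)\,|\lambda|$ to $n$. The centralizer of a primary block depends on $p$ only through $\deg(p)$, because the algebra $k[x]/(p)$ is determined up to $k$-isomorphism by the degree for each of our three base fields, so the $z$-class of $g$ is determined by the multiset of pairs $(d,\lambda)$ satisfying $\sum d|\lambda| = n$. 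Any such multiset is realized provided the field has at least as many monic irreducibles of each degree $d$ as the multiset demands.

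Given this structural statement, the three identities follow by generating-function bookkeeping. For (1), $\mathbb{C}$ has only degree-one irreducibles and infinitely many of them, so the multiset collapses to a multiset of partitions and the formula of Proposition~\ref{countzgln} gives $z_{\mathbb{C}}(x) = z(x)$ directly. For (2), $\mathbb{R}$ admits exactly degrees $1$ and $2$, each with infinitely many irreducibles, so the generating function factors as
\[
z_{\mathbb{R}}(x) = \prod_{\lambda}\frac{1}{1 - x^{|\lambda|}} \cdot \prod_{\lambda}\frac{1}{1 - x^{2|\lambda|}} = z(x)\, z(x^2),
\]
where $\lambda$ runs over all partitions.

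For (3), the only additional step is to verify that $\mathbb{F}_q$ with $q > n$ has at least $\lfloor n/d \rfloor$ monic irreducible polynomials of every degree $d \leq n$, which is the maximum number ever needed. For $d = 1$ there are $q > n$ such polynomials. For $d \geq 2$, the standard M\"obius count gives $I(d) \geq (q^d - q\cdot q^{d/2})/d$, which for $q > n$ is comfortably larger than $\lfloor n/d \rfloor$. Hence every admissible $(d,\lambda)$-multiset is realized, and the generating function factors as $z_{\mathbb{F}_q}(x) = \prod_{d \geq 1} z(x^d)$. This infinite product is a well-defined formal power series because $z(x^d) = 1 + O(x^d)$, so only finitely many factors contribute to the coefficient of any fixed $x^n$.

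The main obstacle is justifying the structural statement that $z$-equivalence depends only on the $(d,\lambda)$-multiset rather than on the specific irreducibles that arise. To handle this, given two elements $g, g'$ of $GL(n,k)$ with matching multisets, one pairs up their primary blocks with equal $(d,\lambda)$ and constructs, for each pair, a $k$-linear isomorphism between the corresponding primary subspaces that intertwines the respective module structures over $k[x]/(p)$ and $k[x]/(p')$; this uses the unique-up-to-isomorphism identification of these two $k$-algebras available over $\mathbb{C}$, $\mathbb{R}$, and $\mathbb{F}_q$. Assembling these isomorphisms into an element of $GL(n,k)$ conjugates $g$ to a $g''$ whose primary blocks use the same irreducibles as those of $g'$, at which point within each primary subspace the equality of centralizers reduces to the algebraically closed case already treated in Proposition~\ref{countzgln}.
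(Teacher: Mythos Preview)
Your argument is correct and follows the same underlying idea as the paper: parametrize $z$-classes by the multiset of pairs $(\text{degree of irreducible factor},\ \text{partition})$, observe that for each of $\mathbb{C}$, $\mathbb{R}$, $\mathbb{F}_q$ the extension $k[x]/(p)$ depends only on $\deg p$, and read off the generating function as a product over the degrees that occur. The paper's proof says exactly this in telegraphic form, speaking of ``contributions coming from'' each extension field.

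Where you go beyond the paper is in two places, and both are improvements. First, you actually justify the structural statement that matching $(d,\lambda)$-multisets force conjugate centralizers, by transporting the module structure along a $k$-algebra isomorphism $k[x]/(p)\cong k[x]/(p')$; the paper simply asserts the parametrization. Second, and more importantly, you make genuine use of the hypothesis $q>n$ in part~(3), checking that $\mathbb{F}_q$ has enough monic irreducibles of each degree $d\le n$ to realize every admissible multiset. The paper's proof never invokes $q>n$, which is a real omission: as the paper's own Remark and Example~\ref{examplegap} after Theorem~\ref{maintheorem4} show, the count fails for small $q$ precisely because some types are unavailable. One small sharpening: for $d=1$ you should count only the $q-1$ linear factors $x-a$ with $a\neq 0$, since elements of $GL(n,k)$ have invertible constant term; $q>n$ still gives $q-1\ge n$ as needed.
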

\begin{proof}
	\begin{enumerate}
		\item Here $\mathbb{C}$ can be replaced by any algebraically closed 
		field. Since an algebraically closed field has no extension at all,  
		$z_{\mathbb{C}}(x)=z(x)$. 
		\item Now $\mathbb{R}$ has two extensions, one is $\mathbb{R}$ itself of 
		degree $1$, and $\mathbb{C}$ of degree $2$. 
		We are looking at rational canonical form for each element $g\in GL(n, \mathbb R)$. Then 
		over $\bar{\mathbb R}$ (i.e., $\mathbb C$ degree $2$ extension of $\mathbb{R}$), $z$-classes are given by 
		the generating function $z(x^2)$ (see Proposition~\ref{countzgln}). 
		Clearly the contributions to 
		$z_{\mathbb{R}}(x)$ coming from $\mathbb{C}$ is $z_{\mathbb{C}}(x^2)=z(x^2)$. There will be more $z$-classes 
		apart from these, which will be coming from the generating function $z(x)$ (over $\mathbb{R}$ itself).
		Hence $z_{\mathbb R}(x)= z(x)z(x^2)$. 
		\item For finite field $\mathbb{F}_q$, for each degree extension $n$, there is 
		a unique field of that degree, namely $\mathbb{F}_{q^n}$. So the contributions to 
		$z_{\mathbb{F}_q}(x)$ coming from $\mathbb{F}_{q^n}$ are $z_{\mathbb{C}}(x^n)=z(x^n)$. 
		Hence $z_{\mathbb F_q}(x) = \displaystyle\prod_{n=1}^{\infty} z(x^n)$, and this product 
		is well-defined because $\mathbb{F}_q$ has the property FE. 
		
	\end{enumerate}
	
\end{proof}
To compare these numbers we make a table for small ranks. 
The last row of this table is there in the work of Green (see p.408 in~\cite{Gr}).
\vskip 3mm
\begin{center}
	\begin{tabular}{|l|l|l|l|l|l|l|l|l|l|l|}
		\hline
		$z_{k}(n)$           &z(1)&z(2)&z(3)&z(4)&z(5)&z(6)&z(7)&z(8)&z(9)&z(10) \\ \hline
		$\mathbb{C}$   &1&3&6&14&27&58&111&223&424&817 \\ \hline
		$\mathbb{R} $  &1&4&7&20&36&87&162&355&666&1367 \\ \hline 
		$\mathbb F_{q}, q>n$        &1&4&8&22&42&103&199&441&859&1784 \\ 
		\hline
	\end{tabular}
\end{center}

\section{$z$-classes in Unitary Groups}
The genus number of compact Lie groups has been computed in~\cite{Bo}. 
In this situation we have a vector space $V$ over $\mathbb C$ of dimension $n+1$. 
From now on the field is $\mathbb{C}$ up until the start of Section 9.2.3.
The hermitian forms are classified by the signature,  
and the corresponding groups are denoted by $U(r,s)=\{g \in GL(n+1, \mathbb C) \mid \tra \bar{g}\beta g=\beta \}$, 
where $\beta= \begin{pmatrix}
I_r &0 \\
0 & -I_s
\end{pmatrix}$ 
and $r+s=n+1$ (see (1) of Example~\ref{splitu}). 
\subsection{$z$-classes in $U(n+1,0)$}
We record the result (see Theorem 3.1~\cite{Bo}) here as follows:
\begin{proposition}
	The number of $z$-classes in $U(n+1,0)$ is $p(n+1)$.
\end{proposition}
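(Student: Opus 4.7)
The plan is to apply Theorem~\ref{semisimplezclass} directly, using the fact that every element of the compact unitary group $U(n+1,0)$ is already semisimple, so the Jordan decomposition of Theorem~\ref{jordandecomposition} is trivial and there is no unipotent contribution to worry about. By the spectral theorem, every $T\in U(n+1,0)$ is diagonalisable over $\mathbb{C}$ with all eigenvalues on the unit circle; in the notation of Chapter~\ref{chapter5} this means that in the primary decomposition of $V$ with respect to $T$, every irreducible factor of the minimal polynomial is linear $x-\lambda$, and is self-$U$-reciprocal since $\bar{\lambda}\lambda=1$ for $|\lambda|=1$. Consequently there are no factors of the second ("not self-$U$-reciprocal") type: in Theorem~\ref{semisimplezclass} we have $k_2=0$ and each $d_i=1$.

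Next I would unpack the remaining invariants listed in Theorem~\ref{semisimplezclass}. The field extensions $E_i=\mathbb{C}[x]/\langle x-\lambda_i\rangle$ are all equal to $\mathbb{C}$ itself, so they contribute no new data. The form $H_i$ is the restriction of the ambient positive definite hermitian form $B$ to the eigenspace $V_i=\ker(T-\lambda_i I)$; since any subspace of a positive definite hermitian space is again positive definite, the classification of complex hermitian forms gives that $H_i$ is uniquely determined up to equivalence by its rank $r_i=\dim V_i$. Hence the only invariant of $T$ that survives is the unordered multiset of multiplicities $\{r_1,\ldots,r_m\}$ with $\sum_i r_i=n+1$, which is precisely a partition of $n+1$.

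To finish, I would exhibit the bijection between $z$-classes and partitions explicitly. Given a partition $\lambda=(r_1,\ldots,r_m)\vdash n+1$, choose distinct unit complex numbers $\lambda_1,\ldots,\lambda_m$ and set $T_\lambda=\mathrm{diag}(\underbrace{\lambda_1,\ldots,\lambda_1}_{r_1},\ldots,\underbrace{\lambda_m,\ldots,\lambda_m}_{r_m})$; by Theorem~\ref{semisimplezclass} its centraliser is $\mathcal{Z}_{U(n+1,0)}(T_\lambda)\cong U(r_1)\times\cdots\times U(r_m)$, and the above analysis shows that every element of $U(n+1,0)$ is $z$-equivalent to some $T_\lambda$. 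The only non-obvious step, and the main thing to verify carefully, is that distinct partitions produce non-conjugate centralisers: if $\prod_i U(r_i)$ and $\prod_j U(r'_j)$ were conjugate inside $U(n+1,0)$ they would in particular be isomorphic as real Lie groups, and the multiset $\{r_i\}$ is recoverable from this isomorphism type (e.g.\ from the list of simple factor dimensions, or by comparing ranks of the centres and of the derived subgroups). This gives a bijection between $z$-classes in $U(n+1,0)$ and partitions of $n+1$, proving the count $p(n+1)$.
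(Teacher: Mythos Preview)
Your proof is correct and follows essentially the same approach as the paper: compactness forces every element to be semisimple, diagonalisation gives the centraliser as $\prod_i U(r_i)$, and the $z$-class is recorded by the partition $(r_1,\ldots,r_m)\vdash n+1$. The paper argues this directly from the diagonal form rather than by invoking Theorem~\ref{semisimplezclass}, and it simply asserts that the partition determines the conjugacy class of the centraliser without the Lie-theoretic justification you supply; your treatment of the injectivity step is therefore more careful than the original.
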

\begin{proof}
	The group $U(n+1,0)$ is a compact Lie group. So 
	every element is semisimple. Let $g\in U(n+1,0)$, then $g$ is 
	conjugate to $s:=\mathrm{diag}(\lambda_1 I_{r_1},\ldots,\lambda_t I_{r_t})$, where 
	$\lambda_i$'s are distinct complex numbers such that $\lambda_i\overline{\lambda_i}=1$ and 
	$r_1+\cdots+r_t=n+1$. Hence 
	$$\mathcal{Z}_{U(n+1,0)}(s)=\displaystyle \prod_{i=1}^t U(r_i,0).$$
	So, up to conjugacy, $\mathcal{Z}_{U(n+1,0)}(s)$ is determined by the partitions of $n+1$ (here order of the 
	$\lambda_i$ is not important).
	Hence the number of $z$-classes in $U(n+1,0)$ is $p(n+1)$.
\end{proof}
\subsection{$z$-classes in $U(n,1)$}
The $z$-classes of $U(n,1)$ have been discussed by Cao and Gongopadhyay in~\cite{CG}. 
In fact, they classified how the centralizers will look like (see p. 3319, Corollary 1.2~\cite{CG}). 
So what is new here is the enumeration. 
Here we present the number of $z$-classes in this group using the parametrization described there. 
Recall that the hermitian matrix used there is $\beta=\begin{pmatrix} -1 & 0 \\ 0 & I_{n} \end{pmatrix}$, and 
the unitary group is $U(n,1)=\{g \in GL(n+1,\mathbb{C}) \mid \tr{\bar{g}}\beta g=\beta\}$. 

Another way to look at it is the following ball model:
Let $V$ be a vector space of dimension $n+1$ over $\mathbb{C}$, i.e., $V\cong \mathbb{C}^{n+1}$ 
equipped with the hermitian form of \emph{signature} $(n,1)$,
$$\langle v,w \rangle=-\bar{v}_0 w_0 +\bar{v}_1w_1+\cdots+\bar{v}_n w_n, $$ 
where $v=\tr (v_0 v_1 \cdots v_n)$ and $w=\tr (w_0 w_1 \cdots w_n)$ are column vectors in 
$\mathbb{C}^{n+1}$. Define 
\begin{align*}
V_0:=\{v\in V \mid \langle v,v\rangle=0\}, \\
V_{+}:=\{v\in V \mid \langle v,v\rangle > 0\}, \\
V_{-}:=\{v\in V \mid \langle v,v\rangle < 0\}.
\end{align*}
Let $\mathbb{P}(V)$ be the complex projective space, i.e., 
$\mathbb{P}(V)=\frac{V\smallsetminus \{0\}}{\sim}$, where 
$u \sim v$ if there exists $\lambda \in \mathbb{C}^{\times}$ such that 
$u=\lambda v$. Here $\mathbb{P}(V)$ is equipped with the quotient topology, and the 
quotient map is $\pi : V\smallsetminus \{0\} \rightarrow \mathbb{P}(V)$. 
The $n$-dimensional complex hyperbolic space is defined to be 
$\mathbb{H}_{\mathbb C}^{n}:=\pi (V_{-})$. 
The boundary $\partial \mathbb{H}_{\mathbb C}^{n}$ in 
$\mathbb{P}(V)$ is $\pi (V_0)$.
The isometry group $U(n,1)$ of the hermitian space $(V, \beta)$ acts as 
the isometries of $\mathbb{H}_{\mathbb C}^{n}$. The actual group of 
isometries of $\mathbb{H}_{\mathbb C}^{n}$ is $PU(n,1)=\frac{U(n,1)}{\mathcal{Z}(U(n,1))}$, 
where $\mathcal{Z}(U(n,1))=\mathbb{S}^1=\{zI_n \mid |z|=1\}$ is the center. 
Thus an isometry $g$ of $\mathbb{H}_{\mathbb C}^{n}$ 
lifts to a unitary transformation $\tilde{g} \in U(n,1)$. The fixed points of $g$ 
correspond to eigenvectors of $\tilde{g}$.
However, for convenience, we will mostly deal with the linear group 
$U(n,1)$ rather than the projective group $PU(n,1)$. In the following, we 
shall often forget the lift and use the same symbol for an isometry as 
well as its lifts. 

Now by Brouwer's fixed point theorem, it follows that every 
isometry $g$ has a fixed point on the closure 
$\overline{\mathbb{H}_{\mathbb C}^{n}}=\mathbb{H}_{\mathbb C}^{n} \cup \partial \mathbb{H}_{\mathbb C}^{n}$. 
An isometry $g$ is called \emph{elliptic} if it has a fixed point on $\mathbb{H}_{\mathbb C}^{n}$. 
It is called \emph{parabolic} if it is not elliptic and has exactly one fixed point 
on the boundary $\partial \mathbb{H}_{\mathbb C}^{n}$, and is called \emph{hyperbolic} if 
it is not elliptic and has exactly two fixed points on the boundary 
$\partial \mathbb{H}_{\mathbb C}^{n}$.

Thus the elements of this group are classified as either 
elliptic, hyperbolic or parabolic depending on their fixed points. 
Using conjugation classification~\cite{CGb} we know that if an element $g \in U(n,1)$ is elliptic or hyperbolic, 
then they are always semisimple. But a parabolic element need not be semisimple. 
However it has a Jordan decomposition $g=g_{s}g_{u}$, where $g_{s}$ is elliptic, hence semisimple, 
and $g_{u}$ is unipotent. In particular if a parabolic isometry is unipotent, then it has minimal 
polynomial $(x-1)^2$ or $(x-1)^3$ and is called 
\emph{vertical translation} or \emph{non-vertical translation} respectively. 
\begin{definition}
	An eigenvalue $\lambda $ (counted with multiplicities) of an element 
	$g \in U(n,1)$ is called null, positive or negative if the corresponding $\lambda$-eigenvectors 
	belong to $V_0,V_{+}$ or $V_{-}$ respectively.
\end{definition}
Accordingly, a similarity class of eigenvalues $[\lambda ]$ is \emph{null}, 
\emph{positive} or \emph{negative} according to its representative $\lambda $ is null, 
positive or negative respectively.
\begin{theorem}\label{countzun1}
	\begin{enumerate}
		\item The number of $z$-classes of hyperbolic elements in $U(n,1)$ is $p(n-1)$.
		\item The number of $z$-classes of elliptic elements in $U(n,1)$ is $$\displaystyle\sum_{m=1}^{n+1}p(n+1-m).$$
		\item The number of $z$-classes of parabolic elements in $U(n,1)$ is $2+p(n-1)+p(n-2)$ ($n\geq2$).
	\end{enumerate}
\end{theorem}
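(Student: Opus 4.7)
The plan is to split the count by isometry type (hyperbolic, elliptic, parabolic), using the Jordan decomposition $T=T_sT_u$ of Theorem~\ref{jordandecomposition} together with Proposition~\ref{conjugacyunitary} and the generalized-eigenspace decomposition of Proposition~\ref{spacedecomposition}. In each case this reduces the centralizer of $T$ in $U(n,1)$ to a product of smaller unitary factors determined by the eigenvalue/Jordan data on each primary piece, after which the count of $z$-classes becomes a partition-counting problem. Throughout one uses the fact that $T$ is elliptic (respectively hyperbolic) iff $T$ is semisimple with a timelike (respectively two lightlike) eigenvector(s), while $T$ is parabolic iff $T_u\neq I$.

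For hyperbolic $T$, the two null boundary fixed points span a non-degenerate $(1,1)$-plane carrying a pair of eigenvalues $\{\lambda,\bar\lambda^{-1}\}$ with $|\lambda|\neq 1$, and the orthogonal $(n-1)$-dimensional complement is positive definite with $T|_{\text{complement}}\in U(n-1)$. On the $(1,1)$-plane the centralizer of $\mathrm{diag}(\lambda,\bar\lambda^{-1})$ in $U(1,1)$ is the diagonal torus $\cong\mathbb{C}^\times$ (independent of $\lambda$); on the complement one obtains the $U(n-1)$-centralizer of $T|_{\text{complement}}$. By Theorem~\ref{semisimplezclass} the latter is determined up to conjugacy by the partition of $n-1$ encoding the eigenvalue multiplicities, giving exactly $p(n-1)$ $z$-classes.

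For elliptic $T$, all eigenvalues lie on $\mathbb{S}^1$ and exactly one, call it $\lambda_0$, supports the unique timelike eigenvector. Let $m$ be the multiplicity of $\lambda_0$ among the positive eigenvalues, so $0\le m\le n$. The generalized $\lambda_0$-eigenspace then has dimension $m+1$ with signature $(m,1)$, contributing a centralizer factor $U(m,1)$, while the remaining positive eigenvalues split by a partition of $n-m$, contributing factors $\prod_i U(a_i)$. By Theorem~\ref{semisimplezclass} the $z$-class is determined by $m$ together with this partition, so summing over $m$ yields $\sum_{m=0}^{n}p(n-m)=\sum_{k=1}^{n+1}p(n+1-k)$.

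For parabolic $T$, the signature $(n,1)$ forces the Jordan type of the non-trivial $T_u$ to be one of exactly two possibilities: the vertical translation $(2,1^{n-1})$ (supported on a $(1,1)$-plane) or the non-vertical translation $(3,1^{n-2})$ (supported on a $(2,1)$-subspace). In either case $T_s$ commutes with $T_u$, so it acts as a scalar $\lambda_0\in\mathbb{S}^1$ on the unipotent subspace and arbitrarily on the positive-definite orthogonal complement $\mathbb{C}^{n-1}$ or $\mathbb{C}^{n-2}$. The two pure-unipotent elements ($T_s\equiv I$) provide the constant "$2$" in the formula, while a non-trivial $T_s$ on the complement contributes, via Theorem~\ref{semisimplezclass} applied to $U(n-1)$ or $U(n-2)$, exactly $p(n-1)$ and $p(n-2)$ further $z$-classes respectively, yielding the total $2+p(n-1)+p(n-2)$. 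The main obstacle is precisely the parabolic case: one must (i) verify by a direct signature calculation on Jordan blocks (a size-$2k$ block carries signature $(k,k)$, a size-$(2k{+}1)$ block carries $(k{+}1,k)$ or $(k,k{+}1)$) that no other Jordan type is compatible with $(n,1)$, and (ii) show that the scalar $\lambda_0$ — in particular in the situation where it coincides with an eigenvalue of $T_s|_{\text{complement}}$ — does not generate additional $z$-classes beyond the partition count, which reduces to the fact that $\lambda_0 I$ is central in the ambient unitary factor and so is absorbed into the centralizer structure.
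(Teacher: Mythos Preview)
Your approach is essentially the same as the paper's: both proofs split by the elliptic/hyperbolic/parabolic trichotomy, use the orthogonal primary decomposition of $V$ to factor the centralizer, and reduce the count on each definite piece to a partition of the remaining dimension, invoking the Cao--Gongopadhyay classification of centralizers in $U(n,1)$ as input.

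The only organizational difference is in the parabolic case. The paper isolates the $T$-indecomposable subspace $V_\lambda$ of dimension $2$ or $3$ carrying the null eigenvector (citing \cite{Go} for the dimension restriction) and then counts $z$-classes of $T|_{V_\lambda^\perp}$ inside the compact factor $U(n+1-m)$; you instead run the Jordan decomposition $T=T_sT_u$, constrain the Jordan type of $T_u$ by a direct signature-of-blocks argument, and then count $z$-classes of $T_s$ on the positive-definite complement. These are equivalent viewpoints. Your extra remark (ii) --- that the coincidence $\lambda_0 \in \mathrm{spec}(T_s|_{\text{complement}})$ must not produce additional $z$-classes --- is a genuine subtlety that the paper passes over silently; your resolution (``$\lambda_0 I$ is central and is absorbed'') is in the right direction but is stated rather loosely, and you should make explicit which centralizer it is central in and why the resulting centralizer of $T$ is still conjugate to one already on your list. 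Apart from that, the arguments match.
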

\begin{proof}
	\begin{enumerate}
		\item Now, suppose $T\in U(n,1)$ is hyperbolic. Then $V$ has an orthogonal decomposition  
		$V=V_r \perp (\perp_{i=1}^t V_i)$, where $\dim(V_i)=r_i$, and $V_i$ is the eigenspace of $T$ 
		corresponding to the similarity class of positive eigenvalue $[\lambda_i]$ 
		with $|\lambda_i|=1$ (see p. 3324 in~\cite{CG} and reference there for definition). 
		The subspace $V_r$ is the two-dimensional $T$-invariant subspace spanned by the corresponding similarity 
		class of null-eigenvalues $[re^{i\theta}], [r^{-1}e^{i\theta}]$ for $r>1$, respectively. 
		Then $\mathcal Z_{U(n,1)}(T)=\mathcal Z(T|_{V_r}) \times \displaystyle \prod_{j=1}^t U(r_j)=
		S^1 \times \mathbb{R} \times \displaystyle \prod_{j=1}^t U(r_j)$. Here $n+1=2+\displaystyle \sum_{j=1}^t r_j$, 
		i.e., $\displaystyle \sum_{j=1}^t r_j=n-1$. Thus, the number of $z$-classes of hyperbolic elements is $p(n-1)$.
		\item Let $T\in U(n,1)$ be an elliptic element. Then $T$ has a negative class of eigenvalue 
		say $[\lambda]$. Let $m=\dim (V_{\lambda})$, which is $\geq 1$. It follows from the conjugacy 
		classification that all the eigenvalues have norm $1$ and there is a negative eigenvalue. 
		All other eigenvalues are of the positive type. 
		Then $V=V_{\lambda}\perp V_{\lambda}^{\perp}=V_{\lambda}\perp (\perp_{i=1}^sV_{\lambda_i})$. 
		Suppose $\dim(V_{\lambda_i})=r_{i}$, then 
		$\mathcal Z_{U(n,1)}(T)=\mathcal Z_{U(V_{\lambda})}(T|_{V_{\lambda}})\times \displaystyle \prod_{i=1}^s U(r_{i})$. 
		Now since $T|_{V_{\lambda}}$ is of negative type, so $\mathcal Z(T|_{V_{\lambda}})=U(m-1,1)$. 
		Here $n+1=m+\displaystyle \sum_{i=1}^s r_i$, therefore $\displaystyle \sum_{i=1}^s r_i=n+1-m$. 
		This gives that the number of $z$-classes of elliptic elements is $\displaystyle \sum_{m=1}^{n+1} p(n+1-m)$.
		\item Let $T\in U(n,1)$ be parabolic. First, let $T$ be unipotent. If the minimal polynomial of $T$ is $(x-1)^2$ 
		(i.e., $T$ is a vertical translation), 
		then $\mathcal Z_{U(n,1)}(T)=U(n-1)\ltimes (\mathbb{C}^{n-1}\times \mathbb{R})$. 
		If the minimal polynomial of $T$ is $(x-1)^3$ (i.e., $T$ is non-vertical translation), 
		then $\mathcal Z_{U(n,1)}(T)=(S^1\times U(n-2))\ltimes ((\mathbb{R}\times \mathbb{C}^{n-2})\ltimes \mathbb{R})$. 
		Hence there are exactly two $z$-classes of unipotents, one corresponds to the vertical translation and 
		the other to the non-vertical translation.
		Now assume that $T$ is not unipotent. Suppose that the similarity class of a null-eigenvalue is $[\lambda]$. 
		Then $V$ has a $T$-invariant orthogonal decomposition $V=V_{\lambda}\perp V_{\lambda}^{\perp}$, 
		where $V_{\lambda}$ is a $T$-indecomposable subspace of $\dim(V_{\lambda})=m$, which is 
		either $2$ or $3$ (see p. 956~\cite{Go}). 
		Then $\mathcal Z_{U(n,1)}(T)=\mathcal Z(T|_{V_{\lambda}})\times \mathcal Z(T|_{V_{\lambda}^{\perp}})$. 
		For each choice of $\lambda$, there is exactly one 
		choice for the $z$-classes of $T|_{V_{\lambda}}=\lambda I$ in $U(m-1, 1)$, 
		i.e., $U(1,1)$ or $U(2,1)$. Note that $T|_{V_{\lambda}^{\perp}}$ can be embedded into $U(n+1-m)$. 
		Hence it suffices to find out the number of $z$-classes of $T|_{V_{\lambda}}$ in $U(m-1,1)$. 
		For each choice of $\lambda$, there are exactly one choice for the $z$-classes of $T|_{V_{\lambda}}$ in $U(m-1,1)$.
		Hence the total number of $z$-classes of non-unipotent parabolic is $p(n-1)+p(n-2)$. 
		Therefore the total number of $z$-classes of parabolic transformations is $2+p(n-1)+p(n-2)$ ($n\geq 2$).
	\end{enumerate}
\end{proof}
\subsection{$z$-classes in $U(n,q)$}
Now we will focus on unitary groups over finite field 
$k=\mathbb F_{q^2}$ with $\sigma$ given by $\bar x=x^q$ and $k_0=\mathbb F_q$. 
It is well-known that over a finite field there is a unique non-degenerate hermitian form up to equivalence. 
We denote the unitary group by $U(n,q):=\{g\in GL(n,q^2)\mid \tra \bar{g} g=I_n\}$. 
The groups $GL(n,q)$ and $U(n,q)$, both are subgroups 
of $GL(n,q^2)$. We want to count the number 
of $z$-classes, and write its generating function. In view of Ennola duality, the representation theory of 
both these groups are closely related. Thus it is always useful to compare any 
computation for $U(n,q)$ with that of $GL(n,q)$. 
\begin{corollary}
	With the same notation as in \emph{Theorem~\ref{semisimplezclass}}. 
	Let $T\in U(n,q)$ be a semisimple element. Then the $z$-class of 
	$T$ is determined by a finite sequence of integers $(d_{1},\ldots, d_{k_{1}}; e_{1},\ldots, e_{k_{2}})$ 
	each $d_i, e_j\geq 0$ and $n = \displaystyle \sum_{i=1}^{k_{1}}d_{i}r_{i} +2 \sum_{j=1}^{k_{2}}e_{j}s_{j}$.
\end{corollary}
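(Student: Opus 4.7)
The plan is to specialize Theorem~\ref{semisimplezclass} to finite fields and verify that conditions (2) and (3) there contribute no extra parameters. Starting from an arbitrary semisimple $T\in U(n,q)$ with the decomposition given in Section 5.2, Theorem~\ref{semisimplezclass} tells us that the $z$-class of $T$ is captured by the tuple $(d_1,\ldots,d_{k_1};e_1,\ldots,e_{k_2})$ together with the multiplicities $r_i, s_j$, a choice of field extensions $E_i$ of $\mathbb{F}_{q^2}$ of degree $d_i$ and $K_j$ of degree $e_j$, and equivalence classes of the associated $E_i$-valued hermitian form $H_i$ of rank $r_i$ and the $K_j\times K_j$-valued form $H_j'$ of rank $s_j$. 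The aim is to collapse the last two layers of data.

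First I would observe that for the base field $\mathbb{F}_{q^2}$, there is a unique extension of each finite degree, namely $\mathbb{F}_{q^{2d_i}}$ (respectively $\mathbb{F}_{q^{2e_j}}$), so condition (2) is determined by the integers $d_i,e_j$. For the type-2 summands governed by the split ring $K_j\times K_j$, Corollary~\ref{Wall's} shows that any two non-degenerate hermitian forms are equivalent regardless of the ground field, so $H_j'$ contributes no new invariant.

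The main step is to verify that for each type-1 summand, the hermitian form $H_i$ on $V_i^T$ is also unique up to equivalence. Here $E_i\cong \mathbb{F}_{q^{2d_i}}$ carries the involution $\alpha$ whose fixed field $F_{i,0}=\mathbb{F}_{q^{d_i}}$, so $E_i/F_{i,0}$ is a quadratic extension of finite fields. I would then invoke the surjectivity of the norm map $N\colon E_i^\times\to F_{i,0}^\times$ for such an extension (the kernel has size $q^{d_i}+1$, giving $|\mathrm{Im}\,N|=q^{d_i}-1=|F_{i,0}^\times|$), so $F_{i,0}^\times/N(E_i^\times)$ is trivial. By the same appeal to Theorem 10.2 of Grove used at the end of the proof of Corollary~\ref{Wall's}, this triviality forces hermitian forms of a fixed rank over $(E_i,\alpha)$ to be unique up to equivalence. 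I expect the only subtle point is making this norm-surjectivity argument watertight in the Jacobson-radical-quotient framework Wall's approximation theorem needs, but since the relevant $E_i$ are already fields (the minimal polynomial factor $p_i(x)$ appears with multiplicity one because $T$ is semisimple), $E_i$ coincides with its own quotient by the radical and no further reduction is necessary. Combining these observations with Theorem~\ref{semisimplezclass} yields the claim: the $z$-class of a semisimple $T\in U(n,q)$ is determined by the integer data alone, subject to $n=\sum d_i r_i+2\sum e_j s_j$.
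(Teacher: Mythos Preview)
Your proposal is correct and follows essentially the same approach as the paper: specialize Theorem~\ref{semisimplezclass} to finite fields, use uniqueness of finite extensions for condition~(2), and uniqueness of hermitian forms over finite fields for condition~(3). The only difference is that the paper dispatches condition~(3) in one line by citing Grove (Corollary~10.4) for the uniqueness of hermitian forms over any finite field, whereas you unpack this by separating the type-2 case (via Corollary~\ref{Wall's}) from the type-1 case and giving the norm-surjectivity argument explicitly; your route is more self-contained but not genuinely different.
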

\begin{proof}
	We know that, for finite field $\mathbb{F}_q$ there 
	is a unique field of each degree extension $d$, namely $\mathbb{F}_{q^d}$. 
	Also hermitian form is unique, up to equivalence, over a finite field (see p. 88, Corollary 10.4 in~\cite{Gv}). 
	Hence the result follows from Theorem~\ref{semisimplezclass}, when we specify 
	$k=\mathbb{F}_q$ a finite field.
\end{proof}

\begin{lemma}
	\begin{enumerate}
		\item The number of $z$-classes of unipotent elements in $U(n,q)$ is $p(n)$, which is the number of $z$-classes 
		of unipotent elements in $GL(n,q)$.
		\item The number of $z$-classes of semisimple elements in $U(n,q)$ is same as 
		the number of $z$-classes of semisimple elements in $GL(n,q)$ if $q>n$.
	\end{enumerate}
\end{lemma}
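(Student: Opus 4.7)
The plan is to treat the two parts separately; in each case one reduces the $z$-class count to a purely combinatorial partition-type datum, and then matches that datum across the two groups.

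For part (1), I invoke Proposition~\ref{conjugacycriterion} with the self-U-reciprocal irreducible polynomial specialized to $p(x)=x-1$. A unipotent element $T\in U(n,q)$ has minimal polynomial $(x-1)^d$, and the proposition records its $z$-class by the sequence of elementary divisors $1\leq d_1\leq\cdots\leq d_r=d$ together with the equivalence classes of the cyclic-module hermitian forms $H_{d_i}^T$. Applying Wall's approximation theorem (Theorem~\ref{wall'sapproximationthm}) reduces each such form to a hermitian form of rank $r_i$ over $\mathbb{F}_{q^2}$, and over a finite field such a form is unique up to equivalence (Example~\ref{splitu}(2)). The only surviving invariant is therefore the partition $\sum_i d_i r_i=n$, giving $p(n)$ classes; the same count is the standard one for unipotent $z$-classes in $GL(n,q)$ via the Jordan partition.

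For part (2), I start from Theorem~\ref{semisimplezclass}: a semisimple $z$-class in $U(n,q)$ is specified by an unordered collection of pairs $(d_i,r_i)$ coming from self-U-reciprocal irreducible factors of degree $d_i$ with multiplicity $r_i$ (with $d_i$ forced to be odd by Proposition~\ref{ennolaodd}), an unordered collection of pairs $(e_j,s_j)$ coming from conjugate pairs $q_j(x)\tilde q_j(x)$ of degree $2e_j$ with multiplicity $s_j$, together with field extensions of $\mathbb{F}_{q^2}$ and hermitian forms. Over $\mathbb{F}_{q^2}$ a field extension of each degree is unique, the $E_i$-valued hermitian forms are unique up to equivalence in each rank (finite-field uniqueness), and the $K_j\times K_j$-valued forms are unique by Corollary~\ref{Wall's}. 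Hence only the integer data $\{(d_i,r_i)\},\{(e_j,s_j)\}$ remains, and it already determines the isomorphism type of the centralizer. The hypothesis $q>n$ enters purely to guarantee a sufficient supply of distinct self-U-reciprocal irreducibles of each admissible odd degree over $\mathbb{F}_{q^2}$ and of distinct conjugate pairs of each degree $\leq n/2$, so that any prescribed combinatorial datum is actually realized; the binding constraint is at degree one, where there are $q+1$ self-U-reciprocal linear polynomials, more than enough once $q>n$.

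Next, I repackage the data by collecting all self-U-reciprocal contributions of a fixed odd degree $d$ into one partition $\lambda_d$ whose parts are the multiplicities $r_{d,\ast}$, and all non-self-U-reciprocal conjugate pairs of a fixed degree $e$ into one partition $\mu_e$. The constraint becomes
\[
\sum_{d\text{ odd}} d\,|\lambda_d| \;+\; \sum_{e\geq 1} 2e\,|\mu_e| \;=\; n.
\]
The analogous repackaging for $GL(n,q)$ under the same hypothesis attaches to each integer $m\geq 1$ a partition $\nu_m$ with $\sum_{m\geq 1} m\,|\nu_m|=n$. The map $\lambda_d\mapsto\nu_d$ for odd $d$ and $\mu_e\mapsto\nu_{2e}$ for every $e\geq 1$ is a weight-preserving bijection, because each positive integer is uniquely either odd or twice a positive integer. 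At the level of generating functions this reads $\prod_{d\text{ odd}} p(x^d)\cdot\prod_{e\geq 1}p(x^{2e})=\prod_{m\geq 1}p(x^m)$, and completes the argument.

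The main obstacle, and the only step requiring genuine care, is disentangling which of the several invariants recorded in Theorem~\ref{semisimplezclass} really distinguish $z$-classes over a finite field; once the uniqueness of finite-field extensions and of finite-field hermitian forms is brought to bear, everything collapses to degree-multiplicity data, and matching it with the $GL(n,q)$ parametrization is a purely combinatorial manipulation.
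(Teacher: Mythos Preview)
Your argument for part (2) is correct and follows the paper's route (Theorem~\ref{semisimplezclass} plus uniqueness of extensions and hermitian forms over finite fields), with the bonus that you make the combinatorial bijection explicit where the paper simply asserts that the data ``corresponds to the number of ways $n$ can be written as $n=\sum a_ib_i$''. Your remark on where the constraint $q>n$ actually bites is also sharper than what the paper says.

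Part (1), however, has a genuine gap. Proposition~\ref{conjugacycriterion} is a \emph{conjugacy} criterion, not a $z$-class criterion; what it gives you, after invoking uniqueness of hermitian forms over $\mathbb F_{q^2}$, is that unipotent conjugacy classes in $U(n,q)$ are parametrized by partitions of $n$, and that the centralizer $\prod_i U(V_{d_i}^T,H_{d_i}^T)$ depends only on that partition. This shows the number of unipotent $z$-classes is \emph{at most} $p(n)$. You still owe the reverse inequality: that distinct partitions give non-conjugate centralizers. You never address this, and it is not automatic---a priori two different products of unitary groups over local rings could be conjugate inside $U(n,q)$.

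The paper closes this gap by a different and more direct route. It invokes Wall's membership criterion to transport unipotent conjugacy classes from $GL(n,q^2)$ to $U(n,q)$, then cites the explicit centralizer formula (Lemma~3.3.8 of \cite{BG}) $\mathcal Z_{U(n,q)}(u)=N\prod_i U(a_i,q)$ with $|N|$ a prescribed power of $q$, and observes that these groups have pairwise distinct orders as the partition varies, hence cannot be conjugate. Your approach can be repaired in the same spirit: compute (or cite) the order of $\prod_i U(V_{d_i}^T,H_{d_i}^T)$ and check it separates partitions. But as written the step is missing.
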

\begin{proof}
	\begin{enumerate}
		\item Let $u=[J_1^{a_1} J_2^{a_2}\ldots J_n^{a_n}]$ be a unipotent element in $GL(n,q^2)$ 
		written in Jordan block form (see Chapter 3 in~\cite{BG} for more details). 
		Wall proved the following membership test (see Case(A) on page 34 of~\cite{Wa2}): Let $A \in GL(n,q^2)$ 
		then $A$ is conjugate to ${}^t\bar{A}^{-1}$ in $GL(n,q^2)$ if and only if $A$ is conjugate to an element of $U(n,q)$. 
		Since unipotents are conjugate to their own inverse in $GL(n,q^2)$, this implies $u$ is conjugate 
		to ${}^t\bar{u}^{-1}$ in $GL(n,q^2)$. Hence $u$ is conjugate to an element of $U(n,q)$. 
		Wall also proved that two elements of $U(n,q)$ are conjugate in $U(n,q)$ if and only if they are conjugate 
		in $GL(n,q^2)$ (see also 6.1~\cite{Ma}). Thus, up to conjugacy, there is a one-one correspondence of unipotent 
		elements between $GL(n,q^2)$ and $U(n,q)$. This gives that the number of unipotent conjugacy classes 
		in $U(n,q)$ is $p(n)$, and it is same as that of $GL(n,q)$. Now, we note that (see Lemma 3.3.8~\cite{BG})
		$\mathcal Z_{U(n,q)}(u)=N\displaystyle \prod_{i=1}^nU(a_i,q)$, where 
		$$|N|=q^{\sum_{i=2}^n(i-1)a_i^2 + 2\sum_{i<j}ia_ia_j}.$$
		Clearly, the centralizers are distinct and hence can not be conjugate. 
		Thus the number of unipotent $z$-classes in $U(n,q)$ is $p(n)$.
		\item For semisimple elements, we use Theorem~\ref{semisimplezclass}. 
		Over a finite field (when $q>n$), we get that semisimple 
		$z$-classes in $U(n,q)$ are characterized by simply 
		$n=\displaystyle \sum_{i=1}^{k_{1}}d_i r_i+ \displaystyle \sum_{j=1}^{k_2}l_j s_j$, where $d_i$ is 
		odd (being a degree of a monic, irreducible, self-U-reciprocal polynomial, see Proposition~\ref{ennolaodd}) 
		and $l_j=2e_j$ is even. This corresponds to the number of ways 
		$n$ can be written as $n=\displaystyle \sum_i a_ib_i$, which 
		is same as the number of semisimple $z$-classes in $GL(n,q)$. 
	\end{enumerate}
\end{proof}
\noindent
The main result of this chapter is the following:
\begin{theorem}\label{maintheorem4}
	The number of $z$-classes in $U(n,q)$ is same as the number of $z$-classes in $GL(n,q)$ if $q>n$. 
	Thus, the number of $z$-classes for either group can be read off 
	by looking at the coefficients of the function 
	$\displaystyle\prod_{i=1}^{\infty} z(x^i)$, 
	where $z(x)=\displaystyle\prod_{j=1}^{\infty}\frac{1}{(1-x^j)^{p(j)}}$ and $p(j)$ is the number of partitions of $j$.
\end{theorem}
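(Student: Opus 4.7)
The plan is to compute the generating function for $z$-classes in $U(n,q)$ via Jordan decomposition and compare it with the generating function for $GL(n,q)$ in Proposition~\ref{countprod}(3). Every $T\in U(n,q)$ has a Jordan decomposition $T=T_sT_u=T_uT_s$ with $T_u\in \mathcal{Z}_{U(n,q)}(T_s)$, and $\mathcal{Z}_{U(n,q)}(T)=\mathcal{Z}_{U(n,q)}(T_s)\cap \mathcal{Z}_{U(n,q)}(T_u)$, as already exploited in the proof of Theorem~\ref{maintheorem3}. Consequently the $z$-class of $T$ in $U(n,q)$ is determined by the pair consisting of the $z$-class of $T_s$ in $U(n,q)$ and the $z$-class of $T_u$ as a unipotent element of the centralizer $\mathcal{Z}_{U(n,q)}(T_s)$; an analogous two-step enumeration applies to $GL(n,q)$.

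For $q>n$, Theorem~\ref{semisimplezclass} together with Proposition~\ref{ennolaodd} parametrizes semisimple $z$-classes in $U(n,q)$ by assigning, to each odd $d\geq 1$, a partition $\lambda^{(d)}$ (parts equal to the multiplicities of the irreducible self-$U$-reciprocal factors of degree $d$), and to each even $l=2e\geq 2$ a partition $\mu^{(l)}$ (parts equal to the multiplicities of the non-self-$U$-reciprocal pairs of total degree $l$), subject to $\sum_d d|\lambda^{(d)}|+\sum_l l|\mu^{(l)}|=n$. A representative $T_s$ then has centralizer
$$\mathcal{Z}_{U(n,q)}(T_s)\cong \prod_{d\text{ odd}}\prod_j U_{\lambda^{(d)}_j}(H^{(d)}_j)\times \prod_{l\text{ even}}\prod_j GL_{\mu^{(l)}_j}(K^{(l)}_j),$$
with each unitary factor living over a degree-$d$ extension of $\mathbb{F}_q$ and each general linear factor over $\mathbb{F}_{q^l}$.

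Applying part (1) of the lemma immediately preceding the theorem (the same argument passes unchanged through each relevant extension field), each unitary factor contributes $p(m)$ unipotent $z$-classes, matching the classical count $p(m)$ for each general linear factor; multiplying across factors, the number of unipotent $z$-classes inside this centralizer is $\prod_{d,j}p(\lambda^{(d)}_j)\cdot\prod_{l,j}p(\mu^{(l)}_j)$. Summing over all admissible semisimple data and invoking the identity $\sum_{m\geq 0}\bigl(\sum_{\lambda\vdash m}\prod_j p(\lambda_j)\bigr)x^m=z(x)$ (Proposition~\ref{countzgln}(2) rewritten as a generating function), the generating function for $z$-classes in $U(n,q)$ becomes
$$\sum_{n\geq 0}z_{U(n,q)}(n)\,x^n=\prod_{d\text{ odd}}z(x^d)\cdot\prod_{l\text{ even}}z(x^l)=\prod_{i=1}^{\infty}z(x^i),$$
which is exactly Proposition~\ref{countprod}(3) for $GL(n,q)$ with $q>n$; extracting the coefficient of $x^n$ yields both assertions.

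The main obstacle is the clean combinatorial book-keeping at the semisimple step: verifying that Theorem~\ref{semisimplezclass} genuinely parametrizes semisimple $z$-classes in $U(n,q)$ (for $q>n$) by partitions indexed by odd and even degrees, and confirming that unipotent $z$-classes inside a product of classical groups factor as a product of unipotent $z$-classes in the individual factors (which reduces to the fact that a unipotent in a direct product is a product of unipotents and that centralizers of such products decompose accordingly). Once these points are settled, the elementary identity $\{1,2,3,\ldots\}=\{\text{odd}\}\sqcup\{\text{even}\}$ collapses the two products into $\prod_{i\geq 1}z(x^i)$ and completes the proof.
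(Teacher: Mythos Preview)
Your overall route is the paper's: reduce via Jordan decomposition, parametrize semisimple $z$-classes by Theorem~\ref{semisimplezclass} and Proposition~\ref{ennolaodd}, count unipotents in each centralizer via the preceding lemma, and compare with Proposition~\ref{countprod}(3). But there is a concrete error. The identity you invoke,
\[
\sum_{m\geq 0}\Bigl(\sum_{\lambda\vdash m}\prod_j p(\lambda_j)\Bigr)x^m=z(x),
\]
is false: at $m=4$ the left side is $p(4)+p(3)p(1)+p(2)^2+p(2)p(1)^2+p(1)^4=5+3+4+2+1=15$, while $z(4)=14$. Proposition~\ref{countzgln}(2) actually reads $z(n)=\sum_{\lambda\vdash n}\prod_i\binom{p(i)+k_i-1}{k_i}$ with $\lambda=(1^{k_1}2^{k_2}\cdots)$, not $\prod_j p(\lambda_j)=\prod_i p(i)^{k_i}$; these disagree exactly when a part of $\lambda$ repeats.

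The conceptual slip is the claim that the $z$-class of $T$ in $U(n,q)$ is determined by the pair $\bigl([T_s]_z,\text{ unipotent $z$-class of }T_u\text{ in }\mathcal Z_G(T_s)\bigr)$. When $\mathcal Z_G(T_s)$ has two isomorphic direct factors---say two copies of $U_r$ coming from two distinct degree-$d$ self-$U$-reciprocal irreducibles with the same multiplicity $r$---there is an element of $U(n,q)$ interchanging the two primary summands; it normalizes $\mathcal Z_G(T_s)$ without lying in it, and it merges unipotent $z$-classes that are distinct inside $\mathcal Z_G(T_s)$. Hence the honest fiber over a semisimple $z$-class is a multiset count $\prod_r\binom{p(r)+k_r-1}{k_r}$, and \emph{that} is what sums to $z(m)$. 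The paper never computes the $U(n,q)$ generating function directly; it only argues that the displayed summation has identical ingredients for $U(n,q)$ and $GL(n,q)$ (so the two totals agree even before one knows the exact value) and then quotes Proposition~\ref{countprod}(3) for $GL(n,q)$. Your argument, by contrast, needs the false identity to reach $\prod_i z(x^i)$ for $U(n,q)$ on its own, so as written it has a gap; with the corrected multiset bookkeeping the computation does go through.
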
  
\begin{proof}
	Recall that if $g=g_sg_u$ is the Jordan decomposition of $g$ 
	then $\mathcal Z_{U(n,q)}(g)=\mathcal Z_{U(n,q)}(g_s)\cap \mathcal Z_{U(n,q)}(g_u)
	=\mathcal Z_{\mathcal Z_{U(n,q)}(g_s)}(g_u)$, and the structure of $\mathcal Z_{U(n,q)}(g_s)$ 
	in Theorem~\ref{semisimplezclass} implies that 
	$$\text{\ number\ of\ $z$-classes\ in\ $U(n,q)$} = 
	\displaystyle \sum_{[s]_z} \text{\ no of unipotent $z$-classes in}\, \mathcal Z_{U(n,q)}(s), $$
	where the sum runs over semisimple $z$-classes. Hence the number of $z$-classes in $U(n,q)$ 
	is the same as the number of $z$-classes in $GL(n,q)$.
\end{proof}
\begin{remark}
	However, the above Theorem~\ref{maintheorem4} need not be true when $q\leq n$. 
	For sufficiently large $q$ there will be $z$-classes of every type 
	but for small values of $q$ certain types may not be available. 
	For example, if $q=n=3$, then there are no matrices in $GL(3,3)$ of type 
	$\begin{pmatrix}a&&\\&b&\\&&c\end{pmatrix}$, where $a,b,c\in \mathbb{F}_{3}^{\times}$ and are distinct.
\end{remark}
\begin{example}\label{examplegap}
	Over a finite field $\mathbb F_q$, if $q$ is not large enough 
	we may not have as many finite extensions available as required in part 2 of Theorem~\ref{semisimplezclass}. 
	Thus we expect less number of $z$-classes. 
	We use GAP~\cite{GAP} to calculate the number of $z$-classes for small order and present our findings below:  
	\vskip3mm
	\begin{center}
		\begin{tabular}{|c|l|l|l|l|l|l|}
			\hline
			$z_{\mathbb F_q}(2) $    & $q=2$     & $q=3$&$q=4 $&$q=5$&$q=7$&$q=9$ \\ \hline
			$GL(2,q)$ & $3$ & $4$ & $4$ & $4$ & $4$ & $4$ \\ \hline
			$U(2,q)$  & $3$ &$4$ & $4$ & $ 4$& $4$ & $4$ \\ \hline 
		\end{tabular}
	\end{center}
	
	\vskip3mm
	\begin{center}
		\begin{tabular}{|c|l|l|l|l|l|l|}
			\hline
			$z_{\mathbb F_q}(3) $      &$q=2$   &$q=3$& $q=4$ & $q=5$&$q=7$&$q=9$ \\ \hline
			$GL(3,q)$ & $5$& $7$ & $8$ & $8$ &$8$ &$8$ \\ \hline
			$U(3,q)$  & $7$ & $8$ & $8$ & $8$ &$8$ &$8$ \\ \hline 
		\end{tabular}
	\end{center}
	
	\vskip3mm
	\begin{center}
		\begin{tabular}{|c|l|l|l|l|l|}
			\hline
			$z_{\mathbb F_q}(4) $     &$q=2$    &$q=3$& $q=4$ & $q=5$&$q=7$ \\ \hline
			$GL(4,q)$   & $11$& $19$ & $21$ & $22$& $22$ \\ \hline
			$U(4,q)$  & $15$ & $22$ & $22$ & $22$ &$22$ \\ \hline 
		\end{tabular}
	\end{center}
	\vskip2mm
	\noindent Thus we demonstrate the following:
	\begin{enumerate}
		\item When $q\leq n$ the number of $z$-classes in $GL(n,q)$ and $U(n,q)$ are not given 
		by the formula in Theorem~\ref{maintheorem4}.
		\item When $q\leq n$ the number of $z$-classes in $GL(n,q)$ and $U(n,q)$ need not be equal.
	\end{enumerate}
\end{example}
\chapter{Future Plans}\label{chapter10}
The groups we study here are fundamental objects in algebraic groups.
Given wide interest and applications in group theory, it is interesting to compute centralizers and $z$-classes 
in algebraic groups. 
\section{Further Questions}
We would like to continue our study for other groups, especially for 
exceptional groups. So the precise problem would be the following: 
\begin{problem}
	Is the number of $z$-classes finite for the exceptional groups of type 
	$E_6, E_7, E_8, F_4, G_2$ defined over $k$ with the property FE?
\end{problem}
\noindent
R. Steinberg proved the result all at once for reductive algebraic groups over an algebraically closed field.
So one can ask the following:
\begin{problem}
	Is the number of $z$-classes finite for a reductive algebraic group defined over $k$ with the property FE?
\end{problem}
\noindent
This problem is hard but will be quite interesting. I believe the answer to these questions is positive. 
We have some ideas and preliminary results on this.
Another natural question would be; what is the number of $z$-classes for a certain group $G$? 
We would like to address this question over finite fields $\mathbb{F}_q$. A more concrete question I would like to 
address in future is the following:
\begin{problem}
	What are the number of $z$-classes in $Sp(n,q)$ and $O(n,q)$?
\end{problem}  
\begin{problem}
	How does it reflect on the representation theory of these groups?
\end{problem}
We have seen that the Bruhat decomposition (Theorem~\ref{bruhatdecomposition}) 
for general linear groups $GL(n,k)$ has a nice connection to the classical Gaussian elimination algorithm. 
So one would expect the same kind of decomposition for other groups, namely, similitude groups using 
our Gaussian elimination algorithms developed in Section~\ref{gausseven} and~\ref{gaussodd}. So the precise 
problem would be the following:
\begin{problem}
	Do the Bruhat decomposition for the symplectic and orthogonal groups using our algorithms.
\end{problem}
\noindent
More generally, 
\begin{problem}
	Do the Bruhat decomposition for the symplectic and orthogonal similitude groups using our algorithms.
\end{problem}


\end{document}